\newcommand{\e}{\epsilon}
\newcommand{\RR}{{\mathbb R}}
\newcommand{\NN}{{\mathbb N}}
\newcommand{\CC}{{\mathbb C}}
\newcommand{\ZZ}{{\mathbb Z}}
\newcommand{\CI}{{C^\infty}}
\newcommand{\TT}{{\mathbb T}}
\renewcommand{\e}{\epsilon}
\newcommand{\mc}[1]{\mathcal{#1}}
\newcommand{\spec}{\operatorname{spec}}
\newcommand{\specppL}{{\rm{spec}}_{{\rm{pp}},L^2}}
\title{Viscosity limits for 0th order pseudodifferential operators}
\author{Jeffrey Galkowski}
\email{j.galkowski@ucl.ac.uk}
\address{Department of Mathematics, University College London, WC1H 0AY, UK}
\author{Maciej Zworski} 
\email{zworski@math.berkeley.edu}
\address{Department of Mathematics, University of California, Berkeley, CA 94720}
\begin{document}


\begin{abstract}
Motivated by the work of Colin de Verdi\`ere and Saint-Raymond \cite{CS} on 
spectral theory {for} 0th order pseudodifferential operators on tori we consider
 viscosity limits in which 0th order operators{,} $ P $, are replaced by 
$ P + i \nu \Delta $, $ \nu > 0 $. By adapting the Helffer--Sj\"ostrand theory of scattering resonances \cite{HS}, we show that{,} in a complex neighbourhood of the continuous spectrum, eigenvalues of $ P + i \nu \Delta $ have limits as {the} viscosity{,} $ \nu $, goes to 0. In the simplified setting of tori{,} this justifies claims made in the physics literature -- see for instance \cite{Rieu}.
\end{abstract}

\maketitle

\section{Introduction}
\label{s:intr}

Spectral properties of 0th order pseudo-differential operators 
arise naturally in the problems of fluid mechanics -- for an early example see  
Ralston \cite{Ra}. Recently, 
Colin de Verdi\`ere and Saint-Raymond \cite{CS}, \cite{C} investigated such 
operators under natural dynamical conditions motivated by the study of
(linearized) internal waves -- see the review article of Dauxois et al 
\cite{Da} and the introduction to \cite{CS} for a physics perspective and references. Dyatlov--Zworski \cite{DyZw19} provided  proofs
of the results of \cite{CS} based on the analogy to scattering theory --
see Melrose--Zworski \cite{mz}, Hassell--Melrose--Vasy \cite{hmv} and \cite{dizzy}. This analogy was developed further by Wang \cite{JW} who defined and described a  
scattering matrix in this setting. Tao \cite{ZTao} constructed an 
example of an embedded eigenvalue.

\renewcommand\thefootnote{\dag}%

Motivated by the physics literature -- see for instance Rieutord et al \cite{Rieu} --  we consider here operators with a viscosity term
\[  P_\nu := P + i \nu \Delta ,\]
where $ P $ is a 0th order pseudodifferential operator {on the torus}~\eqref{eq:defP0} {satisfying}~\eqref{e:SymbolAnalyticity0} and the dynamical assumption~\eqref{eq:HpG0}. The operator $ \Delta $ is the usual Laplacian on the torus.
The assumption~\eqref{eq:HpG0} guarantees continuity of 
the spectrum at $ 0 $  \cite{CS}, \cite{DyZw19}.  We then show that as $ \nu \to 0 + $ the eigenvalues of $ P_\nu$ 
 in a complex neighbourhood of $ 0$ tend to a discrete set associated to $ P $ alone -- see Figure \ref{f:1} for a numerical illustration. {This} justifies 
claims seen in related models of the physics literature\footnote{For example a claim from \cite{Rieu}:
``The aim of this paper is to present what we believe to be the asymptotic limit of inertial modes in a spherical shell when viscosity tends to zero."}. Our approach is again based on analogy to scattering theory, 
in this case to the general approach to scattering resonances due to 
Helffer--Sj\"ostrand \cite{HS}.
\begin{center}
 \begin{figure}
\includegraphics[width=13.5cm]{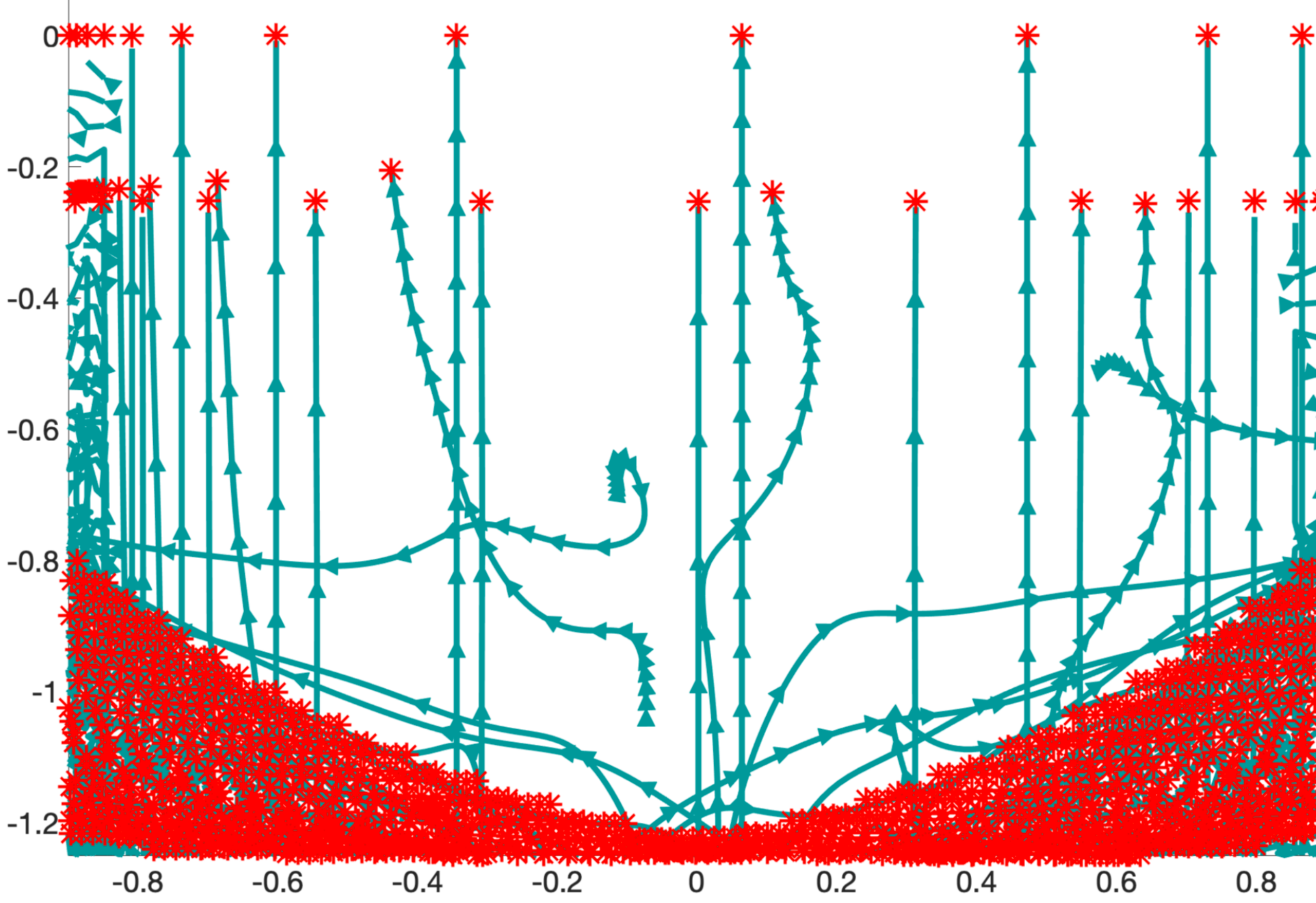}
\caption{\label{f:1}We display the resonances of $ P $ as red stars (a full explanation using the deformed operator $ P_\theta $ is given in Appendix B). The paths of the eigenvalues of $P+i\nu \Delta$ as $\nu \to 0+$ are shown by the green curves with the arrows denoting the direction of the path as $\nu$ decreases. $P$ is chosen as in~\eqref{e:numeric} with $V_a=\tfrac{1}{2}(\xi^3-1)e^{-\xi^2}$ and $V_m=(1 + (e-1) (\xi-2)^2 )e^{-(\xi-2)^2}$. For an animated version of this figure see
\url{https://math.berkeley.edu/~zworski/vis_dynam.mov}.}
\end{figure}
\end{center}

To state our results precisely{,} we start with the class of pseudodifferential operators: 
\begin{equation}
\label{eq:defP0}
Pu(y):=\frac{1}{(2\pi h)^n}\int_{\RR^n}\int_{\RR^n}  e^{\frac{i}{h}\langle y-y',\eta\rangle}p(y,\eta)u(y')dy'd\eta
\end{equation}
where 
$p\in S^m ( T^* \TT^n ) $, $ \TT^n := \RR^n/ 2\pi \ZZ^n $, has an analytic continuation from $T^*\mathbb{T}^n$ satisfying
\begin{equation}
\label{e:SymbolAnalyticity0}
|p(z,\zeta)|\leq M, \ \text{ for }  \ |\Im z|\leq a, \ \ |\Im \zeta|\leq  b\langle \Re \zeta\rangle .
\end{equation}
The integral in the definition \eqref{eq:defP0} of $ Pu $  is considered in the sense of oscillatory integrals (see for instance \cite[\S 5.3]{zw}) 
and we extend both $ y \mapsto u ( y ) $ and $ y \mapsto p ( y, \eta ) $
to periodic functions on $ \RR^n $.

The dynamical assumption is formulated using an {\em escape function}:
\begin{equation}
\exists \, G\in S^1(T^*\mathbb{T}^n), \ C>0 \ \  H_pG(x,\xi)>0, \ \ \text{ for } (x,\xi)\in \{p=0\}\cap \{|\xi|>C\}.
\label{eq:HpG0}
\end{equation}
(For the definition of the symbol class $ S^1 ( T^* \TT^n ) $ see 
\eqref{e:controlDeformation} and \cite[\S E.1]{dizzy} and for a discussion  of escape functions \cite[\S 6.4]{dizzy}.) 
We make our assumption at $ p= 0$ but {the value} $ 0$ can be replaced by any real number $ \lambda $ by changing the operator {to} $ P - \lambda $.
We could also replace $ p $ in \eqref{eq:HpG0} by the principal symbol of
$ P $. Examples of operators satisfying our assumptions are given in 
Appendix B (see also \cite{DyZw19} and \cite{ZTao}). 

We denote by $ \Delta = \sum_{j=1}^n \partial_{x_j}^2 $ the usual Laplacian on $ \TT^n $
and state a precise version of our main result:
\begin{theo}
\label{t:viscoscity}
Suppose that $ P $ is given by \eqref{eq:defP0} with $ p $
satisfying \eqref{e:SymbolAnalyticity0} and \eqref{eq:HpG0}. 
Then there exist an open neighbourhood of $ 0 $ in $ \CC $, $ U $ and 
a set 
$$ \mathscr R ( P ) \subset \{ \Im \omega \leq 0 \} \cap U  $$ 
such that
for every $ K \Subset U $, $ \mathscr R ( P ) \cap K $ is discrete, and 
\begin{equation}
\label{eq:t1}  \spec_{L^2 } ( P + i \nu \Delta ) 
\longrightarrow \mathscr R ( P )  , \ \ \nu \to 0 + ,
\end{equation}
uniformly on  $K $. 
\end{theo}

Numerical illustrations of this theorem are presented in Appendix B.

Another way to state the theorem is to say that $ \mathscr R ( P )  = \{ \omega_j \}
_{ j = 1}^N  $ (where $N = \infty $ is allowed) and 
$ \spec_{L^2} ( P + i \nu \Delta ) = \{ \omega_j ( \nu ) \}_{j=1}^\infty $
then (after suitable re-ordering)
\[ \omega_j ( \nu ) \to \omega_j , \ \ \nu \to 0 + , \]
 uniformly on compact sets and with agreement of multiplicities. In fact, the proof gives a more precise statement implying smoothness of 
 projectors acting on spaces $ \mc{X} $ of Theorem \ref{t:meromorphy} -- see \cite[Proposition 5.3]{DZ3}. Since the statement is essentially the same we do not reproduce it here.
 
The Laplacian $ \Delta $ can be replaced by any second order (or any order) 
elliptic differential operator with analytic coefficients and the set 
$ \mathscr R ( P ) $ is independent of that choice. The next theorem shows that  $\mathscr{R}(P)$ is defined intrinsically for operators satisfying our assumptions:
\begin{theo}
\label{t:meromorphy}
Suppose that $ P $ satisfies the assumptions of Theorem \ref{t:viscoscity}
and $ U $ is the open set presented there. Then 
there exists a Hilbert space $ \mc{ X } $ such that for $ \omega \in U $, 
\[  P - \omega : \mc{X} \to \mc{X} \ \text{ is a Fredholm operator }, \]
and 
$
\mc{R} (\omega):=(P-\omega)^{-1}:\mc{X}\to \mc{X}, 
$
forms a meromorphic family of operators with poles
of finite rank. The set of these poles in $ U $ is the set $ \mathscr R ( P ) $ in 
Theorem \ref{t:viscoscity} (with inclusion according to multiplicity). 
Moreover, 
$$ \mathscr R ( P ) \cap \RR = \specppL(P)\cap U . $$
\end{theo}

The space $ \mc{X} = H_\Lambda $ is defined in \S \ref{s:dFBI} and
for some $ \delta > 0 $, 
\[   \mathscr A_\delta \subset \mc{X} \subset \mathscr A_{-\delta } , \ \ 
 \]
where, for $ s \in \RR $, the spaces $ \mathscr A_s $ is given by formal Fourier series with Fourier coefficients bounded by $ e^{ - |n|s } $, $ n \in \ZZ^n $. Hence $ \mc{X} $ contains the space of analytic functions extending to a sufficiently large complex neighbourhood of $ \TT^n $ and is contained in the dual of such space -- 
see \eqref{e:analytic} for precise definitions. 

We briefly recall similar results in different settings. Dyatlov--Zworski 
\cite{DZ3} showed that if $ X $ is the generator of an Anosov flow on a compact manifold and $ Q $ is a self-adjoint second order elliptic  operator{,} then the eigenvalues of $ X + i \nu Q $ converge to the 
{\em Pollicott--Ruelle resonances} of the Anosov flow. These resonances
appear in expansions of correlations -- see \cite{DZ3} for a discussion and references.
Drouot \cite{Dr2} proved an analogue of this result for {\em kinetic Brownian motion} in which $ X $ is a generator of an Anosov geodesic flow and $ Q $ is the ``spherical Laplacian" on the fibers. Dang--Rivi\`ere
\cite{dar} showed that for Morse--Smale gradient flows, the eigenvalues of 
$ \mathcal L_{\nabla_g f } + i \nu \Delta_g $ (which agree with the eigenvalues of the {\em Witten Laplacian}) converge to the Pollicott-Ruelle resonance of the gradient flow. That generalized a result
of Frenkel--Losev--Nekrasov \cite{FLN} who, motivated by quantum field theory, considered the case of the height function on the sphere.

The {\em complex absorbing potential} method (see \cite[\S 4.9]{dizzy} for 
a description and references) is also related to viscosity limits: to obtain discrete complex spectrum a complex potential, say $ - i \epsilon |x|^2 $, is added to a 
Schr\"odinger operator. In cases where scattering resonances can be defined, the spectrum of this new operator converges to the resonances -- see \cite{Xi}, \cite{z18}.

The essential ingredient in  the proof of Theorems \ref{t:viscoscity} and \ref{t:meromorphy} is the theory of complex microlocal deformations
inspired by works of Sj\"ostrand \cite{sam},\cite{Sj96} and Helffer--Sj\"ostrand \cite{HS}. The starting object there is an 
FBI transform. In our case we need an FBI transform which respects the analytic structure of the underlying compact analytic manifold.
Hence{,} if $ M $ is a compact analytic manifold{,} we define (using a measure
on $ M$ coming from a real analytic metric)
\begin{equation}
\label{eq:genT}  T u ( x, \xi , h ) := h^{-\frac{3n}2} \int_M K ( x, \xi  , y , h ) u ( y ) dy , 
\end{equation}
where 
\[   ( x, \xi , y ) \to K ( x, \xi , y , h ) \]
is holomorphic 
in a fixed complex conic neighbourhood of $ T^* M \times M $ and{,}
uniformly in that neighbourhood{,}
\begin{equation}
\label{eq:defKex}
\begin{gathered}  K ( x, \xi , y , h ) = \chi ( x, y) a ( x, \xi, y , h ) e^{ \frac ih \varphi ( x, \xi, {y}, h ) } + \mathcal O ( e^{ - \langle  \Re \xi \rangle/ Ch } ) , \\
\varphi ( x, \xi, x ) =0 , \ \ d_y \varphi ( x, \xi, y )|_{y = x  } = - \xi, \ \  \Im d_y^2 \varphi ( x, \xi, y )|_{y = x   } \sim \langle \Re \xi \rangle I.
\end{gathered}
\end{equation}
Denoting by $ \widetilde M $ a complex neighbourhood of $ M$, $ \chi $ satisfies
\begin{gather*}  \chi \in \CI ( U ), \ \
\chi|_V  \equiv 1 , 
\ \
V \Subset U \subset \widetilde M \times \widetilde M \text{ are small neighbourhoods of $ \Delta ( \widetilde M ) $,} 
\end{gather*}
and $ a $ is analytic symbol of order $ n/4 $ in $ \xi $. (Here $ 
\Delta ( \widetilde M ) $ denotes the diagonal $\{ ( x, x ) : 
x \in \widetilde M \} $.)

Existence of such kernels $ K $ can be obtained by choosing a real analytic metric with exponential map $ T_x M \ni ( x, v ) \mapsto 
\exp_x ( v ) $, and then  putting
\[ \varphi ( x, \xi , y ) = - \xi ( \exp^{-1}_x ( y ) ) + 
\tfrac i 2 \langle \xi \rangle d ( x, y )^2. \]
We can then solve the $ \bar \partial $-equation with the right hand side given by $ \bar \partial_{x,y} $ applied to the first term on the right hand side of \eqref{eq:defKex}.

In this paper, in view of our applications and for the sake of clarity, we consider an explicit $ K ( x, \xi, y , h ) $ available in the case of tori, $ \TT^n := \RR^n/ ( 2 \pi \ZZ)^n $:
\begin{equation}
\label{e:exactFBI}  K ( x, \xi , y , h ) = 
c_n \langle \xi \rangle^{ \frac n 4} \sum_{ k \in \ZZ} 
e^{\frac i h ( \langle  x - y - 2 \pi k , \xi \rangle + 
\frac i 2 \langle \xi \rangle ( x - y - 2 \pi k )^2 ) } . \end{equation}
Although the analysis works in the more general setting of analytic compact manifolds and FBI transforms satisfying \eqref{eq:genT} and \eqref{eq:defKex}{,} we can avoid additional complications such as the study of analytic symbols when the inverse of $ T $ is not exact (see 
Proposition \ref{p:invert}) and of operators annihilating $ T u $
which do not commute exactly (see Proposition \ref{p:ZLa}) {by using~\eqref{e:exactFBI}}. One motivation for this project was to present the theory of 
{\em exponential weights} which are not compactly supported -- see \S \ref{s:dFBI}.  The expository 
article \cite{GZ} is intended as an introduction to these methods in the 
simpler setting of compactly supported weights, see also Martinez \cite{M} and Nakamura \cite{N} for a very clear approach to compactly supported weights in 
$ \RR^n $ (or more generally weights $ \psi $ satisfying $ 
\partial^\alpha \psi \in L^\infty $ for $ |\alpha| > 0 $).

In an independent development Guedes Bonthonneau--J\'ez\'equel \cite{Guj} 
presented a similar theory in a more general setting of Gevrey regularity
and arbitrary real analytic compact manifolds. 
Their motivation came 
from microlocal study of dynamical zeta functions and trace formulas for
Anosov flows, see \cite{DZ16},\cite{Je} and references given there.

The paper is organized as follows:

\begin{itemize}

\item In \S \ref{s:FBItor} we define an FBI transform, $ T $ on tori and construct
its exact left inverse $ S$. The FBI transform takes function{s} on $ \TT^n $ to  functions on $ T^* \TT^n $.

\item The geometry of complex deformation and their relation to exponential weights is reviewed in \S \ref{s:geom}. The 
complex deformation of our FBI transform, $ T_\Lambda $,  is then investigated in 
\S \ref{s:dFBI} where the space $ \mc{X} = H_\Lambda $ is {also} defined. 
Here, $ \Lambda $ is a complex deformation of $ T^* \TT^n $ associated to 
$ G $ in \eqref{eq:HpG0} using \eqref{e:LambdaDef}.

\item \S \ref{s:asy} is motivated by the study of Bergman kernels by Boutet de Monvel--Sj\"ostrand \cite{BS}, \cite{Sj96} and of Toeplitz operators by 
Boutet de Monvel--Guillemin \cite{BG}: we construct a parametrix for the
orthogonal projector onto the image of $ \mc{X} $ under $ T_\Lambda $. 

\item The action of pseudo-differential {operators} of the form \eqref{eq:defP0} 
on the space $ \mc{X} $ is described in \S \ref{s:deforpsi}. We also present {the}
compactness and trace class properties needed in {our} proofs of the Fredholm property and of the viscosity limit {for $P$ and $P+i\nu\Delta$}.

\item Finally, \S \ref{s:deforpsi} is devoted to the proofs of Theorems 
\ref{t:viscoscity} and \ref{t:meromorphy}. 

\item Appendix A reviews some aspects almost analytic machinery of Melin--Sj\"ostrand \cite{mess}, see also \cite[\S 5]{GZ}. In Appendix B we
discuss the (very) special case of escape functions which are linear in 
$ \xi $. In that case we can use an analogue of the method of complex scaling -- see \cite[\S\S 4.5,4.7]{dizzy} and references given there. 
{This} method lends itself to numerical experimentation and some  results of that are presented in Appendix B as well.

\end{itemize}


\medskip\noindent\textbf{Acknowledgements.}
We would like to thank Semyon Dyatlov for many enlightening discussions and 
Johannes Sj\"ostrand for helpful comments on the first version of \cite{GZ}. 
Partial support by the National Science Foundation grants
DMS-1900434 and DMS-1502661 (JG) and 
 DMS-1500852 (MZ) is also
gratefully acknowledged.

\section{A semiclassical FBI transform on $\mathbb{T}^n=\RR^n/2\pi \mathbb{Z}^n$}
\label{s:FBItor}

We start by defining an FBI transform on $\mathbb{T}^n$ {which respects the} \emph{real analytic} structure of $\mathbb{T}^n$ and is invertible with error
exponentially small in $ h $ and in frequency.

As stated in \S \ref{s:intr} we achieve this with the following transform:
\begin{equation}
\label{e:periodized}
\begin{gathered}
Tu(x,\xi):=  h^{-\frac{3n}{4}}\int_{\mathbb{T}^n} \sum_{k\in \mathbb{Z}^n} e^{\frac{i}{h}\varphi(x,y-2\pi k,\xi)}\langle \xi\rangle^{\frac{n}{4}}u(y)dy, \ \ u\in \CI (\mathbb{T}^n), \\
\varphi(x,\xi,y):=\langle x-y,\xi\rangle +\tfrac{i}{2}\langle \xi\rangle (x-y)^2.\end{gathered}
\end{equation}
This sum is rapidly convergent since $\Im \varphi \geq \langle \xi\rangle|x-y|^2/2$.

\noindent
{\bf Remark.} 
As already emphasized in \S \ref{s:intr}, 
the crucial feature of $T$ is the structure of its integral kernel, $K(x,\xi,y)$, which is analytic in all variables and is given by 
\begin{gather*}
e^{\frac{i}{h}\varphi(x,\xi,y)}a(x,\xi,y)\chi(d(x,y))+ \mathcal O(e^{-\langle \xi\rangle/h}), \\
 \varphi(x,\xi,y)=\langle \exp_{y}^{-1}(x),\xi\rangle +\tfrac{i}{2}\langle \xi\rangle d(x,y)^2,
\end{gather*}
where $a$ is a classical analytic symbol and $\chi\in C_c^\infty(\mathbb{R})$ is supported in a small neighbourhood of $0$ and is equal to $1$ near $ 0$.

 Extending $u$ to $\RR^n$ as a $2\pi \mathbb{Z}^n$ periodic function, we observe that 
$$
Tu(x,\xi)=h^{-\frac{3n}{4}}\int_{\RR^n} e^{\frac{i}{h}\varphi(x-y,\xi)}\langle \xi\rangle^{\frac{n}{4}}u(y)dy
$$
and, moreover, $Tu(x,\xi)$ is $2\pi \mathbb{Z}^n$ periodic in $x$.

\begin{lemm}
The operator $T:C^\infty(\mathbb{T}^n)\to C^\infty(T^*\mathbb{T}^n)$ extends to an operator 
$$ T: L^2(\mathbb{T}^n)\to L^2(T^*\mathbb{T}^n), \ \ \|T \|_{
L^2(\mathbb{T}^n)\to L^2(T^*\mathbb{T}^n) } \leq C , $$
with $ C $ independent of $ h $.
\end{lemm}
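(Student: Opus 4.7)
The plan is to compute $Tu$ essentially explicitly by Fourier expansion in $y$, reducing everything to a single uniform Gaussian estimate in $\xi$. Since $\varphi(x-y,\xi)$ is quadratic in $z = x-y$, inserting the Fourier series of the periodic extension $u(y) = \sum_{m\in\ZZ^n}\hat u_m e^{i\langle m,y\rangle}$ turns the integral into a Gaussian integral for each mode. Concretely, after the substitution $z = x-y$,
\[
Tu(x,\xi) = h^{-\frac{3n}{4}}\langle\xi\rangle^{\frac{n}{4}}\sum_{m\in\ZZ^n}\hat u_m e^{i\langle m,x\rangle}\int_{\RR^n} e^{-\frac{\langle\xi\rangle}{2h}z^2 + \frac{i}{h}\langle z,\,\xi-hm\rangle}\,dz,
\]
and evaluating the Gaussian yields the closed form
\[
Tu(x,\xi) = (2\pi)^{\frac{n}{2}}\,h^{-\frac{n}{4}}\langle\xi\rangle^{-\frac{n}{4}}\sum_{m\in\ZZ^n}\hat u_m\, e^{i\langle m,x\rangle}\,e^{-\frac{|\xi-hm|^2}{2h\langle\xi\rangle}}.
\]

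Next, I apply Parseval's identity on $\TT^n$ in the $x$-variable. Since $\{e^{i\langle m,x\rangle}\}_m$ is an orthogonal basis, the cross terms vanish and
\[
\int_{\TT^n}|Tu(x,\xi)|^2\,dx = C_n\, h^{-\frac{n}{2}}\langle\xi\rangle^{-\frac{n}{2}}\sum_{m\in\ZZ^n}|\hat u_m|^2\, e^{-\frac{|\xi-hm|^2}{h\langle\xi\rangle}}.
\]
Integrating in $\xi$ and using $\|u\|_{L^2(\TT^n)}^2 \asymp \sum_m |\hat u_m|^2$, the claim $\|Tu\|_{L^2(T^*\TT^n)}\le C\|u\|_{L^2(\TT^n)}$ reduces to showing
\[
I(m,h) := \int_{\RR^n} h^{-\frac{n}{2}}\langle\xi\rangle^{-\frac{n}{2}}\, e^{-\frac{|\xi-hm|^2}{h\langle\xi\rangle}}\,d\xi \;\le\; C,
\]
uniformly in $m\in\ZZ^n$ and $h\in(0,1]$.

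The only substantive step is this uniform Gaussian estimate, which is slightly subtle because the width $\sqrt{h\langle\xi\rangle}$ of the Gaussian depends on $\xi$ itself. I change variables $\eta = \xi - hm$ and split according to whether $|\eta|\le\tfrac12(1+|hm|)$ or $|\eta|>\tfrac12(1+|hm|)$. In the first region, $\langle hm+\eta\rangle \asymp \langle hm\rangle$, so the integrand is essentially a Gaussian of variance $h\langle hm\rangle$ with prefactor $h^{-n/2}\langle hm\rangle^{-n/2}$, contributing $O(1)$. In the second region, $|hm|\le 2|\eta|$ so $\langle hm+\eta\rangle\lesssim\langle\eta\rangle$, which gives $|\eta|^2/(h\langle hm+\eta\rangle)\gtrsim |\eta|/h$ once $|\eta|\ge 1/2$; the exponential decay dominates and contributes $O(h^{n/2})$.

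The anticipated obstacle is precisely this $\xi$-dependent normalization: one has to check that both when $\xi$ sits in the Gaussian core and when it escapes to the polynomial tail where $\langle\xi\rangle\ll\langle hm\rangle$, the bound survives uniformly in $h$. Once $I(m,h)=O(1)$ is established, Parseval immediately gives $\|T\|_{L^2(\TT^n)\to L^2(T^*\TT^n)}\le C$ with $C$ independent of $h$, and by density of $\CI(\TT^n)$ the extension is automatic.
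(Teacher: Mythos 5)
Your proof is correct, and it is a genuinely different route from the paper's. The paper uses the $TT^*$ method: it computes $TT^*v$ by completing the square and integrating out the intermediate variable $w$, obtains a kernel with the Gaussian phase $\Psi$ of \eqref{e:tempProjectorPhase}, and then applies Schur's test. You instead exploit the convolution structure of the kernel (it depends only on $x-y$) together with the exact Gaussian form of $\varphi$: Fourier expansion of $u$ diagonalizes the $x$-integration, Parseval gives an exact Plancherel-type formula
\[
\|Tu\|^2_{L^2(T^*\TT^n)} = C_n\sum_m |\hat u_m|^2\, I(m,h), \qquad I(m,h)=\int_{\RR^n} h^{-\frac n2}\langle\xi\rangle^{-\frac n2}e^{-\frac{|\xi-hm|^2}{h\langle\xi\rangle}}\,d\xi,
\]
and the operator norm bound reduces to the scalar Gaussian estimate $I(m,h)\le C$. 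Your splitting into the core $|\eta|\le\tfrac12(1+|hm|)$ (where $\langle\xi\rangle\asymp\langle hm\rangle$, giving $O(1)$) and the tail (where $\langle hm+\eta\rangle\lesssim\langle\eta\rangle$ converts the Gaussian into exponential decay $e^{-|\eta|/Ch}$, giving $O(h^{n/2})$) handles the $\xi$-dependent width correctly. The trade-off: your computation is more explicit and avoids Schur's test, but it is tied to the torus and to the translation-invariant quadratic phase; the paper's $TT^*$/Schur argument is the one that survives the passage to general analytic compact manifolds via \eqref{eq:genT}--\eqref{eq:defKex}, where no such Fourier diagonalization exists, and it is also the template the paper reuses (e.g.\ in Lemma \ref{l:TSbounded}).
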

\begin{proof}
Suppose that $v\in C_c^\infty(T^*\mathbb{T}^n)$. We extend $v$ periodically in $ x $ and consider
$$
TT^*v(x,\xi)=h^{-\frac{3n}{2}}\langle \xi\rangle^{\frac{n}{4}}\int_{\RR^{3n}}\langle \eta\rangle^{\frac{n}{4}} e^{\frac{i}{h}
\Phi } v(y,\eta)dyd\eta dw,
$$
where
\[ \Phi:= \langle x-w,\xi\rangle +\langle w-y,\eta\rangle +\tfrac{i}{2}(\langle \xi\rangle (x-w)^2+\langle \eta\rangle(y-w)^2) . \]
Completing the square and integrating in $w$, we then obtain
$$
TT^*v(x,\xi)=h^{-n}\int_{T^* \TT^n} \frac{\langle \xi\rangle^{\frac{n}{4}}\langle \eta\rangle^{\frac{n}{4}}}{(\langle \xi\rangle+\langle \eta\rangle)^{\frac{n}{2}}} \sum_{ k \in \ZZ^n}
e^{\frac i h \Psi ( x - y + 2 \pi k , \xi , \eta)  }v(y,\eta)dyd\eta .
$$
where
$$
\Psi ( z, \xi, \eta)  :=\frac{i}{2}\frac{(\xi-\eta)^2}{\langle \xi\rangle +\langle \eta\rangle}+\frac{i}{2}\frac{ \langle \eta\rangle \langle \xi\rangle z^2 }{\langle \xi\rangle +\langle \eta\rangle}+\frac{\langle \eta\rangle \xi+\langle \xi\rangle \eta}{\langle \xi\rangle +\langle \eta\rangle}\cdot z .
$$
Schur's test for boundedness together with density of $C_c^\infty(T^*\mathbb{T}^n)$ in $L^2(T^*\mathbb{T}^n)$ complete the proof of the lemma.
\end{proof}

Our next goal is to find an inverse for $T$. To do this, we define
\begin{equation}
\label{eq:defS}
\begin{gathered}
Sv(y)=h^{-\frac{3n}{4}}\int_{T^*\mathbb{T}^n}\sum_{k\in \mathbb{Z}^n}e^{-\frac{i}{h}\varphi^*(x-2\pi k,y,\xi)}b(x-y - 2\pi k,\xi)v(x,\xi)dxd\xi\\
\varphi^*(x,\xi,y)= \bar{\varphi}(x,\xi,y).
\end{gathered}
\end{equation}
Then, as before, extending $v$ periodically in $x$, 
$$
Sv(y)=h^{-\frac{3n}{4}}\int_{T^*\RR^n}e^{\frac{i}{h}\varphi^*(x,y,\xi)}b(x-y,\xi)v(x,\xi)dxd\xi.
$$
We then have
\begin{prop}
\label{p:invert}
Putting 
\begin{equation}
\label{eq:defb} b ( w, \xi ) = 2^{\frac n 2} (2\pi)^{-\frac{3n}{2}}\langle \xi\rangle^{\frac{n}{4}}(1+\tfrac{i}{2} \langle w, {\xi}/{\langle \xi\rangle}\rangle), \end{equation}
in 
\eqref{eq:defS} gives 
\begin{equation}
\label{e:ST1}
ST u = u , \ \  u \in L^2 ( \RR^n ) . 
\end{equation}
\end{prop}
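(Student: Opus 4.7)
The proof is a direct Gaussian computation, so the plan is to unfold the periodic sums, reduce to an oscillatory integral on $\mathbb{R}^{3n}$, and verify by integration that the resulting kernel is $\delta(y-y')$. First, extending $u$ to a $2\pi\mathbb{Z}^n$-periodic function on $\mathbb{R}^n$ I note that $Tu$ is automatically $2\pi\mathbb{Z}^n$-periodic in $x$, so both the periodization in $T$ and the periodization in $S$ unfold and give
\begin{equation*}
STu(y') = h^{-3n/2}\int_{\mathbb{R}^{3n}} e^{(i/h)[\varphi(x-y,\xi) - \varphi^*(x-y',\xi)]}\langle\xi\rangle^{n/4}b(x-y',\xi)u(y)\,dy\,dx\,d\xi.
\end{equation*}
Using $\varphi^*(x-y',\xi)=\overline{\varphi(x-y',\xi)}$ for real arguments, the combined phase simplifies to $e^{i\langle y'-y,\xi\rangle/h}e^{-\langle\xi\rangle[(x-y)^2+(x-y')^2]/(2h)}$: an oscillation in $\xi$ times a real Gaussian damping in $x$.

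Next I would complete the square in $x$ via $w := x-(y+y')/2$, giving $(x-y)^2+(x-y')^2 = 2w^2 + (y-y')^2/2$. This cleanly separates the damping into a Gaussian in $w$ (to be integrated) and a residual Gaussian in $y-y'$. Since $b(x-y',\xi) = 2^{n/2}(2\pi)^{-3n/2}\langle\xi\rangle^{n/4}(1+\tfrac{i}{2\langle\xi\rangle}\langle w+(y-y')/2,\xi\rangle)$ is affine in $w$, the linear-in-$w$ term vanishes by parity of the Gaussian. The $w$-integral then evaluates to $(\pi h/\langle\xi\rangle)^{n/2}$ against the constant part, and the constants conspire (with $\langle\xi\rangle^{n/4+n/4-n/2}=1$ and $2^{n/2}(2\pi)^{-3n/2}\pi^{n/2}=(2\pi)^{-n}$) to produce
\begin{equation*}
STu(y') = (2\pi h)^{-n}\iint \bigl[1 + \tfrac{i\langle y-y',\xi\rangle}{4\langle\xi\rangle}\bigr] e^{i\langle y'-y,\xi\rangle/h - \langle\xi\rangle(y-y')^2/(4h)} u(y)\,dy\,d\xi.
\end{equation*}

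Finally — and this is the step I expect to be the main obstacle — I would verify that this kernel equals $\delta(y-y')$ exactly, not merely modulo $\mathcal{O}(h^\infty)$. The linear correction in $b$ is engineered precisely to cancel the residual Gaussian damping $e^{-\langle\xi\rangle(y-y')^2/(4h)}$ left over from the completion of the square. I would execute this cancellation by recognizing that $i(y-y')_j e^{i\langle y'-y,\xi\rangle/h} = -h\,\partial_{\xi_j} e^{i\langle y'-y,\xi\rangle/h}$ and integrating by parts in $\xi$ against $\xi_j/(4\langle\xi\rangle)$: the $\xi$-derivative hitting the damping factor $e^{-\langle\xi\rangle(y-y')^2/(4h)}$ produces the missing term that exponentiates the correction back to $1$, while the $\xi$-derivative hitting $\xi_j/\langle\xi\rangle$ combines with the remaining pieces to yield a total derivative that integrates to zero. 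What is left is the bare Fourier inversion kernel
\begin{equation*}
(2\pi h)^{-n}\int e^{i\langle y'-y,\xi\rangle/h}\,d\xi = \delta(y-y'),
\end{equation*}
and hence $STu=u$.

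An alternative route is to apply Plancherel in $y$: after the $y$ and $z=y-y'$ reduction the $z$-integral becomes an explicit Gaussian centered on $\xi-\eta$, and one is left to check that a single $\xi$-integral (with amplitude $\langle\xi\rangle^{-n/2}$ times a rational correction) equals $(\pi h)^{n/2}$ independently of the Fourier variable $\eta$. Either route makes essential use of the explicit torus formula \eqref{e:exactFBI} and the exact amplitude \eqref{eq:defb}, which is precisely why the remark following \eqref{eq:defKex} flags that this exactness is generally not available for FBI transforms on arbitrary analytic compact manifolds.
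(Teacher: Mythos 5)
Your reduction --- unfolding the periodizations, completing the square in the intermediate position variable, and doing the Gaussian integral to compute the amplitude --- is exactly the paper's (compare \eqref{e:ST} and \eqref{eq:axyxi}); up to \eqref{eq:axyxi} the two arguments agree. (One sign to watch: in the paper's final formula the linear correction $1+\tfrac i4\langle x-y,\xi/\langle\xi\rangle\rangle$ carries the \emph{same} sign of $x-y$ as the oscillation $e^{\frac ih\langle x-y,\xi\rangle}$, whereas your $\langle y-y',\xi\rangle$ against $e^{\frac ih\langle y'-y,\xi\rangle}$ has them opposite; this traces back to the sign of $b$'s argument, on which \eqref{eq:defS} and \eqref{e:ST} themselves disagree, and with the opposite sign the contour deformation below would move into the region of exponential growth.)

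The genuine gap is in the last step. You claim a single integration by parts in $\xi$ against $\xi_j/(4\langle\xi\rangle)$ ``exponentiates the correction back to $1$.'' It does not. Carrying that IBP out with $\zeta:=x-y$ gives
\begin{equation*}
\int e^{\frac ih\langle\zeta,\xi\rangle}\Bigl[1-\tfrac{h(n-1)}{4\langle\xi\rangle}-\tfrac{h}{4\langle\xi\rangle^3}+\tfrac{\zeta^2}{16}-\tfrac{\zeta^2}{16\langle\xi\rangle^2}\Bigr]e^{-\langle\xi\rangle\zeta^2/(4h)}\,d\xi,
\end{equation*}
which is not $\int e^{\frac ih\langle\zeta,\xi\rangle}\,d\xi$: the bracket does not reduce to $1$, nor does it begin the Taylor series of $e^{\langle\xi\rangle\zeta^2/(4h)}$ (the $\zeta^2$ coefficient would have to be $\langle\xi\rangle/(4h)$, not $1/16$), and the advertised ``total derivative that integrates to zero'' never materializes; iterating just produces an increasingly complicated expansion. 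What the paper does instead is invoke Lemma~\ref{l:identity} (Lebeau's inversion formula \eqref{eq:Leb}) with $a=\tfrac14$, and its mechanism is a \emph{contour deformation}, not an IBP: shift $\xi\mapsto\eta=\xi+ia\langle\xi\rangle(x-y)$, so the oscillation acquires the Gaussian damping $e^{-a\langle\xi\rangle(x-y)^2/h}$ while the Jacobian $\det(\partial\eta/\partial\xi)=1+ia\langle x-y,\xi/\langle\xi\rangle\rangle$ produces exactly the affine amplitude. That is the structural reason for the choice of $b$: the linear-in-$w$ term is the Jacobian of a contour shift, and the identity is \emph{exact} because one integrates a closed holomorphic $n$-form over homotopic contours, not because of any term-by-term cancellation. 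Your Plancherel alternative does not escape this either: after integrating out $z$ one must still verify $\int\langle\xi\rangle^{-n/2}\bigl(1-\tfrac{\langle\xi-\eta,\xi\rangle}{2\langle\xi\rangle^2}\bigr)e^{-(\xi-\eta)^2/(h\langle\xi\rangle)}\,d\xi=(\pi h)^{n/2}$ for every $\eta$, which is not a Gaussian integral since $\langle\xi\rangle$ depends on $\xi$ and again requires the same deformation argument in disguise.
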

\begin{proof}
Using definition \eqref{e:periodized} and \eqref{eq:defS} 
we have
\begin{equation}
\label{e:ST}
\begin{aligned}
STu&=h^{-\frac{3n}{2}}\int_{\RR^n\times \RR^n\times \RR^n} e^{\frac{i}{h}(\langle x-y,\xi\rangle +\frac{i}{2}\langle \xi\rangle (z-y)^2 +\frac{i}{2}\langle\xi\rangle (x-z)^2)}\langle\xi\rangle^{\frac{n}{4}}b(x-z,\xi)u(y)dydzd\xi\\
&= h^{-n}\int_{\RR^n\times \RR^n} e^{\frac{i}{h}\langle x-y,\xi\rangle -\frac{\langle \xi\rangle}{4h}(x-y)^2 }  a(x, y,\xi;h)u(y)dyd\xi.
\end{aligned}
\end{equation}
For our choice of $ b $ we have 
\begin{equation}
\label{eq:axyxi} 
\begin{split}
 a(x, y,\xi;h)&= h^{-\frac{n}{2}}e^{ \frac{\langle \xi\rangle}{4h} ( x- y )^2  }\int_{\RR^n}e^{-\frac{\langle \xi\rangle }{2h}[(x-z)^2+(z-y)^2]}\langle\xi\rangle^{\frac{n}{4}}b(x-z,\xi)dz\\
&= h^{-\frac{n}{2}} e^{\frac{\langle \xi\rangle}{4h}(x-y)^2 }\int_{\RR^n}e^{-\frac{\langle \xi\rangle }{2h}[(x-w-y)^2+w^2]}\langle\xi\rangle^{\frac{n}{4}}b(x-w-y,\xi )dw \\
& = h^{-\frac{n}{2}} \langle\xi\rangle^{\frac{n}{4}} \int_{\RR^n}e^{-\frac{\langle \xi\rangle }{h} v^2 }b \left(\tfrac 12 ( x- y ) - v,\xi , h  \right) d{v} \\
& = (2\pi)^{-n}(1+\tfrac{i}{4} \langle x-y, {\xi}/{\langle \xi\rangle}\rangle) . 
\end{split}
\end{equation}
The proof is now concluded using \eqref{eq:Leb} below. \end{proof}

For the reader's convenience we include the derivation of Lebeau's inversion formula used in the proof of Proposition \ref{p:invert}  (see
\cite[(9.6.7)]{H1}): 
\begin{lemm}
\label{l:identity}
For $u\in C_c^\infty ( \RR^n ) $, 
\begin{equation}
\label{eq:Leb}
u(x)=(2\pi h)^{-n}\int_{\RR^{2n}} e^{\frac{i}{h}\langle x-y,\xi+ i a \langle \xi\rangle (x-y)\rangle}  (1+ i a  \langle x-y, {\xi}/{\langle \xi\rangle}\rangle)   u(y)dy d\xi, \ \ a > 0 . 
\end{equation}
\end{lemm}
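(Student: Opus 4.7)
\noindent The plan is to prove the identity by showing that both sides agree at $a=0$ and that the right-hand side is independent of $a$. Write $F(a)$ for the right-hand side of~\eqref{eq:Leb}, interpreted as an oscillatory integral. At $a=0$, the usual Fourier inversion formula gives $F(0) = u(x)$, so it suffices to prove $\partial_a F(a) = 0$ for $a > 0$.

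\medskip
\noindent The clean way to see the cancellation in $\partial_a F$ uses the observation that the weight factor is precisely the Jacobian determinant of the map $\Phi_a(\xi) := \xi + ia\langle\xi\rangle(x-y)$: by the matrix determinant lemma applied to $\partial_\xi \Phi_a = I + ia\, (x-y)\otimes \xi^T/\langle\xi\rangle$ one obtains $\det(\partial_\xi\Phi_a) = 1 + ia\langle x-y,\xi/\langle\xi\rangle\rangle$. Consequently, with $E := e^{\frac{i}{h}\langle x-y,\Phi_a(\xi)\rangle}$ (which equals the full exponential factor in the integrand) and $\sigma := \langle x-y,\xi/\langle\xi\rangle\rangle$, a short computation yields
\[
\sum_k (x-y)^k\, \partial_{\xi^k} E \;=\; \frac{i\,(x-y)^2}{h}\bigl(1 + ia\sigma\bigr)E.
\]
Differentiating $F$ in $a$ produces two terms in the integrand: the contribution $-\tfrac{\langle\xi\rangle (x-y)^2}{h}(1+ia\sigma)E$ coming from $\partial_a E$, and the contribution $i\sigma\,E$ coming from $\partial_a(1+ia\sigma)$. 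Using the displayed identity, the first term equals $i\langle\xi\rangle \sum_k (x-y)^k\partial_{\xi^k} E$; integrating by parts in $\xi$ turns it into
\[
-i\int E\,\sum_k \partial_{\xi^k}\bigl(\langle\xi\rangle (x-y)^k\bigr)\,d\xi \;=\; -i\int \sigma\,E\,d\xi,
\]
which exactly cancels the second term. Hence $\partial_a F(a)=0$.

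\medskip
\noindent The main obstacle is justifying this purely formal manipulation: differentiation under the integral sign and boundary-free integration by parts must be checked even though the $\xi$-integrand is not absolutely integrable on $\{x=y\}$ (the Gaussian factor $e^{-\tfrac{a}{h}\langle\xi\rangle(x-y)^2}$ degenerates there). The standard remedy is to insert a convergence factor $e^{-\epsilon\langle\xi\rangle}$ rendering everything absolutely convergent, carry out the above manipulations (the extra $\xi$-derivatives of the regulariser carry a factor of $\epsilon$ and drop out in the limit), and then pass to $\epsilon\to 0^+$ using the compact support of $u$ and integration by parts in $y$ (which generates arbitrary negative powers of $\langle\xi\rangle$ on the region $|x-y|\geq\delta$). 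The same regularisation shows $F(a)\to u(x)$ as $a\to 0^+$, completing the argument.
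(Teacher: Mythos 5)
Your proof is correct and takes a genuinely different, though closely related, route from the paper's. The paper's argument is a direct contour deformation: it writes the Fourier inversion formula with an $e^{-\epsilon\langle\xi\rangle/h}$ regulariser, deforms the $\xi$-contour to $\Gamma_a(x,y):\xi\mapsto\eta=\xi+ia\langle\xi\rangle(x-y)$ for $0<a\ll1$, records the Jacobian $\det(\eta_\xi)=1+ia\langle x-y,\xi/\langle\xi\rangle\rangle$, lets $\epsilon\to0$, and finally extends from small $a$ to all $a>0$ by noting the right-hand side is holomorphic in $\{\Re a>0\}$. You instead prove the identity by differentiating in the deformation parameter: you verify $F(0)=u(x)$ by ordinary Fourier inversion and then show $\partial_a F\equiv0$ via an integration by parts in $\xi$, after a convergence-factor regularisation. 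This is precisely the infinitesimal version of the paper's Stokes-type contour shift, and both proofs rest on the same structural observation that the amplitude is the Jacobian of $\Phi_a$. The paper's version has the practical advantage that all the convergence and limit-exchange issues are packaged into a single estimate on $\Im\langle x-y,\eta\rangle$ along the deformed contour, whereas your derivative argument requires separately controlling the $\epsilon\to0^+$ limit after differentiating and integrating by parts, and also establishing continuity of $F$ at $a=0$ (which you flag correctly but do not carry out in detail). Your argument is slightly more elementary in that it avoids choosing a branch of $\langle\eta\rangle$ on the complex contour, and the matrix-determinant-lemma derivation of the Jacobian makes the source of the amplitude transparent — a point the paper leaves implicit.
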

\begin{proof}
For $u\in C_c^\infty ( \RR^n ) $ the Fourier inversion formula gives
$$
u(x)=(2\pi h)^{-n}\lim_{\epsilon\to 0+}\int e^{\frac{i}{h}(\langle x-y,\xi\rangle +i\epsilon \langle \xi\rangle )}u(y)dy d\xi,
$$
where the integral converges absolutely for $ \epsilon > 0 $.
We deform the contour of integration in $ \xi $ to 
$ \Gamma_a (x,y)$ given by 
$$
\xi \mapsto \eta := \xi + a {i}  \langle \xi\rangle (x-y), \ \ 
\xi \in \RR^n , \ \ 0 < a \ll 1 .
$$
This deformation is justified since on $\Gamma$, 
$$
\Im \langle x-y,\eta\rangle \geq c\langle \eta\rangle (x-y)^2.
$$
and for $ a $ sufficiently small, $ \langle \eta \rangle := 
( 1 + \eta^2 ) ^{\frac12 } $ has an analytic branch with positive real part.
In particular, we have, using that $d\langle \xi\rangle =\sum_i \langle \xi\rangle^{-1}\xi_id\xi_i.$
\begin{align*}
u(x)&=(2\pi h)^{-n}\lim_{\epsilon \to 0}\int_{\Gamma_a}  \int_{\RR^n}e^{\frac{i}{h}(\langle x-y,\eta\rangle +i\epsilon \langle \eta \rangle ) }u(y)dy d\eta_1\wedge d\eta_2\wedge \dots\wedge d\eta_n\\
&=(2\pi h)^{-n}\lim_{\epsilon\to 0}\int_{\RR^{2n}} e^{\frac{i}{h}(\langle x-y,\xi+ i a (x-y)\rangle+i\epsilon \langle{\eta}\rangle )} \det (\eta_{\xi})\,  u(y)dy d\xi\\
&=(2\pi h)^{-n}\int_{\RR^{2n}} e^{\frac{i}{h} \langle x-y,\xi+ i a (x-y)\rangle} (1+i a  \langle x-y, {\xi}/{\langle\xi\rangle}\rangle)   u(y)dy d\xi .
\end{align*}
Since the right hand side is analytic in $ \{ a \in \CC :  \Re a >0 \} $ it follows that
the formula remains valid for all $ a > 0 $.
\end{proof}

\section{Geometry of complex deformations}
\label{s:geom}

Following \cite{HS} and \cite{Sj96} we will study the FBI transform~\eqref{e:periodized} when $T^*\mathbb{T}^n$ is replaced by {an} $I$-Lagrangian $\mathbb{R}$-symplectic manifold submanifold of 
\[ \widetilde{T^*\mathbb{T}^n} :=\left\{(z,\zeta)\mid z\in \mathbb{C}^n/2\pi \mathbb{Z}^n,\,\zeta\in \mathbb{C}^n \right\} \simeq T^*( \mathbb{C}^n/2\pi \mathbb{Z}^n) . \]
We recall that $ \widetilde{T^*\mathbb{T}^n}  $ is equipped with the
complex symplectic form 
\[ \sigma := d\zeta \wedge dz := \sum_{ j=1}^{ n } d \zeta_j \wedge dz_j = 
d ( \zeta \cdot dz). \]
For a real $2n$-dimensional submanifold of $ \widetilde{T^*\mathbb{T}^n}  $, $ \Lambda$,  we 
say 
\[  \text{$\Lambda $ is $I$-Lagrangian} \ \Longleftrightarrow \
\Im(\sigma|_\Lambda ) \equiv 0 , \]
and that
\[  \text{$\Lambda $ is $\RR$-symplectic} \ \Longleftrightarrow \
\text{ $\Re (\sigma|_\Lambda) $ is non-degenerate.} \]

The specific submanifolds used here are given as follows.
For a function $G(x,\xi)\in \CI (T^*\mathbb{T}^n; \RR )$, assume that
for some sufficiently small $ \epsilon_0 $ (to be chosen in the constructions below), 
\begin{equation}
\label{e:controlDeformation}
\sup_{ |\alpha| + |\beta| \leq 2 } \langle \xi\rangle^{-1+|\beta|} | \partial_x^\alpha \partial_\xi ^\beta G(x,\xi) | \leq \epsilon_0, \ \ 
 | \partial_x^\alpha \partial_\xi ^\beta G(x,\xi) | \leq C_{\alpha \beta} \langle \xi\rangle^{1-|\beta|}.
\end{equation}
(The second condition merely states that $ G \in S^1 ( T^* \TT^n ) $ 
in the standard notation of \cite{H3}.) 
We then define 
\begin{equation}
\label{e:LambdaDef}
\begin{gathered}
\Lambda:=\{(x+iG_\xi(x,\xi),\xi-iG_x(x,\xi))\mid (x,\xi)\in T^*\mathbb{T}^n\}\subset \widetilde{T^*\mathbb{T}^n} .
\end{gathered}
\end{equation}
By consider{ing} $ G ( x, \xi ) $ as a period{ic} function of $ x $, we can also think of $ \Lambda $ as a submanifold of $ T^* \CC^n $. 

A submanifold given by \eqref{e:LambdaDef} is always $I$-Lagrangian: 
\[  \begin{split} \zeta \cdot dz|_\Lambda & = ( \xi - i G_x ) \cdot d (x + i G_\xi ) \\
& = 
\xi \cdot dx + G_x \cdot G_{\xi \xi} d \xi + G_x \cdot G_{ \xi x } dx 
+ i ( - G_x \cdot dx + {\xi \cdot dG_\xi}   ) ,
\end{split}  \]
and 
\[  d ( - G_x \cdot dx + {\xi\cdot dG_\xi}) = 
d \xi \wedge G_{\xi x } dx + dx \wedge G_{x \xi } d \xi = 0 . \]
The smallness of $ \epsilon_0 $ enters for the first time in guaranteeing that $ \Lambda $ is $ \RR$-symplectic:
\[ \begin{split}  d ( \xi \cdot dx + G_x \cdot G_{\xi \xi} d \xi + G_x \cdot G_{ \xi x } dx  )  & = 
d \xi \wedge d x + G_{xx} dx \wedge G_{\xi \xi } d \xi + 
G_{x \xi } d \xi \wedge G_{ \xi \xi } d\xi \\
& \ \ \ \ + G_{ x \xi} d \xi \wedge 
G_{ \xi x } dx + {G_{xx}} dx \wedge G_{\xi x } dx .\\ 
\end{split}
\] 
The left hand side is non-degenerate if $ \epsilon_0 $ in \eqref{e:controlDeformation} is small enough.

Since $ \Im \zeta \cdot dz |_ \Lambda $ is closed, there exists 
$ H \in \CI ( \Lambda; \RR ) $ such
\begin{equation}
\label{eq:defH}  d H  = - \Im \zeta d z|_{ \Lambda } ,
\end{equation}
with the normalization $ H \equiv 0 $ when $ G \equiv 0 $. 
Using the parametrization \eqref{e:LambdaDef} we have the following 
explicit expression for 
\begin{equation}
\label{eq:Hpedest}
 H ( x, \xi ) = G ( x, \xi ) - \xi \cdot G_\xi ( x, \xi)  .\end{equation}

Any $I$-Lagrangian and $\RR$-symplectic manifold is automatically maximally totally real in the sense that
\[  T_\rho \Lambda \cap i T_\rho \Lambda = \{0 \}, \ \ \rho \in \Lambda. \]
In fact, suppose that $ X , i X  \in T_\rho \Lambda$, then for all
$ Y \in T_\rho \Lambda $, $ \Re \sigma ( Y , i X ) = - \Im \sigma ( Y , X ) = 0 $, as $ \Im \sigma $ vanishes on $ T_\rho \Lambda $. But then the 
non-degenerary of $ \Re \sigma $ shows that $ X = 0 $.  

The real symplectic form on $ \Lambda $ defines a natural 
volume form $  d m ( \alpha) = ( \sigma |_{\Lambda})^n /n!$. If $ ( z, \zeta ) = ( x + i G_\xi , \xi - i G_x ) $
we sometimes write 
\begin{equation}
\label{eq:dmLa} d m_\Lambda ( \alpha ) = dz d\zeta = d \alpha , \ \ 
\alpha = ( z, \zeta ) \in \Lambda , \ \  \beta = \Re \alpha . 
\end{equation}

Let $ \Gamma $ be a small conic connected neighbourhood of $ T^* \TT^n $
in $ T^* \CC^n/{\ZZ^n} $ and let $ \widetilde G ( z, \zeta ) $ be 
a symbolic almost analytic extension of $ G ( x, \xi ) $ supported in 
$ \Gamma $:
\begin{gather*}   | \bar \partial_z \widetilde G ( z, \zeta ) | + 
\langle \Re \zeta \rangle \bar\partial_\zeta \widetilde G ( z, \zeta ) |
\leq \langle \Re \zeta \rangle \mathcal O ( | \Im z |^\infty + 
| \Im \zeta/ \langle \Re \zeta \rangle |^\infty ), \\
\sup_{ |\alpha| + |\beta| \leq 2 } | \partial_z^\alpha \partial_\zeta ^\beta \widetilde G(z ,\zeta) | \leq C \epsilon_0\langle \Re \zeta \rangle^{1-|\beta|}, \ \ 
 | \partial_z^\alpha \partial_\zeta ^\beta G(x,\xi) | \leq C_{\alpha \beta} \langle \Re \zeta \rangle^{1-|\beta|}, 
\end{gather*}
for $ ( z, \zeta ) \in \Gamma $ -- 
see \cite[Theorem 1.3]{mess} (for a brief review of basic concepts of 
almost analytic machinery see Appendix A). 

We use an almost analytic change of variables in $ \Gamma $ to 
identify the totally real submanifold $ \Lambda $ with $ T^* \RR^n $ (on 
$ \Lambda $ the differentials of that transformation are complex linear): it is the inverse of the map
\begin{equation}
\label{eq:paraLaG} F:  ( z , \zeta ) \mapsto  ( w, \omega ) := ( z + i \widetilde G_\zeta ( z, \zeta ) , \zeta - i \widetilde G_z( z, \zeta ) ) , 
\end{equation}
 Using this identification we define
\begin{equation}
\label{eq:conjLa}  C_\Lambda ( w  , \omega ) =  F (  \overline { F^{-1} ( w,
\omega) } ) , \ ( w, \omega ) \in \Gamma , \ \  C_\Lambda|_\Lambda = I_\Lambda. 
\end{equation}

 We also denote by $ \sigma_\Lambda $ the almost analytic extension of $  \sigma|_{{ \Lambda}} $ to 
$ \Gamma $. 

\noindent
{\bf Notation:} The different identifications lead to potentially confusing 
notational issues. We will typically use coordinates 
\[  \alpha = ( \alpha_x, \alpha_\xi ) = ( x, \xi ) \mapsto \beta = ( 
\beta_x , \beta_\xi ) = 
( x +  i G_\xi ( x, \xi) , \xi - i G_x ( x, \xi ) ) \in \Lambda \]
and consider the complexification of $ \alpha $ using the identification
\eqref{eq:paraLaG}. In that case for $ \alpha \in \Gamma $,
$ \bar \alpha $ denotes $ C_\Lambda ( \alpha ) $. It is {\em not} 
given by taking $ ( z, \zeta ) \mapsto ( \bar z, \bar \zeta ) $ in the 
original coordinates on $ \widetilde T^* \TT^n $ (for one thing, it would not be the identity on $ \Lambda $). Sometimes it is convenient to 
use $ \beta \in \Lambda $ as the variable in formulae and integrations.
The choice should be clear from the context.

\section{Complex deformations of the FBI transform}
\label{s:dFBI} 

For $ \Lambda $ given by \eqref{e:LambdaDef} 
we define an operator $T_\Lambda$ by prescribing its 
Schwartz kernel:
$$
T_\Lambda(z,\zeta, y) :=T((z,\zeta),y)|_{(z,\zeta)\in \Lambda}.
$$
We then define an operator $S_{\Lambda}$  by
$$
S_{\Lambda }v(y):=\int_{\Lambda} S(x, \beta )v( \beta ) d \beta, 
\ \ \beta = ( z, \zeta ) \in \Lambda, \ \ d \beta = dz\wedge d\zeta|_{\Lambda},
$$
where $S(x,z,\zeta)$ is the kernel of the operator $S$:
\begin{equation}
\label{eq:defkerS}
S ( x, z, \zeta ) := 
h^{-\frac{3n}{4}}\sum_{k\in \mathbb{Z}^n}e^{-\frac{i}{h}\varphi^*(z -2\pi k,x ,\zeta)}b(z - x - 2\pi k,{\zeta}), 
\end{equation}
with $ b $ given in \eqref{eq:defb}.

 Note that {if we parametrize $\Lambda$ as in~\eqref{e:LambdaDef} with $\alpha=(x,\xi)$} we may also write 
$$
S_{\Lambda}v(y) :=\int_{T^*\mathbb{T}^n} S_{\Lambda}(x,z(\alpha ),\zeta(\alpha ))v(\alpha)dm _{\Lambda}(\alpha ) , 
$$
where $dz\wedge d\zeta|_{\Lambda}=d m_\Lambda (\alpha) $. Finally, we sometimes write $\alpha_x=z(\alpha)$ and $\alpha_\xi=\zeta(\alpha)$.

In order to make sense of the composition $S_\Lambda T_\Lambda$, we start by analyzing $T_\Lambda$ on a space of analytic functions on $\mathbb{T}^n$.  
For $ \delta \geq 0 $ let 
\begin{equation}
\label{e:analytic}
\begin{gathered} 
\mathscr{A}_\delta = \{ u \in L^2 ( \TT^n ) : \| u \|_{ \mathscr A_\delta}^2  := 
\sum_{ n \in \ZZ^n } | \widehat u ( n ) |^2 e^{ 4 |n| \delta} < \infty \}, \\
\widehat u ( n ) := \frac{1 }{ (2 \pi)^n } \int_{\TT^n} u ( x ) e^{- i \langle x, n \rangle} d x . 
\end{gathered}
\end{equation}
Let also $\mathscr{A}_{-\delta}$ denote the dual space of $\mathscr{A}_\delta$. Note that $\mathscr{A}_{-\delta}$  is a space of hyperfunctions but on tori it can be 
identified with formal Fourier series with coefficients satisfying 
\[  \sum_{ n \in \ZZ^n } | \widehat u ( n ) |^2 e^{ - 4 |n| \delta} < \infty   . \]
(In that case $ \widehat u (n ) $ can be defined using the pairing of the hyperfunction $ u $ with the analytic function $ x \mapsto e^{ - i \langle x , n \rangle }/(2 \pi)^n  $.)
We note that $ u \in \mathscr A_\delta $ extends to a (periodic) holomorphic function
in $ |\Im z  | <  {2}\delta $ and (by the Fourier inversion formula and the Cauchy--Schwartz inequality),
\begin{equation}
\label{eq:boundonu} \forall \, \delta' < {2}\delta \,\, \exists \, C \, \text{ such that for } \ 
u \in \mathscr A_\delta , \  \ \sup_{ |\Im z | < \delta' } 
| u ( z ) | \leq C \| u \|_{ \mathscr A_\delta } .\end{equation}

\begin{lemm}
\label{l:expDecay}
Define 
\[ \Omega_\delta := 
\{ ( z,  \zeta)  \in T^* (\CC^n/2 \pi \ZZ^n)  : |\Im \zeta|\leq  \delta \langle \Re\zeta \rangle,\,|\Im z|\leq \delta,\,|\Re \zeta| \geq 1 \} .\]
There exist $ c_0 , \delta_0 > 0  $ such that for $ ( z, \zeta ) \in \Omega_{ \delta}$ and $ 0 < \delta < \delta_0 $, 
\begin{equation}
\label{eq:TuStu}
|T u (z,\zeta)|\leq  e^{-\delta  c_0 |\zeta|/ h}\|u\|_{\mathscr A_\delta},  
\ \ \ \ |S^tu (z,\zeta)|\leq  e^{- c_0 \delta |\zeta|/ h}\|u\|_{\mathscr A_\delta}
.
\end{equation}
where $S^t$ is defined by 
\[  S^t u ( z, \zeta ) := \int_{\TT^n  } S ( y , z , 
\zeta )  u ( y ) d y \]
where the kernel $ S $ is defined in \eqref{eq:defkerS}.
\end{lemm}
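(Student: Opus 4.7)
\textbf{Plan for Lemma~\ref{l:expDecay}.} Recall that $u\in\mathscr{A}_\delta$ means its Fourier coefficients satisfy $\sum_n|\widehat u(n)|^2 e^{4|n|\delta}<\infty$; Cauchy--Schwarz on the Fourier side then shows $u$ extends to a $2\pi\ZZ^n$-periodic holomorphic function on the strip $\{|\Im y|<2\delta\}$ with $\sup_{|\Im y|\leq \delta'}|u(y)|\leq C_{\delta'}\|u\|_{\mathscr{A}_\delta}$ for every $\delta'<2\delta$. Using the periodic-extension identity from \S\ref{s:FBItor}, I would rewrite
\[ Tu(z,\zeta) = h^{-\tfrac{3n}{4}}\langle\zeta\rangle^{\tfrac n4}\!\int_{\RR^n}\! e^{\tfrac i h \varphi(z-y,\zeta)} u(y)\,dy, \quad S^tu(z,\zeta) = h^{-\tfrac{3n}{4}}\!\int_{\RR^n}\! e^{-\tfrac ih \varphi^*(z,y,\zeta)}b(z-y,\zeta) u(y)\,dy, \]
for $(z,\zeta)\in\Omega_\delta$, and attack both estimates by contour deformation in~$y$.

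For $Tu$, I would shift the $y$-contour from $\RR^n$ to $\RR^n+iv$ with $v := \Im z - c\delta\,\Re\zeta/|\Re\zeta|$ for a small fixed $c\in(0,1)$. The deformation is justified since $u$ is holomorphic on $|\Im y|\leq(1+c)\delta<2\delta$, the phase is entire in $y$, and the quadratic term $\Im[\tfrac i2\langle\zeta\rangle(z-y-iv)^2]$ dominates at infinity. Writing $a := \Re z - y$ and $b := \Im z - v = c\delta\,\Re\zeta/|\Re\zeta|$, a direct calculation gives
\[ \Im\varphi(z-y-iv,\zeta) \,=\, \langle b,\Re\zeta\rangle + \langle a,\Im\zeta\rangle + \tfrac12\Re\langle\zeta\rangle(a^2 - b^2) - \Im\langle\zeta\rangle(a\cdot b). \]
Using $\langle b,\Re\zeta\rangle = c\delta|\Re\zeta|$, $|\Im\zeta|\leq\delta\langle\Re\zeta\rangle$, $\Re\langle\zeta\rangle\geq\tfrac12\langle\Re\zeta\rangle$ and $|\Im\langle\zeta\rangle|\leq C\delta\langle\Re\zeta\rangle$ (valid on $\Omega_\delta$ for $\delta<\delta_0$), together with the AM--GM inequality $|a|\delta\langle\Re\zeta\rangle\leq \tfrac18 a^2\langle\Re\zeta\rangle + 2\delta^2\langle\Re\zeta\rangle$, I would obtain
\[ \Im\varphi(z-y-iv,\zeta) \,\geq\, \tfrac{c}{2}\delta|\Re\zeta| + \tfrac18 a^2\langle\Re\zeta\rangle, \]
provided $\delta_0$ is sufficiently small. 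Integrating $e^{-\Im\varphi/h}$ against $da$ yields a Gaussian prefactor of size $(h/\langle\Re\zeta\rangle)^{n/2}$, which combined with $h^{-3n/4}\langle\zeta\rangle^{n/4}$ gives $h^{-n/4}\langle\Re\zeta\rangle^{-n/4}$; since $|\Re\zeta|\geq 1$, this algebraic factor is absorbed into the exponential by replacing $c/2$ with any $c_0<c/2$.

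The $S^t u$ estimate is symmetric, with the opposite shift $v := \Im z + c\delta\,\Re\zeta/|\Re\zeta|$. Since $|e^{-i\varphi^*/h}|=e^{\Im\varphi^*/h}$ and $\varphi^*(z,y,\zeta)=\langle z-y,\zeta\rangle - \tfrac i2\langle\zeta\rangle(z-y)^2$, the analogous computation yields $\Im\varphi^*(z,y-iv,\zeta)\leq -\tfrac c2\delta|\Re\zeta| - \tfrac18 a^2\langle\Re\zeta\rangle$; the amplitude $b(z-y-iv,\zeta)$ grows at most linearly in $|a|$ and as $\langle\Re\zeta\rangle^{n/4}$, and is absorbed in the same way. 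The main technical obstacle throughout is controlling the complex branch of $\langle\zeta\rangle=\sqrt{1+\zeta^2}$: choosing $\delta_0$ small confines $1+\zeta^2$ to a fixed sector in the right half-plane for $(z,\zeta)\in\Omega_\delta$, which forces $\Re\langle\zeta\rangle$ to remain comparable to $\langle\Re\zeta\rangle$ and $\Im\langle\zeta\rangle$ to be of order $\delta\langle\Re\zeta\rangle$ uniformly, vindicating the bounds used above.
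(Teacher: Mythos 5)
Your proof is correct and follows the same contour-deformation strategy as the paper: shift the $y$-contour into the complex domain by (a small multiple of) $\delta$ in the direction of $\Re\zeta$, use the analytic extension of $u$ on the strip $|\Im y|<2\delta$ guaranteed by~\eqref{eq:boundonu}, and estimate $\Im\varphi$ using the branch bounds on $\langle\zeta\rangle$ together with an AM--GM absorption of the cross terms. You are slightly more careful than the paper's cursory "the proof for $S^t$ is identical": since the modulus of the $S^t$ kernel has the \emph{opposite} sign on the linear term $\Im\langle z-y,\zeta\rangle$, the shift must indeed be $v=\Im z+c\delta\,\Re\zeta/|\Re\zeta|$ rather than the shift used for $T$, as you correctly note.
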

\begin{proof}
Extended $u$ to a periodic function on $\RR^n$ we write
$$
Tu(z,\zeta)=h^{-\frac{3n}{4}}\int_{\RR^n}  e^{\frac{i}{h}(\langle z-y,\zeta\rangle +\frac{i}{2}\langle \zeta\rangle (z-y)^2)}\langle \zeta\rangle^{\frac{n}{4}}u(y)dy. 
$$
Since $u$ is analytic on $|\Im y|\leq \delta$, we may deform the contour in the $y$ integration to $ \Gamma(z,\zeta) $ given by 
$$
 w \mapsto y ( w) =  w + z - i\delta \frac{\Re\zeta}{\langle \Re\zeta\rangle},  \ \ w \in 
 \RR^n .
$$
Then, 
$$
Tu(z,\zeta)=h^{-\frac{3n}{4}}\int e^{\frac{i}{h}(\langle - w+i\delta\frac{\Re\zeta}{\langle \Re\zeta\rangle },\zeta\rangle +\frac{i}{2}\langle \zeta\rangle(w - i\delta\frac{\Re\zeta}{\langle \Re\zeta\rangle })^2) }\langle\zeta\rangle^{\frac{n}{4}}u\left(
y ( w ) \right) dw.
$$

For $ | \Im \zeta |\leq \delta \langle \Re \zeta \rangle $, $ |\Re \zeta | \geq 1 $, 
with $ \delta $ small enough, 
\[  \Re \langle \zeta \rangle \geq \tfrac 12 |  \zeta |  , \ \ 
 | \Im \langle \zeta \rangle | \leq \tfrac 1{16} | \zeta | , \ \ 
 |\zeta | \geq \tfrac12  .\]
Hence for 
$ w \in \RR $ and $ ( z, \zeta ) \in \Omega_\delta $ the real part of 
the phase in the integral above is bounded  by 
\[  
- \tfrac12 \delta | \zeta| + \tfrac 1{16} \delta |w| |\zeta | - 
\tfrac14 ( |w|^2 - \delta^2 ) |\zeta | + \tfrac 1{16} \delta |\zeta| 
\leq - c_0 | \zeta| - c_0 |w|^2 , \ \ c_0 > 0 . 
\]
In view of \eqref{eq:boundonu} the integrand is then bounded by 
$ \exp ( - c_0  ( |\zeta| + |w|^2 ) /h ) \| u \|_{ \mathscr A_\delta } $
which gives the first bound in \eqref{eq:TuStu}.
The proof for $S^t$ is identical since the phase agrees with that of $T$.
\end{proof}

A natural Hilbert space on the FBI transform side is defined by the norm
$$
\|v\|_{L^2(\Lambda)}^2=\int_{\Lambda} |v ( \alpha ) |^2e^{-2H( \alpha ) /h} d \alpha .
$$
The next lemma gives boundedness of $ S_\Lambda $ and $ T_\Lambda^t $ 
on exponentially decaying functions on $ \Lambda $:
\begin{lemm}
\label{l:expDecay2}
There exist $\delta_0>0$ and $C_0>0$ big enough such that 
for $0<\e_0<\delta_0$ in \eqref{e:controlDeformation} we have 
$$ S_\Lambda: e^{-C_0\delta\langle \xi\rangle/h}L^2(\Lambda)\to \mathscr{A}_\delta,\qquad
T^*_\Lambda: e^{-C_0\delta \langle \xi\rangle/h}L^2(\Lambda)\to \mathscr{A}_\delta.
$$
for all $0<\delta<\delta_0$, where the adjoint $ T^*_\Lambda $ is defined using the $ L^2 ( \Lambda, e^{ -2 H / h } ) $ inner product.
\end{lemm}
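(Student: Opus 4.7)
The plan is to establish the mapping property for $S_\Lambda$ by showing that $S_\Lambda v$ extends holomorphically in $y$ to a complex strip of width proportional to $\delta$ with a pointwise bound proportional to $\|v\|_{e^{-C_0\delta\langle\xi\rangle/h}L^2(\Lambda)}$; membership in $\mathscr{A}_\delta$ then follows from contour-shifting in the Fourier-coefficient formula. The argument for $T_\Lambda^*$ is the same up to complex conjugation, since its kernel $\overline{T(\alpha,y)}e^{-2H(\alpha)/h}$ extends anti-holomorphically in $y$ and one simply works with $\overline{T_\Lambda^* v}\in\mathscr{A}_\delta$.

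\emph{Step 1 (pointwise estimate on $\Lambda$).} From~\eqref{eq:defkerS}, $y\mapsto S(y,z,\zeta)$ is entire in $y$, and for $\alpha=(z,\zeta)\in\Lambda$ parametrized by $(x,\xi)\in T^*\TT^n$ via~\eqref{e:LambdaDef}, a direct computation of the imaginary part of $\varphi^*(z,y,\zeta)=\langle z-y,\zeta\rangle-\tfrac{i}{2}\langle\zeta\rangle(z-y)^2$, combined with the explicit formula $H(\alpha)=G-\xi\cdot G_\xi$ of~\eqref{eq:Hpedest}, yields
\[
|S(y,\alpha)|\, e^{H(\alpha)/h}\leq C h^{-\frac{3n}{4}}\langle\xi\rangle^{N}\exp\Bigl(-\tfrac{c|\xi|}{h}|x-\Re y|^2+\tfrac{C'|\Im y|\,|\xi|}{h}\Bigr),
\]
uniformly on $|\Im y|\leq 2\delta_0$, provided $\epsilon_0$ in~\eqref{e:controlDeformation} is small enough that the quadratic part of $\Im\varphi^*|_\Lambda$ retains a definite sign. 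The crucial cancellation is between $H(\alpha)$ and the $G$-linear piece of $\Im\varphi^*|_\Lambda$; the residual $C'|\Im y||\xi|/h$ comes purely from analytically continuing $\langle z-y,\zeta\rangle$ in $y$.

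\emph{Step 2 (Cauchy--Schwarz and integrability).} Writing $v=e^{-C_0\delta\langle\xi\rangle/h}\tilde v$ with $\tilde v\in L^2(\Lambda, e^{-2H/h}d\alpha)$ and applying Cauchy--Schwarz in that weighted $L^2$ inner product gives
\[
|S_\Lambda v(y)|^2\leq \|\tilde v\|_{L^2(\Lambda)}^2 \int_\Lambda |S(y,\alpha)|^2\, e^{2H(\alpha)/h-2C_0\delta\langle\xi\rangle/h}\,d\alpha.
\]
Substituting the bound from Step 1, the integrand is dominated by $\langle\xi\rangle^{2N}e^{-2c|\xi||x-\Re y|^2/h}e^{-2(C_0\delta-C'|\Im y|)\langle\xi\rangle/h}$. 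Choosing $C_0$ so that $C_0\delta-2C'\delta_0>0$ makes the last exponential strictly negative for $|\Im y|<2\delta$; the $x$-integral is then a Gaussian of width $(h/|\xi|)^{1/2}$ and the $\xi$-integral converges absolutely, uniformly in $y$. Hence $|S_\Lambda v(y)|\leq C\|\tilde v\|_{L^2(\Lambda)}$ on $|\Im y|\leq 2\delta-\eta$, for any small $\eta>0$.

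\emph{Step 3 (Fourier decay and conclusion).} The integral defining $S_\Lambda v(y)$ converges uniformly on compacta of $\{|\Im y|<2\delta\}$, so $y\mapsto S_\Lambda v(y)$ is holomorphic there. Shifting the contour in $\widehat{S_\Lambda v}(n)=(2\pi)^{-n}\int_{\TT^n}S_\Lambda v(y)e^{-i\langle n,y\rangle}dy$ to $\TT^n - i(2\delta-\eta)n/|n|$ yields $|\widehat{S_\Lambda v}(n)|\leq Ce^{-(2\delta-\eta)|n|}\|\tilde v\|_{L^2(\Lambda)}$; a single initial shrinking of $\delta_0$ absorbs the $\eta$-loss and gives $\|S_\Lambda v\|_{\mathscr{A}_\delta}\leq C\|v\|_{e^{-C_0\delta\langle\xi\rangle/h}L^2(\Lambda)}$. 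The principal obstacle is Step 1: verifying that on $\Lambda$ the weight $e^{H/h}$ precisely compensates the exponential $e^{\Im\varphi^*|_\Lambda/h}$ coming from the deformation, leaving the FBI kernel with exactly a Gaussian of width $(h/\langle\xi\rangle)^{1/2}$ centered at $\Re y=x$ and no net growth in $\langle\xi\rangle$.
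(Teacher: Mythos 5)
Your overall strategy matches the paper's: apply Cauchy--Schwarz in the weighted $L^2(\Lambda,e^{-2H/h}d\alpha)$ inner product, estimate the resulting kernel integral pointwise in $y$, and deduce that $S_\Lambda v$ is bounded holomorphic on a strip $|\Im y|\lesssim\delta$, hence in $\mathscr{A}_\delta$. Step~3 and the remark on $T_\Lambda^*$ are fine elaborations of the paper's terse conclusion. The problem is Step~1, which overstates the cancellation and thereby loses a term that the paper's proof is careful to keep.

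Parametrizing $\alpha_x=x+iG_\xi$, $\alpha_\xi=\xi-iG_x$ and using $H=G-\xi\cdot G_\xi$ from~\eqref{eq:Hpedest}, a direct computation gives
\[
\Im\langle\alpha_x-y,\alpha_\xi\rangle+H(\alpha)=G(x,\xi)-\langle x-\Re y,G_x\rangle-\langle\Im y,\xi\rangle .
\]
So the only exact cancellation is $\langle G_\xi,\xi\rangle-\xi\cdot G_\xi=0$. What remains are: (i) the $\langle\Im y,\xi\rangle$ piece that you keep; (ii) the cross term $\langle x-\Re y,G_x\rangle=O(\epsilon_0\langle\xi\rangle|x-\Re y|)$, which after Cauchy absorption into the Gaussian leaves an $O(\epsilon_0^2\langle\xi\rangle)$ remainder; and (iii) the $G(x,\xi)$ piece, which is $O(\epsilon_0\langle\xi\rangle)$ and has \emph{no} factor of $|\Im y|$ or of $|x-\Re y|$, so it cannot be folded into either the Gaussian or the $C'|\Im y||\xi|$ residual. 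The quadratic part $\Re[\langle\alpha_\xi\rangle(\alpha_x-y)^2]$ contributes further $O(\epsilon_0\langle\xi\rangle)$ errors as well. Consequently the correct pointwise bound on $|S(y,\alpha)|\,e^{H(\alpha)/h}$ carries an extra factor $e^{C\epsilon_0\langle\xi\rangle/h}$ that your Step~1 omits; without it, the $\alpha_\xi$-integral in Step~2 would diverge whenever $C_0\delta\le C\epsilon_0$, and your sufficient condition for Step~2 (which should read $C_0\delta-2C'\delta>0$, not $C_0\delta-2C'\delta_0>0$ — as written it fails for $\delta\ll\delta_0$) says nothing about it. The paper's proof explicitly tracks these terms — $-\Im\langle y-\alpha_x,\alpha_\xi\rangle\le(a\delta+\epsilon_0)|\beta_\xi|+\epsilon_0\langle\beta_\xi\rangle|\beta_x-\Re y|$ and $2H(\alpha)\le C\epsilon_0\langle\beta_\xi\rangle$ — and it is exactly to defeat this $O(\epsilon_0\langle\xi\rangle)$ growth that the conclusion requires $C_0\gg C$ together with the smallness hypothesis on $\epsilon_0$ from~\eqref{e:controlDeformation}. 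You need to restore those terms in Step~1 and then choose $C_0$ large enough in Step~2 to dominate them as well, not merely the $|\Im y|$ contribution.
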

\begin{proof}
Let $v\in e^{-c\langle \xi\rangle/h}L^2(\Lambda)$ and $|\Im y|\leq {a}\delta$. Then,
$$
 S_{\Lambda}v(y)=h^{-\frac{3n}{4}}\int_\Lambda  e^{\frac{i}{h}(\langle y-\alpha_x,\alpha_\xi\rangle +\frac{i}{2}\langle \alpha_\xi\rangle (\alpha_x-y)^2)}b(y-\alpha_x) v(\alpha)d \alpha. 
$$
where $ b $ is given in \eqref{eq:defb}. Therefore, by the Cauchy--Schwartz inequality, 
\[
| S_{\Lambda}v(y)|^2\\
\leq C h^{-\frac{3n}{2}} J( y ) \|e^{C_0\delta\langle \xi\rangle/h}v\|^2_{L^2(\Lambda)} , 
\]
where
\[ J ( y ) := 
\int_\Lambda 
 e^{-2\Im(\langle y-\alpha_x,\alpha_\xi\rangle +\frac{i}{2}\langle \alpha_\xi\rangle (\alpha_x-y)^2)/h{+}2H(\alpha)/h}\langle |y-\alpha_x|\rangle^2e^{-2C_0\delta \langle |\alpha_\xi|\rangle/h}d\alpha\] 
Writing $ \beta = \Re \alpha $ we now estimate
\[ \begin{split} - \Im \langle y - \alpha_x , \alpha_\xi \rangle & = 
\langle G_\xi -   \Im y   , \beta_\xi \rangle
- \langle \beta_x - \Re y , G_x \rangle \\
& \leq ( {a}\delta + \epsilon_0 ) | \beta_\xi| + \epsilon_0 \langle \beta_\xi \rangle  | \beta_x - \Re y | .
\end{split} \]
Similarly, 
\[ \Re ( \langle \alpha_\xi \rangle ) ( \alpha_x - y )^2 
\leq - ( 1 - C\epsilon_0 ) \langle \beta_\xi \rangle ( 
| \beta_x - \Re y |^2 - C \epsilon^2 - C {a^2}\delta^2 ) ,
\]
and $  2 H ( \alpha ) \leq C \epsilon_0 \langle \beta_\xi \rangle $ (see
\eqref{eq:Hpedest}).
Hence for $ C_0 \gg C $, the phase in $ J ( y ) $ is bounded by 
\[  - C_1 \delta \langle \beta_\xi \rangle \langle 
\Re y - \beta_x \rangle^2 , \ \ C_ 1 > 0 .\]
That proves $ S_\Lambda v $ in analytic and uniformly 
bounded in $ |\Im y | \leq {a}\delta $. In particular $ S_\Lambda v \in 
\mathscr A_\delta $.
  A similar argument applies to $T_\Lambda^*.$ 
\end{proof}

Together, Lemmas~\ref{l:expDecay} and~\ref{l:expDecay2} imply that there are $\delta_1,\delta_2>0$ such that  $S_\Lambda T_\Lambda$ is well as an operator $\mathscr{A}_{\delta_1}\to \mathscr{A}_{\delta_2}$ and as an operator $\mathscr{A}_{-\delta_2}\to \mathscr{A}_{-\delta_1}$.

We can now show that $S_{\Lambda}T_\Lambda$ is  the identity on $\mathscr{A}_{\delta}$ and $\mathscr{A}_{-\delta}$ for $\delta>0$ small enough.
\begin{prop}
There is $\delta_1>0$ such that for all $0<|\delta|<\delta_1$, $S_\Lambda$ and $T_\Lambda$ as above,
$$
S_\Lambda T_\Lambda =\Id :\mathscr{A}_{\delta}\to \mathscr{A}_{\delta}. $$
\end{prop}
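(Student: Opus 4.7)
The strategy is to reduce the identity to the already-established $ST = \mathrm{Id}$ of Proposition \ref{p:invert} via a holomorphic contour deformation. The essential observation is that both kernels \eqref{e:periodized} and \eqref{eq:defkerS} admit holomorphic extensions in the FBI-side variables $(z,\zeta)$ to a complex conic neighbourhood of $T^*\TT^n$: the exponential phases are polynomial in these variables, and the prefactor $\langle \zeta \rangle^{n/4}$ has a well-defined analytic branch in the cone $|\Im\zeta| \leq \epsilon_0\langle \Re\zeta\rangle$ once $\epsilon_0$ in \eqref{e:controlDeformation} is small. Hence, for $u \in \mathscr A_\delta$, the integrand
\[
\omega(y;z,\zeta) := S(y;z,\zeta)\,Tu(z,\zeta)\,dz_1\wedge \cdots \wedge dz_n \wedge d\zeta_1 \wedge \cdots \wedge d\zeta_n
\]
is a holomorphic $(2n,0)$-form on a conic neighbourhood of $T^*\TT^n$, and the periodization sums in both $S$ and $T$ converge uniformly on compacta by the Gaussian factors.

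I would then interpolate between $\Lambda_0 := T^*\TT^n$ and $\Lambda_1 := \Lambda$ via the family $\Lambda_t$ obtained by replacing $G$ by $tG$ in \eqref{e:LambdaDef}. Since each $tG$ obeys \eqref{e:controlDeformation} with the same constant $\epsilon_0$, Lemma \ref{l:expDecay} yields the bound $|Tu(z,\zeta)| \leq C e^{-c_0\delta|\zeta|/h}\|u\|_{\mathscr A_\delta}$ uniformly on all $\Lambda_t$. Setting $F_y(t) := (S_{\Lambda_t} T_{\Lambda_t} u)(y) = \int_{\Lambda_t}\omega$ and applying Stokes' theorem to the cobordism $\Sigma := \bigcup_{t\in[0,1]}\{t\}\times \Lambda_t$ gives
\[
F_y(1) - F_y(0) = \int_\Sigma d\omega = 0,
\]
because $d\omega = \bar\partial\omega = 0$ for the holomorphic top form $\omega$. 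Since $F_y(0) = STu(y) = u(y)$ by Proposition \ref{p:invert}, this proves $S_\Lambda T_\Lambda u = u$ for $u \in \mathscr A_\delta$, $\delta > 0$ small.

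The principal obstacle is making the Stokes argument rigorous on the non-compact cobordism $\Sigma$. I would truncate to $|\Re\zeta|\leq R$, apply Stokes on the compact piece, and let $R\to\infty$; the boundary terms on $|\Re\zeta|=R$ vanish in the limit by the uniform exponential decay of $Tu$ together with the Gaussian factor $e^{-\langle\Re\zeta\rangle|\Re(z-y)|^2/Ch}$ contributed by $S$. For negative $\delta$ (the hyperfunction side of the statement) I would extend the identity by density: trigonometric polynomials lie in every $\mathscr A_{\delta'}$ and are dense in $\mathscr A_{\delta}$ for $\delta < 0$, while boundedness of $S_\Lambda T_\Lambda$ on $\mathscr A_\delta$ follows from dual versions of Lemmas \ref{l:expDecay} and \ref{l:expDecay2}, so the identity, valid on the dense subspace by the deformation argument, extends by continuity.
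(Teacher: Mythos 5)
Your proof is correct and follows essentially the same route as the paper's: for $\delta>0$ the paper likewise deforms the integration contour from $\Lambda$ back to $T^*\TT^n$ and applies Proposition \ref{p:invert}, and your Stokes-on-a-cobordism formulation simply makes that deformation explicit. For $\delta<0$ the paper also reduces to a density step, pairing $S_\Lambda T_\Lambda v$ against $u\in\mathscr A_{c_1\delta}$ (dense in $\mathscr A_\delta$) and then deforming $T_\Lambda^*S_\Lambda^*u$, which is the transpose of your trigonometric-polynomial density argument; both rely on the boundedness $S_\Lambda T_\Lambda:\mathscr A_{-\delta_2}\to\mathscr A_{-\delta_1}$ recorded just before the proposition.
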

\begin{proof}
Assume first that $ \delta > 0 $ and let $v\in \mathscr{A}_\delta$. Then, by Lemma~\ref{l:expDecay} for $\delta>0$ small enough, $T_\Lambda v\in e^{-c\delta  | \alpha_\xi | }L^2(\Lambda)$ and is given by 
$$
T_\Lambda v(\alpha)=\int_{\mathbb{T}^n} T_\Lambda(\alpha,y)v(y)dy.
$$
Then, again for $\delta>0$ small enough, Lemma~\ref{l:expDecay2}, shows that $S_{\Lambda}T_\Lambda v$ is well defined and given by 
\begin{equation}
\label{e:ST2}
S_\Lambda T_\Lambda v(x)=\int_{\Lambda}\int_{\mathbb{T}^n} S_\Lambda (x,\alpha)T_{\Lambda}(\alpha,y)v(y)dyd\alpha.
\end{equation}
The decay in $ | \alpha_\xi {|}$ allows a contour deformation in $\alpha$ in~\eqref{e:ST2} and then an application of Proposition~\ref{p:invert}.
This gives, 
$$
S_\Lambda T_\Lambda v(x)=\int_{T^*\mathbb{T}^n}\int_{\mathbb{T}^n} S (x,\alpha)T(\alpha,y)v(y)dyd\alpha=v(y), \ \ v \in \mathscr A_\delta. 
$$

To define $T_{\Lambda}v$ for $v\in \mathscr{A}_{-\delta}$, $ \delta > 0 $,  we note that Lemma~\ref{l:expDecay2} shows that if $w\in e^{-C\delta \langle \xi\rangle/h}L^2(\Lambda)$, then $T_{\Lambda}^* w\in \mathscr{A}_{\delta}$. Therefore, 
$$
\langle T_{\Lambda}v,w\rangle_{ L^2 ( \Lambda ) }:=\langle v,T_\Lambda^* w\rangle_{L^2 ( \TT^n ) } 
$$
is well defined and $T_\Lambda:\mathscr{A}_{-\delta}\to e^{C\delta\langle \xi\rangle/h}L^2(\Lambda)$. 

For $u\in \mathscr{A}_{c_1 \delta}$, $ c_1 \gg 1 $, $ c_1 \delta < \delta_0 $ (with $ \delta_0 $ of Lemma \ref{l:expDecay}), 
we formally have
\begin{equation}
\label{eq:STpair}
\langle S_\Lambda T_\Lambda v,u\rangle_{ L^2 ( \TT^n ) } :=\langle T_\Lambda v, S_\Lambda^* u\rangle_{ L^2 ( \Lambda ) } .
\end{equation}
Since $ S_\Lambda^* u = \overline{ S^t|_\Lambda \bar u } e^{ 2 H ( \alpha)/h } $,  and $ H ( \alpha ) \leq C \epsilon_0 \langle \Re \alpha_\xi \rangle $, 
Lemma~\ref{l:expDecay} shows that 
\[ S_\Lambda^* u\in e^{C \epsilon_0 \langle \xi \rangle /h-c_0 c_1 \delta|  \xi | {/h}} L^2(\Lambda) .
 \]
 and hence for $c_1 >0 $ large enough (and $ \delta_1 $ small enough 
 so that $ c_1 \delta_1 < \delta_0 $), the pairing on the right hand side of \eqref{eq:STpair} is well defined and
$$
\langle S_\Lambda T_\Lambda v,u\rangle=\langle v,T_\Lambda^* S_\Lambda^* u\rangle.
$$
We can now deform the contour in the the $ \alpha $ integral 
which gives 
$$
T_\Lambda^* S_\Lambda^* u(x)=\int_{\mathbb{T}^n}\int_{\Lambda} T_\Lambda(\alpha,x)S_\Lambda(y,\alpha)u(y)dy d\alpha=u(y) . $$
Hence for $ v \in \mathscr A_\delta $ and $ u \in \mathscr A_{c_1 \delta }
$, $ \langle S_{{\Lambda}} T_{{\Lambda}} v, u \rangle_{ L^2 ( \TT^n ) }
= \langle v, u \rangle_{ L^2 ( \TT^n ) } $. Since $ \mathscr A_{c_1 \delta } $, 
$ c_1 \geq 1 $ is dense in $ \mathscr A_{\delta} $, the claim follows.
\end{proof}

We now define natural spaces on which $T_\Lambda$, $S_\Lambda$ act:

\noindent
{\bf Definition}. Let $ \delta_0 $ be as in Lemma \ref{l:expDecay}. 
We define the Sobolev space of order $ t $ adapted to $ \Lambda $ as
\begin{equation}
\label{e:deformedSpace}
H^t_{\Lambda}:=\overline{\mathscr{A}_{\delta_0}}^{_{\|\cdot \|_{H^m_{\Lambda}}}},\qquad \|u\|^2_{H^t_{\Lambda}}:=\int_{\Lambda} \langle \Re \alpha_{{\xi}} \rangle^{2t}|T_{\Lambda}u(\alpha)|^2e^{-2H(\alpha) /h} d \alpha \end{equation}
where we used the notation from \eqref{eq:dmLa} and \eqref{eq:defH}.
We then have an isometry
\[  T_\Lambda : H^t_\Lambda \to \langle \xi \rangle^{-t} L^2 ( \Lambda ) , \]
where the notation on the right hand side is the shorthand for 
$ \langle \Re \alpha_\xi \rangle^{-t} $. \qed

\medskip

\noindent{\bf{Remarks:}} 1. There exists $\delta>0$ such that
$$\mathscr{A}_{\delta}\subset H_{\Lambda}^m\subset \mathscr{A}_{-\delta}.$$ The left inclusion is immediate from the definition. On the other hand, for $u\in H_\Lambda^m$,  $T_\Lambda u\in \langle \xi\rangle^m L^2(\Lambda)$ and in particular, by Lemma~\ref{l:expDecay2} ${S}_{\Lambda}T_\Lambda u\in \mathscr{A}_{-\delta}$ for some $\delta>0$. But, ${S}_{\Lambda}T_\Lambda u=u$ and hence $u\in \mathscr{A}_{-\delta}$. 

\noindent 2.{Let $\Pi_\Lambda$ denote the orthogonal projection from $L^2(\Lambda)\to T_{\Lambda}(H^0_{\Lambda})$.} The properties of $ \Pi_\Lambda $ show that
$  T_\Lambda ( H^t_ \Lambda  ) = \Pi_\Lambda ( \langle \xi \rangle^{-t} 
L^2 ( \Lambda ) ) $.

\medskip

Lemmas \ref{l:expDecay} and \ref{l:expDecay2} show that (with $ h $ dependent norms and changing $ c_0 $ to $ c_0 /2 $), 
\[  T_\Lambda : \mathscr A_\delta \to e^{ - \delta c_0 \langle \xi \rangle  } L^2 ( \Lambda ) , \ \ \ 
S_\Lambda : e^{ - \delta C_0 \langle \xi \rangle / h }  L^2 ( \Lambda ) 
\mapsto \mathscr A_\delta . \]
This means that
\begin{equation}
\label{eq:TLaS} 
T_\Lambda S_\Lambda : e^{ - \delta C_0 \langle \xi \rangle / h }
L^2 ( \Lambda ) \to e^{ - \delta c_0 \langle \xi \rangle / h}
L^2 ( \Lambda )  .
\end{equation}

\begin{prop}
\label{p:prelim-Projector}
The operator $T_\Lambda {S}_\Lambda$ in \eqref{eq:TLaS} extends to an 
operator
$$
T_\Lambda {S}_\Lambda = \mathcal O ( 1 ) : \langle \xi\rangle^mL^2(\Lambda)\to \langle \xi\rangle^mL^2(\Lambda).
$$
Moreover, there are $ k \in S^0 ( \Lambda \times \Lambda ) $,  $ \alpha, \beta \in \Lambda $, $ \chi \in \CIc ( \RR ) $ such that for all $\delta>0$, there is $\e_1>0$ such that for $G$ satisfying~\eqref{e:controlDeformation} with $\e_0<\e_1$,
\[  T_\Lambda S_\Lambda = K_\Lambda + O_N(e^{-C_\delta/h})_{\langle \xi\rangle^{N}L^2(\Lambda)\to \langle \xi\rangle^{-N}L^2(\Lambda)} , 
\]
where the Schwartz kernel of $ K_\Lambda $ is given by 
\begin{equation}
\label{e:kernel-Projector}
\begin{gathered}
K_{\Lambda}(\alpha,\beta)=h^{-n}e^{\frac{i}{h}\Psi(\alpha,\beta)}k(\alpha,\beta)\tilde{\chi}(\alpha,\beta)\\
\tilde{\chi}(\alpha,\beta):=\chi(\delta^{-1}d(\Re\alpha_x,\Re\beta_x ))\chi(\delta^{-1}\min(\langle\Re \beta_\xi\rangle,\langle \Re \alpha_\xi\rangle)^{-1}|\Re \alpha_\xi-\Re \beta_\xi|) , 
\end{gathered}
\end{equation}
and
\begin{equation}
\label{e:tempProjectorPhase}
\Psi=\frac{i}{2}\frac{(\alpha_\xi-\beta_\xi)^2}{\langle \alpha_\xi\rangle +\langle \beta_\xi\rangle}+\frac{i}{2}\frac{ \langle \beta_\xi\rangle \langle \alpha_\xi\rangle (\alpha_x-\beta_x)^2}{\langle \alpha_\xi\rangle +\langle \beta_\xi\rangle}+\frac{\langle \beta_\xi\rangle \alpha_\xi+\langle \alpha_\xi\rangle \beta_\xi}{\langle \alpha_\xi\rangle +\langle \beta_\xi\rangle}\cdot (\alpha_x-\beta_x).
\end{equation}
\end{prop}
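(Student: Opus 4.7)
The plan is to compute the composition kernel
\[
(T_\Lambda S_\Lambda)(\alpha,\beta) = \int_{\TT^n} T_\Lambda(\alpha,y)\, S_\Lambda(y,\beta)\, dy
\]
directly from the explicit formulas~\eqref{e:periodized} and~\eqref{eq:defS}. Using $2\pi\ZZ^n$-periodicity of $T_\Lambda(\alpha,\cdot)$ to unfold the $y$-integration, the composition becomes
\[
h^{-\frac{3n}{2}}\langle\alpha_\xi\rangle^{\frac{n}{4}}\sum_{k\in\ZZ^n}\int_{\RR^n} e^{\frac{i}{h}[\varphi(\alpha_x,y,\alpha_\xi)-\varphi^*(\beta_x-2\pi k,y,\beta_\xi)]}\, b(\beta_x-2\pi k-y,\beta_\xi)\, dy,
\]
where $\varphi,\varphi^*,b$ are the analytic continuations evaluated at $\alpha,\beta\in\Lambda$. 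The $y$-phase is quadratic with Hessian $i(\langle\alpha_\xi\rangle+\langle\beta_\xi\rangle)I$, so Gaussian integration is explicit: the critical point evaluation is precisely the computation carried out for $TT^*$ in the lemma preceding Proposition~\ref{p:invert}, and it produces the phase $\Psi$ of~\eqref{e:tempProjectorPhase} (with $\beta_x$ replaced by $\beta_x-2\pi k$), together with an analytic amplitude of the form
\[
k(\alpha,\beta)=c_n\frac{\langle\alpha_\xi\rangle^{\frac{n}{4}}\langle\beta_\xi\rangle^{\frac{n}{4}}}{(\langle\alpha_\xi\rangle+\langle\beta_\xi\rangle)^{\frac{n}{2}}}\bigl(1+O(\langle\xi\rangle^{-1})\bigr),
\]
which belongs to $S^0(\Lambda\times\Lambda)$ on the support of $\tilde\chi$ since $\langle\alpha_\xi\rangle\sim\langle\beta_\xi\rangle$ there.

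Next I would localize this sum to the $k=0$ term inside the support of $\tilde\chi$, absorbing everything else into the exponentially small remainder. The key input is a Sj\"ostrand-type estimate
\[
-\Im\Psi(\alpha,\beta)+H(\alpha)-H(\beta)\leq -c\Bigl(\tfrac{|\Re\alpha_\xi-\Re\beta_\xi|^2}{\langle\Re\alpha_\xi\rangle+\langle\Re\beta_\xi\rangle}+(\langle\Re\alpha_\xi\rangle+\langle\Re\beta_\xi\rangle)\,d(\Re\alpha_x,\Re\beta_x)^2\Bigr),
\]
valid on $\Lambda\times\Lambda$ provided $\epsilon_0$ in~\eqref{e:controlDeformation} is small relative to the fixed $\delta$. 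This couples $\Im\Psi$ to the weight $H$ defined by $dH=-\Im\zeta\,dz|_\Lambda$ and is proved by Taylor-expanding both sides in $\alpha-\beta$, identifying the leading positive quadratic form, and using smallness of $\epsilon_0$ to dominate the deformation-generated cross terms (with $\widetilde G$ almost analytic in $\widetilde{T^*\TT^n}$, as reviewed in Appendix A). From this estimate the portion of the $k=0$ term outside $\tilde\chi$ contributes $O(\exp(-c\delta^2\langle\xi\rangle/h))$ to the weight-adjusted kernel, and each $k\neq 0$ term contributes $O(\exp(-c|k|^2\langle\xi\rangle/h))$ because the diagonal separation in the $x$-variable forces the second term in the estimate to dominate; summing in $k$ and using standard symbol-type estimates on derivatives produces the remainder bounded in any $\langle\xi\rangle^N L^2\to\langle\xi\rangle^{-N}L^2$ operator norm.

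Finally, for boundedness I would apply Schur's test to the weight-adjusted kernel $e^{-H(\alpha)/h}K_\Lambda(\alpha,\beta)e^{H(\beta)/h}$. The displayed estimate dominates this by
\[
h^{-n}|k(\alpha,\beta)|\tilde\chi(\alpha,\beta)\exp\!\Bigl(-\tfrac{c}{h}\bigl(\tfrac{|\Re\alpha_\xi-\Re\beta_\xi|^2}{\langle\Re\alpha_\xi\rangle+\langle\Re\beta_\xi\rangle}+(\langle\Re\alpha_\xi\rangle+\langle\Re\beta_\xi\rangle)d(\Re\alpha_x,\Re\beta_x)^2\bigr)\Bigr),
\]
a standard FBI-type Gaussian whose row and column integrals over $\Lambda$ are $O(1)$ uniformly in $h$ after the obvious rescaling $(\delta_x,\delta_\xi)\mapsto(\langle\xi\rangle^{1/2}\delta_x,\langle\xi\rangle^{-1/2}\delta_\xi)$. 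The $\langle\xi\rangle^m$ weight is propagated because $\langle\Re\alpha_\xi\rangle\sim\langle\Re\beta_\xi\rangle$ on the support of $\tilde\chi$, so $\langle\Re\alpha_\xi\rangle^m\langle\Re\beta_\xi\rangle^{-m}=O(1)$. The main technical obstacle is the Sj\"ostrand estimate with the quantitative $\epsilon_0<\epsilon_1(\delta)$ dependence: once it is in place the rest is explicit Gaussian calculus together with the standard periodization argument already used for $T$ itself.
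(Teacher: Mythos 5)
Your route is essentially the one the paper takes: unfold the $y$-integration by periodicity, recognize the phase is exactly quadratic in $y$ (so the Gaussian is explicit and produces $\Psi$ of~\eqref{e:tempProjectorPhase}), localize to a single lattice representative inside $\supp\tilde\chi$ using the Gaussian off-diagonal decay, and then establish boundedness by conjugating by the weight and applying Schur's test. The paper splits this into Lemmas~\ref{l:compositePhase} and~\ref{l:TSbounded}, with the former stated in greater generality for two possibly different deformations $G_1,G_2$ (this extra generality is reused in Lemma~\ref{l:changeLagrangian}), but the content is the same.

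There is, however, a sign error in your central estimate that is worth flagging because the sign is where the whole argument lives. The weight-adjusted kernel $e^{-H(\alpha)/h}K_\Lambda(\alpha,\beta)e^{H(\beta)/h}$ has modulus $h^{-n}|k|\tilde\chi\,\exp\bigl((-\Im\Psi(\alpha,\beta)-H(\alpha)+H(\beta))/h\bigr)$, so what is needed is
\[
-\Im\Psi(\alpha,\beta)-H(\alpha)+H(\beta)\leq -c\Bigl(\tfrac{|\Re\alpha_\xi-\Re\beta_\xi|^2}{\langle\Re\alpha_\xi\rangle+\langle\Re\beta_\xi\rangle}+(\langle\Re\alpha_\xi\rangle+\langle\Re\beta_\xi\rangle)\,d(\Re\alpha_x,\Re\beta_x)^2\Bigr),
\]
whereas you wrote $+H(\alpha)-H(\beta)$ on the left. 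The distinction is not cosmetic: with the correct sign the left-hand side (call it $-\Im\Phi$ with $\Phi:=\Psi-iH(\beta)+iH(\alpha)$ as in~\eqref{eq:defPhi}) vanishes to second order on the diagonal, because $d_\alpha\Psi|_{\alpha=\beta}=\zeta\,dz|_\Lambda$ and $dH=-\Im(\zeta\,dz)|_\Lambda$ by~\eqref{eq:defH}, so $\Im d_\alpha\Phi|_{\alpha=\beta}=0$ and likewise for $\beta$. With your sign the first-order term is $-2\,dH\neq 0$ for nontrivial $G$, so the displayed inequality fails in any neighbourhood of $\Delta(\Lambda\times\Lambda)$. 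Your subsequent Schur-test step is consistent with the corrected sign, so this reads as a slip rather than a conceptual error, but it also obscures the point you flag as ``the main technical obstacle'': it is not a generic Taylor-expansion fact that the quadratic form dominates — the cancellation of the linear term is exactly the reason the weight $H$ is chosen by $dH=-\Im(\zeta\,dz)|_\Lambda$, and once that is in place the positivity of the quadratic remainder follows from the explicit $G=0$ computation plus a small conic perturbation in $\epsilon_0$, which is how the paper argues. Making those two points explicit would close the gap.
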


We will prove the proposition in two lemmas which for future use are formulated in greater generality. We first study the kernel of the composition $T_{\Lambda}S_{\Lambda}$.
\begin{lemm}
\label{l:compositePhase}
Let $\Lambda_{G_1}$ and $\Lambda_{G_2}$ be given by~\eqref{e:LambdaDef} with $G_i$ satisfying~\eqref{e:controlDeformation}. Then, there are $ \chi \in \CIc ( \RR ) $ and
$ k \in S^0 ( \Lambda_{G_2} \times \Lambda_{G_1}{)} $ such that for all $\delta>0$, there is $\e_1>0$ such that for $G_1$ and $G_2$ satisfying~\eqref{e:controlDeformation} with $\e_0<\e_1$,
\[ T_{\Lambda_{G_2}}S_{\Lambda_{G_1}} = K + O_N(e^{-C_\delta/h})_{\langle \xi\rangle^{N}L^2(\Lambda_1)\to \langle \xi\rangle^{-N}L^2(\Lambda_2)}, 
\]
where the Schwartz kernel of $K $ 
is given by 
$$
h^{-n}e^{\frac{i}{h}\Psi(\alpha,\beta)}k(\alpha,\beta)\chi(\delta^{-1}d(\Re \alpha_x, \Re \beta_x) )\chi(\delta^{-1}\min(\langle\Re\alpha_\xi \rangle,\langle \Re\beta_\xi\rangle)^{-1}|\Re \alpha_\xi-\Re\beta_\xi|), $$
$ (\alpha, \beta ) \in \Lambda_{G_2} \times \Lambda_{G_1}  $ and where $\Psi$ is as in~\eqref{e:tempProjectorPhase}. 
\end{lemm}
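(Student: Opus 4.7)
The plan is to express the Schwartz kernel of $T_{\Lambda_{G_2}}S_{\Lambda_{G_1}}$ as an explicit Gaussian integral in the intermediate torus variable $y$, evaluate it by deforming the $y$-contour to its complex critical point, and identify the resulting closed form with \eqref{e:kernel-Projector}. Starting from \eqref{e:periodized} and \eqref{eq:defkerS}, and using that $S_{\Lambda_{G_1}}(y,\beta)$ is $2\pi\ZZ^n$-periodic in $y$, the $k$-sum of $T_{\Lambda_{G_2}}$ unfolds the $y\in\TT^n$ integration to $\RR^n$ and leaves only the $k$-sum from $S_{\Lambda_{G_1}}$:
\begin{equation*}
K(\alpha,\beta)=h^{-\frac{3n}{2}}\langle\alpha_\xi\rangle^{\frac{n}{4}}\sum_{k\in\ZZ^n}\int_{\RR^n} e^{\frac{i}{h}\Phi_k(\alpha,\beta,y)}\,b(\beta_x-y-2\pi k,\beta_\xi)\,dy,
\end{equation*}
where $\Phi_k:=\varphi(\alpha_x-y,\alpha_\xi)-\varphi^*(\beta_x-y-2\pi k,\beta_\xi)$ is quadratic in $y$ with Hessian $i(\langle\alpha_\xi\rangle+\langle\beta_\xi\rangle)I$. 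The condition \eqref{e:controlDeformation} with $\e_0$ sufficiently small keeps the real part of this Hessian comparable to $|\alpha_\xi|+|\beta_\xi|$ uniformly for $\alpha\in\Lambda_{G_2}$ and $\beta\in\Lambda_{G_1}$, so each $y$-integral is a genuine Gaussian.

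I would next evaluate each $y$-integral by deforming the contour to its unique complex critical point $y_c^{(k)}(\alpha,\beta)$, producing
\begin{equation*}
K(\alpha,\beta)=h^{-n}\sum_{k\in\ZZ^n} e^{\frac{i}{h}\Psi_k(\alpha,\beta)}k_k(\alpha,\beta),
\end{equation*}
where $\Psi_0$ agrees with the phase $\Psi$ of \eqref{e:tempProjectorPhase}: this is the complex analogue of the Gaussian computation carried out for $TT^*$ at the start of \S\ref{s:FBItor}, now done on $\Lambda_{G_2}\times\Lambda_{G_1}$. The amplitude $k_0\in S^0(\Lambda_{G_2}\times\Lambda_{G_1})$ is read off from the Gaussian normalization $(2\pi/(\langle\alpha_\xi\rangle+\langle\beta_\xi\rangle))^{n/2}$, the prefactor $\langle\alpha_\xi\rangle^{n/4}$, and the evaluation of $b$ at $y_c^{(0)}$. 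For $k\neq 0$ the imaginary part of $\Psi_k$ picks up an additional $c\langle\Re\beta_\xi\rangle|k|^2$, and those terms sum to an $O_N(e^{-C_\delta/h})$ remainder after playing this exponential decay against the weights $\langle\xi\rangle^{\pm N}$ in the statement.

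Inserting the cutoff $\tilde\chi$ localizes the principal term to a neighbourhood of the diagonal. From \eqref{e:tempProjectorPhase}, a direct inspection gives, for $\e_0$ small enough,
\begin{equation*}
\Im\Psi(\alpha,\beta)\geq c\bigl(\langle\Re\beta_\xi\rangle\,d(\Re\alpha_x,\Re\beta_x)^2+|\Re\alpha_\xi-\Re\beta_\xi|^2/\max(\langle\Re\alpha_\xi\rangle,\langle\Re\beta_\xi\rangle)\bigr),
\end{equation*}
so on the complement of $\operatorname{supp}\tilde\chi$ the principal kernel is pointwise bounded by $e^{-c_\delta/h}$ times an integrable weight and can be absorbed into the remainder. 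The main technical obstacle will be making the Gaussian contour deformation in $y$ uniform in $(\alpha,\beta)\in\Lambda_{G_2}\times\Lambda_{G_1}$: one needs a single family of real $n$-dimensional contours on which $\Im\Phi_0\gtrsim(\langle\Re\alpha_\xi\rangle+\langle\Re\beta_\xi\rangle)(|y-\Re y_c^{(0)}|^2+1)$ throughout, while keeping the analytic continuations of $b$ and the ensuing symbols controlled. This is exactly where the smallness threshold $\e_1$ on $\e_0$ enters quantitatively: the $G_i$-corrections to the purely real computation must stay dominated by the Gaussian weight on the deformed contour.
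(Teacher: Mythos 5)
Your overall plan — unfold to $\RR^n$, recognize a Gaussian in $y$, evaluate at the complex critical point to produce $\Psi$, and then localize by the cutoff $\tilde\chi$ — is structurally the same calculation the paper performs. However, the order of operations is reversed in a way that leaves a real gap. The paper does \emph{not} first perform the global Gaussian evaluation and then argue after the fact that the principal kernel is small off $\operatorname{supp}\tilde\chi$; instead it first shifts the $y$-contour by the small amount $y\mapsto y + i\delta\,\overline{(\beta_\xi-\alpha_\xi)}/\langle\Re(\beta_\xi-\alpha_\xi)\rangle$, establishes on that contour the lower bound
\[
\Im(\varphi(\alpha,y)-\varphi^*(\beta,y))\geq c\langle\alpha_\xi\rangle|\Re\alpha_x-y|^2+c\langle\beta_\xi\rangle|\Re\beta_x-y|^2+c\delta\frac{|\alpha_\xi-\beta_\xi|^2}{\langle\Re(\beta_\xi-\alpha_\xi)\rangle}-C\delta^2(\langle\alpha_\xi\rangle+\langle\beta_\xi\rangle),
\]
and deduces directly that the integrand is bounded by $e^{-(\langle\Re\alpha_\xi\rangle+\langle\Re\beta_\xi\rangle)(1+|\Re\alpha_x-\Re\beta_x|)/Ch}$ off the cutoff region. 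Only then is $\tilde\chi$ inserted and the critical-point deformation carried out. This estimate has the linear growth in $|\Re\alpha_x-\Re\beta_x|$ in the exponent that is needed to beat the weights $\langle\xi\rangle^{\pm N}$ and the volume of $\Lambda$, giving the remainder in the stated weighted operator norm.

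The step in your proposal that does not hold is the claimed pointwise bound $\Im\Psi(\alpha,\beta)\geq c(\langle\Re\beta_\xi\rangle\,d(\Re\alpha_x,\Re\beta_x)^2+|\Re\alpha_\xi-\Re\beta_\xi|^2/\max(\langle\Re\alpha_\xi\rangle,\langle\Re\beta_\xi\rangle))$ for $(\alpha,\beta)\in\Lambda_{G_2}\times\Lambda_{G_1}$. Writing $\Delta\xi=\alpha_\xi-\beta_\xi$, the contribution of the first term of $\Psi$ to $\Im\Psi$ is $\tfrac12\Re\big((\Delta\xi)^2/(\langle\alpha_\xi\rangle+\langle\beta_\xi\rangle)\big)$, whose numerator is $|\Re\Delta\xi|^2-|\Im\Delta\xi|^2$ up to smaller corrections; since $|\Im\Delta\xi|$ can be as large as $C\epsilon_0(\langle\Re\alpha_\xi\rangle+\langle\Re\beta_\xi\rangle)$, this can be negative, so the inequality fails as a pointwise statement. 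What is true (Lemma~\ref{l:TSbounded}) is positivity of $\Im\Phi$ for the \emph{weight-corrected} phase $\Phi=\Psi-iH(\beta)+iH(\alpha)$, and even that is established in the single-Lagrangian case $\Lambda_{G_1}=\Lambda_{G_2}$ (where $\Im\Phi$ vanishes quadratically on the diagonal); for two different $\Lambda_{G_i}$'s that vanishing does not occur. The obstacle you flag at the end — that the contour deformation to $y_c$ needs to be uniform over $\Lambda_{G_2}\times\Lambda_{G_1}$ — is precisely what the $\delta$-shift step in the paper is designed to finesse: once $\tilde\chi$ has been inserted, $|\Re(\alpha_\xi-\beta_\xi)/(\langle\alpha_\xi\rangle+\langle\beta_\xi\rangle)|\leq C\delta$ on the support, so the deformation to $y_c$ stays in a region where the Gaussian weight controls everything, and the $\epsilon_0\ll\delta$ condition makes the $G_i$-corrections subdominant. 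You should perform the paper's preliminary $\delta$-shift and the resulting $\Im(\varphi-\varphi^*)$ estimate before the Gaussian evaluation, rather than trying to extract the exponential decay from $\Im\Psi$ afterwards.
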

\begin{proof}
The kernel of $T_{\Lambda_{G_2}}S_{\Lambda_{G_1}}$ (again extending everything to be periodic on $\RR^n$ and using integration with 
respect $ d \beta = (\sigma|_\Lambda )^n/n!$) is given by 
$$
h^{-\frac{3n}{2}}\int_{\mathbb{R}^n} e^{\frac{i}{h}(\varphi(\alpha,y)-\varphi^*(\beta,y))}\langle \alpha_\xi\rangle^{\frac{n}{4}}b(\beta_x-y,\beta_\xi)dy, 
$$
where $\alpha\in \Lambda_{G_1}$, $\beta\in \Lambda_{G_2}$, and $b
$ is given by \eqref{eq:defb}. To analyse it, we first  observe that for $\e_{0}$  small enough
\begin{multline*}
\Im \varphi(\alpha,y)-\varphi^*(\beta,y)\geq \frac{1}{4}|\langle 
\alpha_\xi\rangle|(|\Re (\alpha_x-y)|^2-|\Im (\alpha_x-y)|^2)\\+\frac{1}{4}|\langle \beta_\xi\rangle|(|\Re (\beta_x-y)|^2-|\Im(\beta_x-y)|^2 ) +\Im \langle \alpha_x-y,\alpha_\xi\rangle +\Im \langle y-\beta_x,\beta_\xi\rangle
\end{multline*}
Now, fix $\delta>0$, and assume that $|\Im y|\leq \delta$. Then for $\e_0\leq\delta$ in~\eqref{e:controlDeformation}, we have
\begin{multline*}
\Im \varphi(\alpha,y)-\varphi^*(\beta,y)\geq c|\langle 
\alpha_\xi\rangle|| \Re \alpha_x-\Re y|^2+c|\langle \beta_\xi\rangle|| \Re \beta_x-\Re y|^2-C\delta^2(|\langle \alpha_\xi\rangle|+|\langle \beta_\xi\rangle| )\\
+\Im \langle \alpha_x-y,\alpha_\xi\rangle +\Im \langle y-\beta_x,\beta_\xi\rangle.
\end{multline*}
Therefore, deforming the contour in $y$ using 
$$
y\mapsto y+\frac{i\delta (\overline{\beta_\xi-\alpha_\xi})}{\langle \Re( \beta_\xi-\alpha_\xi)\rangle }, \ \ \ y \in \RR^n , 
$$
we have (on the new contour)
\begin{equation*}
\begin{split}
\Im \varphi(\alpha,y)-\varphi^*(\beta,y) & \geq c|\langle 
\alpha_\xi\rangle|| \Re \alpha_x-\Re y|^2+c|\langle \beta_\xi\rangle|| \Re \beta_x-\Re y|^2+\delta\frac{|\alpha_\xi-\beta_\xi|^2}{\langle \Re (\beta_\xi-\alpha_\xi )\rangle} \\ & \ \ \ \ \ \ \ 
-C\delta^2(|\langle \alpha_\xi\rangle|+|\langle \beta_\xi\rangle| ) 
+\Im \langle \alpha_x,\alpha_\xi\rangle -\Im \langle \beta_x,\beta_\xi\rangle.
\end{split}
\end{equation*}
Using 
$$|\Im \beta_x|+|\Im \alpha_x|+|\langle \alpha_\xi\rangle|^{-1}|\Im \alpha_\xi|+|\langle \beta_\xi\rangle|^{-1}|\Im \beta_\xi|\leq C\e_0\ll\delta,$$
we then obtain
\begin{equation*}
\begin{split}
\Im \varphi(\alpha,y)-\varphi^*(\beta,y) &  \geq c|\langle 
\alpha_\xi\rangle||\Re \alpha_x-y|^2+c|\langle \beta_\xi\rangle||\Re  \beta_x-y|^2
\\ 
& \ \ \ \ \ \ +c\delta\frac{|\alpha_\xi-\beta_\xi|^2}{\langle \Re (\beta_\xi-\alpha_\xi)\rangle}-C\delta^2(|\langle \alpha_\xi\rangle|+|\langle \beta_\xi\rangle| )
\end{split}
\end{equation*}
In particular when
$$
|\Re \alpha_x-\Re \beta_x|\geq \delta \ \text{ or } \  |\Re \alpha_\xi-\Re \beta_\xi|\geq 2c\delta\min(\langle \Re \alpha_\xi\rangle,\langle\Re \beta_\xi\rangle)/C,
$$
the integrand is bounded by
$$
e^{-(\langle \Re \alpha_\xi\rangle+\langle \Re \beta_\xi\rangle)(1+|\Re \alpha_x -\Re\beta_x|)/Ch}.
$$
Therefore, modulo an $O_N(e^{-C/h})_{\langle \xi\rangle^{N}L^2(\Lambda_1)\to \langle \xi\rangle^{-N}L^2(\Lambda_2)}$ error, the kernel is given by
\begin{equation*}
\begin{gathered}
k ( \alpha, \beta ) := h^{-\frac{3n}{2}}\int_{ \RR^n}  e^{\frac{i}{h}(\varphi(\alpha,y)-\varphi^*(\beta,y)){h}} k_1 (\alpha,\beta,y)\widetilde \chi dy\\
\widetilde \chi := \chi(\delta^{-1}d{(}\alpha_x,\beta_x))
\chi ( \delta^{-1}\min(\langle \Re \alpha_\xi\rangle,\langle \Re \beta_\xi\rangle)^{-1}|\alpha_\xi-\beta_\xi|), 
\end{gathered}
\end{equation*} 
where $ \chi $ is a suitable cut-off function and $ k_1\in \langle \Re \alpha_\xi \rangle^{\frac n4} \langle \Re \beta_\xi \rangle^{\frac n4}S^{0} ( \Lambda_{G_2} \times \Lambda_{G_1} 
\times \RR^n )$, and the dependence on the last variable is periodic {and holomorphic on $|\Im y|\leq c.$}

We claim that $ k ( \alpha, \beta ) $  is given by 
\begin{equation}
\label{e:finalPhase}
h^{-n}e^{\frac{i}{h}\Psi(\alpha,\beta)}k(\alpha,\beta)\tilde{\chi}, 
\end{equation}
where $k\in S^0 ( \Lambda_{G_2} \times \Lambda_{ G_2 } ) $. To see this we note that the critical point in $y$ is given by
$$
y_c =\frac{i(\beta_\xi-\alpha_\xi) +\langle \alpha_\xi\rangle \alpha_x+\langle \beta_\xi\rangle \beta_x}{\langle \alpha_\xi\rangle +\langle \beta_\xi\rangle}.
$$
We then deform the contour to $
y\mapsto y+y_c 
$. 
The phase  becomes
$$
(\alpha_x-\beta_x)\frac{\langle \alpha_\xi\rangle\beta_\xi+\alpha_\xi\langle \beta_\xi\rangle}{\langle \alpha_\xi\rangle +\langle \beta_\xi\rangle}+\frac{i(\langle \alpha_\xi\rangle +\langle \beta_\xi\rangle)}{2}y^2+\frac{i}{2}\frac{\langle \alpha_\xi\rangle\langle \beta_\xi\rangle (\beta_x-\alpha_x)^2}{\langle \alpha_\xi\rangle +\langle \beta_\xi\rangle}+\frac{i}{2}\frac{(\beta_\xi-\alpha_\xi)^2}{\langle \alpha_\xi\rangle +\langle \beta_\xi\rangle}.
$$
and the method of steepest descent gives~\eqref{e:finalPhase}. 
\end{proof}

The next lemma gives the first part of Proposition \ref{p:prelim-Projector}:
\begin{lemm}
\label{l:TSbounded}
For all $m\in \RR$, there are $C, h_0>0$ such that for $0<h<h_0$, 
$$
\|T_{\Lambda}S_{\Lambda}\|_{\langle \xi\rangle^mL^2({\Lambda})\to \langle \xi\rangle^mL^2(\Lambda)}\leq C.
$$
\end{lemm}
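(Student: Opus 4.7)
The plan is to apply Schur's test to the kernel $K_\Lambda$ produced by Lemma~\ref{l:compositePhase} (with $G_1=G_2=G$); the $O_N(e^{-C/h})$ remainder is trivially bounded between any weighted $L^2$ spaces. Recall the norm on $\langle\xi\rangle^m L^2(\Lambda)$ carries the weight $e^{-2H(\alpha)/h}$, so conjugation by $W(\alpha):=\langle\Re\alpha_\xi\rangle^m e^{-H(\alpha)/h}$ reduces the claim to $L^2(\Lambda,d\alpha)$-boundedness (symplectic measure, no weight) of the operator with kernel
\[
\widetilde K(\alpha,\beta)\;:=\;\frac{\langle\Re\alpha_\xi\rangle^m}{\langle\Re\beta_\xi\rangle^m}\,e^{-(H(\alpha)-H(\beta))/h}\,K_\Lambda(\alpha,\beta).
\]
On the support of the cutoff $\widetilde\chi$ from~\eqref{e:kernel-Projector} we have $\langle\Re\alpha_\xi\rangle\sim\langle\Re\beta_\xi\rangle$ and $k\in S^0$, so
\[
|\widetilde K(\alpha,\beta)|\le Ch^{-n}\exp\!\Bigl(-\tfrac1h\bigl[\Im\Psi(\alpha,\beta)+H(\alpha)-H(\beta)\bigr]\Bigr),
\]
and we are reduced to proving the Gaussian lower bound
\[
\Im\Psi(\alpha,\beta)+H(\alpha)-H(\beta)\;\ge\;c\bigl(\langle\xi\rangle|\Re(\alpha_x-\beta_x)|^2+\langle\xi\rangle^{-1}|\Re(\alpha_\xi-\beta_\xi)|^2\bigr)
\]
on $\mathrm{supp}\,\widetilde\chi$.

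The central ingredient is the identity $dH=-\Im(\zeta\,dz)|_\Lambda$ from~\eqref{eq:defH}. Writing~\eqref{e:tempProjectorPhase} as $\Psi=i\Psi_1+i\Psi_2+\Psi_3$ with $\Psi_3=A\cdot(\alpha_x-\beta_x)$ and $A:=(\langle\beta_\xi\rangle\alpha_\xi+\langle\alpha_\xi\rangle\beta_\xi)/(\langle\alpha_\xi\rangle+\langle\beta_\xi\rangle)=\tfrac12(\alpha_\xi+\beta_\xi)+O(|\alpha_\xi-\beta_\xi|^2/\langle\xi\rangle)$, midpoint integration of $dH$ along the $(x,\xi)$-parametrization of $\Lambda$ yields
\[
\Im\Psi_3\;=\;H(\beta)-H(\alpha)+R,
\]
where $R$ is a cubic-in-$(\alpha-\beta)$ remainder (with constants controlled by third derivatives of $G$, hence by $\epsilon_0$). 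Using the cutoff constraints $|\Re(\alpha_x-\beta_x)|\le\delta$ and $|\Re(\alpha_\xi-\beta_\xi)|\le\delta\langle\xi\rangle$ together with Young's inequality, $R$ is absorbed into the positive quadratic form on the right-hand side of the desired bound. The $H$-terms then cancel from $\Im\Psi+H(\alpha)-H(\beta)$, leaving $\Re\Psi_1+\Re\Psi_2$, which contribute the positive definite form $\sim\langle\xi\rangle|\Re(\alpha_x-\beta_x)|^2+\langle\xi\rangle^{-1}|\Re(\alpha_\xi-\beta_\xi)|^2$ modulo an $O(\epsilon_0)$ perturbation coming from the imaginary parts of $\alpha,\beta\in\Lambda$, controlled by~\eqref{e:controlDeformation}. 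Choosing $\epsilon_0$ and $\delta$ small secures the bound.

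With the Gaussian upper bound for $\widetilde K$ established, Schur's test is immediate: the change of variables $u=\sqrt{\langle\xi\rangle/h}\,\Re(\alpha_x-\beta_x)$, $v=\Re(\alpha_\xi-\beta_\xi)/\sqrt{h\langle\xi\rangle}$ produces Jacobian $h^n$, exactly cancelling $h^{-n}$, so that $\sup_\alpha\int|\widetilde K(\alpha,\beta)|\,d\beta=O(1)$; the $\alpha\leftrightarrow\beta$ symmetry of the Gaussian estimate gives the symmetric bound, and Schur concludes. The principal obstacle is the identification $\Im\Psi_3=H(\beta)-H(\alpha)+(\text{absorbable})$ in the middle paragraph: without the $I$-Lagrangian identity $dH=-\Im(\zeta\,dz)|_\Lambda$ one would be left with linear-in-$(\alpha-\beta)$ residues of size $\epsilon_0\langle\xi\rangle$ that are incompatible with any Gaussian Schur bound, and it is precisely this cancellation that encodes the compatibility of the FBI transform with the geometry of $\Lambda$.
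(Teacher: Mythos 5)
Your proof is correct and follows essentially the same route as the paper: both reduce via Lemma~\ref{l:compositePhase} and conjugation by the weight $\langle\Re\alpha_\xi\rangle^m e^{H/h}$ to a Gaussian lower bound for $\Im\Psi+H(\alpha)-H(\beta)$ established through the $I$-Lagrangian identity $dH=-\Im(\zeta\,dz)|_\Lambda$ (the paper observes that the effective phase $\Phi$ has vanishing $\Im d\Phi$ on the diagonal and perturbs off $G=0$; your midpoint-rule computation is a more explicit unpacking of that same quadratic vanishing), finishing with Schur's test. One small imprecision: the cubic remainder $R$ is governed by \emph{third} derivatives of $G$, which~\eqref{e:controlDeformation} bounds only by the fixed $C_{\alpha\beta}$'s, not by $\epsilon_0$; nevertheless the absorption is unaffected, since on the support of $\widetilde\chi$ one has $|R|\lesssim\delta\,|\alpha-\beta|^2$ and so it suffices to take the cutoff scale $\delta$, rather than $\epsilon_0$, small.
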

\begin{proof}
By Lemma~\ref{l:compositePhase}, we need to show uniform 
boundedness of $ K $ with the kernel given by 
$$
K (\alpha,\beta)=h^{-n}e^{\frac{i}{h}\Psi(\alpha,\beta)}k(\alpha,\beta)\chi(\delta^{-1}d( \Re\alpha_x, \Re\beta_x ) )\chi\Big(\frac{|\Re\alpha_\xi-\Re\beta_\xi|}{\delta\min(\langle\Re \alpha_\xi\rangle,\langle \Re\beta_\xi\rangle)}\Big) .
$$
where $\Psi$ is as in~\eqref{e:tempProjectorPhase}, $k\in S^0 $. 

In particular, conjugating by $\langle \Re \alpha_\xi\rangle^me^{H(\alpha)/h}$, we need to show that the operator with the kernel 
\begin{gather*}
h^{-n}e^{\frac{i}{h}(\Psi(\alpha,\beta)-iH(\beta)+iH(\alpha))}\tilde{k}(\alpha,\beta)
\end{gather*}
is  bounded on $L^2 ( \Lambda, dm(\alpha) )$ where 
$$\tilde{k}(\alpha,\beta):=\left( \frac{ \langle \Re \alpha_{\xi}\rangle}{\langle \Re \beta_\xi\rangle}\right)^{m}k(\alpha,\beta)\chi(\delta^{-1}|\Re\alpha_x-\Re\beta_x|)\chi\left( \frac{ 
| \Re\alpha_\xi-\Re\beta_\xi|}{\delta \min(\langle\Re\alpha_\xi\rangle,\langle \Re\beta_\xi\rangle)}\right).$$

To establish this we define 
\begin{equation}
\label{eq:defPhi}
\Phi(\alpha,\beta):=\Psi(\alpha,\beta) - iH(\beta) + iH(\alpha),  
\end{equation}
where we see that $\Phi(\alpha,\alpha)=0$. 
Next, we note that 
\begin{align}
\label{eq:dalPh} 
d_{\alpha}\Phi|_{\alpha=\beta}&=\alpha_\xi d\alpha_x + id_\alpha H.
\end{align}
Therefore (see \eqref{eq:defH}), $\Im d_\alpha \Phi|_{\alpha=\beta}=0$. Similarly, $\Im d_\beta \Phi|_{\alpha=\beta}=0$ and hence $\Im \Phi$ vanishes quadratically at $\alpha=\beta$. 

In the case of no deformation (that is, for $ \Lambda = T^* \TT^n $)
$$
\Im \Phi\geq  c\langle \alpha_\xi\rangle |\alpha_x-\beta_x|^2+c\langle \alpha_\xi\rangle^{-1}|\alpha
_\xi-\beta_\xi|^2,\qquad \alpha, \,\beta \in T^{*}\TT^n .
$$
Since $\Lambda$ is a small conic perturbation of $T^*\mathbb{T}^n$, this remains true on $\Lambda$. Hence, 
\begin{gather*}
| K ( \alpha, \beta ) | \leq Ch^{-n}e^{  (c\langle\Re \alpha_\xi\rangle |\Re\alpha_x-\Re\beta_x|^2+c\langle \Re \alpha_\xi\rangle^{-1}|\Re\alpha_\xi-\Re\beta_\xi|^2) /h }  \langle\Re \alpha_\xi \rangle^{\frac n 4} \langle\Re \beta_\xi \rangle ^{\frac n 4 } \widetilde \chi , \\ \,
\widetilde \chi = \chi(\delta^{-1}d( \Re\alpha_x, \Re\beta_x ) )
\chi(  \delta^{-1}\min(\langle\Re\alpha_\xi\rangle,\langle \Re\beta_\xi\rangle)^{-1} 
| \Re\alpha_\xi-\Re\beta_\xi|) . 
\end{gather*} 
The Schur's test for boundedness on $L^2$ then shows that 
$ K $ is uniformly bounded on $L^2 ( \Lambda ) $. 
 \end{proof}

The following lemma shows that compact changes of the Lagrangian $\Lambda$ change the norm on $L^2(\Lambda)$ but not the elements in the space.
\begin{lemm}
\label{l:changeLagrangian}
Let $G_1$ and $G_2$ satisfy~\eqref{e:controlDeformation}. Then, for all $M,N>0$,
$$
\indic_{|\xi|\leq M}T_{\Lambda_{G_2}} {S}_{\Lambda_{G_1}}  = 
\mathcal O_h ( 1 ) :\langle \xi\rangle^{N}L^2(\Lambda_{G_1})\to \langle \xi\rangle^{{-}N}L^2(\Lambda_{G_2}). 
$$
\end{lemm}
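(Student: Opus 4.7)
My plan is to reduce to the explicit kernel representation of Lemma~\ref{l:compositePhase} and then exploit the fact that the left multiplication by $\indic_{|\xi|\leq M}$ together with the cutoff $\tilde\chi$ in the kernel localizes both frequency variables to a compact set, where all weights are bounded (with $h$-dependent constants, which is all that $\mathcal O_h(1)$ requires).

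First, I invoke Lemma~\ref{l:compositePhase} with the two Lagrangians $\Lambda_{G_1}$ and $\Lambda_{G_2}$ to write the kernel of $T_{\Lambda_{G_2}}S_{\Lambda_{G_1}}$ as $K(\alpha,\beta)+R(\alpha,\beta)$, where $R$ is already an $O_N(e^{-C_\delta/h})$ operator from $\langle\xi\rangle^N L^2(\Lambda_{G_1})$ to $\langle\xi\rangle^{-N} L^2(\Lambda_{G_2})$, and
\[
K(\alpha,\beta) = h^{-n}e^{\frac{i}{h}\Psi(\alpha,\beta)}k(\alpha,\beta)\tilde\chi(\alpha,\beta),
\]
with $k\in S^0$, $\Psi$ as in~\eqref{e:tempProjectorPhase}, and $\tilde\chi$ enforcing $|\Re\alpha_\xi-\Re\beta_\xi|\leq \delta\min(\langle\Re\alpha_\xi\rangle,\langle\Re\beta_\xi\rangle)$. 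Multiplying by $\indic_{|\xi|\leq M}$ restricts $|\Re\alpha_\xi|\leq M$; the support condition from $\tilde\chi$ then forces $\langle\Re\beta_\xi\rangle\leq (1-\delta)^{-1}\langle\Re\alpha_\xi\rangle\lesssim M$ (for $\delta<\delta_0$ sufficiently small), so $\indic_{|\xi|\leq M}K$ is supported on a set where both $|\Re\alpha_\xi|$ and $|\Re\beta_\xi|$ are bounded by some $M'=M'(M,\delta)$.

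Next I apply Schur's test to bound $\indic_{|\xi|\leq M}K$ as a map $L^2(\Lambda_{G_1},e^{-2H_1/h}d\beta)\to L^2(\Lambda_{G_2},e^{-2H_2/h}d\alpha)$. On the compact support $\{|\Re\alpha_\xi|,|\Re\beta_\xi|\leq M'\}$: the symbol $k$ is uniformly bounded; the polynomial weights $\langle\Re\alpha_\xi\rangle^{-N}$ and $\langle\Re\beta_\xi\rangle^{N}$ are each bounded by $\langle M'\rangle^N$; the weights $e^{\pm H_j(\cdot)/h}$ are bounded by $e^{C_{M'}/h}$ (since~\eqref{eq:Hpedest} gives $|H_j|\lesssim M'$ on the bounded region); and the Gaussian estimate $|e^{i\Psi/h}|\leq \exp(-c(|\Re\alpha_x-\Re\beta_x|^2+\langle M'\rangle^{-2}|\Re\alpha_\xi-\Re\beta_\xi|^2)/h)$ from the argument in Lemma~\ref{l:TSbounded} makes the row and column integrals of $|K|$ uniformly bounded (in $\alpha$, respectively $\beta$), with constant depending on $h$ and $M'$. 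Hence $\indic_{|\xi|\leq M}K=\mathcal O_h(1):L^2(\Lambda_{G_1})\to L^2(\Lambda_{G_2})$. Absorbing the polynomial weights (which are bounded on the support) and the exponentially small remainder $R$ yields the claimed bound $\mathcal O_h(1):\langle\xi\rangle^N L^2(\Lambda_{G_1})\to\langle\xi\rangle^{-N}L^2(\Lambda_{G_2})$.

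The main (mild) obstacle is purely bookkeeping: one must check that the support condition from $\tilde\chi$ genuinely transfers boundedness of $|\Re\alpha_\xi|$ to boundedness of $|\Re\beta_\xi|$, which is what the strict inequality $\delta<1$ guarantees; and one must accept that the constants depend exponentially on $1/h$ (via $e^{C_{M'}/h}$), which is harmless because the target norm $\mathcal O_h(1)$ allows arbitrary $h$-dependence. No uniform-in-$h$ cancellation is required, because Lemma~\ref{l:changeLagrangian} is used only to compare different choices of Lagrangian at fixed $h$.
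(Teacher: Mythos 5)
Your proposal is correct and follows essentially the same route as the paper: reduce to the kernel $K$ via Lemma~\ref{l:compositePhase}, observe that $\indic_{|\xi|\leq M}$ together with the cutoff $\tilde\chi$ compactly localizes both frequency variables, and conclude boundedness (with $h$-dependent constants absorbing the weight mismatch). You have merely filled in the Schur-test details that the paper's two-line proof leaves implicit.
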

\begin{proof}
By Lemma~\ref{l:compositePhase}  we only need to show that the operator 
$ \indic_{|\xi| \leq {M} } K $ is bounded. However, the structure of the
Schwartz kernel described in that lemma shows that the kernel 
of $ \indic_{|\xi| \leq {M} } K $  is smooth and compactly supported. 
Except for a loss in the constant due to different weights the boundedness follows.
\end{proof}

\section{Asymptotic description of the projector}
\label{s:asy}

The main part of this section consists of a construction of a parametrix for 
the orthogonal projector onto the (closure of the) image of $ T_\Lambda $.
It is inspired by \cite[\S 1]{Sj96} which in turn followed ideas of
\cite{mess}, \cite{BS}, \cite{BG} and \cite{HS}. A detailed presentation in 
a simpler case of compactly supported weights can be found
in \cite[\S 6]{GZ} and it can be used as a guide to the more notationally involved case at hand. We then use the argument from \cite{BG} and \cite{Sj96} to relate the parametrix to the exact projector.

\subsection{The structure of the parametrix} 
We seek an operator of the following form 
\begin{equation}
\label{eq:kerBLa}
\begin{gathered}  B_\Lambda u ( \alpha ) = h^{-n} \int_{T^* \TT^n}  
e^{ i  \psi ( \alpha, \beta )/h -   2 H ( \beta )/h } a ( \alpha, \beta, h  ) u ( \beta ) d m_\Lambda ( \beta)  ,  \\ 
d m_\Lambda ( \beta ) := (\sigma|_\Lambda)^n / n! = 
d \alpha ,  \ \beta = \Re \alpha , \ \ \alpha \in \Lambda ,  
\end{gathered}
\end{equation}
where $ \psi $ and $ a $ satisfy (for all $  k,k',\ell,\ell' \in \NN^n  $) 
\begin{equation}
\label{eq:propsi}
\begin{gathered} 
\supp \psi, \ \supp a \subset \{ ( \alpha, \beta ) :
d ( \alpha_x , \beta_x ) \leq \epsilon , \ \
| \alpha_\xi - \beta_\xi | \leq \langle \alpha_\xi \rangle \epsilon \},  \\
\partial_{\alpha_x}^k \partial_{\alpha_\xi}^\ell 
\partial_{\beta_x}^{k'} \partial_{\beta_\xi}^{\ell'} \psi ( \alpha, \beta ) 
= \mathcal O ( \langle \alpha_\xi \rangle^{1-|\ell|-|\ell'|}) , 
\ \ \overline{ \psi ( \alpha, \beta ) } = - \psi ( \beta, \alpha ) , 
\end{gathered}
\end{equation}
and
\begin{equation}
\label{eq:propaa}
\begin{gathered} 
a ( \alpha, \beta , h ) \sim \sum_{j=0}^\infty ( \langle \alpha_\xi \rangle^{-1} h)^j a_j ( \alpha, \beta ) , \ \ 
\overline{ a ( \alpha, \beta ) } = a ( \beta, \alpha ) ,\\
\partial_{\alpha_x}^k \partial_{\alpha_\xi}^\ell 
\partial_{\beta_x}^{k'} \partial_{\beta_\xi}^{\ell'} a_j ( \alpha, \beta ) 
= \mathcal O ( \langle \alpha_\xi \rangle^{-|\ell|-|\ell'|}). 
\end{gathered}
\end{equation}
The basic properties we need are self-adjointness and idempotence:
\begin{equation}
\label{eq:propBLa}
B_\Lambda = B_\Lambda^{*,H} , \ \ 
B_\Lambda \equiv  B_\Lambda^2 , 
\end{equation}
where $ A \equiv B $ means that $ A - B =  \mathcal O ( h^N )_{ \langle \xi\rangle^NL^2(\Lambda)\to \langle \xi\rangle^{-N}L^2(\Lambda)} $ for all $ N $. 

The deeper requirement comes from relating the image of $ B_\Lambda $ 
to that of $ T_\Lambda $:
\begin{prop}
\label{p:ZLa}
Suppose that $ Z_j $, differential operators with holomorphic coefficients in $ \Gamma $, are defined by 
\[  Z_j :=   \langle\zeta\rangle^{-1} ( hD_{z_j} - \zeta_j ) +\tfrac 12 
\langle \zeta \rangle ^{ -3} \zeta_j (h D_z - \zeta)^2  - i h D_{\zeta_j } - \tfrac {n} 4 h \langle \zeta\rangle^{-2} \zeta_j .
 \]
If 
\begin{equation}
\label{eq:defZLa} Z_j^\Lambda = Z_j|_\Lambda \end{equation}
in the sense of restriction of holomorphic operators to totally real submanifolds, then for $ u \in \mathscr A_\delta $, 
\begin{equation}
\label{eq:ZLa}
Z_j^\Lambda T_\Lambda u ( \alpha ) = 0 , \ \ j =1, \cdots, n .
\end{equation}
\end{prop}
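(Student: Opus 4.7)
\textbf{Proof plan for Proposition \ref{p:ZLa}.}

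The strategy is to verify the stronger statement that $Z_j$ annihilates the full holomorphic kernel of $T$ before periodization and before restricting to $\Lambda$. Concretely, setting
\[
K(z,\zeta,y) := \langle\zeta\rangle^{n/4} e^{\frac{i}{h}\varphi(z,\zeta,y)}, \qquad \varphi(z,\zeta,y) = \langle z-y,\zeta\rangle + \tfrac{i}{2}\langle\zeta\rangle(z-y)^2,
\]
viewed as holomorphic in $(z,\zeta) \in \Gamma$ for fixed $y\in\RR^n$, I claim $Z_j K \equiv 0$. Once this is shown, the periodized kernel $\sum_k K(z,\zeta,y-2\pi k)$ is also annihilated by $Z_j$ (the sum is $y$-dependent while $Z_j$ is a differential operator in $(z,\zeta)$), so for $u\in \mathscr A_\delta$ extended periodically,
\[
Z_j (Tu)(z,\zeta) = \int_{\RR^n} Z_j K(z,\zeta,y)\, u(y)\, dy = 0,
\]
interpreted as a holomorphic identity on $\Gamma$ thanks to the exponential decay of the integrand provided by Lemma \ref{l:expDecay}. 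Restricting to $\Lambda\subset\Gamma$ and using that $Z_j^\Lambda$ is by definition the restriction of $Z_j$ to the totally real submanifold $\Lambda$ gives \eqref{eq:ZLa}.

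The computation $Z_j K = 0$ is the heart of the argument, but it is a matter of pairing off explicit Gaussian derivatives. Using $\partial_{z_k}\varphi = \zeta_k + i\langle\zeta\rangle(z_k-y_k)$ and $\partial_{\zeta_k}\varphi = (z_k - y_k) + \tfrac{i\zeta_k}{2\langle\zeta\rangle}(z-y)^2$, one finds
\[
(hD_{z_k}-\zeta_k) K = i\langle\zeta\rangle (z_k-y_k) K, \qquad (hD_z-\zeta)^2 K = nh\langle\zeta\rangle K - \langle\zeta\rangle^2 (z-y)^2 K,
\]
and
\[
-ih D_{\zeta_j} K = -i(z_j-y_j) K + \tfrac{\zeta_j (z-y)^2}{2\langle\zeta\rangle} K - \tfrac{n h \zeta_j}{4\langle\zeta\rangle^2} K.
\]
Substituting these into $Z_j$, the $i(z_j-y_j)K$ terms from the first and third summands cancel, the $\tfrac{\zeta_j(z-y)^2}{2\langle\zeta\rangle}K$ terms from the second and third cancel, and the subleading $\tfrac{nh\zeta_j}{2\langle\zeta\rangle^2}K - \tfrac{nh\zeta_j}{4\langle\zeta\rangle^2}K$ cancels against the correction term $-\tfrac{n}{4}h\langle\zeta\rangle^{-2}\zeta_j K$. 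The three groups of cancellations are precisely the design principle behind the three explicit pieces of $Z_j$: the $\langle\zeta\rangle^{-1}(hD_z-\zeta)$ term removes the linear factor $(z-y)$, the quadratic $(hD_z-\zeta)^2$ term removes the quadratic factor $(z-y)^2$, and the last subprincipal correction cancels the symbolic contribution of $\langle\zeta\rangle^{n/4}$.

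The only conceptual point requiring care—and the place I would spend most of the write-up—is the passage between the holomorphic statement $Z_j K = 0$ on $\Gamma$ and the claimed identity on $\Lambda$ for $Z_j^\Lambda T_\Lambda u$. This requires: first, justifying that for $u\in \mathscr A_\delta$ the integral defining $Tu$ converges to a genuinely holomorphic function on $\Omega_\delta \supset \Lambda$ (Lemma \ref{l:expDecay} supplies the needed exponential decay in $\zeta$ uniformly on $\Omega_\delta$, and consequently the differentiation under the integral sign is legitimate); second, invoking the fact that if a holomorphic differential operator $P$ annihilates a holomorphic function $F$ on a complex open set containing a maximally totally real submanifold $\Lambda$, then the intrinsic operator $P|_\Lambda$ (defined via the complex linear extension of tangent vectors on $\Lambda$ to vectors in $T(\widetilde{T^*\TT^n})$) annihilates $F|_\Lambda$. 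Both points are standard but worth stating cleanly given that they will be used repeatedly in the sequel. No obstacle arises from the periodization since the $y$-sum is absolutely convergent and uniformly so on compact subsets of $\Gamma$, as already noted in \S \ref{s:FBItor}.
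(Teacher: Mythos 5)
Your proof is correct and takes essentially the same route as the paper: a direct verification that the kernel of the (unrestricted) FBI transform is annihilated by $Z_j$, followed by restriction to $\Lambda$. The only cosmetic difference is that the paper first conjugates $Z_j$ by $\langle\zeta\rangle^{n/4}$ to split off the symbolic factor (obtaining $W_j$ with the correction $-\tfrac{n}{2}h\langle\zeta\rangle^{-2}\zeta_j$) and then checks $W_j e^{i\varphi/h}=0$, whereas you keep the $\langle\zeta\rangle^{n/4}$ factor inside $K$ and verify $Z_j K = 0$ directly; the cancellations you enumerate are the same ones and are correct.
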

\begin{proof}
Putting 
\[ W_j = 
 \langle \zeta \rangle^{-\frac n 4} Z_j \langle \zeta \rangle^{\frac n 4} 
 = \langle\zeta\rangle^{-1} ( hD_{z_j} - \zeta_j )  + \tfrac 12 
\langle \zeta \rangle ^{ -3} \zeta_j (h D_z - \zeta)^2  - i h D_{\zeta_j } - \tfrac n 2  h \langle \zeta\rangle^{-2} \zeta_j  ,\]
we check that 
\[ W_j ( e^{ \frac i h (  \langle z - y + 2 \pi k , \zeta \rangle + 
\frac i 2 \langle \zeta \rangle ( z - y + 2 \pi k )^2 )} ) = 0 , \]
for all $ y \in \TT^n $ and $ k \in \ZZ^n $. 
The definition of $ T_\Lambda $ then immediately gives \eqref{eq:ZLa}. 
\end{proof}

We note that $ Z_j $'s commute and hence we also have
\[ [ Z_j^\Lambda , Z_k^\Lambda ] = 0. \]

We write 
\[ z_j^\Lambda := \langle \zeta \rangle ^{-1} ( z_j^* - \zeta_j ) +\tfrac 12 
\langle \zeta \rangle^{ -3} ( z^* - \zeta)^2 \zeta_j - i \zeta_j^* |_\Lambda , \ \ 
\{ z_j^\Lambda , z_k^\Lambda \} = 0 \]
for the principal symbol of $ Z_j^\Lambda $ (in a sense
which will be explained after the rescaling below).
The vanishing of the Poisson bracket reflects the 
fact that  
$ z_j^\Lambda $ vanish on the involutive manifold 
$ \{ ( \alpha, d_\alpha \varphi ( \alpha, y ) : \alpha \in \Lambda \,,
\ y \in \TT^n \} $ -- see Lemma \ref{l:zetac} below.

{Since $B_\Lambda$ is supposed to be a parametrix for} a self-adjoint projection onto the image $ T_\Lambda $, Proposition \ref{p:ZLa} shows that we should have
\begin{equation}
\label{eq:ZLaB} Z^\Lambda_j B_\Lambda \equiv 0 , \ \ B_\Lambda (Z^\Lambda_j)^{*, H} 
\equiv 0 , \end{equation}
where the definition of $ \equiv $ is given in \eqref{eq:BLaq} below.

To explore the second condition in terms of the kernel of 
 $ B_\Lambda $ we denote by $ A^*$ the formal adjoint
 of an operator $ A $ on $ L^2 ( \Lambda , dm_\Lambda ) $ (no weight). We also define a transpose of $ A $ by 
\[  \int_\Lambda A u ( \alpha ) v ( \alpha ) dm_\Lambda ( \alpha ) =
\int_\Lambda u ( \alpha ) A^t v ( \alpha ) dm_\Lambda ( \alpha ) . \]
We note the general fact 
$ ( A^*)^t = J \circ A \circ J $, $  J u := \bar u $. 
Then, with $ K_\Lambda ( \alpha , \beta ) := h^{-n} e^{ i  \psi ( \alpha, \beta )/h} a ( \alpha, \beta, h  ), $
\[ \begin{split}   ( A B_\Lambda )^{*, H}  u ( \alpha ) &  = \int_{\Lambda}  K_\Lambda ( \alpha, \beta ) A^* ( e^{ -  2H ( \bullet)/h } u ( \bullet ) ) ( \beta ) 
d m_\Lambda ( \beta)  \\
& =  \int_\Lambda  (A^*)^t \left( K_\Lambda ( \alpha, \bullet ) \right) ( \beta )  
e^{ -  2H( \beta)/ h } u ( \beta ) d m_\Lambda( \beta)  \\
& =  \int_\Lambda (J \circ A \circ J ) \left( K_\Lambda ( \alpha, \bullet ) \right) ( \beta )  
e^{ -  2H( \beta)/ h } u ( \beta ) d m_\Lambda( \beta) .
\end{split}
\]
Using \eqref{eq:ZLaB} and the above calculation with $ A = Z_j^\Lambda $ gives \begin{equation}
\label{eq:JZJ} 
\widetilde Z_j^\Lambda ( K_\Lambda ( \alpha, \bullet ) ) \equiv 0 , \ \ 
 \widetilde Z_j^\Lambda := J \circ Z_j^\Lambda \circ J . \ \ 
J u := \bar u , \end{equation}
The principal symbols are given by 
\begin{equation}
\label{eq:JzJ} \widetilde z_j^\Lambda ( \beta, \beta^* )  = 
\bar z_j^\Lambda ( \beta , - \beta^*) , \ \ 
\bar z_j^\Lambda := \overline{ z_j^\Lambda  ( \bar \beta,  \bar \beta^* )} , \end{equation}
and by almost analytic continuation are defined in $ \Gamma$. 

\noindent {{\bf{Remark:}} Here we recall that the complex conjugation of $\beta$ and $\beta^*$ is defined as in~\eqref{eq:conjLa}.}

Lemma \ref{l:zetac} will discuss some properties of $ z_j^\Lambda $ 
and $ \bar z_j^\Lambda $ after a linear rescaling. Here we point out that
$ z_j^\Lambda $ is a restriction to $ \Lambda $ of a holomorphic function in $ \Gamma $ but $ \bar z_j^\Lambda ( \alpha, \alpha^* ) = \overline{
\zeta_j^\Lambda ( \alpha, \alpha^* ) }$, $ ( \alpha, \alpha^* ) \in 
T^* \Lambda $, is {\em not}.

\subsection{A general construction}
Here we establish the following
\begin{prop}
\label{p:BLa}
Let $ Z_j^\Lambda $ and $ \widetilde Z_j^\Lambda $ be given by 
\eqref{eq:defZLa} and \eqref{eq:JZJ} respectively.  Suppose that
$ b = b ( \alpha, h ) $ satisfies \eqref{eq:propaa} (with no dependence on 
$ \beta $).

Then there exist $ \psi ( \alpha, \beta ) $ and $ a ( \alpha, \beta , h ) $
satisfying \eqref{eq:propsi} and \eqref{eq:propaa} and such that
\begin{equation}
\label{eq:BLaq}
\begin{gathered}
\psi ( \alpha, \alpha ) = - 2 i H ( \alpha ) ,  \ \ \ 
a_j ( \alpha, \alpha ) = b_j ( \alpha) , 
\\
 e^{ - \frac i h \psi ( \alpha, \beta ) } Z_j^\Lambda ( \alpha, 
h D_\alpha, h ) \left( e^{  \frac i h \psi ( \alpha, \beta ) } a( \alpha, \beta, h ) \right) = \mathcal O_\infty  \\  
 e^{ - \frac i h \psi ( \alpha, \beta ) } \widetilde Z_j^\Lambda ( \beta, 
h D_\beta, h ) \left( e^{  \frac i h \psi ( \alpha, \beta ) } a( \alpha, \beta, h ) \right) = \mathcal O_\infty \end{gathered}, 
\end{equation}
where
\[ \mathcal O_\infty :
= \mathcal O \left( d ( \alpha_x, \beta_x )^\infty + 
( \langle \alpha_\xi \rangle^{-1}  | \alpha_\xi - \beta_\xi|)^\infty + 
( \langle \alpha_\xi \rangle^{-1} h) ^\infty \right). \]

The phase $ \psi ( \alpha, \beta ) $ and amplitudes
$ a_j ( \alpha, \beta ) $
are uniquely determined by $ b_j ( \alpha ) $ up to 
$ \mathcal O_\infty $ and 
\begin{equation}
\label{eq:BLap}
- H ( \alpha ) - \Im \psi ( \alpha, \beta ) - H( \beta ) \leq 
 - (d ( \alpha_x, \beta_x )^2  +  \langle \alpha_\xi \rangle^{-1}  | \alpha_\xi - \beta_\xi|^2 )/C ,
\end{equation}
for some $ C > 0 $.
\end{prop}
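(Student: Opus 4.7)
\textbf{Proof plan for Proposition \ref{p:BLa}.} The construction is a complex WKB analysis for two commuting systems of pseudodifferential operators, and my strategy follows the classical pattern of Boutet de Monvel--Sj\"ostrand and Helffer--Sj\"ostrand, adapted to the rescaling in $\xi$ present here. First I would dyadically localize in $|\alpha_\xi|\sim \lambda$ and rescale $\zeta = \lambda\eta$, which reduces everything to a truly semiclassical problem with effective parameter $h_{\rm eff}=h/\lambda$; after this rescaling the $z_j^\Lambda$ and $\widetilde z_j^\Lambda$ become (almost) analytic symbols of order $0$ in a fixed neighbourhood of the rescaled diagonal, and the bounds in \eqref{eq:propsi}--\eqref{eq:propaa} become the natural statement that $\psi$ and $a_j$ lie in standard symbol classes.

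The phase $\psi$ is constructed so that its graph is the canonical relation defined by the joint characteristics of the two involutive systems. Concretely, I would use the (expected) content of Lemma \ref{l:zetac}: the principal symbols $z_j^\Lambda(\alpha,\alpha^*)$ vanish on an involutive submanifold $\Sigma$ of $T^*\Lambda$, the conjugate symbols $\widetilde z_j^\Lambda(\beta,\beta^*) = \overline{z_j^\Lambda(\bar\beta,-\bar\beta^*)}$ vanish on the complex-conjugate involutive submanifold $\widetilde\Sigma$, and these intersect transversally over the diagonal $\{\alpha=\beta\}$ with the correct cotangent vector $\alpha^*=\xi\, d\alpha_x+id_\alpha H$ from \eqref{eq:dalPh}. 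The eikonal equations
\begin{equation*}
 z_j^\Lambda(\alpha, \partial_\alpha\psi) = \mathcal O_\infty, \qquad \widetilde z_j^\Lambda(\beta,-\partial_\beta\psi)=\mathcal O_\infty,
\end{equation*}
with boundary condition $\psi(\alpha,\alpha)=-2iH(\alpha)$ then have a unique formal solution modulo $\mathcal O_\infty$, obtained by integrating the Hamiltonian flows of almost-analytic extensions of $z_j^\Lambda$ and $\widetilde z_j^\Lambda$ from the diagonal; the symmetry $\overline{\psi(\alpha,\beta)}=-\psi(\beta,\alpha)$ is forced by the symmetry between the two families under conjugation.

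Once $\psi$ is in hand, the amplitudes $a_j$ are constructed recursively. Substituting $e^{i\psi/h}a$ into $Z_j^\Lambda$ and $\widetilde Z_j^\Lambda$ and expanding in powers of $h_{\rm eff}$ gives a transport equation for $a_0$ along the bicharacteristics with initial condition $a_0(\alpha,\alpha)=b_0(\alpha)$, and inhomogeneous transport equations for $a_j$, $j\geq 1$, with source terms involving lower-order amplitudes and derivatives of $\psi$; again by involutivity these are compatible and uniquely solvable up to $\mathcal O_\infty$, and the symmetry $\overline{a(\alpha,\beta)}=a(\beta,\alpha)$ is built in by the initial data and the symmetry of $\psi$. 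Borel summation in $h_{\rm eff}$ gives the full symbol, and the rescaling unpacks to the symbol estimates \eqref{eq:propsi}--\eqref{eq:propaa}.

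Finally, for the positivity \eqref{eq:BLap} I would Taylor expand at the diagonal: the first-order vanishing follows from $\psi(\alpha,\alpha)=-2iH(\alpha)$ and $d_\alpha\Phi|_{\alpha=\beta}=0$ as computed in \eqref{eq:dalPh}, so
\begin{equation*}
 -H(\alpha)-\Im\psi(\alpha,\beta)-H(\beta)
\end{equation*}
vanishes to second order at $\alpha=\beta$. The Hessian at the diagonal is then read off from the principal symbols $z_j^\Lambda$, which to leading order in the rescaled variables control the $\langle \alpha_\xi\rangle(\alpha_x-\beta_x)^2$ and $\langle\alpha_\xi\rangle^{-1}(\alpha_\xi-\beta_\xi)^2$ directions, exactly as in the undeformed model computation in the proof of Lemma \ref{l:TSbounded}. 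Smallness of $\epsilon_0$ in \eqref{e:controlDeformation} guarantees that this Hessian remains negative definite after perturbation by $G$, yielding \eqref{eq:BLap}. The main obstacle, as in \cite{HS} and \cite{Sj96}, is the bookkeeping of almost-analytic extensions through the rescaling: ensuring that the formal eikonal/transport construction, which is naturally holomorphic for $z_j^\Lambda$, extends correctly to the non-holomorphic conjugate system $\widetilde z_j^\Lambda$ uniformly in $|\xi|$, with all error terms controlled by the $\mathcal O_\infty$ symbol class.
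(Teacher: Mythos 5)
Your plan is correct and follows essentially the same route as the paper: rescale in $\xi$ to reduce to an effective semiclassical problem, construct the phase as a generating function for the (almost analytic) Lagrangian relation generated by flowing out from $S\cap\bar S$ by the $\zeta^H$ and $\bar\zeta^H$ Hamiltonians, solve the transport equations recursively using involutivity for compatibility, and deduce the quadratic positivity of $-H(\alpha)-\Im\psi-H(\beta)$ from the strict positivity of $\{\zeta_j^\Lambda,\bar\zeta_k^\Lambda\}/2i$ in Lemma~\ref{l:zetac} plus comparison with $G=0$. You correctly identify the main technical point (propagating almost analytic estimates through the non-holomorphic conjugate family uniformly in $|\xi|$), which the paper addresses via the estimate \eqref{eq:sJt} and the generating-function Lemma~\ref{l:generate}.
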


{We will see that $a$ and $\psi$ are essentially determined by their values on the diagonal in $\Lambda\times \Lambda$. Therefore, t}he construction of $ \psi $ and $ a $ can be done locally and
we now work near $ \alpha^0 = ( x^0, \xi^0 ) \in \Lambda $, where
we identify $ \Lambda $ with $ T^* \TT^n $ as in \eqref{eq:paraLaG}.

For $ \alpha, \beta $ in a conic neighbourhood of $ \alpha^0 $ we 
rescale $ Z_j $ using the following change of variables:
\begin{equation}
\label{eq:tildealbe} \begin{split} 
& \tilde \alpha_x : = \alpha_x - \alpha^0_x , \ \ 
\tilde \alpha_\xi := \langle \alpha_\xi^0 \rangle^{-1} ( \alpha_\xi - 
\alpha_\xi^0 ) ,\\
& \tilde \beta_x : = \beta_x - \alpha^0_x , \ \ 
\tilde \beta_\xi := \langle \alpha_\xi^0 \rangle^{-1} ( \beta_\xi - 
\alpha_\xi^0 ) .
\end{split}
\end{equation}
In this new coordinates the operators $ Z_j^\Lambda $ become
\begin{equation}
\label{eq:tildeZLa}  \begin{gathered} Z_j^\Lambda = \zeta_j^\Lambda ( \tilde \alpha, \tilde h D_{\tilde \alpha } ) + \tilde h \zeta_j^1 ( \tilde \alpha , \tilde h 
D_{\tilde \alpha } ) + \tilde h^2 \zeta_j^2 ( \tilde \alpha ) , 
\ \ \zeta_j^\Lambda = z_j |_\Lambda , 
\\ z_j ( z, \zeta, z^*, \zeta^* ) := \lambda( \zeta ) ( z_j^* - \zeta_j {-}
\theta_j ) {+} \tfrac12 \lambda ( \zeta)^{3} ( z^* - \zeta {-} \theta )^2 (\zeta_j + \theta_j ) 
- i \zeta_j^* ,
 \\
\tilde h := \frac{h}{\langle \alpha_\xi^0
\rangle }, \  \ \
\lambda ( \zeta ) := \frac{\langle \alpha^0_\xi \rangle}{ 
\langle \langle \alpha_\xi^0 \rangle ( \zeta + \theta ) \rangle } , \ \ \  {{\theta}} :=   
\frac{ \alpha_{\xi}^0 }{ \langle \alpha_\xi^0 \rangle}, 
\end{gathered} \end{equation}
where we still have $ \{ \zeta_j^\Lambda , \zeta_k^\Lambda \} = 0 $. The operators
$ \widetilde Z_j^\Lambda $ are defined using \eqref{eq:JZJ}.

We now define the rescaled phase and amplitudes:
\begin{equation}
\label{eq:tildepha}
\begin{gathered}
\tilde \psi ( \tilde \alpha, \tilde \beta ) := 
\langle \alpha_\xi^0 \rangle^{-1} \psi ( \alpha, \beta ) , \ \ 
\tilde H ( \tilde \alpha ) := \langle \alpha_\xi^0 \rangle^{-1} 
H ( \alpha ) , \ \  {\tilde{G}(\tilde{\alpha})=\langle \alpha_\xi^0\rangle^{-1}G(\alpha), }\\
\tilde a_j ( \tilde \alpha, \tilde \beta ) : = 
\langle \alpha_\xi^0 \rangle^j a_j ( \alpha, \beta ) , 
\ \ \tilde b_j ( \tilde \alpha ) :=  \langle \alpha_\xi^0 \rangle^j b_j ( \alpha) , 
\end{gathered}
\end{equation}
so that
\[  a ( \alpha, \beta ) \sim \sum_{ j=0}^\infty \tilde h^j \tilde a_j 
( \tilde \alpha, \tilde \beta ) , \ \ 
b ( \alpha ) \sim \sum_{ j=0}^\infty \tilde h^j \tilde b_j 
( \tilde \alpha) .
 \]
Hence \eqref{eq:BLaq} becomes
\begin{equation}
\label{eq:BLaq1}
\begin{gathered}
\tilde \psi ( \tilde \alpha, \tilde \alpha ) = - 2 i \tilde H ( \alpha ) ,  \ \ \ 
\tilde a_j ( \tilde \alpha, \tilde \alpha ) = \tilde b_j ( \tilde \alpha) , 
\\
 e^{ - \frac i {\tilde h}  \tilde \psi ( \tilde \alpha, \tilde \beta ) } Z_j^\Lambda  \left( e^{  \frac i {\tilde h} \tilde \psi (  \bullet, \tilde \beta  ) } \tilde a(  \bullet, \tilde \beta , \tilde h ) \right) ( \tilde \alpha ) = \mathcal O \left(   | \tilde \alpha- \tilde \beta|)^\infty + 
{\tilde h} ^\infty \right), \\  
 e^{ - \frac i {\tilde h } \tilde \psi ( \tilde \alpha, \tilde \beta ) } \widetilde Z_j^\Lambda \left( e^{  \frac i {\tilde h } \psi ( \tilde \alpha,
 \bullet ) } \tilde a( \tilde \alpha, \bullet, \tilde h ) \right) ( \tilde \beta) = \mathcal O \left(   | \tilde \alpha- \tilde \beta|)^\infty + 
{\tilde h} ^\infty \right), \end{gathered}
\end{equation}
where now $ \tilde \psi $ and $ \tilde a_j $ are smooth functions in a neighbourhood of $ 0 \in \RR^{2n} \times \RR^{2n} $. 
\begin{equation}
\label{eq:convent}
\text{\bf  To simplify notation we now drop $ \tilde{} \, $ in $ \tilde h $, $ \tilde \psi $, $ \tilde H $, ${\tilde{G}}$ 
and $ \tilde a $.}
\end{equation}
This will apply until the end of the construction of the phase and the amplitude.

\subsubsection{Eikonal equations}
Here we work in the coordinates \eqref{eq:tildealbe} and use
the convention \eqref{eq:convent}. Hence we assume that $ \Lambda $ is
a neighbourhood of $ 0 $ in $ T^* \RR^n $.

Let $ \zeta_j^\Lambda  $ and $ \widetilde \zeta_j^\Lambda $ be 
the principal symbols of $ Z_j^\Lambda $ and $ \widetilde Z_j^\Lambda$ respectively -- see \eqref{eq:tildeZLa}. {The} eikonal equations we want to solve are
\begin{equation}
\label{eq:Laeik}
\begin{gathered}
\zeta_j^\Lambda ( \alpha, d_\alpha \psi ( \alpha, \beta ) ) = 
\mathcal O ( | \alpha - \beta |^\infty ) , \\ 
\widetilde \zeta_j^\Lambda ( \beta, d_\beta \psi ( \alpha, \beta ) ) 
= \mathcal O ( | \alpha - \beta |^\infty ) ,  \end{gathered}
\end{equation}
for $ \alpha, \beta \in \Lambda$. 
We also put 
\[ \bar{ \zeta}_j^\Lambda ( \alpha, \alpha^* ) := 
\widetilde {\zeta}_j^\Lambda ( \alpha, - \alpha^* ) , \]
see \eqref{eq:JZJ} and \eqref{eq:JzJ}. We note that
for $ ( \alpha, \alpha^* ) \in T^* \Lambda $, 
$ \bar{\zeta}_j^\Lambda ( \alpha, \alpha^*) = 
\overline{ \zeta_j^\Lambda ( \alpha, \alpha^* )}$. The next lemma
records the Poisson bracket properties of $ \zeta_j^\Lambda $ on 
$ \Lambda $:

\begin{lemm}
\label{l:zetac}
Let $ \{ \bullet, \bullet \} $ denote the Poisson bracket on 
$ T^* \RR^n   $ defined using the (real) symplectic form $ 
\sigma_\Lambda := (\sigma_{T^* \CC^n })|_\Lambda  $ and coordinates 
\eqref{eq:paraLaG}.  Let 
\[ \Sigma := \{ \zeta_j^\Lambda ( \rho ) = 0 : \rho \in 
 T^* \RR^{2n}  ,  \ | x^*  - \xi - \theta  | < { \lambda ( \xi )^{-1} }  \} ,  \ \ \rho = ( x, \xi, x^*, \xi^* ),   \]
where $ \lambda $ is defined in \eqref{eq:tildeZLa}. 

Then, for $ \zeta_j^\Lambda $ defined above we have 
$\{ \zeta_j^\Lambda, \zeta_k^\Lambda \} = 0 $, 
and, for $ \| G \|_{ C^2 } \ll 1 $, 
\begin{equation}
\label{eq:zetajLab}   
\left( \tfrac 1 {2i}  \{ \zeta_j^\Lambda , \bar \zeta_k^\Lambda \} ( \alpha, \alpha^* ) \right) _{1\leq j,k\leq n } \gg c I , \ \ c > 0 , 
\end{equation}
for $ ( \alpha, \alpha^*) \in \Sigma  \cap \nbhd_{ T^* \RR^{2n} } ( 0 ) $.
\end{lemm}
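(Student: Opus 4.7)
For the first claim I would invoke the remark made immediately after Proposition~\ref{p:ZLa}, namely that the holomorphic differential operators $Z_j$ on $\Gamma$ commute. Consequently their holomorphic principal symbols $z_j$ satisfy $\{z_j,z_k\}^{\mathbb C}=0$ as holomorphic functions on $\Gamma\subset T^*T^*{\mathbb C}^n$. Since $\Lambda$ is $I$-Lagrangian and $\RR$-symplectic (hence maximally totally real), the complex Poisson bracket of holomorphic functions agrees on $\Lambda$ with the real Poisson bracket of their restrictions with respect to $\sigma_\Lambda=\Re(\sigma|_\Lambda)$; this is the standard compatibility in Sj\"ostrand's framework, and can be verified by a short coordinate computation using the parametrization \eqref{eq:paraLaG}. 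Restricting then gives $\{\zeta_j^\Lambda,\zeta_k^\Lambda\}_{\sigma_\Lambda}=\{z_j,z_k\}^{\mathbb C}|_\Lambda=0$.

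For the positivity statement I would argue by continuity from the undeformed case $G\equiv 0$, where $\Lambda=T^*\RR^n$ (in the rescaled coordinates), $z=x,\ \zeta=\xi$ are real, the parametrization $F$ is the identity so that $(z^*,\zeta^*)=(x^*,\xi^*)$, and $\sigma_\Lambda$ is the standard symplectic form. On $\Sigma$, setting $z_j=0$ forces $\xi^*=0$ (from the imaginary part) and then $x^*=\xi+\theta$ (from the real part), so that $(x^*-\xi-\theta)$ vanishes identically on $\Sigma$. Using this, the derivatives of $z_j$ collapse to
\[
\partial_{x^*_\ell}z_j|_\Sigma=\lambda(\xi)\delta_{j\ell},\qquad \partial_{\xi_\ell}z_j|_\Sigma=-\lambda(\xi)\delta_{j\ell},\qquad \partial_{\xi^*_\ell}z_j=-i\delta_{j\ell},\qquad \partial_{x_\ell}z_j=0,
\]
with the analogous formulas for $\bar z_k$ (complex conjugates, noting $\lambda$ is real-valued on $\RR^n$), the only sign change being $\partial_{\xi^*_\ell}\bar z_k=+i\delta_{k\ell}$. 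Since $\partial_x z_j=0$, the $x$-$x^*$ part of the Poisson bracket drops out and the $\xi$-$\xi^*$ part yields
\[
\{z_j,\bar z_k\}|_\Sigma=(-i\delta_{j\ell})(-\lambda\delta_{k\ell})-(-\lambda\delta_{j\ell})(i\delta_{k\ell})=2i\lambda(\xi)\delta_{jk},
\]
so $\frac{1}{2i}\{\zeta_j^\Lambda,\bar\zeta_k^\Lambda\}|_\Sigma=\lambda(\xi)\,\delta_{jk}$. Because $\lambda(0)=1$ and $\lambda$ is positive and smooth on the allowed range of $\xi$, this matrix is $\geq cI$ on a neighbourhood of the reference point.

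Finally, for $G$ with $\|G\|_{C^2}\ll 1$, the submanifold $\Lambda$, the identification $F$, the symbol $\zeta_j^\Lambda$, its complex conjugate, and the symplectic form $\sigma_\Lambda$ are all $O(\|G\|_{C^2})$ perturbations of their $G=0$ counterparts. Hence the matrix-valued function $\frac{1}{2i}(\{\zeta_j^\Lambda,\bar\zeta_k^\Lambda\})_{jk}$ depends continuously on $G$ (in $C^2$), and the strict positivity obtained at $G=0$ is preserved on $\Sigma\cap \mathrm{nbhd}(0)$ for $\epsilon_0$ small enough.

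\textbf{Main difficulty.} The subtle point is that $\bar\zeta_k^\Lambda$ is \emph{not} the restriction of a holomorphic symbol to $\Lambda$, so the clean holomorphic-restriction argument used for the first claim does not apply; one must work in real coordinates on $T^*\Lambda$ and keep track of the identification between $(z^*,\zeta^*)$ and the cotangent variables on $\Lambda$ induced by the parametrization $F$ of \eqref{eq:paraLaG}. The calculation above is transparent at $G=0$ because this identification is the identity; for general $G$ the resulting $O(\|G\|_{C^2})$ corrections to derivatives and to $\sigma_\Lambda$ require bookkeeping but do not spoil positivity, which is exactly why the smallness assumption on $\epsilon_0$ in \eqref{e:controlDeformation} appears in the hypothesis.
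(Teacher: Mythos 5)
Your proposal is correct and follows essentially the same route as the paper: reduce to $G\equiv 0$, compute $\Sigma$ explicitly to find $\xi^*=0$, $x^*=\xi+\theta$, evaluate the bracket there to get $\lambda(\xi)\delta_{jk}$, and then perturb. The only cosmetic difference is that the paper identifies $\Sigma$ via its geometric origin as the graph $\{(\alpha,d_\alpha\varphi(\alpha,y))\}$ of the FBI phase, whereas you solve the algebraic equations $\Re z_j=\Im z_j=0$ directly (using the cutoff $|x^*-\xi-\theta|<\lambda^{-1}$ to rule out the spurious branch); both yield the same description of $\Sigma$ and the same derivative computation.
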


\noindent
{The positivity condition in Lemma~\ref{l:zetac} will be used in two places. First, it is used to guarantee that the Lagrangian used to construct the phase solving~\eqref{eq:Laeik} is strictly positive (see \eqref{eq:strpos}). Next, when $G$ is only smooth, this condition will be crucial when proving~\eqref{eq:sJt} (see also~\cite[(6.29)]{GZ}) and hence that the Lagrangian we construct is almost analytic. The proof of the Lemma will also show that there are solutions to $\zeta_j^\Lambda(\rho)=0$ with $|x^*-\xi-\theta|\geq  \lambda ( \xi )  $ {($ \lambda ( \xi ) \sim 1 $ for $ \xi $ in 
a neighbourhood of $ 0 $)}. However,~\eqref{eq:zetajLab} may not be satisfied at these points and hence (at leais \emph{not} appropriately positive.}

\begin{proof}
It is enough to check \eqref{eq:zetajLab} for $ G = 0$. In that 
case $ \Sigma $ is contained in in $ \{ ( \alpha, d_\alpha \varphi ( \alpha, y ) : \alpha \in \RR^{2n} , y \in\CC^n \} $ where 
$ \varphi ( \alpha, y ) $ is the rescaled phase of our FBI transform
(this follows from the fact that $ \zeta_j^\Lambda $ are pri{n}cipal symbols of operators annihilating $ T $). Hence,
\begin{equation}
\label{eq:phiSig} \begin{gathered} 
\begin{split} \Sigma & := \{ ( x, \xi , \varphi_x  , 
 \varphi_\xi ) : y \in \CC^n {, \ 
 | \xi + \theta - \varphi_x  | < 
1 \ } 
  , \ x, \xi \in \RR^n \} \cap T^* \RR^{2n} \\
 & = 
\{ ( x, \xi , \xi + {{\theta}} , 0 ) \},
\ \ 
\varphi = \varphi ( x, \xi , y ) := \langle x - y ,  \xi + {{\theta}}  \rangle + 
\tfrac i 2 \lambda ( \xi )^{{-1}} ( x - y )^2  ,
\end{split} 
\end{gathered} 
\end{equation}
where $ \lambda $ was defined in \eqref{eq:tildeZLa}. 
{(We just check that if $ x^* = \varphi_x ( x , \xi, y ) $, 
 $ \xi^* = \varphi_\xi ( x, \xi, y ) $ then $ y = x - i \lambda ( \xi + \theta - x^* ) $ 
and $ \xi^* = i \lambda ( \xi + \theta - x^* ) 
+ \tfrac12 i \partial_\xi \lambda  ( \xi + \theta - x^* )^2 $. As $ x^* $ and $ \xi^* $ are real we obtain
that either $ x^* = \xi + \theta $ as claimed or 
 \[ \begin{split} \lambda | \xi + \theta - x^* | & = 2 \lambda^2
| \partial_\xi \lambda |^{-1}  = 2 \lambda^{-1} | \xi + \theta |^{-1} 
=
2 \frac{ \langle R ( \xi + \theta ) \rangle } { R | \xi + \theta| }
\geq 2 
\end{split} \]
which contradicts the condition in \eqref{eq:phiSig}. Hence
$\xi = x^* - \theta  $ and $ y = x $.)} 

Since
$ \{ \zeta^\Lambda_j , \zeta_k^\Lambda \} = 0 $ we see
that 
\[ 
\begin{split}
\tfrac 1 {2i}  \{  \zeta_j^\Lambda , \bar \zeta_k^\Lambda \} & = \{ \Im \zeta_j^\Lambda, \Re \zeta_k^\Lambda \} \\
& = -
\partial_{ \xi_j } \left( \lambda( \xi) ( x_{{k}}^* - \xi_{{k}} {-}
{{\theta}}_{{k}} ) {+} \tfrac12 \lambda ( {\xi})^{3} ( x^* - \xi {-} {{\theta}} )^2 (\xi_{{k}} + {{\theta}}_{{k}} ) \right) = \lambda ( \xi ) \delta_{jk} , \end{split} \]
when evaluated at $ x^* = \xi + {{\theta}} $. Hence, for $ G = 0 $, 
the matrix is \eqref{eq:zetajLab} is given by $ \lambda ( \xi ) I $
and $ \lambda ( \xi ) \sim 1 $ for $ \xi $ bounded. Hence for $ G $ small
the matrix stays positive definite.
\end{proof}

From the geometric point of view, the framework for 
construction of the phase is the same as in \cite[\S 2.2]{GZ} (see also
\cite[\S 6.1]{GZ} for a presentation in a simpler case). It is convenient
to remove the weight by putting
\[ \psi_H ( \alpha, \beta ) := i H ( \alpha ) + \psi ( \alpha, \beta ) 
+ i H ( \beta ) . \]
We also define,
\begin{equation}
\label{eq:defzetH}   \begin{split} \zeta_j^H ( \alpha, \alpha^* ) & :=
\zeta_j^\Lambda ( \alpha, \alpha^* - i dH( \alpha ) ), 
\\  \bar \zeta_j^H  ( \alpha , \alpha^* ) & := 
\bar  \zeta_j^\Lambda  ( \alpha, \alpha^* + i dH( \alpha ) )
= \overline{ \zeta_j^H ( \bar \alpha, \bar \alpha^* ) }
 . \end{split} \end{equation}
(Here again the $ \bar \alpha $ and $ \bar \alpha^* $ are defined after an identification of $ \Lambda $ with $ T^* \RR^n $.)
Lemma \ref{l:zetac} remains valid for $ \zeta_j^H $.

The eikonal equations \eqref{eq:Laeik} become
\begin{equation}
\label{eq:LaeikH}
\begin{gathered}
\zeta_j^H ( \alpha, d_\alpha \psi_H ( \alpha, \beta ) ) = 
\mathcal O ( | \alpha - \beta |^\infty ) , \\ 
\bar \zeta_j^H ( \beta, - d_\beta \psi_H ( \alpha, \beta ) ) 
= \mathcal O ( | \alpha - \beta |^\infty ) ,  
\end{gathered} \end{equation}
for $ \alpha, \beta \in \Lambda$. Since we demand that $ 
\psi ( \alpha, \alpha ) = - 2 i  H ( \alpha ) $, 
it follows that $ \psi_H ( \alpha, \alpha ) = 0 $, and by differentiation,
\begin{equation}
\label{eq:diagcond}
0 = d_\alpha  ( \psi_H ( \alpha, \alpha  )) = 
d_\alpha \psi_H ( \alpha, \beta ) |_{ \beta = \alpha} + d_\beta \psi_H ( \alpha, \beta )|_{\beta = \alpha }  , \ \ \alpha \in \Lambda. 
\end{equation}
To construct $ \psi_H $ we will construct $ \mathscr C_H $, a Lagrangian relation for which $ \psi_H $ will be the generating function: 
\begin{equation}
\label{eq:defCH}  \mathscr C_H = \{ ( \alpha, d_\alpha \psi_H ( \alpha, \beta ) , 
\beta, - d_\beta \psi_H ( \alpha, \beta ) ) : ( \alpha, \beta)  \in 
\nbhd_{\CC^{4n} } ( \Diag ( \Lambda \times \Lambda ) ) \}. \end{equation}
We first assume that $ G $, and hence $ H $, are real analytic and have 
holomorphic extensions. 

Writing $ \rho = ( x, \xi , x^*, \xi^* ) $,  the eikonal equations require that we should have (up to 
equivalence of almost analytic manifolds and exactly on $ T^* \Lambda $)
\begin{equation}
\label{eq:defSbS}  \begin{gathered}  \mathscr C_H \subset S \times \overline S , \ \ 
S := \{ \rho : \zeta_j^H ( \rho ) = 0 , \ \rho \in \nbhd_{ \CC^{4n}} 
(\RR^{4n} ), \ { | x^* - \xi - \theta | < 1 }   \} , \\ \overline S := \{ \bar \rho : \rho \in S \}=
\{ \rho : \bar \zeta_j^H ( \rho ) = 0 , \ \rho \in \nbhd_{ \CC^{4n}} 
( \RR^{4n}  ), \ { | x^* - \xi - \theta | < 1 } \} . \end{gathered} \end{equation}

The condition \eqref{eq:diagcond} means that 
\begin{equation}
\label{eq:diagoncond1}  \mathscr C_H \cap
\pi^{-1} (\Delta_{ \CC^{2n} \times \CC^{2n}} ) = 
\Delta (  ( S \cap \bar S ) \times ( S \cap \bar S )),  \end{equation}
where $ \Delta (  A \times A  ) := \{ ( a, a ) :
a \in A\} $. In fact, \eqref{eq:diagcond} shows that this must be true
for $ \mathscr C_H \cap
\pi^{-1} (\Delta_{ {\RR}^{2n} \times {\RR}^{2n}} ) $ and then it follows 
by analytic continuation (or an equivalence of almost analytic manifolds
once we move to the $ \CI $ category). We have the following additional property which comes from the choice of the weight $ H$:
\begin{lemm}
\label{l:SSR}
Let $ S$ and $ \bar S $ be defined in \eqref{eq:defSbS}. The for 
$ H $ satisfying \eqref{eq:defH} we have
\begin{equation}
\label{eq:SSR} 
( S \cap \bar S )_\RR = S_\RR = \{ ( \alpha, \Re ( z  
d\zeta |_\Lambda ) : \alpha \in \nbhd_{\RR^{2n} } ( 0 ) \}.
\end{equation}
\end{lemm}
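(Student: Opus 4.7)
The argument rests on one geometric identity. By \eqref{eq:defH} we have $dH = -\Im(\zeta\,dz)|_\Lambda$, so the complex-valued 1-form $(\zeta\,dz)|_\Lambda$ decomposes as
$(\zeta\,dz)|_\Lambda = \Re(\zeta\,dz)|_\Lambda - i\,dH$.
I plan to combine this identity with the unique solvability of the system $\zeta_j^H = 0$ guaranteed by Lemma~\ref{l:zetac} to identify $S_\RR$ exactly.

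For the inclusion $\{(\alpha,\Re(\zeta\,dz)|_\Lambda(\alpha)) : \alpha \in \nbhd_{\RR^{2n}}(0)\} \subset S_\RR$, I would start from the fact, established in the proof of Proposition~\ref{p:ZLa}, that $W_j(e^{i\varphi/h}) = 0$ with $\varphi(z,\zeta,y) = \langle z-y,\zeta\rangle + \tfrac{i}{2}\langle\zeta\rangle(z-y)^2$. Hence the principal symbol of $Z_j$ vanishes on the Lagrangian $\{(z,\zeta,\partial_z\varphi,\partial_\zeta\varphi)\}$ parametrized by $y \in \CC^n$. The specialization $y = z$ gives the section $(z,\zeta,\zeta,0)$, which corresponds to the 1-form $\zeta\,dz$. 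Restricting $(z,\zeta)$ to $\Lambda$ and using the almost analytic extension of $\zeta_j^\Lambda$ from $T^*\Lambda_\RR$ to a complex neighborhood in $T^*\Lambda_\CC$ produces $\zeta_j^\Lambda(\alpha, (\zeta\,dz)|_\Lambda(\alpha)) = 0$. Combined with the decomposition above and the definition $\zeta_j^H(\alpha,\alpha^*) = \zeta_j^\Lambda(\alpha, \alpha^* - i\,dH(\alpha))$, this yields $\zeta_j^H(\alpha,\Re(\zeta\,dz)|_\Lambda(\alpha)) = 0$, which is the desired inclusion.

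Equality then follows by a dimension count. Lemma~\ref{l:zetac} applies equally to $\zeta_j^H$, since the momentum shift $\alpha^* \mapsto \alpha^* - i\,dH(\alpha)$ does not affect the principal Poisson brackets, so the $2n$ real functions $\Re\zeta_j^H, \Im\zeta_j^H$ have $\RR$-linearly independent differentials at each real solution. Hence $S_\RR$ is a smooth real $2n$-dimensional submanifold of $T^*\Lambda_\RR$, and the graph already shown to lie inside it fills it out on a neighborhood of $0$. For the equality $(S \cap \bar S)_\RR = S_\RR$, the inclusion $\subset$ is immediate; for the reverse, any real $(\alpha,\alpha^*) \in S$ satisfies $\overline{\zeta_j^H(\alpha,\alpha^*)} = 0$, and the defining property \eqref{eq:JzJ} of $\bar\zeta_j^H$ as the almost analytic extension of $\overline{\zeta_j^H|_\RR}$ forces $\bar\zeta_j^H(\alpha,\alpha^*) = 0$.

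\textbf{Main obstacle.} The subtle step is making sense of the evaluation $\zeta_j^\Lambda(\alpha,(\zeta\,dz)|_\Lambda(\alpha))$ when $G \not\equiv 0$: the 1-form $(\zeta\,dz)|_\Lambda$ then has nontrivial imaginary part, so the argument lies outside $T^*\Lambda_\RR$ and one needs the almost analytic extension of $\zeta_j^\Lambda$. This uses the machinery of Appendix~A together with the totally-real property of $\Lambda$ recorded in \S\ref{s:geom}, and the fact that the characteristic variety of $\{Z_j\}$ is genuinely holomorphic in $T^*\CC^n$ before restriction. One also has to transport everything through the rescaling \eqref{eq:tildealbe} underlying the formulation of the lemma.
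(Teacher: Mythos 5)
Your proposal is correct and essentially matches the paper's proof: both identify the real locus by observing that the symbols $\zeta_j^\Lambda$ vanish at the covector $(\zeta\,dz)|_\Lambda$ (coming from the $y=z$ critical point of the FBI phase), then apply the $H$-shift and the identity $dH=-\Im(\zeta\,dz)|_\Lambda$ to land on $\Re(\zeta\,dz)|_\Lambda$, and close the argument via the non-degeneracy furnished by Lemma~\ref{l:zetac}. The only cosmetic difference is that the paper phrases the uniqueness as transversal intersection of $S_\alpha$ and $\bar S_\alpha$ in a single point, whereas you phrase it as a dimension count via independence of the differentials of $\Re\zeta_j^H,\Im\zeta_j^H$; these are equivalent.
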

\begin{proof}
As in the proof of Lemma \ref{l:zetac} it is useful to go to the origins of the symbols
$ \zeta_j^H $ \eqref{eq:defzetH}: $ Z_j^\Lambda $'s, with 
symbols $ \zeta_j^\Lambda $ annihilate the phase in $ T_\Lambda $ 
and that shows that, after switching to $ \zeta_j^H $, 
\begin{gather*} S_\alpha := S \cap T_\alpha^* \Lambda^\CC = 
\left\{ ( \alpha, d_\alpha \varphi ( \alpha, y ) 
+ i dH ( \alpha ) ) \
 :  \ y \in \CC^n \right\}, \\ 
\varphi ( \alpha, y ) := \langle z - y , \zeta + \theta \rangle + 
\tfrac i 2 \lambda^{{-1}} ( \zeta ) ( z - y )^2   ,\\
z = \alpha_x + i G_\xi ( \alpha_x, \alpha_\xi) , \ \
\zeta = \alpha_\xi - iG_x ( \alpha_x  , \alpha_\xi).
\end{gather*}
where $ \lambda ( \zeta ) $ and $\theta $ were defined in \eqref{eq:tildeZLa}.

In the case $ G = 0 $ (and hence $ H = 0 $),
$ S_\alpha  $ and $ \bar S_\alpha := \bar S \cap T^*_\alpha
\Lambda ^\CC  $ intersect transversally 
in one point. This remains true for a small perturbation induced by $ G $ with $ \| G \|_{C^2 } \ll 1 $ (this corresponds to symbolic norms before rescaling). Hence we 
are looking for a solution to 
\begin{equation}
\label{eq:dalH}  d_\alpha \varphi ( \alpha, y ) + i d H ( \alpha ) = 
\overline{ d_\alpha \varphi ( \alpha, y' ) } - i d H ( \alpha ) .\end{equation} 
Now, at $ y = y' = \alpha_x $ we have 
$ d_\alpha \varphi  ( \alpha, y ) = \zeta d z|_\Lambda $
and in view of the definition of $ d H $ in \eqref{eq:defH}, 
\eqref{eq:dalH} holds. It follows that  {for $\alpha\in \Lambda$}, that is 
for $ \alpha $ real, 
\[  S_\alpha \cap \bar S_\alpha = \{ ( \alpha, \Re ( z  
d\zeta |_\Lambda ) \} = S_\alpha \cap T^* \Lambda, \ \Lambda \simeq 
\nbhd_{T^* \RR^n } ( 0 ) .  \]
But this proves \eqref{eq:SSR}.
\end{proof}

Since 
\[ \mathscr C_H \subset \bigcap_{ j = 1}^n (\pi^*_L\zeta^H)^{-1} ( 0 ) \cap 
(\pi_R^* \bar \zeta_j^H)^{-1} ( 0 ) , \ \ 
\pi_L ( \rho, \rho' ) := \rho, \ \ \pi_R ( \rho, \rho' ) := \rho' , \]
it follows that the complex vector fields $ H_{\pi^*_L \zeta_j^H} $ and $ H_{\pi_R^* \bar \zeta_j^H}$
are tangent to $ \mathscr C_H$. By checking the case of $ T^* \Lambda = 
T^* \RR^n $ (no deformation and hence $ H \equiv 0 $) we have (see
\cite[\S 2.2]{GZ}) that $ S \cap \bar S $ is a symplectic submanifold 
(with respect to the complex symplectic form) of complex dimension 
$ {2} n $. The independence of $ H_{\zeta_k^H } $, $ H_{\bar \zeta_j^H } $
, $ j , k = 1, \cdots n $ (again easily seen in the unperturbed case)
shows that 
\[  B_{\CC^n} ( 0 , \epsilon ) \times B_{\CC^n} ( 0 , \epsilon ) 
\times ( S \cap \bar S ) \ni ( t, s, \rho)  \mapsto 
  ( \exp \langle t, H_{\zeta^H} \rangle  ( \rho ) , 
\exp \langle s , H_{ \bar \zeta_H} \rangle  ( \rho ) ) \in 
\CC^{8n} , \]
is a bi-holomorphic map to an embedded (complex) $ 4n$ dimensional
submanifold. 
This implies that
\begin{equation}
\label{eq:analCH} \mathscr C_H =  \left\{ (
\exp \langle t , H_{\zeta^H} \rangle ( \rho) , 
\exp \langle s , H_{\bar \zeta^H} \rangle ( \rho )) : \rho \in 
S \cap \bar S , \ \ t , s \in B_{ \CC^n } ( 0 , \epsilon ) \right\}  , \end{equation}
where $ \langle t, H_{\bullet^H } \rangle := 
\sum_{ k=1}^n t_k H_{\bullet_k^H } $, $ \bullet = 
\zeta, \bar \zeta $. {Checking again in the unperturbed case, we have that for $\rho \in S\cap \bar{S}$}
\begin{equation}
\label{e:project}
{\pi_*:  T_\rho \mathscr C_H \to T_{\pi(\rho)} \mathbb{C}^{4n} \ \text{ is onto.}}
\end{equation}

We now explain how to use 
almost analytic extensions off $ \Lambda $ in the $ \CI $ case. We first identify 
$ \Lambda $ with $ T^* \RR^n $ using \eqref{eq:paraLaG} and extending 
$ G $ almost analytically to $ \CC^{4n} $. The symplectic form is 
now the almost analytic extension of the symplectic form 
$ d \zeta \wedge dz |_\Lambda $. 
Hence we define (see the Appendix for the definitions)
\[ \mathscr C_H = 
\left\{ \left( \exp{ \widehat{ \langle t, H_{\zeta^H } \rangle }} (\rho), 
\exp{ \widehat {\langle s, H_{\bar \zeta^H } \rangle } } (\rho) \right) :
\rho \in S \cap \bar S  , \ \ t, s \in B_{\CC^n } ( 0 , \epsilon ) \right\} . \]
We claim that
{ \begin{equation}
\label{eq:sJt}  |\Im \exp{ \widehat{ \langle t, H_{\zeta^H } \rangle }} (\rho) | \geq |t|/ C , \ \ 
| \Im \exp{ \widehat {\langle s, H_{\bar \zeta^H } \rangle } } (\rho) |
\geq |s|/ C ,  \ \ \rho \in S \cap \bar S . \end{equation}
In fact, in view of Lemma \ref{l:zetac} at 
 $ \rho \in T^* \Lambda \cap S $ and for $ \| G \|_{C^2 } $ small, 
  we can assume 
  $ \{ \zeta_j^\Lambda, \bar \zeta_k^\Lambda \} ( \rho )/2i  $ is positive 
  definite. The  changes of variable
leading to $ \zeta_j^H $ {is a symplectomorphism} and hence we have the same property for $
\zeta_j^H $. By changing $ \zeta_j^H $ by a linear transformation 
we can then assume that $ \{ \zeta_j^H, \bar \zeta_k^H \} ( \rho )/2i  =  \delta_{kj} $. Hence we can make a linear
symplectic change of variables at any point of $ T^* \Lambda $ giving
new variables $  ( x, y, \xi,\eta )$, $ x, y , \xi , \eta \in \RR^n $, centered at $ 0 \in \RR^{4n} $,  
such that 
\[ \zeta_j^H = c ( \eta_j + i y_j ) + \mathcal O ( |x|^2 + |y|^2 + |\xi|^2 + |\eta|^2 )  , \ \ c >0 . \]
This continues to hold for the almost analytic continuations of $ \zeta_j^H $. 
That means that near $ 0 $,  
\begin{equation}
\label{eq:mathsJ}  S \cap \bar S  = \{ ( z, 0 , \zeta, 
0 ) + F ( z, \zeta ) ) : ( z, \zeta ) \in \nbhd_{ \CC^{2n} } ( 0 )  \} , \ \ F = \mathcal O ( |z|^2 + |\zeta|^2 ) ,
\end{equation}
We also note that for $ ( z, \zeta )\in \RR^{2n} $ (which corresponds to 
the interection with $ T^* \Lambda $), $ S \cap \bar S $ is real. This means
that in \eqref{eq:mathsJ}, 
\[ \Im F ( z , \zeta )  = 
\mathcal O ( ( |\Im z | + |\Im \zeta | ) ( |z| + |\zeta| ) ). \]
Hence,
\[ \begin{split} 
|\Im \exp{ \widehat{ \langle t, H_{\zeta^H } \rangle }} ( 
( z, 0 , \zeta, 
0 ) + F ( z, \zeta ) ) | & =
|( \Im z , c \Im t , \Im \zeta, c \Re t ) | \\
& \ \ \ \ + \mathcal O (  ( |\Im z | + |\Im \zeta | + |t| ) ( |z| + |\zeta| )  + |t|^2 ) 
\\ & \geq |t| /C , \ \text{ if $ |z|,|\zeta| \ll 1 $,}  \end{split}
\]  
with the corresponding estimate for $ \bar \zeta^H $. 
Lemma \ref{l:flow1} and \eqref{eq:sJt} now show the almost analyticity of $ \mathscr C_H $} and Lemma \ref{l:geof} shows that $ \mathscr C_H $ 
is Lagrangian in the almost analytic sense: 
\[  ( \pi_L^* \omega_{T^* \CC^{2n}}  - \pi_R^* \omega_{T^* \CC^{2n} } )|_{\mathscr C_H } \sim 0 .\]
(See the appendix for the review of the almost analytic machinery and notation.) Lemma~\ref{l:SSR} shows that $\Delta ((S\cap \bar{S}_{\RR}\times (S\cap \bar{S})_{\RR})=(\mc{C}_H)_{\RR}$ is a submanifold, Lemma~\ref{l:zetac} shows that $\mc{C}_H$ is therefore a strictly positive almost analytic Lagrangean submanifold and hence, using~\eqref{e:project}, 
Lemma \ref{l:generate} now gives $ \psi_H = 
\psi_H  ( \alpha, \beta ) $ such that, 
\[ d_{\bar \alpha, \bar \beta } \psi_H  ( \alpha, \beta ) = 
\mathcal O \left( |\Im \alpha |^\infty + | \Im \beta |^\infty + |\Im 
\psi_H ( \alpha, \beta )|^\infty\right) , \]
and \eqref{eq:defCH} holds in the sense of equivalence of almost analytic manifolds (that is with $ \sim $ of \eqref{eq:h1h2} replacing the equality). In addition, in view of \eqref{eq:SSR} and \eqref{eq:sJt},
\begin{equation}
\label{eq:dpsiH}   \begin{split} d_\alpha \psi_H ( \alpha , \beta )|_{\beta=\alpha}  & = \Re (\zeta \cdot dz |_\Lambda ) , \\ 
d_\beta \psi_H ( \alpha, \beta ) |_{ \beta = \alpha } & = - \Re ( \zeta \cdot dz |_\Lambda ) ,\end{split}  \ \ \ \ \ \alpha \in \nbhd_{\RR^{2n} } ( 0), 
\end{equation}
and $ d_\alpha ( \psi_H ( \alpha, \alpha ) ) \sim 0 $. Hence we can choose
$ \psi_H ( \alpha, \alpha ) = 0 $. We also see that
\[   d_\alpha  \Im \psi_H ( \alpha , \beta )|_{\beta=\alpha} = 0 , \ \ 
d_\beta \Im \psi_H ( \alpha, \beta ) |_{ \beta = \alpha } = 0 ,  \ \ 
\ \ \alpha \in \nbhd_{\RR^{2n} } ( 0) , \]
which means that $ \Im \psi_H ( \alpha, \beta ) = \mathcal O ( | \alpha - \beta |^2 ) $, $ \alpha, \beta \in \nbhd_{\RR^{2n} } ( 0)  $, and the comparison with the case of $ G = 0$ shows that
\begin{equation}
\label{eq:ImpsiH} 
\Im \psi_H ( \alpha , \beta ) \sim | \alpha - \beta|^2 . 
\end{equation} 
Finally we 
return to \eqref{eq:LaeikH}: (recall that $ \zeta_j^H $ are the almost analytic extensions of $ \zeta_j^H $ from $ T^*{\Lambda}$
and that $ \{ \zeta_j^H , \zeta_k^H \} \sim 0 $):
\[ \begin{split}  \widehat{ \langle s, H_{\pi_L^* \zeta^H} \rangle } \pi_L^* \zeta^H_j  & = 
\sum_{ k=1}^n  ( H_{ s_k \zeta_k^H} \zeta_j^H +
\overline{ H_{ s_k \zeta_k^H } } \zeta_j^H ) \\
& = 
\mathcal O ( | \Im Z |^\infty ) , \ \ Z  = ( \alpha, \beta, \alpha^*, \beta^*), \end{split}  \]
with similar estimates for $ \pi_R^* \bar \zeta_j $'s. Hence using the
definition \eqref{eq:h1h2}. This implies that
\[  \pi_L^* \zeta^H_j ,\  \pi_R^* \bar \zeta^H_j \sim 0 \ \ \text{ on 
$ \mathscr C_H $.} \]
In view of the discussion above ($ \mathscr C_H $ equivalent to 
the right hand side of \eqref{eq:defCH}) we obtain
\[   \zeta_j^H ( \alpha, d_\alpha \psi_H ( \alpha, \beta ) ) 
= 
\mathcal O ( |\Im \alpha|^\infty + |\Im \beta|^\infty + 
| \Im   d_\alpha \psi_H  |^\infty + | \Im d_\beta
\psi_H  |^\infty ) , \]
with the same estimate for $ \bar \zeta_j^H ( \beta, - d_\beta \psi_H ( \alpha, \beta ) ) $.
This and \eqref{eq:ImpsiH} give \eqref{eq:Laeik}.

This completes the construction of the phase needed in Proposition \ref{p:BLa}. The construction of $ \mathscr C_H $ satisfying 
\eqref{eq:SSR}, \eqref{eq:diagoncond1} and~\eqref{eq:defSbS}  is equivalent, in the almost analytic sense, to the construction of $\psi_H $ satisfying \eqref{eq:ImpsiH}
and \eqref{eq:diagcond} that gives uniqueness of $ \psi_H $.

We have achieved more as the definition of $ \mathscr C_H $ shows
that, in the analytic case \eqref{eq:analCH}, 
$ \mathscr C_H \circ \mathscr C_H = \mathscr C_H $ (see \cite[Lemma 2]{GZ} for a simple linear algebra case). In general we have 
$ \mathscr C_H \circ \mathscr C_H \sim \mathscr C_H $ which for real 
values of $ \alpha $ and $ \beta $ means that
\[   {\rm{c.v.}}_\gamma \left( \psi_H ( \alpha, \gamma ) + 
\psi_H ( \gamma, \beta ) \right) = \psi_H ( \alpha, \beta ) + 
\mathcal O ( | \alpha - \beta |^\infty ) . \]

We now return to our original $ \psi $ in \eqref{eq:kerBLa}, 
$ \psi( \alpha, \beta ) = - i H ( \alpha ) + \psi_{H } ( \alpha, \beta) - i H( \beta  ) $. Our construction shows that 
\begin{equation}
\label{eq:psi2H}
\text{\eqref{eq:Laeik} holds, }   \ \psi ( \alpha, \alpha ) = - 2 i H ( \alpha ) , \  \ \psi ( \alpha, \beta ) = - \overline{ \psi ( \beta, \alpha ) },  \ \ 
\alpha, \beta  \in \Lambda . 
\end{equation}
The value of $ d_\alpha \psi $ on the diagonal, $ \zeta \cdot d z|_\Lambda $ is determined by \eqref{eq:defH} and \eqref{eq:dpsiH}.
In addition, $ \psi $ is uniquely determined, up to $ \mathcal O ( | \alpha - \beta|^\infty ) $, by \eqref{eq:psi2H}. 

Returning to the original problem of solving \eqref{eq:Laeik} we record our findings in 
\begin{prop}
\label{p:eikoLa}
With the convention of \eqref{eq:convent}, suppose that $ H $ is given by \eqref{eq:defH} and $ \zeta_j^\Lambda $,
$\tilde \zeta_j^\Lambda $ are defined in \eqref{eq:tildeZLa}. Then
there exists $ \psi \in \CI ( \Lambda \times \Lambda ) $, 
$ \Lambda  = \nbhd_{\RR^{2n} } ( 0 ) $, such that
\eqref{eq:Laeik} hold and $ \psi ( \alpha, \alpha ) = - 2 i H ( \alpha ) $. The function $ \psi $ is uniquely determined modulo
$ \mathcal O ( |\alpha - \beta|^\infty ) $. 
Moreover we have,
\begin{equation}
\label{eq:proppsi}
\begin{gathered}
 {\rm{c.v.}}_\gamma \left( \psi ( \alpha, \gamma ) + 2  i H( \gamma ) + 
\psi ( \gamma, \beta ) \right) = \psi ( \alpha, \beta ) + 
\mathcal O ( | \alpha - \beta|^\infty ) , \\
- H ( \alpha ) - \Im \psi ( \alpha, \beta ) - H( \beta )
\leq - | \alpha - \beta|^2 / C , \ \ C> 0 ,\\
(d_\alpha \psi) ( \alpha , \alpha ) = \zeta \cdot dz|_\Lambda . 
\end{gathered}
\end{equation}
\end{prop}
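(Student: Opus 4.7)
The plan is to follow the construction already sketched in the body of the section, transferring it from the intermediate object $\psi_H$ back to $\psi$. First I would introduce $\psi_H(\alpha,\beta):=iH(\alpha)+\psi(\alpha,\beta)+iH(\beta)$, together with the shifted symbols $\zeta_j^H$, $\bar\zeta_j^H$ of \eqref{eq:defzetH}. Under these substitutions, \eqref{eq:Laeik} becomes \eqref{eq:LaeikH} and the normalization $\psi(\alpha,\alpha)=-2iH(\alpha)$ becomes $\psi_H(\alpha,\alpha)=0$. So the whole problem reduces to producing a smooth complex phase $\psi_H$ on a neighbourhood of the diagonal, vanishing there, and whose graph $\mathscr C_H$ lies (in the almost-analytic sense) in $S\times\bar S$, where $S$ and $\bar S$ are defined in \eqref{eq:defSbS}.

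Next I would construct $\mathscr C_H$ directly by flowing out the real locus $\Delta((S\cap\bar S)_\RR\times(S\cap\bar S)_\RR)$ by the commuting Hamiltonian vector fields $H_{\zeta_j^H}$ on the left factor and $H_{\bar\zeta_j^H}$ on the right factor, using formula \eqref{eq:analCH} in the analytic case and its almost-analytic version
\[
\mathscr C_H=\Bigl\{\bigl(\exp\widehat{\langle t,H_{\zeta^H}\rangle}(\rho),\,\exp\widehat{\langle s,H_{\bar\zeta^H}\rangle}(\rho)\bigr):\rho\in S\cap\bar S,\ t,s\in B_{\CC^n}(0,\epsilon)\Bigr\}
\]
in general. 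Lemma \ref{l:SSR} identifies $(S\cap\bar S)_\RR$ with the real graph $\{(\alpha,\Re(\zeta\,dz|_\Lambda))\}$; Lemma \ref{l:zetac} ensures $\{\zeta_j^H,\bar\zeta_k^H\}/(2i)\gg 0$ on this real locus. The positivity is then what yields the key imaginary-part lower bound \eqref{eq:sJt}, hence almost analyticity of $\mathscr C_H$ via Lemma \ref{l:flow1} and the Lagrangian property via Lemma \ref{l:geof}. The projection surjectivity \eqref{e:project} then puts us in the hypotheses of Lemma \ref{l:generate}, which supplies a smooth generating function $\psi_H(\alpha,\beta)$, unique modulo $\mathcal O(|\alpha-\beta|^\infty)$ after imposing $\psi_H|_{\Delta}=0$; the strict positivity of $\mathscr C_H$ then translates into $\Im\psi_H\gtrsim|\alpha-\beta|^2$, which is precisely the middle inequality of \eqref{eq:proppsi} after undoing the $H$-shift.

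Having produced $\psi_H$, I would read off the remaining conclusions. The eikonal equations \eqref{eq:LaeikH} hold up to $\mathcal O(|\alpha-\beta|^\infty)$ because the vector fields $H_{\pi_L^*\zeta_j^H}$ and $H_{\pi_R^*\bar\zeta_j^H}$ are tangent to $\mathscr C_H$ in the almost-analytic sense, so their defining functions vanish on $\mathscr C_H$ modulo the usual almost-analytic errors controlled by $\Im\alpha$, $\Im\beta$, and $\Im d\psi_H$; combined with $\Im\psi_H\sim|\alpha-\beta|^2$ this gives the desired $\mathcal O(|\alpha-\beta|^\infty)$. The diagonal value $d_\alpha\psi(\alpha,\alpha)=\zeta\cdot dz|_\Lambda$ is immediate from \eqref{eq:dpsiH} and the definition \eqref{eq:defH} of $H$, once one remembers $\psi=-iH+\psi_H-iH$. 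Uniqueness modulo $\mathcal O(|\alpha-\beta|^\infty)$ follows because any other solution would produce an equivalent $\mathscr C_H$ (two almost-analytic Lagrangians containing the same real locus and contained in $S\times\bar S$ must be equivalent), and generating functions of equivalent Lagrangians agree to infinite order on the real diagonal.

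The composition identity in \eqref{eq:proppsi} comes from the group-like property of the flow-out construction: in the analytic model $\mathscr C_H\circ\mathscr C_H=\mathscr C_H$ by direct computation, and in general $\mathscr C_H\circ\mathscr C_H\sim\mathscr C_H$ in the almost-analytic sense. Translated through the generating-function formalism and unwinding the $H$-shift $\psi=-iH+\psi_H-iH$, this becomes exactly the stated critical-value identity, where the middle weight $2iH(\gamma)$ is the contribution of the two copies of $-iH$ at the intermediate variable. The one point that will require care is the smooth (non-analytic) case: there, the hardest step is verifying \eqref{eq:sJt} so that $\mathscr C_H$ is both almost analytic and strictly positive; this is precisely where the transversality encoded in Lemma \ref{l:zetac} (and its $H$-shift) is used, reducing the local model to the normal form $\zeta_j^H=c(\eta_j+iy_j)+\mathcal O(|\cdot|^2)$ after a linear symplectic change of coordinates, from which \eqref{eq:sJt} follows by direct computation.
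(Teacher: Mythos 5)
Your proposal follows essentially the same route as the paper: reducing to $\psi_H$ and $\zeta_j^H$, constructing $\mathscr C_H$ by complex-time flow-out of $(S\cap\bar S)_\RR$, invoking Lemmas \ref{l:zetac}, \ref{l:SSR}, \ref{l:flow1}, \ref{l:geof}, and \ref{l:generate} in the same order, reading off \eqref{eq:proppsi} from strict positivity and from $\mathscr C_H\circ\mathscr C_H\sim\mathscr C_H$, and correctly identifying \eqref{eq:sJt} as the technical crux in the $\CI$ case. The argument is correct and no gaps appear.
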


\subsubsection{Transport equations}
\label{s:trans}

Keeping the convention \eqref{eq:convent} we now solve the transport equations arising from \eqref{eq:BLaq1}. we start with a formal discussion (valid when 
all the objects are analytic). We first note that
in view of \eqref{eq:Laeik} and \eqref{eq:proppsi}
{for any $b(\alpha,\beta)\in $} analytic in a neighbourhood of 
$ 0 $ (in the notation of \eqref{eq:tildeZLa} and \eqref{eq:convent}),
\begin{equation}
\label{eq:ZjLa} 
\begin{split}  &  Z_j^\Lambda ( \alpha, h D_\alpha) \left( 
e^{ \frac i h \psi ( \alpha, \beta ) } b ( \alpha, \beta ) \right) 
= h e^{ \frac i h \psi ( \alpha, \beta ) }(   ( V_j   + c_j  ) b ( \alpha, \beta )  + \mathcal O ( h  ) ) , \\
&  \widetilde Z_j^\Lambda ( \beta, h D_\beta) \left( 
e^{ \frac i h \psi ( \alpha, \beta ) } b ( \alpha, \beta ) \right) 
= h e^{ \frac i h \psi ( \alpha, \beta ) } ( ( \widetilde V_j   + \widetilde c_j  ) b ( \alpha, \beta )   + \mathcal O ( h ) 
).
\end{split} 
\end{equation}
Here, 
\begin{gather*}
V_j := \langle V_j ( \alpha, \beta ) , \partial_\alpha \rangle, \ \ V_j (\alpha, \beta )_\ell  :=  \partial_{ \alpha_\ell ^*}  \zeta_j^\Lambda ( \alpha, d_\alpha \psi ( \alpha, \beta ) )    , \\ 
c_j ( \alpha , \beta  ) := \tfrac12 \sum_{\ell=1}^{2n}  
\partial_{\alpha_\ell}  V_j (\alpha, \beta ) + 
\zeta_{j1} ( \alpha , d_\alpha \psi ( \alpha, \beta ) {)}  \\ 
\ \ \ \ \ \ \ \ \ \ \  \ \ \ \ \ \ \ \ \ \ \ \ \ \ \ \ \ \ \ 
- \, i \sum_{k,\ell = 1}^{2n }
\partial_{\alpha_k \alpha_\ell}^2 \psi ( \alpha, \beta  ) \partial_{\alpha^*_k \alpha^*_\ell}^2 \zeta_j^\Lambda  ( \alpha , d_\alpha \psi ( \alpha, \beta  ) ) ,
\end{gather*}
with similar expressions coming from the applications $ \widetilde Z_j^\Lambda ( \beta , h D_\beta ) $: 
$ V_j $, $ c_j $, replaced by $ \widetilde V_j $, $ \widetilde 
c_j $,  and  with the roles of $ \alpha $ and $ \beta $ switched. 

A key observation here is that the holomorphic vector fields $ H_{\zeta_j^\Lambda ( \alpha ) } $ and
$ H_{\bar \zeta_j^\Lambda ( \beta )  } $ are tangent to 
$$ \mathscr C = \{ ( \alpha, d_\alpha \psi ( \alpha, \beta ) , 
\beta , d_\beta \psi ( \alpha, \beta ) ) : \alpha , \beta  \in 
\nbhd_{\CC^{2n}} ( 0 ) \} ,
$$
and that they commute.
In the parametrization of $ \mathscr C $ by $ ( \alpha, \beta ) $,
they are given by 
$ V_j $ and $ - \widetilde V_j $, respectively.  Hence,
\begin{equation}
\label{eq:comVjk0}
[ V_j , V_k ] = 0 , \ \ [ V_j, \widetilde V_k ] = 0  , \ \
[ \widetilde V_k, \widetilde V_k ] = 0 . 
\end{equation}

Hence, we seek  $ a $ of the form
\[ a ( \alpha, \beta ) \sim \sum_{k=0}^\infty h^k a_k ( \alpha, \beta ) , \]
where, we want to solve
\begin{equation}
\label{eq:ZjLa0} \begin{gathered} 
V_j   a_k (\alpha, \beta) + c_j ( \alpha, \beta ) 
a_k ( \alpha, \beta)  = F^j_{k-1} ( a_0, \cdots, a_{k-1} ) ( \alpha, \beta)  , \ \ F^j_{-1} \equiv 0, \\
\end{gathered} \end{equation}
with the corresponding expression 
involving $ \widetilde V_j $. 

Solving~\eqref{eq:ZjLa0} means that
\begin{equation}
\label{eq:ZjLa1} 
\begin{split} 
& Z_j^\Lambda ( \alpha, hD_\alpha ) \left( e^{ \frac i h \psi ( \alpha, \beta ) } \sum_{k=0}^{K-1} h^k a_k ( \alpha, \beta ) \right) =
h^{ K+1 } e^{ \frac i h \psi ( \alpha, \beta ) } F_{ {K-1}}^j ( \alpha, \beta ) , \\
& \widetilde Z_j^\Lambda ( \beta, hD_\beta ) \left( e^{ \frac i h \psi ( \alpha, \beta ) } \sum_{k=0}^{K-1} h^k a_k ( \alpha, \beta ) \right) =
h^{ K+1 } e^{ \frac i h \psi ( \alpha, \beta ) } \widetilde F_{ {K-1}}^j ( \alpha, \beta ) . \end{split}
\end{equation}
Since 
\begin{gather*}  [ Z_j^\Lambda ( \alpha, h D_\alpha ) , Z_k^\Lambda ( \alpha, h D_\alpha )] = 0 , \ \ 
[\widetilde Z_j^\Lambda ( 
\beta, h D_\beta ) , \widetilde Z_k^\Lambda ( 
\beta, h D_\beta ) ] = 0 , \\
[ Z_j^\Lambda ( \alpha, h D_\alpha ), \widetilde Z_k^\Lambda ( 
\beta, h D_\beta ) ] = 0 , \end{gather*}
we have from \eqref{eq:comVjk0} and \eqref{eq:ZjLa},
\begin{equation}
\label{eq:comVjk}  V_j c_k = V_k c_j , \ \ 
V_k \widetilde c_j = 
\widetilde V_j c_k, \ \
\widetilde V_k \widetilde c_j = \widetilde V_j \widetilde c_k . 
\end{equation}
Similarly, \eqref{eq:ZjLa1} gives 
\begin{equation}
\label{eq:comVjk1} 
\begin{gathered}  ( V_j + c_j ) F_{ {K-1}}^\ell = ( V_k + c_k ) F_{ {K-1}}^j , \ \
( \widetilde V_j + \widetilde c_j ) \widetilde F_{ {K-1}}^\ell = ( \widetilde V_k + \widetilde c_k ) \widetilde F^j_{ {K-1}} , \\
( V_j +  c_j ) \widetilde F_{ {K-1}}^\ell = ( \widetilde V_k + \widetilde c_k )  F^j_{ {K-1}} . 
\end{gathered} \end{equation}
Equations \eqref{eq:comVjk} and \eqref{eq:comVjk1} provide compatibility 
conditions for solving \eqref{eq:ZjLa0}:
\[   ( V_j + c_j ) a_k = F_{k-1}^j ,  \ \ ( \widetilde V_\ell + 
\widetilde c_\ell ) a_k = F_{k-1}^\ell , \ \ 
a_k ( \alpha, \alpha ) = b_k ( \alpha ) , \]
where  {the} $ b_k $'s  {are} prescribed. In fact, since  {the} $ V_\ell $'s and 
$ \widetilde V_j $ {'s} are independent when $ \alpha = \beta $ (as complex vectorfields), 
\[ \begin{gathered}  \CC^{2n}  \times \CC^n \times \CC^n \ni
( \rho, t , s ) \mapsto ( \alpha , \beta ) = \left( \exp 
\langle V, t \rangle  ( \rho  ) , 
\exp  \langle \widetilde V, s \rangle  (\rho ) \right) \in 
\CC^{2n} \times \CC^{2n} ,\\
\langle V, t \rangle :=  \sum_{j=1}^n t_j V_j  , \ \
\langle \widetilde V, s \rangle  := \sum_{\ell=1}^n s_j \widetilde V_\ell , 
\end{gathered} \]
is a local bi-holomorphic map onto 
of $ \nbhd_{ \CC^{4n} } ( \diag ( \Lambda \times \Lambda) ) $ (almost analytic in the general case). In view of 
this and of \eqref{eq:comVjk0}, \eqref{eq:comVjk},  {the following integrating factor}, $ g = g ( \alpha, \beta ) $,  is well defined (in the analytic case) on $ \nbhd_{ \CC^{4n} } ( \diag ( \Lambda \times \Lambda) ) $:
\[  g ( e^{ \langle V, t \rangle  }  ( \rho) , 
e^{ \langle  \widetilde V, s \rangle }  ( \rho ) ) := - 
\sum_{ j=1}^n \int_0^1 ( 
t_j c_j + s_j \widetilde c_j )|_{( \alpha, \beta) = ( e^{ \tau \langle V, t \rangle  }  ( \rho ) , 
e^{ \tau \langle  \widetilde V, s \rangle }  ( \rho ) ) } d\tau , \]
 {and satisfies}
\[ V_j g ( \alpha , \beta ) = c_j ( \alpha, \beta ) , \ \ 
 \widetilde V_j g ( \alpha , \beta ) = \widetilde c_j ( \alpha, \beta ), \ \ j =1, \cdots , n . \] 
We then define $ a_k ( \alpha, \beta ) $ inductively as follows: at
$ ( \alpha, \beta ) = 
( e^{  \langle V, t \rangle  }  ( \rho ) , 
e^{  \langle  \widetilde V, s \rangle }  ( \rho ) ) $, 
\[ \begin{split}   a_k ( \alpha, \beta ) & =  
e^{g(\alpha, \beta  ) } 
 b_k ( \rho ) \\
& + e^{g (\alpha, \beta  ) } \int_0^1 e^{-g ( \gamma, \gamma' ) } (t_j F_{k-1}^j 
( \gamma, \gamma' )  + s_j \widetilde F_{k-1}^j ( \gamma, \gamma' ) )|_{( \gamma, \gamma') = ( e^{ \tau \langle V, t \rangle  }  ( \rho ) , 
e^{ \tau \langle  \widetilde V, s \rangle }  ( \rho ) ) } d\tau .
\end{split} \]
The compatibility relations \eqref{eq:comVjk1} then show that \eqref{eq:ZjLa0} hold.

We now modify this discussion to the $ C^\infty $ case using 
almost analytic extensions as in \S \ref{A:trans}  and that provides
solutions of \eqref{eq:ZjLa0} for $ ( \alpha, \beta ) 
\in \Lambda \times \Lambda $ valid to infinite order at 
$ \diag( \Lambda \times \Lambda ) $ with any initial data on the diagonal. 

Hence we have solved \eqref{eq:BLaq1} locally 
near $ ( \alpha, \beta ) = ( 0 , 0 ) $. We now return to the original coordinates and note the uniqueness of the local construction gives 
us $ \psi $ and $ a $ in \eqref{eq:BLaq} satisfying \eqref{eq:propsi}
and \eqref{eq:propaa}. This completes the proof of Proposition \ref{p:BLa}.

\subsection{Projection property}

It remains to choose $ a|_{\Delta } $ so that $ B_\Lambda^2 \equiv B_\Lambda$. 
From \eqref{eq:proppsi} we already know that the phase in \eqref{eq:kerBLa} has the correct composition property 
and hence we need to find the amplitude 
$a(\alpha, \beta)$. From Proposition \ref{p:BLa} {\em it is enough to determine $a$ on the diagonal}. For that we consider the kernel of $ B_\Lambda^2$ on the diagonal:
\begin{equation}
\label{eq:KB2} K_{B^2_\Lambda } ( \alpha, \alpha ) = h^{-2n} \int_\Lambda e^{ \frac i h ( \psi ( \alpha, \beta ) + 2 i H( \beta) 
+ \psi ( \beta, \alpha ) ) } a ( \alpha, \beta , h ) a ( \beta, \alpha, h ) d m_\Lambda ( \beta ) . \end{equation}
We note that the support property of $ a $ in \eqref{eq:propsi} 
implies that the integration takes place over a bounded set 
$ | \beta_\xi | \leq C \langle \alpha_\xi \rangle $.
Application of complex stationary phase to~\eqref{eq:KB2} yields
\begin{equation}
\label{eq:KB21} 
K_{B^2_\Lambda}=h^{-n} e^{\frac{i}{h}\psi(\alpha,\beta)}c(\alpha,\beta)
, \ \ 
c(\alpha,\alpha) \sim \sum_{j}h^jL_{2j}a(\alpha,\gamma, h)a(\gamma,\alpha, h )|_{\gamma=\alpha} , 
\end{equation}
where $L_{2j}$ are differential operators of order $2j$ in $\gamma$ and 
$$L_0|_{\Delta}=f(\alpha), \ \  | f ( \alpha ) | > 0 , \ \ 
\Delta := \Delta ( \Lambda \times \Lambda ) .$$
 Since $\psi(\alpha,\beta)=-\overline{\psi(\beta,\alpha)}$, $f(\alpha)\in\RR$.
(Strictly speaking we should again proceed with the rescaling 
\eqref{eq:tildealbe} and we are tacitly using the convention \eqref{eq:convent} here.)

Writing 
$
a\sim \sum_{j}h^ja_j, 
$
we have 
$$
c(\alpha,\beta )\sim \sum_j h^j c_j (\alpha, \beta )  , \ \ 
c_j ( \alpha, \alpha ) = \sum_{k+\ell+m=j}L_{2k}
 a_\ell(\alpha,\gamma)a_{m}(\gamma,\alpha)  |_{\gamma=\alpha} .
$$
We note that if $ a ( \alpha , \beta )  = \overline { a ( \beta, \alpha ) } $ then $ B_\Lambda $ is self-adjoint and hence so is
$ B_\Lambda^2 $. That means in particular that $ c ( \alpha, \alpha ) 
$ is real. 
Hence if $ a_\ell ( \alpha, \beta ) = \overline{ a_\ell ( \beta, \alpha ) }$ for $ \ell \leq M $, then
$ c_\ell |_{\Delta} \in \RR $ for $ \ell \leq M $. Since
\[ b_M ( \alpha, \alpha ) = 2 f ( \alpha ) a_0 ( \alpha, \alpha)
a_M ( \alpha, \alpha ) + \sum_{\substack{ k+\ell+m =M\\\ell , m < M} } 
L_{2k }  a_\ell ( \alpha, \gamma ) a_m ( \gamma, \alpha) |_{ \gamma = \alpha}   , \]
it follows that 
\begin{equation}
\label{eq:aell} a_\ell ( \alpha , \beta )  =  \overline{ a_\ell ( \beta , \alpha ) }
 , \ \ell < M  \ \Longrightarrow \  
 \sum_{\substack{ k+\ell+m =M\\\ell , m < M} } 
L_{2k}  a_\ell ( \alpha, \gamma ) a_m ( \gamma, \alpha) |_{ \gamma = \alpha}  \in \RR .  
 \end{equation}
We iteratively solve the following sequence of equations
\begin{equation}
\label{eq:iterate}
\sum_{k+\ell+m=j}L_{2k}a_\ell(\alpha,\gamma)a_{m}(\gamma,\alpha)|_{\gamma=\alpha}=a_j(\alpha,\alpha)
\end{equation}
with $a_j|_{\Delta}$ real. 
Proposition \ref{p:BLa} then gives us the desired $ a ( \alpha, \beta ) $.
First, let
$$
a_0(\alpha,\alpha)=\frac{1}{f(\alpha)}\in C^\infty(T^*\RR^n;\RR)
$$
so that $f(\alpha)a_0(\alpha,\alpha)^2=a_0(\alpha,\alpha)$ (i.e.~\eqref{eq:iterate} is solved for $j=0$). The proof of Proposition 
\ref{p:BLa} (see \S \ref{s:trans}) shows that we can then find 
$ a_0 ( \alpha, \beta ) $ so that \eqref{eq:ZjLa1} holds with $ K = 0 $
and $ a_0 |_\Delta = 1/f( \alpha ) $. 

Assume now that~\eqref{eq:iterate} is solved for $j\leq M-1$.  Then,~\eqref{eq:iterate} with $j=M$ reads
\begin{align*}
a_M(\alpha,\alpha)&=\sum_{k+\ell+m=M}L_{2k}a_\ell(\alpha,\gamma)a_{m}(\gamma,\alpha)|_{\gamma=\alpha}\\
&=2a_M(\alpha,\alpha)+\sum_{\substack{k+\ell+m=M\\\ell,m<M}}L_{2k}a_\ell(\alpha,\gamma)a_{m}(\gamma,\alpha)|_{\gamma=\alpha}
\end{align*}
Putting
$$
a_M(\alpha,\alpha)=-\sum_{\substack{k+\ell+m=M\\\ell,m<M}}L_{2k}a_\ell(\alpha,\gamma)a_{m}(\gamma,\alpha)|_{\gamma=\alpha}
$$ 
we solve~\eqref{eq:iterate} for $j=M$. From \eqref{eq:aell} we see that
$ a_M ( \alpha, \alpha ) $ is real.
The argument in \S \ref{s:trans} provides the construction of $ a_M  $ from 
$ a_\ell $, $ \ell < M $ and $ a_k|_\Delta $. Taking an almost analytic continuation with $a_M(\alpha,\beta)=\overline{a_M(\beta,\alpha)}$ then completes the construction of $a_M$ and hence by induction and the Borel summation lemma we have, in the notation of Proposition \ref{p:BLa}, 
\begin{equation}
\label{eq:a2b}
c =a+ \mathcal O_\infty . 
\end{equation}
This gives the following
\begin{prop}
\label{p:Bla1}
There exists a unique choice of $ b_j ( \alpha ) $ in Proposition 
\ref{p:BLa} for which the operator $ B_\Lambda $ defined by 
\eqref{eq:kerBLa} satisfies 
\begin{equation}
\label{eq:propBLa1}
B_\Lambda = B_\Lambda^{*,H} , \ \ 
B_\Lambda =   B_\Lambda^2 + \mathcal O ( h^N )_{ {\langle \xi\rangle^{N}L^2(\Lambda)\to \langle \xi\rangle^{-N}L^2(\Lambda)} }  , 
\end{equation}
for all $ N $. 
\end{prop}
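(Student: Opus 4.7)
\medskip

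\noindent\textbf{Proof proposal.} The plan is to take as input the phase $\psi$ and the general amplitude machinery of Proposition~\ref{p:BLa}, and use the iterative scheme \eqref{eq:iterate} to pin down the diagonal values $b_j(\alpha) = a_j(\alpha,\alpha)$ one order at a time. Self-adjointness $B_\Lambda = B_\Lambda^{*,H}$ is essentially a bookkeeping calculation: once we arrange $\overline{\psi(\alpha,\beta)} = -\psi(\beta,\alpha)$ (which is built into \eqref{eq:propsi}) and $\overline{a_j(\alpha,\beta)} = a_j(\beta,\alpha)$ at each step, the symmetry of the kernel with respect to the weighted inner product $\langle\cdot,\cdot\rangle_{L^2(\Lambda, e^{-2H/h} d\alpha)}$ falls out by swapping $\alpha \leftrightarrow \beta$ in \eqref{eq:kerBLa}. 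So the substance of the proof is the idempotence modulo $\mathcal{O}(h^N)$.

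For idempotence, I would proceed by induction on $M$, using the asymptotic expansion \eqref{eq:KB21} already produced by complex stationary phase applied to \eqref{eq:KB2}. At $M=0$, set $a_0(\alpha,\alpha) = 1/f(\alpha)$ (real since $f$ is real and nonvanishing); the equation \eqref{eq:iterate} is satisfied at $j=0$. Proposition~\ref{p:BLa} then produces the full $a_0(\alpha,\beta)$ off the diagonal, and an almost-analytic extension preserving the symmetry $\overline{a_0(\alpha,\beta)} = a_0(\beta,\alpha)$ can be arranged by averaging the two almost-analytic continuations of the real diagonal value that are consistent with the (symmetric) transport system.

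For the inductive step, assuming $a_\ell$ satisfies the symmetry and \eqref{eq:iterate} for $\ell < M$, the right-hand side of the $M$-th equation is real by \eqref{eq:aell}, so defining
\[
a_M(\alpha,\alpha) := -\sum_{\substack{k+\ell+m=M\\ \ell,m<M}} L_{2k}\, a_\ell(\alpha,\gamma) a_m(\gamma,\alpha)\big|_{\gamma=\alpha}
\]
gives a real value of $b_M(\alpha)$ and resolves \eqref{eq:iterate} at order $M$; Proposition~\ref{p:BLa} then determines $a_M(\alpha,\beta)$ up to $\mathcal{O}_\infty$, and the symmetry is preserved by the same averaging trick. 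Uniqueness of $b_M$ is forced by the fact that at order $M$, $b_M$ is the only free diagonal value, and the equation \eqref{eq:iterate} pins it down once $b_\ell$, $\ell < M$, are fixed. Borel summation yields $a(\alpha,\beta,h) \sim \sum h^j a_j(\alpha,\beta)$ satisfying \eqref{eq:a2b} to all orders.

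The final step is converting the pointwise/symbolic statement \eqref{eq:a2b} into the operator-norm bound in \eqref{eq:propBLa1}. For this I would apply Lemma~\ref{l:TSbounded} (or rather the kernel estimate behind it, which gave $L^2(\Lambda)$-boundedness via Schur's test from a Gaussian-type kernel) to the remainder $a - \sum_{j<N} h^j a_j$, which by construction is $\mathcal{O}(h^N \langle\alpha_\xi\rangle^{-N})$ with derivatives, yielding the claimed mapping property between weighted $L^2$ spaces. The main obstacle is administrative rather than technical: ensuring that the almost-analytic extensions chosen at each order are mutually compatible with the symmetry condition and the transport system, so that self-adjointness is preserved through the induction; this is handled by the compatibility conditions \eqref{eq:comVjk}--\eqref{eq:comVjk1} established in Section~\ref{s:trans}.
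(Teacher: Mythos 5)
Your proposal is correct and follows essentially the same approach as the paper: solve the recursion \eqref{eq:iterate} iteratively on the diagonal starting with $a_0|_\Delta = 1/f$, extend off the diagonal via Proposition~\ref{p:BLa} while preserving the Hermitian symmetry $\overline{a_j(\alpha,\beta)} = a_j(\beta,\alpha)$, and then upgrade the symbolic identity \eqref{eq:a2b} to the operator bound in \eqref{eq:propBLa1} using the phase estimate \eqref{eq:BLap} and Schur's test. The only cosmetic difference is that the paper simply prescribes a symmetric almost analytic continuation at each step (uniqueness of the transport solution modulo $\mathcal O_\infty$ does the work), whereas you propose an explicit averaging construction; the two are equivalent.
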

\begin{proof}
In view of \eqref{eq:a2b} we need to check that for $ r = \mathcal O_\infty $ (in the notation of
Proposition \ref{p:BLa}), for all $ N $,
\[  R = \mathcal O ( h^N )_{ {\langle \xi\rangle^{N}L^2(\Lambda)\to \langle \xi\rangle^{-N}L^2(\Lambda)}  }  , \ \ 
R u ( \alpha ) := 
h^{-n} \int_\Lambda r ( \alpha, \beta, h ) e^{ \frac i h \psi ( \alpha, \beta ) } u ( \beta ) d m_\Lambda ( \beta ) . \]
But this is an immediate consequence of \eqref{eq:BLap} and 
Schur's criterion for boundedness on $ L^2 $.
\end{proof}

\subsection{Construction of the projector}
\label{s:cotp} 

We now show that the exact
orthogonal projector $ \Pi_\Lambda : L^2 ( \Lambda )  \to H( \Lambda ) $ satisfies
\begin{equation}
\label{eq:B2Pi1}
\Pi_\Lambda = B_\Lambda + \mathcal O ( h^\infty )_{{ {\langle \xi\rangle^{N}L^2(\Lambda)\to \langle \xi\rangle^{-N}L^2(\Lambda)}  }} ,
\end{equation}
for all $ N $.
For that we follow
the proof of \cite[Proposition 1.1, formula (1.46)]{Sj96} which is 
related to an earlier construction in \cite[Step 3, Proof of Corollary A.4.6]{BG}. 

We start with the exact projector $ P_\Lambda $:
\[ P_\Lambda ( L^2_\Lambda ( \Lambda ) ) = H( \Lambda )  , \ \ 
P_\Lambda^2 = P_\Lambda , \ \ P_\Lambda = \mathcal O ( 1 )_{L^2 ( \Lambda ) \to L^2 ( \Lambda ) } , \]
given by 
\[ P_\Lambda = T_\Lambda S_\Lambda . \]
For a real valued $ f \in S (  \Lambda ) $, 
\[ f ( \alpha ) \sim \sum_{ k=0}^\infty 
f_k ( \alpha ) (h/\langle \alpha_\xi \rangle )^k , \ \ 
 f_0 ( \alpha) > 1/C  , \]
  we define the following self-adjoint operator:
\[ A_f :=  P_\Lambda f P_\Lambda^{*, H } , \ \ 
A_f u ( \alpha ) =: h^{-n} \int_\Lambda e^{ \frac i h \psi_1 ( \alpha, \beta ) } a_f ( \alpha, \beta , h) u ( \beta ) e^{ - 2 H ( \beta ) /h} dm_\Lambda ( \beta ) ,\]
where $ \psi_1 $ and $ a_f $ were obtained using the method of complex stationary phase (again it is justified using using the rescaling \eqref{eq:tildealbe})

We claim that $\psi_1 = \psi + 
 \mathcal O_\infty  $ (in the notation of Proposition \ref{p:BLa}. 
Indeed, 
since $ A_f^{ *, H} = 
 {A_f} $ and $ P_\Lambda = T_\Lambda  S_\Lambda $, 
the arguments leading to \eqref{eq:Laeik} apply and $ \psi_1 $ satisfies
the same eikonal equations. Similarly, $ a_f ( \alpha, \beta , h ) $
satisfies transport equations implied by \eqref{eq:BLaq}. 
Arguing as in the proof of Lemma \ref{l:TSbounded} 
we find the value of $ \psi_1|_\Delta $ to be 
\[ \psi_1 ( \alpha, \alpha ) + 2 i H ( \alpha ) = {\rm{c.v.}}_{\beta } \left( \Psi ( \alpha, \beta )  - \overline { \Psi ( \alpha, \beta ) }  \right) = 0 . \]
We then invoke the uniqueness statement in Proposition \ref{p:BLa}.

If we can choose $ f $ so that 
$   a_f|_\Delta =  a|_{\Delta }  + \mathcal O 
( ( h/\langle \alpha_{\xi} \rangle )^\infty ) $,
with $ a $ in \eqref{eq:kerBLa}, 
then the same uniqueness statement shows that $ a_f = a + \mathcal O_\infty $. Hence
\begin{equation}
\label{eq:afaim}   a_f|_\Delta =  a|_{\Delta }  + \mathcal O 
( ( h/\langle \alpha_{\xi} \rangle )^\infty )  
\ \Longrightarrow \ A_f = B_\Lambda + \mathcal O ( h^\infty )_{{\langle \xi\rangle^{N}L^2(\Lambda)\to \langle \xi\rangle^{-N}L^2(\Lambda)} } .
\end{equation}
determined by its value on the diagonal and we find 
that using $ \Psi $ given in \eqref{e:tempProjectorPhase} and
satisfying \eqref{eq:dalPh} 
\[ \begin{split} \psi_1 ( \alpha, \alpha ) + 2 i H ( \alpha ) & = 
{\rm{c.v.}}_{\beta } \left( \Psi ( \alpha, \beta ) - 
\overline{ \Psi ( \alpha, \beta ) } + 2 i H ( \alpha ) - 
2 i H ( \beta ) \right) = 0 . 
\end{split} \]
But this means that \eqref{eq:psi2H} holds for $ \psi_1 $ and hence
 $ \psi_1 ( \alpha, \beta ) = \psi( \alpha, \beta ) +  \mathcal O ( |\alpha - \beta |^\infty ) $. 
 
  {We next choose }$ f $ so that 
 $ A_f = B_\Lambda + \mathcal O ( h^\infty)_{{\langle \xi\rangle^{N}L^2(\Lambda)\to \langle \xi\rangle^{-N}L^2(\Lambda)} } .$

Writing $ a_f ( \alpha, \beta ) \sim \sum_{k=0}^\infty 
( h / \langle \alpha_\xi \rangle )^j a_{f,j} ( \alpha, \beta ) $, 
we proceed as in \S \ref{s:trans}: with different $ L_{2k} $'s, 
$ g := L_0 |_\Delta \neq 0 $,  
\[ a_{f,j} ( \alpha, \alpha ) =
\sum_{ k + \ell = j } L_{2k } f_\ell ( \alpha ) = g ( \alpha ) f_j ( \alpha) + \sum_{\substack{k+ \ell = j\\\ell < j} } L_{2k } f_\ell ( \alpha ). \]
(In our special case, the amplitude in $ P $ is constant which is not the case in generalizations -- but the argument works easily just the same.) Using this, solving $ a_{f,j} ( \alpha ) = a_j ( \alpha ) $ 
for $ f $ is immediate. As in the construction of the amplitude of 
$ B_\Lambda $ in \S \ref{s:trans} we see that $ f $ is real valued and that $ f_0 $ is bounded from below.

We can now follow \cite{Sj96} and complete the proof of \eqref{eq:B2Pi1}.
We record this statement as 
\begin{prop}
\label{p:B2P}
Suppose that $ \Pi_\Lambda$ is orthogonal projector from 
$ L^2 ( \Lambda ) $ to $ H ( \Lambda ) $ and that $ B_\Lambda $ is given 
by Proposition \ref{p:Bla1}. Then 
\begin{equation}
\label{eq:B2Pi11}
\Pi_\Lambda = B_\Lambda + \mathcal O ( h^\infty )_{{\langle \xi\rangle^{N}L^2(\Lambda)\to \langle \xi\rangle^{-N}L^2(\Lambda)} } ,
\end{equation}
for all $ N $.
\end{prop}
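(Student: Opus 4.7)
The plan is to show that the operator $A_f = P_\Lambda f P_\Lambda^{*,H}$ constructed above agrees with $\Pi_\Lambda$ modulo $\mc{O}(h^\infty)$; combined with the identity $A_f = B_\Lambda + \mc{O}(h^\infty)$ (already established by matching diagonal amplitudes and invoking the uniqueness part of Proposition~\ref{p:BLa}), this yields~\eqref{eq:B2Pi11}.

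Four structural properties of $A_f$ drive the argument. (i) Since $f$ is real, $A_f$ is self-adjoint in $L^2(\Lambda, e^{-2H/h}\,dm_\Lambda)$. (ii) Since $P_\Lambda = T_\Lambda S_\Lambda$ is an exact (non-orthogonal) projection onto $H(\Lambda) = T_\Lambda(H^0_\Lambda)$ (using $S_\Lambda T_\Lambda = I$ from Proposition~\ref{p:invert}), the range of $A_f$ is contained in $H(\Lambda)$; by self-adjointness, $A_f$ vanishes identically on $H(\Lambda)^\perp$, so $A_f = \Pi_\Lambda A_f \Pi_\Lambda$ exactly. (iii) By Proposition~\ref{p:Bla1} and the choice of $f$,
\[
A_f^2 = B_\Lambda^2 + \mc{O}(h^\infty) = B_\Lambda + \mc{O}(h^\infty) = A_f + \mc{O}(h^\infty)
\]
in the $\langle \xi\rangle^N L^2 \to \langle \xi\rangle^{-N} L^2$ operator norm. (iv) Since $f_0 > 1/C$ on the diagonal, for $h$ small $f$ is a positive multiplication operator bounded below by a positive constant uniformly in $h$; for $u \in H(\Lambda)$, $P_\Lambda u = u$ gives
\[
\|P_\Lambda^{*,H} u\|_H = \sup_{\|v\|_H = 1} |\langle u, P_\Lambda v\rangle_H| \geq \|u\|_H,
\]
hence $\langle A_f u, u\rangle_H = \langle f P_\Lambda^{*,H} u, P_\Lambda^{*,H} u\rangle_H \geq c_0 \|u\|_H^2$ with $c_0 > 0$ uniform in $h$.

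Restricting $A_f$ to $H(\Lambda)$, properties (iii) and (iv) present $\tilde A_f := A_f|_{H(\Lambda)}$ as a self-adjoint operator that is bounded below by $c_0$ and satisfies $\tilde A_f^2 - \tilde A_f = \mc{O}(h^\infty)$ in $L^2(\Lambda, e^{-2H/h})$ operator norm. Spectral theory then forces $\spec(\tilde A_f) \subset [c_0,\infty) \cap \bigl([-\mc{O}(h^\infty),\mc{O}(h^\infty)] \cup [1-\mc{O}(h^\infty), 1+\mc{O}(h^\infty)]\bigr)$, so $\spec(\tilde A_f)\subset [1-\mc{O}(h^\infty), 1+\mc{O}(h^\infty)]$ for $h$ small, and $\tilde A_f = I_{H(\Lambda)} + \mc{O}(h^\infty)$. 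Combined with the exact vanishing of $A_f$ on $H(\Lambda)^\perp$ from (ii), this produces $A_f = \Pi_\Lambda + \mc{O}(h^\infty)$, and therefore $B_\Lambda = \Pi_\Lambda + \mc{O}(h^\infty)$.

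The hard part will be upgrading the $L^2$ operator-norm bound on $A_f - \Pi_\Lambda$ produced by the functional calculus to the stronger weighted norm $\langle \xi\rangle^N L^2 \to \langle \xi\rangle^{-N} L^2$ demanded by~\eqref{eq:B2Pi11}. I would exploit the factorization $A_f - \Pi_\Lambda = (\tilde A_f - I_{H(\Lambda)})\Pi_\Lambda$ together with the uniform lower bound on $\tilde A_f$ (giving an inverse $\tilde A_f^{-1}$ uniformly bounded in $h$) and the identity $(\tilde A_f - I)\tilde A_f = \tilde A_f^2 - \tilde A_f = \mc{O}(h^\infty)$ in the weighted norm, which lets one cancel a power of $\tilde A_f$ at each symbol order $N$ and iteratively convert the bound. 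This bookkeeping is standard once the spectral estimate is in place and mirrors the corresponding step in \cite[\S 1]{Sj96}.
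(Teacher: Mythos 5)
Your proposal is correct in its main structure and reaches the result, but it departs from the paper's route at the final step. The paper argues via a contour integral: since $A_f$ is self-adjoint, vanishes on $H(\Lambda)^\perp$, and has spectrum on $H(\Lambda)$ trapped in $[1/C, C]$, one writes $\Pi_\Lambda = \tfrac{1}{2\pi i}\int_\gamma(\lambda-A_f)^{-1}d\lambda$ with $\gamma$ enclosing $[1/C,C]$ but not $0$, then uses the exact algebraic identity $(\lambda-A_f)\bigl(\lambda^{-1}+A_f\lambda^{-1}(\lambda-1)^{-1}\bigr)=I-(A_f^2-A_f)\lambda^{-1}(\lambda-1)^{-1}$ and a Neumann series (convergent in each $\langle\xi\rangle^N L^2\to\langle\xi\rangle^N L^2$ norm since $A_f^2-A_f=\mathcal O(h^\infty)$ there) to get $(\lambda-A_f)^{-1}=\lambda^{-1}+\lambda^{-1}(\lambda-1)^{-1}A_f + \mathcal O(h^\infty)$ directly in the weighted norms, and integrates. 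This produces the bound $\langle\xi\rangle^N L^2\to\langle\xi\rangle^{-N}L^2$ in a single stroke. You instead invoke the spectral theorem for $\tilde A_f$ on $H(\Lambda)$ — which correctly yields $\tilde A_f=I+\mathcal O(h^\infty)$ in the \emph{unweighted} $L^2(\Lambda,e^{-2H/h})$ norm — and then sketch a factorization to upgrade the norm. Both ideas work, but I'd flag that your upgrade step needs to be spelled out with more care than ``cancel a power of $\tilde A_f$ at each symbol order.'' The cleanest version of your route uses the identity $\Pi_\Lambda - A_f = G(A_f-A_f^2) = (A_f-A_f^2)G$, where $G$ is $\tilde A_f^{-1}$ extended by zero on $H(\Lambda)^\perp$: the second form gives $\Pi_\Lambda-A_f = \mathcal O(h^\infty) : L^2\to\langle\xi\rangle^{-N}L^2$ (since $G$ is $L^2$-bounded and $(A_f-A_f^2):L^2\to\langle\xi\rangle^{-N}L^2$ is $\mathcal O(h^\infty)$), self-adjointness then gives $\langle\xi\rangle^N L^2\to L^2$, and complex interpolation between these gives $\langle\xi\rangle^{N/2}L^2\to\langle\xi\rangle^{-N/2}L^2$, which suffices since $N$ is arbitrary. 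Without that duality-plus-interpolation step, the factorization alone does not produce the weighted estimate, because $G=\tilde A_f^{-1}\Pi_\Lambda$ is only known a priori to be bounded on $L^2$ (not on $\langle\xi\rangle^N L^2$) — and establishing its weighted boundedness directly is essentially equivalent to the proposition itself. Also a small historical point: the argument in \cite[\S 1]{Sj96} that you cite is precisely the contour-integral argument the paper follows, not the spectral-theorem variant you propose.
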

\begin{proof}
To start we observe that for $ u \in H ( \Lambda ) $, 
$ \| u \|_{ L^2 ( \Lambda ) } > 0 $, 
\[ \begin{split}  \langle A_f u, u \rangle_{ L^2 ( \Lambda) }  & = 
\langle P_\Lambda f P_\Lambda^* u, u \rangle_{ L^2 ( \Lambda ) } = 
\langle  f P_\Lambda^* u, P_\Lambda^* u \rangle_{ L^2 (\Lambda) } 
\\ & \geq \min_{\alpha \in T^* \RR^n }  f 
( \alpha ) \| P_\Lambda^* u \|^2_{L^2 ( \Lambda )  } 
\geq 
 \frac{| \langle P_\Lambda^* u , u \rangle |^2}{ C \| u \|^2_{L^2 ( \Lambda)  }}  =  \| u \|_{L^2 ( \Lambda ) }^2  /C .  \end{split} 
 \]
Hence, 
\begin{equation}
\label{eq:specAf}
\begin{gathered} \| u \|_{L^2 ( \Lambda)  } /C  \leq \| A_f u \|_{L^2 ( \Lambda)  } \leq {C}\| u \|_{L^2 ( \Lambda ) } , \ \ u \in H ( \Lambda )  , \\
A_f u = 0, \ \ u \in H ( \Lambda ) ^\perp , \ \ A_f^* = A_f, 
\end{gathered}
\end{equation}
and
\begin{equation}
\label{eq:contPi}
\Pi_\Lambda = \frac{1}{2 \pi} \int_\gamma ( \lambda - A_f )^{-1} d\lambda,  
\end{equation}
where $ \gamma $ is a positively oriented boundary of
an open set in $ \CC $ containing $ [ 1/C ,  {C} ] $ and excluding $ 0 $.
From \eqref{eq:afaim} and Proposition \ref{p:Bla1} we know that 
\begin{equation}
\label{eq:AfAf2}  A_f = A_f^2 + \mathcal O ( h^\infty )_{{\langle \xi\rangle^{N}L^2(\Lambda)\to \langle \xi\rangle^{-N}L^2(\Lambda)} }, 
\end{equation}
 and we want to use this property to show that $ \Pi_\Lambda $ is close to $ A_f $.
For that we note that if $ A = A^2 $ then, at first for $ |\lambda  | \gg 1 $, 
\[ ( \lambda - A )^{-1} = \sum_{j=0}^\infty  
  \lambda^{-j-1} A^j = \lambda^{-1} + \lambda^{-1} 
  \sum_{ j=0}^\infty \lambda^{-j} A = \lambda^{-1} + A \lambda^{-1} 
  ( \lambda - 1 )^{-1}  .\]
Hence, it is natural to take the right hand side as the approximate inverse in the case when $ A^2 - A $ is small: 
\[ \begin{split} ( \lambda - A_f) ( \lambda^{-1}  +  A_f \lambda^{-1}  ( \lambda - 1)^{-1} )   & = 
I - ( A_f^2 - A_f ) \lambda^{-2}  ( \lambda - 1)^{-1}  
 . \end{split} \]
In view of \eqref{eq:AfAf2} and for $ h$ small enough, the right
hand side is invertible for $ \lambda \in \gamma $ with the inverse equal to $ I + R $, $ R = \mathcal O ( h^\infty )_{{\langle \xi\rangle^{N}L^2(\Lambda)\to \langle \xi\rangle^{-N}L^2(\Lambda)} }  $. Hence for $ \lambda \in \gamma $, 
\[ (\lambda - A_f )^{-1} =  \lambda^{-1} + \lambda^{-1} ( \lambda -1)^{-1} A_f + \mathcal O ( h^\infty )_{{\langle \xi\rangle^{N}L^2(\Lambda)\to \langle \xi\rangle^{-N}L^2(\Lambda)}}  . \]
Inserting this identity into \eqref{eq:contPi} and using Cauchy's formula gives 
\[  \Pi_\Lambda = A_f + \mathcal O ( h^\infty )_{{\langle \xi\rangle^{N}L^2(\Lambda)\to \langle \xi\rangle^{-N}L^2(\Lambda)} } , \]
which combined with \eqref{eq:afaim} implies
\eqref{eq:B2Pi11}.
\end{proof} 

\section{Deformation of pseudodifferential operators}
\label{s:deforpsi}

In this section we analyse pseudodifferential operators with analytic symbols acting on spaces $ H^m_\Lambda $ defined in \S \ref{s:dFBI}. 
That means describing the action on the FBI side of operators $ P$:
\begin{equation}
\label{eq:TLPSL}  T_\Lambda P u = (T_\Lambda P S_\Lambda) (T_\Lambda u ) = 
( \Pi_\Lambda T_\Lambda P S_\Lambda \Pi_\Lambda ) ( T_\Lambda u ) .
\end{equation}

The class of pseudodifferential operators we consider is given by 
\begin{equation}
\label{eq:defP}
Pu(y):=\frac{1}{(2\pi h)^n}\int_{\RR^n}\int_{\RR^n}  e^{\frac{i}{h}\langle y-y',\eta\rangle}p(y,\eta)u(y')dy'd\eta
\end{equation}
where 
$p\in S^m ( T^* \TT^n ) $ has an analytic continuation from $T^*\mathbb{T}^n$ satisfying
\begin{equation}
\label{e:SymbolAnalyticity}
|p(z,\zeta)|\leq M\langle \zeta\rangle^m, \ \text{ for }  \ |\Im z|\leq a, \ \ |\Im \zeta|\leq  b\langle \Re \zeta\rangle .
\end{equation}
The integral in the definition \eqref{eq:defP} of $ Pu $  is considered in the sense of oscillatory integrals (see for instance \cite[\S 5.3]{zw}) 
and we extend both $ y \mapsto u ( y ) $ and $ y \mapsto p ( y, \eta ) $
to periodic functions on $ \RR^n $.

\subsection{Pseudodifferential operators as Toeplitz operators}
\label{s:ps2To}

We start with a lemma which describes the middle term in \eqref{eq:TLPSL}:
\begin{lemm}
\label{l:psdef}
Suppose $ P $ is defined by \eqref{eq:defP} with $ p $ satisfying
\eqref{e:SymbolAnalyticity}. Then, for $G$ satisfying~\eqref{e:controlDeformation} with $\epsilon_0>0$ small enough, the Schwartz kernel of 
$T_\Lambda P {S}_\Lambda$ is given by 
\begin{equation}
\label{eq:strKP}
K_P(\alpha,\beta)=c_0 h^{-n}e^{\frac{i}{h}\Psi(\alpha,\beta)}a_P(\alpha,\beta) +r(\alpha,\beta)\
\end{equation}
where $\Psi$ is as in~\eqref{e:tempProjectorPhase},
\begin{equation}
\label{eq:defaP}
a_P \sim\sum_{j=0}^\infty {h^j}{\langle \alpha_\xi\rangle^{-j}} a_j,\qquad a_0(\alpha,\alpha)=p|_{\Lambda}(\alpha),
\end{equation}
$a_j \in S^0 ( \Lambda \times \Lambda ) $ {is supported in a conic neighbourhood of $\Delta(\Lambda\times \Lambda)$} and 
\begin{equation}
\label{eq:estr}
| r(\alpha,\beta)|\leq e^{- ( \langle  \Re \alpha_\xi\rangle+\langle  \Re\beta _\xi\rangle +\langle\Re \alpha_x-\Re\beta_x \rangle  )/Ch}. 
\end{equation}
\end{lemm}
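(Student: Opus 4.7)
The plan is to compute the kernel of $T_\Lambda P S_\Lambda$ directly, treating the resulting multi-fold oscillatory integral by contour deformation (the only place where the analyticity hypothesis \eqref{e:SymbolAnalyticity} on $p$ is used) followed by complex stationary phase. This generalizes Lemma~\ref{l:compositePhase}, which handles the case $P=I$.

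After substituting the kernels of $T_\Lambda$, $P$, and $S_\Lambda$ and unfolding the periodization of one of the two spatial variables to $\RR^n$, the Schwartz kernel takes the form
\[
K_P(\alpha,\beta) = h^{-5n/2}\sum_{k\in\ZZ^n}\int_{\RR^{3n}} e^{\frac{i}{h}\Phi_k(\alpha,\beta,y,y',\eta)}\, A(\alpha,\beta,y,y',\eta)\, dy\, dy'\, d\eta,
\]
with
\[
\Phi_0 = \varphi(\alpha, y) + \langle y-y',\eta\rangle - \varphi^*(\beta, y'), \qquad A = \langle\alpha_\xi\rangle^{n/4}\, p(y,\eta)\, b(\beta_x-y',\beta_\xi),
\]
and the $k$-sum corresponding to $\beta_x\mapsto\beta_x+2\pi k$. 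I would first apply complex stationary phase in $(y',\eta)$: the critical equations $\partial_\eta\Phi=0$, $\partial_{y'}\Phi=0$ give $y'=y$ and $\eta_c = -\partial_{y'}\varphi^*(\beta,y')|_{y'=y}$, a point which for $\beta\in\Lambda$ lies inside the strip $\{|\Im\zeta|\leq b\langle\Re\zeta\rangle\}$ provided $\epsilon_0$ in \eqref{e:controlDeformation} is small compared to $b$. Deforming the $\eta$-contour through this complex critical point is possible precisely because $p$ extends holomorphically to that strip by \eqref{e:SymbolAnalyticity}. Complex stationary phase then reduces the integral to one in $y$ of precisely the form treated in Lemma~\ref{l:compositePhase} with an additional amplitude factor $p(y,\eta_c(\beta,y))$.

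Stationary phase in $y$ as in Lemma~\ref{l:compositePhase} next produces the phase $\Psi$ of \eqref{e:tempProjectorPhase} and an expansion $a_P\sim \sum_j h^j\langle\alpha_\xi\rangle^{-j}a_j$ with $a_j\in S^0(\Lambda\times\Lambda)$ supported in a conic neighborhood of the diagonal. On the diagonal $\alpha=\beta$ the two successive critical points collapse to $y=\alpha_x$, $\eta=\alpha_\xi$, and matching the normalization constants with the computation already carried out in Lemma~\ref{l:TSbounded} (which gave $k\in S^0$ with a nonzero constant on the diagonal when $p\equiv 1$) yields $a_0(\alpha,\alpha) = p|_\Lambda(\alpha)$. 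The remainder $r$ in \eqref{eq:estr} collects three contributions: the tail of the periodization sum, which produces the $\langle \Re\alpha_x-\Re\beta_x\rangle$ decay after lifting to $\RR^n$; the contribution from outside a conic diagonal neighborhood, handled as in the proof of Lemma~\ref{l:compositePhase} by deforming $y \mapsto y + i\delta(\overline{\beta_\xi-\alpha_\xi})/\langle\Re(\beta_\xi-\alpha_\xi)\rangle$; and the tails of the Borel-summed $(y',\eta)$ stationary phase expansion. In each case contour deformation delivers a phase imaginary part bounded below by $c(\langle\Re\alpha_\xi\rangle + \langle\Re\beta_\xi\rangle + \langle\Re\alpha_x-\Re\beta_x\rangle)$, yielding \eqref{eq:estr}.

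The main obstacle is coordinating the three contour deformations: the $\eta$-contour must stay inside the strip of analyticity of $p$ while producing enough decay to dominate the polynomial growth $\langle\eta\rangle^m$ permitted by \eqref{e:SymbolAnalyticity}, and the $y,y'$-contours must accommodate both the complex deformation $\Lambda$ (governed by $\epsilon_0$) and the additional deformation needed to close the $\eta$-integral. This is why the smallness of $\epsilon_0$ in \eqref{e:controlDeformation} must be imposed relative to the strip parameters $a,b$ in \eqref{e:SymbolAnalyticity}, a constraint that appears explicitly in the hypothesis of the Lemma.
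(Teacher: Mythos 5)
Your overall architecture (unfold the triple oscillatory integral, deform contours in $\eta$, $y$, $y'$, isolate a conic diagonal neighborhood, apply complex stationary phase, read off $a_0|_\Delta = p|_\Lambda$ from the unperturbed normalization) matches the paper's, and you correctly single out the central difficulty: the $\eta$-contour must remain inside the region $|\Im\eta|\leq b\langle\Re\eta\rangle$ where $p$ is holomorphic. But the resolution you propose — that this is handled by taking $\epsilon_0$ small relative to $a,b$ — does not work, and this is a genuine gap. The critical point $\eta_c$ has imaginary part not only from the $\Lambda$-deformation (which is $O(\epsilon_0\langle\Re\beta_\xi\rangle)$) but also from the geometry of the Gaussian phase: after restricting near the diagonal, $\Im\eta_c$ contains the term $\frac{\langle\alpha_\xi\rangle\langle\beta_\xi\rangle(\beta_x-\alpha_x)}{\langle\alpha_\xi\rangle+\langle\beta_\xi\rangle}$, which is $O(\delta\langle\Re\alpha_\xi\rangle)$ and independent of $\epsilon_0$. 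Even once one arranges $|\Im\eta_c|\leq b\langle\Re\eta_c\rangle$, the affine shift $\eta\mapsto\eta+\eta_c$ necessarily passes through points where $\Re(\eta+\eta_c)$ is arbitrarily small while $\Im(\eta+\eta_c)=\Im\eta_c$ is not, so the shifted contour leaves the strip of holomorphy near $\eta\approx-\Re\eta_c$ regardless of how small $\epsilon_0$ is.

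The paper handles this with a genuine extra idea, not constant-tuning: it replaces the affine shift by the truncated shift $\eta\mapsto\eta+\eta_c\,\chi(|\Re\eta|/|\Re\eta_c|)$ with $\chi$ supported in $(1/2,2)$, so the contour only departs from $\RR^n$ where $|\Re\eta|\gtrsim|\Re\eta_c|$ and stays inside the strip. The price of not reaching the true critical point where $\chi\neq 1$ is a loss of Gaussian decay in $y-y'$; this is recouped by the auxiliary deformations $y\mapsto y+i\delta_1\frac{\eta-\eta_c}{\langle\eta-\eta_c\rangle}$, $y'\mapsto y'-i\delta_1\frac{\eta-\eta_c}{\langle\eta-\eta_c\rangle}$, after which the imaginary part of the phase is shown to dominate $\Im\Psi(\alpha,\beta)+c\delta_1|\eta-(1-\chi)\eta_c|$, provided $\epsilon_0\ll\delta_1$. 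Your write-up would need to incorporate this truncated-shift mechanism (or an equivalent device) before the claim that "deforming the $\eta$-contour through this complex critical point is possible precisely because $p$ extends holomorphically to that strip" can be justified.
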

\begin{proof}
We first note that for each $\beta\in \Lambda$, $v_\beta(y')=e^{-\frac{i}{h}\varphi^*(\beta,y')}b(\beta_x-y',\beta_\xi)$ is a Schwartz function and hence the integral
$$
h^{-\frac{3n}{4}}\frac{1}{(2\pi h)^n}\int_{\mathbb{R}^{2n}} e^{\frac{i}{h}(\langle y-y',\eta\rangle-\varphi^*(\beta,y'))}p(y,\eta)b(\beta_x-y',\beta_\xi)dy'd\eta
$$
defines a Schwartz function of $y$. In particular, the kernel of $T_{\Lambda}PS_{\Lambda}$ is given by 
$$
\frac{h^{-\frac{3n}{2}}}{(2\pi h)^n}\int_{\mathbb{R}^{3n}} e^{\frac{i}{h}(\varphi(\alpha,y)+\langle y-y',\eta\rangle-\varphi^*(\beta,y'))}p(y,\eta)b(\beta_x-y',\beta_\xi)\langle \alpha_\xi\rangle^{\frac{n}{4}}dy'd\eta dy.
$$
To obtain \eqref{eq:strKP} we start by 
deforming the contour in $\eta$:
$
\eta\mapsto \eta+i\delta_1\langle \eta\rangle\frac{y-y'}{\langle y-y'\rangle}.
$
The phase $\Phi$ is then given by 
\[
\begin{split}
\Phi & =\langle \alpha_x-y,\alpha_\xi\rangle +\frac{i\langle \alpha_\xi\rangle}{2}(\alpha_x-y)^2+\frac{i\langle \beta_\xi\rangle}{2}(\beta_x-y')^2 \\
& \ \ \ \ \ \ \ +\langle y'-\beta_x,\beta_\xi\rangle +\langle y-y',\eta\rangle +i\delta_1\langle \eta\rangle\frac{(y-y')^2}{\langle y-y'\rangle}.
\end{split}
\]
We then deform the contour in $y,y'$ as follows 
$$
y\mapsto y+i\delta_1\frac{\eta-\alpha_\xi}{\langle \eta-\alpha_\xi\rangle},\qquad y'\mapsto y'+i\delta_1\frac{\beta_\xi-\eta}{\langle \beta_\xi-\eta\rangle}.
$$
The phase $\Phi$ becomes
\[ \begin{split} 
\Phi&=\langle \alpha_x-y,\alpha_\xi\rangle +\frac{i\langle \alpha_\xi\rangle}{2}(\alpha_x-y)^2+\frac{i\langle \beta_\xi\rangle}{2}(\beta_x-y')^2+\langle y'-\beta_x,\beta_\xi\rangle +\langle y-y',\eta\rangle \\
&\ \ \ \ \ \ \ \ +i\delta_1\Big[\frac{(\alpha_\xi-\eta)^2}{\langle \alpha_\xi-\eta\rangle}+\frac{(\beta_\xi-\eta)^2}{\langle \beta_\xi-\eta\rangle}+  \langle \eta\rangle\frac{(y-y')^2}{\langle y-y'\rangle}\Big]\\
&\ \ \ \ \ \ \ \ +\frac{i\langle \alpha_\xi\rangle}{2}\Big[ -2i\delta_1 \frac{\langle \alpha_\xi-\eta,\alpha_x-y\rangle}{\langle \alpha_\xi-\eta\rangle}-\delta_1^2\frac{(\alpha_\xi-\eta)^2}{\langle \alpha_\xi-\eta\rangle^2}\Big]\\
&\ \ \ \ \ \ \ \  +\frac{i\langle \beta_\xi\rangle}{2}\Big[ -2i\delta_1 \frac{\langle \eta-\beta_\xi,\beta_x-y'\rangle}{\langle \beta_\xi-\eta\rangle}-\delta_1^2\frac{(\beta_\xi-\eta)^2}{\langle \beta_\xi-\eta\rangle^2}\Big]\\
&\ \ \ \ \ \ \ \ + \mathcal O \Big(\delta_1^2\langle \eta\rangle \frac{y-y'}{\langle y-y'\rangle}\Big[\frac{|(\alpha_\xi-\eta)\langle \beta_\xi-\eta\rangle +(\eta-\beta_\xi)\langle \alpha_\xi-\eta\rangle |}{|\langle \alpha_\xi-\eta\rangle\langle \beta_\xi-\eta\rangle|}\Big]\Big)\\
&\ \ \ \ \ \ \ \  + \mathcal O \Big(\frac{\delta_1^3\langle \eta\rangle}{\langle y-y'\rangle}\Big[\frac{|(\alpha_\xi-\eta)\langle \beta_\xi-\eta\rangle +(\eta-\beta_\xi)\langle \alpha_\xi-\eta\rangle |^2}{|\langle \alpha_\xi-\eta\rangle\langle \beta_\xi-\eta\rangle|^2}\Big]\Big)
\end{split} \]
We first consider the case when $\langle \Re \alpha_\xi\rangle\geq 2\langle \Re \beta_\xi\rangle $. Then,
$$
|\Re\alpha_\xi-\eta|+|\Re \beta_\xi-\eta|\geq c(\langle \Re \alpha_\xi\rangle +\langle \Re\beta_\xi\rangle+\langle \eta\rangle).
$$
and in particular, 
$$\Im \Phi\geq c(\langle \Re \alpha_\xi\rangle +\langle\Re \beta_\xi\rangle +\langle \Re\eta\rangle +c(|\Re\alpha_x-y|+|\Re \beta_x-y' |+|y-y'|) ,$$
which produces  produces a term which can be absorbed into $ r $ satisfying
\eqref{eq:estr}.

Similar arguments, show that we can assume that $\langle\Re \alpha_\xi\rangle ,\,\langle\Re \eta\rangle ,$ and $\langle\Re\beta_\xi\rangle$ are proportional.

We now suppose that 
$$
\frac{|\Re\alpha_\xi-\Re\beta_\xi|}{\langle \Re\alpha_\xi\rangle+\langle \Re\beta_\xi\rangle}+|\Re\alpha_x-\Re \beta_x|>\delta.
$$
Then, the imaginary part of the phase is bounded below by
\[ 
\begin{split} 
\Im \Phi & \geq c(\langle \Re\alpha_\xi\rangle +\langle\Re \beta_\xi))(1+|\Re\alpha_x-\Re\beta_x|) \\
& \ \ \ \ \ \ +c(|\Re\alpha_x-y|+|y-y'|+|\Re\beta_x-y|+|\eta-\Re\alpha_\xi|+|\eta-\Re\beta_\xi|).
\end{split} \]
In particular when
$$
\frac{|\Re\alpha_\xi-\Re\beta_\xi|}{\langle\Re \alpha_\xi\rangle+\langle \Re\beta_\xi\rangle}+|\Re\alpha_x-\Re\beta_x|>\delta.
$$
the integral is bounded by 
$
Ce^{-(\langle \Re\alpha_\xi\rangle +\langle \Re\beta_\xi\rangle)(1+|\Re\alpha_x-\Re \beta_x|)/h}
$.
Hence, we can insert a cutoff 
$$
\chi\Big(\delta^{-1}\Big[\frac{|\Re\alpha_\xi-\Re\beta_\xi|}{\langle\Re \alpha_\xi\rangle+\langle\Re \beta_\xi\rangle}+|\Re\alpha_x-\Re\beta_x|\Big]\Big)
$$
into the integral.

{With this cutoff inserted,} we deform in $y,y'$ to the critical point
$$
y\mapsto y+y_c(\alpha,\beta),\qquad y'\mapsto y'+y_c(\alpha,\beta),
$$
where
$$
y_c(\alpha,\beta)=\frac{\alpha_x\langle \alpha_\xi\rangle +\beta_x\langle \beta_\xi\rangle}{\langle \alpha_\xi\rangle +\langle \beta_\xi\rangle}+i\frac{\beta_\xi-\alpha_\xi}{\langle \alpha_\xi\rangle +\langle \beta_\xi\rangle}.
$$
This contour deformation is justified since the cutoff function guarantees that
$$
\Re\Big|\frac{\alpha_\xi-\beta_\xi}{\langle \alpha_\xi\rangle+\langle \beta_\xi\rangle}\Big|\leq C\delta.
$$
The phase is then given by 
\[ \begin{split}
\Phi & =  \frac{i}{2} \left(\frac{(\beta_\xi-\alpha_\xi)^2+\langle \alpha_\xi\rangle\langle \beta_\xi\rangle(\alpha_x-\beta_x)^2}{\langle \alpha_\xi\rangle +\langle \beta_\xi\rangle}+\langle \alpha_\xi\rangle y^2+\langle \beta_\xi\rangle (y')^2\right) \\
& \ \ \ \ \ \ +\Big\langle \alpha_x-\beta_x,\frac{\alpha_\xi \langle \beta_\xi\rangle +\beta_\xi\langle \alpha_\xi\rangle}{\langle \alpha_\xi\rangle +\langle \beta_\xi\rangle}\Big\rangle 
+\langle y-y',\eta-\eta_c(\alpha,\beta)\rangle
\end{split} \]
with
$$
\eta_c(\alpha,\beta)=\frac{\alpha_\xi \langle \beta_\xi\rangle +\beta_\xi\langle \alpha_\xi\rangle}{\langle \alpha_\xi\rangle +\langle \beta_\xi\rangle}+i\frac{\langle \alpha_\xi\rangle \langle \beta_\xi\rangle(\beta_x-\alpha_x)}{\langle \alpha_\xi\rangle +\langle \beta_\xi\rangle}.
$$

We would now like to shift the countour to $\eta\mapsto \eta+\eta_c$. However, $p$ only has an analytic continuation to $|\Im \eta|\leq b\langle\Re\eta\rangle$ and $\Im \eta_c$ is not, in general, bounded. Therefore, when $|\Re\eta|\ll |\Re\eta_c|$, we cannot make this deformation. To finish the proof, we consider two cases.

We first assume that 
$ |\eta_c(\alpha,\beta)|\leq {b}/{2}. $
 Then, the contour deformation $\eta\mapsto \eta+\eta_c$ is justified, and we may perform complex stationary phase to complete the proof. 
 
We now consider the more involved case when
\[ |\eta_c|\geq \frac{b}{2}\gg\epsilon_0  > 0  \]
where $\e_0$ is as in~\eqref{e:controlDeformation}.
In that case we use the deformation 
$$
y\mapsto y+i\delta_1\frac{(\eta-\eta_c)}{\langle \eta-\eta_c\rangle},\qquad y'\mapsto y'-i\delta_1 \frac{(\eta-\eta_c)}{\langle \eta-\eta_c\rangle}
$$
to obtain  the phase
\begin{align*}
&\Big\langle \alpha_x-\beta_x,\frac{\alpha_\xi \langle \beta_\xi\rangle +\beta_\xi\langle \alpha_\xi\rangle}{\langle \alpha_\xi\rangle +\langle \beta_\xi\rangle}\Big\rangle +\frac{i}{2}\Big[ \frac{(\beta_\xi-\alpha_\xi)^2+\langle \alpha_\xi\rangle\langle \beta_\xi\rangle(\alpha_x-\beta_x)^2}{\langle \alpha_\xi\rangle +\langle \beta_\xi\rangle}+\langle \alpha_\xi\rangle y^2+\langle \beta_\xi\rangle (y')^2\Big]\\
&\qquad+\Big\langle y-y',(\eta-\eta_c)\Big(1-\frac{\delta_1}{\langle \eta-\eta_c\rangle}\Big)\Big\rangle+2i\delta_1\frac{(\eta-\eta_c)^2}{\langle \eta-\eta_c\rangle}\Big(1-\frac{\delta_1}{2\langle \eta-\eta_c\rangle}\Big).
\end{align*}
Finally, let $\chi \in C_c^\infty( (1/2,2))$ withe $\chi\equiv 1$ on $(3/4,3/2)$, and  shift contours 
$$
\eta\mapsto \eta+\eta_c\chi\Big(\frac{|\Re\eta|}{|\Re\eta_c|}\Big).
$$
 Note that this deformation is now justified since $|\Re\eta|\geq c|\Re\eta_c|$ on the deformation and $|\Im \eta_c|\leq c\epsilon_0 \langle \Re \eta_c\rangle.$ The phase is then given by
  \begin{align*}
&\ \frac{i}{2}\Big[ \frac{(\beta_\xi-\alpha_\xi)^2+\langle \alpha_\xi\rangle\langle \beta_\xi\rangle(\alpha_x-\beta_x)^2}{\langle \alpha_\xi\rangle +\langle \beta_\xi\rangle}+\langle \alpha_\xi\rangle y^2+\langle \beta_\xi\rangle (y')^2\Big]\\
&\qquad \ \  
+ \left\langle \alpha_x-\beta_x,\frac{\alpha_\xi \langle \beta_\xi\rangle +\beta_\xi\langle \alpha_\xi\rangle}{\langle \alpha_\xi\rangle +\langle \beta_\xi\rangle}\right\rangle +\Big\langle y-y',(\eta-(1-\chi)\eta_c)\Big(1-\frac{\delta_1}{\langle \eta-(1-\chi)\eta_c\rangle}\Big)\Big\rangle\\
&\qquad \ \ \ \ \ \ \ \ \ \ \ \ \ \ +2i\delta_1\frac{(\eta-(1-\chi)\eta_c)^2}{\langle \eta-(1-\chi)\eta_c\rangle}\Big(1-\frac{\delta_1}{2\langle \eta-(1-\chi)\eta_c\rangle}\Big).
\end{align*}
and, since on $|\eta_c|\geq {b}/{2}\gg\epsilon_0 $, 
$$
C\epsilon_0 |\Re (\eta-(1-\chi)\eta_c)|\geq  |\Im (\eta-(1-\chi)\eta_c)|,
$$
we have that the imaginary part of the phase satisfies
\begin{align*}
\Im \Phi &\geq \Im \Psi(\alpha,\beta)+c(|\langle \alpha_\xi\rangle| |y|^2+|\langle \beta_\xi\rangle
| |y'|^2)+c\delta_1|\eta-(1-\chi)\eta_c|
\\
& \ \ \ \ \ -|y-y'||\Im ( (1-\chi)\eta_c)|\\
&\geq \Im \Psi(\alpha,\beta) +c(|\langle \alpha_\xi\rangle ||y|^2+|\langle \beta_\xi\rangle||y'|^2)+c\delta_1|\eta-(1-\chi)\eta_c|-C\epsilon_1 \frac{|(1-\chi)\eta_c|^2}{|\langle \alpha_\xi\rangle+\langle \beta_\xi\rangle|}\\&\geq \Im \Psi(\alpha,\beta) +c(|\langle \alpha_\xi\rangle ||y|^2+|\langle \beta_\xi\rangle||y'|^2)+c\delta_1|\eta-(1-\chi)\eta_c|-C\epsilon_1 |(1-\chi)^2\eta_c|\\\
&\geq \Im \Psi(\alpha,\beta) +c(|\langle \alpha_\xi\rangle ||y|^2+|\langle \beta_\xi\rangle||y'|^2)+c\delta_1|\eta-(1-\chi)\eta_c|
\end{align*}
where we have used that $\alpha_\xi$ and $\beta_\xi$ are comparable and taken $\epsilon_0 \ll \delta_1$ small enough. Thus, we may apply the method of complex stationary phase to obtain the result.
\end{proof}

The next result gives the description on the rightmost term in 
\eqref{eq:TLPSL}. For a simpler case capturing the idea of the proof see
\cite[Theorem 2]{GZ}. 

\begin{prop}
\label{l:FBISide}
Suppose $ P $ is defined by \eqref{eq:defP} with $ p $ satisfying
\eqref{e:SymbolAnalyticity}. Then, for $G$ satisfying~\eqref{e:controlDeformation} with $\epsilon_0>0$ small enough,
$$
\Pi_\Lambda T_\Lambda P {S}_\Lambda \Pi_\Lambda = \Pi_\Lambda b_P \Pi_\Lambda +\mathcal O(h^\infty)_{\langle \xi\rangle^NL^2(\Lambda)\to \langle \xi\rangle^{-N}L^2(\Lambda)}
$$
where 
$$
b_P \sim \sum_{j=0}^\infty h^jb_j,\qquad b_j\in S^{m-j},\qquad b_0=p|_{\Lambda}.
$$
\end{prop}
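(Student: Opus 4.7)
The plan is a Toeplitz-type reduction in the spirit of \cite{BG} and \cite{Sj96}, carried out in three steps.

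\emph{Step 1 (reduction).} By Proposition \ref{p:B2P}, $\Pi_\Lambda = B_\Lambda + \mathcal O(h^\infty)$ in the required operator norms, so it suffices to find a symbol $b_P \sim \sum h^j b_j \in S^m(\Lambda)$, with $b_0 = p|_\Lambda$, such that
\[
B_\Lambda\, (T_\Lambda P S_\Lambda)\, B_\Lambda \;\equiv\; B_\Lambda\, b_P\, B_\Lambda \pmod{\mathcal O(h^\infty)}.
\]
Lemma \ref{l:psdef} writes $T_\Lambda P S_\Lambda = K_P + r$, where $K_P$ has Schwartz kernel $c_0 h^{-n} e^{i\Psi/h} a_P$ (with $\Psi$ from \eqref{e:tempProjectorPhase} and $a_P$ as in \eqref{eq:defaP}) and $r$ is pointwise exponentially small in $h$ by \eqref{eq:estr}. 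Combined with the Gaussian decay of $B_\Lambda$'s kernel away from the diagonal \eqref{eq:BLap}, Schur's test yields $B_\Lambda\, r\, B_\Lambda = \mathcal O(h^\infty)$ in all required norms.

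\emph{Step 2 (composition formula).} The kernel of $B_\Lambda K_P B_\Lambda$ is a double integral in $(\gamma_1,\gamma_2) \in \Lambda \times \Lambda$ whose total $h^{-1}$-phase equals
\[
\Phi(\alpha,\gamma_1,\gamma_2,\beta) := \psi(\alpha,\gamma_1) + \Psi(\gamma_1,\gamma_2) + \psi(\gamma_2,\beta) + 2iH(\gamma_1),
\]
with an outer factor $e^{-2H(\beta)/h}$. Nondegeneracy of the Hessian of $\Im\Phi$ at its critical point follows from the strict positivity \eqref{eq:BLap} for $\psi$ and the analogous positivity of $\Im\Psi$ near the diagonal from Proposition \ref{p:prelim-Projector}. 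Complex stationary phase (after the rescaling \eqref{eq:tildealbe}) then produces a kernel of the form $h^{-n} e^{i\widetilde\psi/h - 2H(\beta)/h} c(\alpha,\beta,h)$ with $c \sim \sum h^j c_j$ and $c_j \in S^{m-j}$. The critical-value identity \eqref{eq:proppsi} for $\psi$, together with its analogue for $\Psi$ (both $\psi$ and $\Psi$ satisfy the eikonal equations \eqref{eq:Laeik} and the diagonal condition $\psi(\alpha,\alpha) = -2iH(\alpha)$, so the uniqueness clause of Proposition \ref{p:BLa} forces $\Psi \equiv \psi$ modulo $\mathcal O(|\alpha-\beta|^\infty)$), gives
\[
\widetilde\psi(\alpha,\beta) = \psi(\alpha,\beta) + \mathcal O(|\alpha - \beta|^\infty).
\]
Applying the same stationary-phase computation to $B_\Lambda\, b\, B_\Lambda$ for any symbol $b$ on $\Lambda$ produces a kernel of identical form with $c_j|_\Delta = \sum_{\ell \leq j} L_{2(j-\ell)} b_\ell$, where the $L_{2k}$ are universal local differential operators on $\Lambda$ and $L_0$ is an invertible multiplication operator.

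\emph{Step 3 (iterative construction of $b_P$).} Set $b_0 := p|_\Lambda$ so the leading diagonal symbols agree. Having fixed $b_0,\ldots,b_{j-1}$, a single linear equation (equating the diagonal values at order $h^j$ of the symbols of $B_\Lambda K_P B_\Lambda$ and $B_\Lambda \sum_{\ell<j} h^\ell b_\ell B_\Lambda$) determines $b_j|_\Delta$, and it is solvable because $L_0$ is invertible. Borel summation delivers $b_P \in S^m$ with the required asymptotic expansion, and the difference $B_\Lambda(K_P - b_P)B_\Lambda$ is $\mathcal O(h^\infty)$ in the stated operator norms.

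The main technical obstacle is the composition identity in Step 2: showing that the critical value of $\Phi$ in $(\gamma_1,\gamma_2)$ reproduces the phase $\psi$ modulo infinite-order vanishing on the diagonal. This is the counterpart, in our framework of non-compactly-supported exponential weights on $\Lambda$, of the Toeplitz phase calculus of \cite{BG,Sj96}. It reduces, through the uniqueness of the phase solving \eqref{eq:Laeik} with prescribed diagonal value proved in Proposition \ref{p:BLa}, to verifying that both $\psi$ and $\Psi$ generate the same almost analytic Lagrangian $\mathscr C_H$ from \S \ref{s:asy}, which in turn follows from the eikonal-equation content of Proposition \ref{p:ZLa} applied to the structure of the kernel $K_P$ in Lemma \ref{l:psdef}.
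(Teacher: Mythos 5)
Your Steps 1 and 3 follow the paper's strategy, but Step 2 contains a genuine error that breaks the argument. You assert that ``both $\psi$ and $\Psi$ satisfy the eikonal equations \eqref{eq:Laeik} and the diagonal condition $\psi(\alpha,\alpha)=-2iH(\alpha)$, so the uniqueness clause of Proposition \ref{p:BLa} forces $\Psi\equiv\psi$ modulo $\mathcal O(|\alpha-\beta|^\infty)$.'' This is false once $G\neq 0$: from the explicit formula \eqref{e:tempProjectorPhase} one reads off $\Psi(\alpha,\alpha)=0$, whereas $\psi(\alpha,\alpha)=-2iH(\alpha)$. Moreover, $\Psi$ is the phase of $T_\Lambda S_\Lambda$, which is idempotent but not self-adjoint on $L^2(\Lambda,e^{-2H/h})$, so there is no reason for $\Psi$ to satisfy the adjoint eikonal equation $\tilde\zeta_j^\Lambda(\beta,d_\beta\Psi)=\mathcal O(|\alpha-\beta|^\infty)$. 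Two hypotheses of Proposition~\ref{p:BLa}'s uniqueness clause therefore fail, and the conclusion $\Psi\equiv\psi$ does not hold. One can also see directly that the substitution would give the wrong answer: with $\gamma_1=\gamma_2=\alpha=\beta$, the critical value of your $\Phi$ with $\Psi$ replaced by $\psi$ would be $-2iH(\alpha)-2iH(\alpha)-2iH(\alpha)+2iH(\alpha)=-4iH(\alpha)$, not the required $-2iH(\alpha)$.

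The correct argument, which the paper supplies, does not rely on identifying $\Psi$ with $\psi$. Instead one verifies directly that when $\alpha=\beta$ the $(\gamma_1,\gamma_2)$-critical point of $\Phi$ lies on the diagonal $\gamma_1=\gamma_2=\alpha$, using the gradient identities $d_\gamma\Psi(\gamma,\gamma')|_{\gamma=\gamma'}=\zeta\,dz|_\Lambda=-d_{\gamma'}\Psi(\gamma,\gamma')|_{\gamma=\gamma'}$ (from \eqref{e:tempProjectorPhase}) together with the last line of \eqref{eq:proppsi} and the fact that $\psi(\alpha,\alpha)=-2iH(\alpha)$. Substituting then gives the critical value $\psi(\alpha,\alpha)+\Psi(\alpha,\alpha)+\psi(\alpha,\alpha)+2iH(\alpha)=-2iH(\alpha)+0-2iH(\alpha)+2iH(\alpha)=-2iH(\alpha)$, after which the uniqueness in Proposition~\ref{p:BLa} (now correctly applied to the composed phase $\widetilde\psi$, which does satisfy both eikonal equations and the right diagonal condition) identifies $\widetilde\psi$ with $\psi$ up to $\mathcal O(|\alpha-\beta|^\infty)$. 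Your reduction and iterative construction of $b_P$ are otherwise sound; the gap is localized to this one claimed phase identity.
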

\begin{proof}
Lemma \ref{l:psdef} shows that we need to prove
\begin{equation}
\label{eq:KP2b}  \Pi_\Lambda K_P \Pi_\Lambda = \Pi_\Lambda b \Pi_\Lambda + 
\mathcal O(h^\infty)_{\langle \xi\rangle^NL^2(\Lambda)\to \langle \xi\rangle^{-N}L^2(\Lambda)} , \end{equation}
where $ K_P $ is given by \eqref{eq:strKP}. Propositions \ref{p:Bla1}
and \ref{p:B2P} show that, modulo negligible terms 
the Schwartz kernel of the 
left hand side is given by 
\[  \int_\Lambda \int_\Lambda  
e^{ \frac{ i } h ( \psi ( \alpha , \gamma ) + \Psi ( \gamma, \gamma' ) 
+ \psi ( \gamma', \beta ) + 2 i H ( \gamma ) + 2 i H ( \beta ) ) } 
a ( \alpha, \gamma ) a_P ( \gamma, \gamma' ) a( \gamma' , \beta ) 
d \gamma d \gamma' , \]
where the support property of $ a $  (see \eqref{eq:propsi}) 
shows that integration is over a compact set.
An application of complex stationary phase produces a phase (with 
critical values taken for almost analytic continuation -- see 
\cite[Theorem 2.3, p.148]{mess}) 
\[ \psi_1 ( \alpha, \beta ) = 
{\rm{c.v.}}_{\gamma, \gamma' } 
( \psi ( \alpha , \gamma ) + \Psi ( \gamma, \gamma' ) 
+ \psi ( \gamma', \beta ) + 2 i H ( \gamma ) ) . \]
If we show that $ \psi_1 ( \alpha, \alpha ) =  -2 i H ( \alpha ) $ then
the uniqueness part of Proposition \ref{p:BLa} shows that (modulo negligible terms) we can take $ \psi_1 = \psi $. To see this 
we claim that for $ \alpha = \beta $ the critical point is given by 
$ \gamma = \gamma' = \alpha $, that is 
\begin{equation} \label{eq:dgagap} \begin{split} 
& d_\gamma ( \psi ( \alpha , \gamma ) + \Psi ( \gamma, \gamma' ) 
+ \psi ( \gamma', \alpha ) + 2 i H ( \gamma ) )|_{ \gamma = \gamma' = \alpha = 0} = 0  , \\
& d_{\gamma'} ( \psi ( \alpha , \gamma ) + \Psi ( \gamma, \gamma' ) 
+ \psi ( \gamma', \alpha ) + 2 i H ( \gamma ) )|_{ \gamma = \gamma' = \alpha = 0 } = 0 . \end{split} 
\end{equation}
To see this, we first use the formula \eqref{e:tempProjectorPhase} 
for $ \Psi$ to obtain 
\begin{equation}
\label{e:PsiDiff}
d_\gamma \Psi(\gamma,\gamma')|_{\gamma=\gamma'}=\zeta dz|_{\Lambda}=-d_{\gamma'}\Psi(\gamma,\gamma')|_{\gamma=\gamma'}.
\end{equation}
This immediately gives the second equation in \eqref{eq:dgagap}.

We then consider 
\[
\begin{split}
& d_\gamma (\psi ( \alpha , \gamma ) + \Psi ( \gamma, \gamma' ) 
+ \psi ( \gamma', \beta ) + 2 i H ( \gamma ))|_{  \gamma' = \alpha }  = 
\\ & \ \ \ \ \ \ \ \ \ \ \ \ \ \    d_\gamma (\psi ( \alpha , \gamma ) + 2 i H ( \gamma ) +\psi(\gamma,\gamma')-\psi(\gamma,\gamma')+\Psi ( \gamma, \gamma' ) )|_{  \gamma' = \alpha }.
\end{split}\]
The last line in \eqref{eq:proppsi} and \eqref{e:PsiDiff} give 
$$
d_\gamma(-\psi(\gamma,\gamma')+\Psi(\gamma,\gamma')|_{\gamma=\gamma'}=0.
$$
Therefore to obtain the first equation in \eqref{eq:dgagap}, it is enough to have
$$d_{\gamma}(\psi ( \alpha , \gamma ) + 2 i H ( \gamma ) +\psi(\gamma,\gamma'))|_{\gamma=\gamma'=\alpha}=0,$$
which follows from the first line of~\eqref{eq:proppsi} together with $\psi(\alpha,\alpha)=-2iH(\alpha)$. 
Since $ \psi ( \alpha, \alpha ) = -2 i H ( \alpha ) $ the critical value 
is given by  $\psi_1 ( \alpha, \alpha ) = \psi ( \alpha, \alpha )  $. 

It follows that
\[ \begin{split}  \Pi_\Lambda K_P \Pi_\Lambda u ( \alpha ) & = 
h^{-n} \int_\Lambda e^{ \frac i h \psi ( \alpha, \beta ) } 
c ( \alpha, \beta , h ) e^{ - 2 H ( \beta ) } u ( \beta ) d \beta 
\\
& \ \ \ \ \ \ \ \ \ \ \ \ + \mathcal O ( h^\infty\| u \|_{ \langle \xi \rangle^N L^2 ( \Lambda ) }  )_{ \langle \xi \rangle^{-N} L^2 ( \Lambda ) }
, 
\end{split} \]
where $ c $ satisfies \eqref{eq:propaa} (and the support property in 
\eqref{eq:propsi}). Arguing as in \eqref{eq:ZLaB}--\eqref{eq:JZJ} 
we see that the terms in the expansion of $ c $ satisfy transport equations
of \eqref{eq:ZjLa0} and hence are determined by their values on the diagonal. 

Assume that we have obtained $ b_j $, $ j = 0, \cdots , J-1$ (the
case of $ J= 0 $, that is no $b_j $'s is also allowed as the first step) so
that 
\begin{equation}
\label{eq:PiTPTPi} \Pi_\Lambda K_P \Pi_\Lambda = \Pi_\Lambda 
\left(\sum_{j=0}^{J-1} \langle \alpha_\xi \rangle^{-j}  h^j b_j \right) \Pi_\Lambda + R^J_\Lambda, \end{equation}
where 
\[ R^J_\Lambda u ( \alpha ) = h^{J -n } \langle \alpha_\xi \rangle^{-J}  \int_{ \Lambda }
e^{ \frac i h \psi ( \alpha, \beta )}  a^J ( \alpha, \beta ) e^{ -  {2}H  ( \beta ) /h } {u(\beta)} d \beta , \ \ \  a^J \sim a_0^J + h \langle 
\alpha_\xi \rangle^{-1 } a_1^J + \cdots,  \]
with $ a^J_k $ satisfying the transport equations of \S \ref{s:trans}. If we apply the method of stationary phase to the kernel of the first term on 
right hand side of \eqref{eq:PiTPTPi} we obtain, by the inductive hypothesis, a kernel with the expansion
\[   e^{ \frac i h \psi ( \alpha, \beta ) } ( a_0 +
\cdots 
+ h^{J-1} \langle \alpha_\xi
\rangle^{-J+1} a_{J-1}  + h^J \langle  \alpha_\xi
\rangle^{-J} r_0^J + h^{J+1} \langle  \alpha_\xi
\rangle^{-J-1}  r_1^J + \cdots ) , \]
where $ a_j$'s are the same as in \eqref{eq:defaP}. Again all the terms satisfy transport equations and hence are uniquely determined from their values on the diagonal. Hence, if we put 
\[  b_J ( \alpha ) := r_0^J ( \alpha, \alpha ) + a_0^J ( \alpha, 
\alpha ) , \]
we obtain \eqref{eq:PiTPTPi} with $ J $ replaced by $ J+1 $. When 
$ J = 0$, $ b_J ( \alpha ) = a_0 ( \alpha, \alpha ) = p|_\Lambda ( \alpha ) $. 
\end{proof}

\begin{prop}
\label{l:FBISideComp}
Suppose $p_1\in S^{m_1}$ and $p_2\in S^{m_2}$. Then, for $G$ satisfying~\eqref{e:controlDeformation} with $\epsilon_0>0$ small enough,
$$
\Pi_\Lambda p_1\Pi_\Lambda p_2\Pi_\Lambda = \Pi_\Lambda b \Pi_\Lambda +\mathcal O(h^\infty)_{\langle \xi\rangle^NL^2(\Lambda)\to \langle \xi\rangle^{-N}L^2(\Lambda)}
$$
where 
$$
b \sim \sum_{j=0}^\infty h^jb_j,\qquad c_j\in S^{m_1+m_2-j},\qquad b_0=p_1p_2.
$$
\end{prop}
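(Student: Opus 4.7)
The strategy mirrors that of Proposition~\ref{l:FBISide}. The plan is to replace each $\Pi_\Lambda$ by the parametrix $B_\Lambda$ of Proposition~\ref{p:B2P}, analyse the resulting double oscillatory integral by complex stationary phase, and then use the uniqueness statement of Proposition~\ref{p:BLa} to identify the result as $\Pi_\Lambda b \Pi_\Lambda$ modulo $\mathcal O(h^\infty)$.

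First I would write, using Proposition~\ref{p:B2P},
\[
\Pi_\Lambda p_1 \Pi_\Lambda p_2 \Pi_\Lambda = B_\Lambda p_1 B_\Lambda p_2 B_\Lambda + \mathcal O(h^\infty)_{\langle\xi\rangle^N L^2 \to \langle\xi\rangle^{-N} L^2},
\]
whose Schwartz kernel (with respect to $dm_\Lambda$) is
\[
h^{-3n}\,e^{-2H(\beta)/h}\!\int\!\!\int e^{\frac{i}{h}\Phi(\alpha,\gamma_1,\gamma_2,\beta)} A(\alpha,\gamma_1,\gamma_2,\beta,h)\,e^{-2(H(\gamma_1)+H(\gamma_2))/h}\,dm_\Lambda(\gamma_1)\,dm_\Lambda(\gamma_2),
\]
with phase $\Phi := \psi(\alpha,\gamma_1)+\psi(\gamma_1,\gamma_2)+\psi(\gamma_2,\beta)$ and amplitude $A := a(\alpha,\gamma_1)\,p_1(\gamma_1)\,a(\gamma_1,\gamma_2)\,p_2(\gamma_2)\,a(\gamma_2,\beta)$. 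The support property in \eqref{eq:propsi} confines the $(\gamma_1,\gamma_2)$ integration to a small conic neighbourhood of the diagonal, so the integral is effectively compactly supported (after the rescaling \eqref{eq:tildealbe}).

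Next I would apply complex stationary phase in $(\gamma_1,\gamma_2)$ to the modified phase
\[
\widetilde\Phi := \psi(\alpha,\gamma_1) + 2iH(\gamma_1) + \psi(\gamma_1,\gamma_2) + 2iH(\gamma_2) + \psi(\gamma_2,\beta).
\]
By the first displayed identity in \eqref{eq:proppsi}, applied first in $\gamma_1$ and then in $\gamma_2$, the critical value satisfies $\mathrm{c.v.}_{\gamma_1,\gamma_2}\widetilde\Phi = \psi(\alpha,\beta) + \mathcal O(|\alpha-\beta|^\infty)$. At $\alpha=\beta$ the critical point is $\gamma_1=\gamma_2=\alpha$ (as in \eqref{eq:dgagap}, using $d_\gamma(\psi(\alpha,\gamma)+2iH(\gamma)+\psi(\gamma,\beta))|_{\gamma=\alpha,\,\beta=\alpha}=0$ and the analogous identity for $\gamma_2$). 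Consequently the resulting kernel has the form
\[
h^{-n}\,e^{\frac{i}{h}\psi(\alpha,\beta)}\,c(\alpha,\beta,h)\,e^{-2H(\beta)/h} + \mathcal O(h^\infty)_{\langle\xi\rangle^N L^2 \to \langle\xi\rangle^{-N} L^2},
\]
with $c \sim \sum_{j\ge 0}(h/\langle\alpha_\xi\rangle)^j c_j(\alpha,\beta)$, each $c_j \in S^{m_1+m_2-j}$.

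Since $B_\Lambda p_1 B_\Lambda p_2 B_\Lambda$ is annihilated on the left by $Z_j^\Lambda$ and on the right by $\widetilde Z_j^\Lambda$ modulo $\mathcal O(h^\infty)$ (because $B_\Lambda$ has this property by \eqref{eq:ZLaB}--\eqref{eq:JZJ}), the $c_j$'s satisfy the transport equations of \S\ref{s:trans}. By the uniqueness statement of Proposition~\ref{p:BLa}, each $c_j$ is determined by its values on the diagonal. Finally I would determine $b_j$ iteratively, exactly as in the proof of Proposition~\ref{l:FBISide}: having matched $b_0,\dots,b_{J-1}$, apply stationary phase to the kernel of $\Pi_\Lambda(\sum_{j<J} h^j b_j)\Pi_\Lambda$ and set $b_J$ equal to the diagonal value of the difference, obtaining
\[
\Pi_\Lambda p_1 \Pi_\Lambda p_2 \Pi_\Lambda = \Pi_\Lambda\Big(\sum_{j=0}^{J-1} h^j b_j\Big)\Pi_\Lambda + \mathcal O(h^J)\text{-remainder of the same form.}
\]
For the base case $J=0$, the stationary-phase leading term on the diagonal is (Hessian factor)$\cdot a_0(\alpha,\alpha)^3 p_1(\alpha) p_2(\alpha)$; matching this against $a_0(\alpha,\alpha) p_1(\alpha) p_2(\alpha)$ using the idempotence computation \eqref{eq:iterate} for $j=0$ (which gave $f(\alpha) a_0(\alpha,\alpha)^2=a_0(\alpha,\alpha)$) yields $b_0 = p_1 p_2$. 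Borel summation then produces the desired $b$.

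The main obstacle is the bookkeeping in the stationary phase at infinity: since $p_1, p_2$ are of positive order, one must verify that the expansion takes place in powers of $h/\langle\alpha_\xi\rangle$ (as in \eqref{eq:propaa}) uniformly, which is handled by the rescaling \eqref{eq:tildealbe}--\eqref{eq:convent} at each point and the standard application of \cite[Theorem 2.3]{mess} to the almost analytic extensions of $\psi$ and $a$. The rest is a direct adaptation of the argument given for Proposition~\ref{l:FBISide}.
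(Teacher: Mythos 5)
Your proposal is correct and follows essentially the same route as the paper: replacing each $\Pi_\Lambda$ by $B_\Lambda$ via Propositions~\ref{p:Bla1} and~\ref{p:B2P}, applying complex stationary phase in $(\gamma_1,\gamma_2)$, invoking~\eqref{eq:proppsi} to identify the phase and the diagonal critical point, noting the transport equations determine the amplitude from its diagonal values, and iterating as in Proposition~\ref{l:FBISide}. The extra detail you give on the base case $J=0$ is consistent with the paper's computation (though the Hessian contribution from the double stationary phase is $f^2$, and the iterative matching is what forces $b_0=p_1p_2$); overall the argument matches.
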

\begin{proof}
Propositions \ref{p:Bla1} and \ref{p:B2P} show that, modulo negligible terms 
the Schwartz kernel of $\Pi_\Lambda b_1\Pi_\Lambda  b_2\Pi_\Lambda$ is given by
\begin{align*}
&h^{-3n}\int_\Lambda\int_\Lambda e^{\frac{i}{h}(\psi(\alpha,\gamma)+2iH(\gamma)+\psi(\gamma,\gamma')+2H(\gamma')+\psi(\gamma',\beta)+2iH(\beta))}\\
&\qquad\qquad\qquad\qquad \qquad\qquad\qquad \qquad \qquad p_1(\gamma)a(\alpha,\gamma)a(\gamma,\gamma')p_2(\gamma')a(\gamma',\beta)d\gamma d\gamma'
\end{align*}
where the support property of $ a $  (see \eqref{eq:propsi}) 
shows that integration is over a compact set. As in the proof of Proposition~\ref{l:FBISide}, we apply complex stationary phase to the integral resulting in the phase
\[ \psi_1 ( \alpha, \beta ) = 
{\rm{c.v.}}_{\gamma, \gamma'} 
( \psi ( \alpha , \gamma )+\psi(\gamma,\gamma')
+ \psi ( \gamma', \beta ) + 2 i H ( \gamma )+2iH(\gamma') ). \]
Then, it follows from~\eqref{eq:proppsi} that, modulo negligible terms, we may take $\psi_1(\alpha,\beta)=\psi(\alpha,\beta)$ and that, when $\alpha=\beta$, the critical point given by $\gamma=\gamma'=\alpha$ and hence that
\[ \begin{split}  \Pi_\Lambda K_P \Pi_\Lambda u ( \alpha ) & = 
h^{-n} \int_\Lambda e^{ \frac i h \psi ( \alpha, \beta ) } 
c ( \alpha, \beta , h ) e^{ - 2 H ( \beta ) } u ( \beta ) d \beta 
\\
& \ \ \ \ \ \ \ \ \ \ \ \ + \mathcal O ( h^\infty\| u \|_{ \langle \xi \rangle^N L^2 ( \Lambda ) }  )_{ \langle \xi \rangle^{-N} L^2 ( \Lambda ) }
, 
\end{split} \]
where $ c $ satisfies \eqref{eq:propaa} (and the support property in 
\eqref{eq:propsi}). Arguing as in \eqref{eq:ZLaB}--\eqref{eq:JZJ} 
we see that the terms in the expansion of $ c $ satisfy transport equations
of \eqref{eq:ZjLa0} and hence are determined by their values on the diagonal. 
Arguing as in the last paragraph of the proof of Proposition~\ref{l:FBISide} then completes the proof.

\end{proof}

\subsection{Compactness properties of the spaces $H^t(\Lambda)$}
We next study the compactness and trace class properties for operators the spaces $H^m(\Lambda)$.

We start with
\begin{lemm}
\label{l:compact}
There is $h_0>0$ such that for all $s\in \RR$ and $0<h<h_0$
\begin{equation}
\label{eq:compact1}
(hD_\alpha)^\gamma \Pi_\Lambda = \mathcal O ( 1 ) : {\langle \xi\rangle^{s-|\gamma|}L^2(\Lambda)\to \langle \xi\rangle^s L^2(\Lambda)} ,
\end{equation}
and
\begin{equation}
\label{eq:compact} 
t>s \ \Longrightarrow  \ \text{ $ H^t ( \Lambda )  \hookrightarrow H^s ( \Lambda ) $ is compact.} 
\end{equation}
\end{lemm}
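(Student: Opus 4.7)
\medskip

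\noindent\textbf{Proof plan.}

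For the first statement, the natural strategy is to replace $\Pi_\Lambda$ by its parametrix $B_\Lambda$ using Proposition \ref{p:B2P}, so that the $O(h^\infty)_{\langle\xi\rangle^N L^2 \to \langle\xi\rangle^{-N}L^2}$ error is negligible (even after taking $|\gamma|$ derivatives, since it maps into arbitrarily negative-order weighted spaces). The kernel of $B_\Lambda$ has the form $h^{-n}e^{i\psi(\alpha,\beta)/h}a(\alpha,\beta,h)e^{-2H(\beta)/h}$ with $\psi$ and $a$ satisfying \eqref{eq:propsi}--\eqref{eq:propaa}. Applying $(hD_\alpha)^\gamma$ under the integral, each derivative either falls on the exponential — producing a factor $d_\alpha\psi(\alpha,\beta) = O(\langle\alpha_\xi\rangle + \langle\beta_\xi\rangle)$ by the symbolic estimate in \eqref{eq:propsi} — or on the amplitude, producing a factor of $h\cdot O(\langle\alpha_\xi\rangle^{-1}\cdot\text{or }1)$. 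The support property in \eqref{eq:propsi} forces $\langle\alpha_\xi\rangle \sim \langle\beta_\xi\rangle$ on the relevant region, so the resulting kernel has the same form as that of $B_\Lambda$, with the amplitude replaced by an element of $\langle\alpha_\xi\rangle^{|\gamma|}S^0(\Lambda\times\Lambda)$.

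To conclude \eqref{eq:compact1}, I would then run Schur's test as in the proof of Lemma \ref{l:TSbounded}, conjugating this time by $\langle\Re\alpha_\xi\rangle^s e^{H(\alpha)/h}$ on the left and $\langle\Re\beta_\xi\rangle^{s-|\gamma|}e^{H(\beta)/h}$ on the right. Using \eqref{eq:BLap} together with $\langle\alpha_\xi\rangle \sim \langle\beta_\xi\rangle$ on the support of the kernel, one gets the Gaussian pointwise bound
\[
\bigl|(hD_\alpha)^\gamma K_B(\alpha,\beta)\bigr| \leq Ch^{-n}\langle\alpha_\xi\rangle^{|\gamma|}\, e^{-(\langle\alpha_\xi\rangle|\Re\alpha_x-\Re\beta_x|^2 + \langle\alpha_\xi\rangle^{-1}|\Re\alpha_\xi-\Re\beta_\xi|^2)/Ch - H(\alpha)/h - H(\beta)/h},
\]
(on the appropriate cutoff region), and the rescaled kernel then integrates to $O(1)$ in either variable by a change of variables $\beta_x - \alpha_x \mapsto h^{1/2}\langle\alpha_\xi\rangle^{-1/2}w$, $\beta_\xi - \alpha_\xi \mapsto h^{1/2}\langle\alpha_\xi\rangle^{1/2}w'$.

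For the compactness statement \eqref{eq:compact}, my plan is to exploit the isometric identification $T_\Lambda : H^t_\Lambda \to \Pi_\Lambda(\langle\xi\rangle^{-t}L^2(\Lambda))$ (with the weighted $L^2$-norm $\int |\cdot|^2 e^{-2H/h}d\alpha$). Given a bounded sequence $\{u_j\}\subset H^t_\Lambda$, set $v_j := T_\Lambda u_j = \Pi_\Lambda v_j$. Then $\{v_j\}$ is bounded in $\langle\xi\rangle^{-t}L^2(\Lambda)$, and applying \eqref{eq:compact1} to $\Pi_\Lambda v_j = v_j$ gives that $(hD_\alpha)^\gamma v_j$ is bounded in $\langle\xi\rangle^{|\gamma|-t}L^2(\Lambda)$ for every $\gamma$. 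In particular, on any compact set $\{|\alpha_\xi|\leq R\}$, the sequence $v_j$ lies in a bounded subset of a standard Sobolev space, and by Rellich's theorem a subsequence converges in $L^2(\{|\alpha_\xi|\leq R\})$. For the tail, since $t>s$,
\[
\int_{|\alpha_\xi|>R}|v_j(\alpha)|^2\langle\alpha_\xi\rangle^{-2s}e^{-2H(\alpha)/h}d\alpha \leq \langle R\rangle^{-2(t-s)}\|v_j\|^2_{\langle\xi\rangle^{-t}L^2(\Lambda)},
\]
which goes to $0$ uniformly in $j$ as $R\to\infty$. A diagonal argument then extracts a subsequence convergent in $\langle\xi\rangle^{-s}L^2(\Lambda)$; since $\Pi_\Lambda(\langle\xi\rangle^{-s}L^2(\Lambda))$ is closed (being the range of a bounded projection), the limit lies in $T_\Lambda(H^s_\Lambda)$, and pulling back via $S_\Lambda$ (using $S_\Lambda T_\Lambda = \Id$) yields convergence of $u_j$ in $H^s_\Lambda$.

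The main technical issue is the first part: one must carefully verify that all the powers of $\langle\alpha_\xi\rangle$ produced by differentiating the phase $\psi$ (and the potential loss from the weight $H$, which grows like $\epsilon_0\langle\alpha_\xi\rangle$) are absorbed by the Gaussian decay in \eqref{eq:BLap}, so that the weighted Schur integrals remain uniformly bounded in $h$. The second part is then essentially a standard Rellich-plus-decay argument, with the only subtlety being that the Rellich step takes place on $\Lambda$ rather than on $\TT^n$ directly, but since $\Lambda$ is diffeomorphic to $T^*\TT^n$ via the parametrization \eqref{e:LambdaDef} this poses no new difficulty.
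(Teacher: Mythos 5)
Your proposal is correct and follows the same route as the paper: replace $\Pi_\Lambda$ by a Gaussian-localized kernel operator (the paper uses the kernel form from Proposition \ref{p:prelim-Projector}, you use $B_\Lambda$ from Proposition \ref{p:B2P} — both give the same structure), observe that $(hD_\alpha)^\gamma$ acting on the kernel produces at worst a factor $\langle\alpha_\xi\rangle^{|\gamma|}$ from the phase with $\langle\alpha_\xi\rangle\sim\langle\beta_\xi\rangle$ on the cutoff, and run Schur's test after conjugating by the weights, exactly as in Lemma \ref{l:TSbounded}. For \eqref{eq:compact}, the paper packages the bounds \eqref{eq:compact1} into the anisotropic scale $H^{r,s}(\Lambda)=\langle hD_{x,\xi}\rangle^{-r}\langle\xi\rangle^{-s}L^2(\Lambda)$ and interpolates before quoting Rellich, while you unpack this into a hands-on diagonal argument (local Rellich plus uniform tail decay from $\langle\xi\rangle^{s-t}$); the content is identical.
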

\begin{proof}
To prove \eqref{eq:compact1} we show the equivalent fact that the operator
$$
\langle \xi\rangle^{-s}(hD_{\alpha})^{\gamma} \Pi_\Lambda\langle \xi\rangle^{s-|\gamma|}:L^2(\Lambda)\to L^2(\Lambda)
$$
is uniformly bounded. By Proposition~\ref{p:prelim-Projector}, the kernel of this operator is given, modulo acceptable errors, by 
\begin{gather*} 
h^{-n}e^{\frac{i}{h}\Psi(\alpha,\beta)}\left(((\partial_\alpha \Psi)^{\gamma} k(\alpha,\beta)+\mathcal O\langle \Re\alpha_\xi\rangle^{|\gamma|-1})\right) \langle\Re \beta_\xi\rangle^{s-|\gamma|}\langle\Re \alpha_\xi\rangle^{-s}
\widetilde \chi , \\
\widetilde \chi := \chi(|\Re\alpha_x-\Re\beta_x|)\chi(\min(\langle\Re\beta_\xi\rangle,\langle\Re \alpha_\xi\rangle)^{-1}|\Re\alpha_\xi-\Re\beta_\xi|), \ \ \chi \in \CIc ( \RR ) , 
\end{gather*}
where, $\Psi$ is defined in~\eqref{e:tempProjectorPhase}.
Now, on the support of the integrand, $ c\langle \Re\alpha_\xi\rangle \leq \langle \Re\beta_\xi\rangle \leq C\langle\Re \beta_\xi\rangle$ and therefore, $|\partial_\alpha \Psi|\leq C\langle\Re \beta_\xi\rangle.$ In particular, after conjugation by $e^{H/h}$, the kernel is bounded by
$$
Ch^{-n}e^{  c(\langle\Re \alpha_\xi\rangle |\Re\alpha_x-\Re\beta_x|^2+\langle\Re \alpha_\xi\rangle^{-1}|\Re\alpha_\xi-\Re\beta_\xi|^2) /h }\widetilde \chi 
$$
and hence, by Schur's test, for boundedness on $L^2$ is uniformly bounded on $L^2$. 

To see \eqref{eq:compact} we 
prove a slightly stronger statement, namely that $ T_\Lambda ( H^t(\Lambda)) \hookrightarrow  \langle \xi\rangle^{-s} L^2(\Lambda)$ is compact. For that we observe that for $u\in T_\Lambda ( H^t(\Lambda)) $, $u=\Pi_\Lambda u$
and \eqref{eq:compact1}  shows that for $ m \in \ZZ $ and $ k \in \NN $, 
\begin{gather*}  \Pi_\Lambda : \langle \xi \rangle^{-m} L^2 ( \Lambda ) \to 
H^{k,m-k} ( \Lambda ) , \ \ \ 
H^{ r, s } ( \Lambda ) := \langle hD_{x,\xi} \rangle^{-r} \langle \xi 
\rangle^{-s} L^2 ( \Lambda ) .
\end{gather*}
 Hence, by interpolation,
 $$
\Pi_\Lambda :\langle \xi\rangle^{-t}L^2(\Lambda)\to H^{r,t-r}(\Lambda), \ \
r \geq 0 , \ \ t \in \RR. 
$$ 
Setting $ r = (t - s)/2 > 0  $ we obtain continuity of $ T_\Lambda ( H^t ( \Lambda ) ) \hookrightarrow  H^{ r ,\frac{s+t}{2}}(\Lambda)$. The lemma then follows from Rellich's theorem: $H^{r ,s + r }(\Lambda)\hookrightarrow \langle \xi\rangle^sL^2(\Lambda) $, 
$ r > 0 $,  is compact.
\end{proof}

The next lemma provides trace class properties needed in the study of
determinants:
\begin{lemm}
\label{l:traceDeformed}
For $t>3n+s$ the inclusion $ H^t(\Lambda) \hookrightarrow  H^s (\Lambda) $ is of trace class. 
\end{lemm}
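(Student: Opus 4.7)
The plan is to reduce trace class to Hilbert--Schmidt via a composition argument. Setting $r = (s+t)/2$, we have $t - r = r - s > 3n/2$, so we can write $\iota = \iota_1 \circ \iota_2$ with $\iota_2 \colon H^t(\Lambda) \hookrightarrow H^r(\Lambda)$ and $\iota_1 \colon H^r(\Lambda) \hookrightarrow H^s(\Lambda)$. Since the composition of two Hilbert--Schmidt operators is trace class, it suffices to show each $\iota_j$ is Hilbert--Schmidt. I would argue this on the FBI side using the reproducing kernel of $T_\Lambda(H^t_\Lambda)$; in fact the argument below proves the Hilbert--Schmidt property whenever the gap exceeds $n/2$, so the hypothesis $t-s>3n$ is ample.

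By Remark 2 following the definition of $H^t_\Lambda$, the map $T_\Lambda$ isometrically identifies $H^t(\Lambda)$ with the closed subspace $\mc{H}_t := \Pi_\Lambda\bigl(\langle \xi \rangle^{-t}L^2(\Lambda)\bigr)$ of $\langle \xi \rangle^{-t}L^2(\Lambda)$. For $v \in \mc{H}_t$, Propositions \ref{p:Bla1} and \ref{p:B2P} give the reproducing identity $v(\alpha) = \int_\Lambda \Pi_\Lambda(\alpha,\beta)\,v(\beta)\,e^{-2H(\beta)/h}\,d\beta$ modulo $\mc{O}(h^\infty)$; rewriting this in the intrinsic inner product of $\mc{H}_t$ identifies the reproducing kernel as $K^t(\alpha,\beta) = \overline{\Pi_\Lambda(\alpha,\beta)}\,\langle \beta_\xi \rangle^{-2t}$. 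For any orthonormal basis $\{v_n\}$ of $\mc{H}_t$ the standard identity $\sum_n |v_n(\alpha)|^2 = K^t(\alpha,\alpha) = \Pi_\Lambda(\alpha,\alpha)\langle \alpha_\xi \rangle^{-2t}$ then yields
\[
\|\iota_2\|_{\mathrm{HS}}^2 = \sum_n \|v_n\|_{\mc{H}_r}^2 = \int_\Lambda \Pi_\Lambda(\alpha,\alpha)\,\langle \alpha_\xi \rangle^{2(r-t)}\, e^{-2H(\alpha)/h}\,d\alpha,
\]
with the analogous formula for $\iota_1$.

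To complete the estimate one invokes the diagonal value from Propositions \ref{p:Bla1} and \ref{p:B2P}: the phase of the parametrix $B_\Lambda$ satisfies $\psi(\alpha,\alpha) = -2iH(\alpha)$ and the leading amplitude $a_0|_{\Delta} = 1/f$ is a bounded positive symbol of order $0$, so
\[
\Pi_\Lambda(\alpha,\alpha) = h^{-n}e^{2H(\alpha)/h}a_0(\alpha,\alpha) + \mc{O}\bigl(h^{-n+1}\langle \alpha_\xi \rangle^{-1}\bigr).
\]
The exponential factor cancels the measure weight, so that
\[
\|\iota_2\|_{\mathrm{HS}}^2 \leq C h^{-n}\int_{T^*\TT^n}\langle \xi \rangle^{2(r-t)}\, dx\, d\xi = C h^{-n}(2\pi)^n \int_{\RR^n}\langle \xi \rangle^{2(r-t)}\, d\xi,
\]
which converges because $2(t-r) > 3n > n$. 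The same bound holds for $\iota_1$, and hence $\iota = \iota_1\iota_2$ is trace class.

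The main subtlety is the interaction between the weighted $L^2(\Lambda)$ inner product and the reproducing kernel computation: the factor $e^{2H/h}$ appearing on the diagonal of $\Pi_\Lambda$ originates from the self-adjointness of $\Pi_\Lambda$ in the weighted space, and cancels the measure weight exactly to leave a standard power-weighted integral over $T^*\TT^n$. The $\mc{O}(h^\infty)$ errors from Proposition \ref{p:B2P} contribute only a convergent tail (or equivalently a smoothing, trace-class remainder) and are harmless.
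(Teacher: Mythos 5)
Your approach is genuinely different from the paper's, and it is essentially correct. The paper factors the inclusion through the operator $A := \langle \alpha_\xi\rangle^{-r/2}\langle hD_\alpha\rangle^{-r/2}$ with $r = 2(s-t)/3$, uses Lemma~\ref{l:compact} to show $A = \mathcal O(1) : T(H^t_\Lambda) \to \langle\xi\rangle^{-s}L^2(\Lambda)$, and then invokes the trace-class criterion for $A^{-1}\in\Psi(m_r)$ in terms of the order function $m_r(\alpha,\alpha^*)=\langle\alpha_\xi\rangle^{r/2}\langle\alpha^*\rangle^r$, integrable on $T^*\Lambda$ for $r<-2n$. The factor $3$ in the threshold $t-s>3n$ comes from splitting the smoothing between $\alpha_\xi$-decay and $hD_\alpha$-regularity. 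You instead compute directly with the Bergman-type diagonal $\Pi_\Lambda(\alpha,\alpha)\approx h^{-n}e^{2H(\alpha)/h}a_0(\alpha,\alpha)$, exploiting only the $\alpha_\xi$-decay of the weights. This gives the sharper threshold $t-s>n$, so your argument in fact improves the constant in the lemma.

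One point deserves more care, though it does not affect the conclusion. The function $k_\alpha(\beta):=\overline{\Pi_\Lambda(\alpha,\beta)}\,\langle\beta_\xi\rangle^{-2t}$ solves $v(\alpha)=\langle v,k_\alpha\rangle_{\langle\xi\rangle^{-t}L^2}$ for $v\in\mathcal H_t$, but it need not itself lie in $\mathcal H_t=\Pi_\Lambda(\langle\xi\rangle^{-t}L^2)$: the factor $\langle\beta_\xi\rangle^{-2t}$ takes $\Pi_\Lambda(\cdot,\alpha)$ out of the range of $\Pi_\Lambda$ unless $t=0$. The true reproducing kernel of $\mathcal H_t$ is the projection $\Pi_{\mathcal H_t}k_\alpha$, so the exact identity is $\sum_n|v_n(\alpha)|^2 = \|\Pi_{\mathcal H_t}k_\alpha\|^2 \le \|k_\alpha\|^2$, not an equality with $K^t(\alpha,\alpha)=\Pi_\Lambda(\alpha,\alpha)\langle\alpha_\xi\rangle^{-2t}$. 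This is harmless for you because an upper bound is all that is needed, and because the conic support of $\Pi_\Lambda(\alpha,\cdot)$ (Proposition~\ref{p:prelim-Projector}, Lemma~\ref{l:compositePhase}) forces $\langle\beta_\xi\rangle\sim\langle\alpha_\xi\rangle$ there, so $\|k_\alpha\|^2 = \int\langle\beta_\xi\rangle^{-2t}|\Pi_\Lambda(\alpha,\beta)|^2e^{-2H(\beta)/h}\,d\beta \le C\langle\alpha_\xi\rangle^{-2t}\Pi_\Lambda(\alpha,\alpha)$. With that replacement, your computation of $\|\iota_2\|_{\mathrm{HS}}^2$ goes through as written, and the $\mathcal O(h^\infty)$ remainder from Proposition~\ref{p:B2P} is indeed trace class for trivial reasons.
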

\begin{proof}
First, note that for all $r\in \mathbb{R}$,
$
m_r(\alpha,\alpha^*):=\langle \alpha_\xi \rangle^{\frac r2} \langle \alpha^*\rangle^r
$
is an order function in the sense of~\cite[Section 4.4.1]{zw} and for  $r<-2n$
$$
\int_{T^* \Lambda }  m_r(\alpha,\alpha^*)d\alpha d\alpha^*<\infty.
$$ 
Therefore (see \cite[(C.3.6)]{zw} or \cite[Chapter 8]{D-S}) if $\langle \xi\rangle^{-s}A\langle \xi\rangle^{s}\in \Psi(m_{r})$ for $r<-2n$, then $A:\langle \xi\rangle^{-s}L^2(\Lambda)\to \langle \xi\rangle^{-s}L^2(\Lambda)$ is of trace class. 

On the other hand,  Lemma~\ref{l:compact} shows that 
$$
A := \langle \alpha_\xi \rangle^{\frac{-r}{2}} \langle hD_{ \alpha } \rangle^{-\frac{r}{2}} = \mathcal O ( 1 ) : T ( {H}^t(\Lambda) ) \to \langle \xi\rangle^{-s}L^2(\Lambda), \ \ r = \frac{ 2 ( s - t ) }3. 
$$
Also, $ A \in \Psi ( m_{-r} ) $ 
 is elliptic and invertible and hence $A^{-1}\in \Psi(m_{r})$.
 Therefore $ A^{-1} $ is of trace if $ r=\frac{2(s-t)}{3} < -2 n $, that
 is when $ t > 3 n + s $. We conclude that
   $$
\|\Id\|_{\mc{L}^1(T(H^t(\Lambda)), \langle \xi\rangle^{-s}L^2(\Lambda))} \leq \|A^{-1}\|_{\mc{L}^1(\langle \xi\rangle^{-s}L^2(\Lambda), \langle \xi\rangle^{-s}L^2(\Lambda))}\|A\|_{T(H^t(\Lambda)) \to \langle\xi\rangle^{-s}L^2(\Lambda)}<\infty,
$$
where $\mc{L}^1$ denotes the trace class.
\end{proof}

\section{0th order operators and viscosity limits}
\label{s:vis}

Recall that the constructions in the previous sections depend only on finitely many $S^1$ norms of $G$ determining 
$$
\Lambda = \Lambda_G=\{(x+iG_\xi,\xi-iG_x)\mid (x,\xi)\in T^*\mathbb{T}^n\}.
$$ 
(Unless we worked with different $ G$'s, we suppress the dependence on 
$ G $ in $ \Lambda_G $.)
Therefore, we start by fixing $h>0$, $\e_0>0$ small enough and $N_0>0$ large enough such that if
\begin{equation}
\label{e:G}
\sup_{|\alpha|+|\beta|\leq N_0} \langle \xi\rangle^{1-|\beta|}|\partial_x^\alpha \partial_\xi^\beta G|\leq \epsilon_0,
\end{equation}
the constructions of $T_{\Lambda}$, ${S}_{\Lambda}$, are valid and 
$$
B_{\Lambda}-\Pi_{\Lambda}:\langle \xi\rangle^{N}L^2(\Lambda)\to \langle \xi\rangle^{-N}L^2(\Lambda).
$$

\subsection{Elliptic regularity in deformed spaces}
We begin with the following preliminary elliptic regularity lemma.
\begin{lemm}
\label{l:Fred2}
Suppose that  $G\in S^1(T^*\mathbb{T}^n)$ satisfies ~\eqref{e:G} and
\begin{equation}
\label{e:G2}
\sup_{|\alpha|+|\beta|\leq 1}|\langle \xi\rangle^{1-|\beta|} \partial_{x}^\alpha\partial_{\xi}^\beta G|\leq \e_1, 
\end{equation}
for a fixed $ \epsilon_1 $.
Suppose also that  $ E $ is given by  \eqref{eq:defP} with $ e $ (replacing $ p $) satisfying~\eqref{e:SymbolAnalyticity} and
$$
|e(z,\zeta)|\geq c_1 |\zeta|^m, \ \  |\zeta| \geq C, \ \ |\Im {z}|\leq \e_1,\ \  |\Im \zeta|\leq \e_1\langle \zeta\rangle.
$$
Then  $ E:H^{s}_{{\Lambda}}\to H^{s-m}_{{\Lambda}}$ is a Fredholm operator and there exists 
$C_1=C_1(s,\e_1,E,N)>0$ such that 
$$
\tfrac 12{c_1} \|u\|_{H^{s}_{\Lambda}}\leq  \|E u\|_{H^{s-m}_{{\Lambda}}}+C_1\|u\|_{H^{-N}_{{\Lambda}}}.
$$
\end{lemm}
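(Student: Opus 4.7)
\medskip

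\noindent\textbf{Proof proposal.} The plan is to transport the problem to the FBI side, where the operator $E$ becomes (up to negligible errors) a Toeplitz operator with symbol $b_E\in S^m$ whose principal part is $e|_\Lambda$, and then carry out the standard elliptic parametrix construction in the Toeplitz calculus established in Propositions~\ref{l:FBISide} and~\ref{l:FBISideComp}. By definition of $H^s_\Lambda$, the map $T_\Lambda:H^s_\Lambda\to\Pi_\Lambda(\langle\xi\rangle^{-s}L^2(\Lambda))$ is an isometric bijection with inverse $S_\Lambda$, so it suffices to establish the analogous Fredholm/elliptic estimate for
\[
\widetilde{E}:=\Pi_\Lambda T_\Lambda E S_\Lambda\Pi_\Lambda:\Pi_\Lambda(\langle\xi\rangle^{-s}L^2(\Lambda))\to\Pi_\Lambda(\langle\xi\rangle^{-(s-m)}L^2(\Lambda)).
\]

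First I would observe that the hypothesis~\eqref{e:G2}, with $\epsilon_1$ sufficiently small, guarantees that $\Lambda$ lies inside the complex strip where $e$ is analytic and satisfies $|e(z,\zeta)|\geq c_1|\zeta|^m$: indeed, on $\Lambda$ we have $|\Im z|=|G_\xi|\leq\epsilon_1$ and $|\Im\zeta|=|G_x|\leq\epsilon_1\langle\xi\rangle$. Hence Proposition~\ref{l:FBISide} applies and yields
\[
\widetilde{E}=\Pi_\Lambda b_E\Pi_\Lambda+\mathcal{O}(h^\infty)_{\langle\xi\rangle^NL^2\to\langle\xi\rangle^{-N}L^2},\qquad b_0=e|_\Lambda,
\]
with $|b_0(\alpha)|\geq c_1\langle\alpha_\xi\rangle^m$ for $|\alpha_\xi|\geq C$.

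Next I would construct a parametrix. Choose $\chi\in C^\infty(\RR)$ with $\chi(t)=0$ for $t\leq C$, $\chi(t)=1$ for $t\geq 2C$, and let $q_0(\alpha):=\chi(\langle\alpha_\xi\rangle)/b_0(\alpha)\in S^{-m}(\Lambda)$. Iteratively define $q_j\in S^{-m-j}$ so that $q\sim\sum h^j q_j$ satisfies, via Proposition~\ref{l:FBISideComp},
\[
\Pi_\Lambda q\Pi_\Lambda b_E\Pi_\Lambda=\Pi_\Lambda\Pi_\Lambda+\Pi_\Lambda r_\infty\Pi_\Lambda+\mathcal{O}(h^\infty),
\]
with $r_\infty\in S^{-\infty}$ (supported where $\langle\xi\rangle\leq 2C$). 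Setting $Q:=S_\Lambda\Pi_\Lambda q\Pi_\Lambda T_\Lambda$, this produces $QE=I+K$ on $H^s_\Lambda$ with $K:H^s_\Lambda\to H^N_\Lambda$ for every $N$; by Lemma~\ref{l:compact} the operator $K:H^s_\Lambda\to H^s_\Lambda$ is compact, so the Fredholm property follows. The elliptic estimate is then
\[
\|u\|_{H^s_\Lambda}\leq\|QEu\|_{H^s_\Lambda}+\|Ku\|_{H^s_\Lambda}\leq C'\|Eu\|_{H^{s-m}_\Lambda}+C''\|u\|_{H^{-N}_\Lambda}.
\]
To upgrade the leading constant from $C'$ to $2/c_1$ (equivalently $\frac{c_1}{2}\|u\|_{H^s}\leq\|Eu\|_{H^{s-m}}+\dots$), I would instead apply the sharp G\aa{}rding inequality in the Toeplitz calculus: $b_E^*b_E$ has principal symbol $|b_0|^2\geq c_1^2\langle\xi\rangle^{2m}$, and Proposition~\ref{l:FBISideComp} applied to $(c_1^2-\tfrac{c_1^2}{4})\langle\xi\rangle^{2m}-|b_0|^2+\mathrm{lower}$, combined with the Toeplitz analogue of sharp G\aa{}rding, gives
\[
\|Eu\|_{H^{s-m}_\Lambda}^2\geq\tfrac{c_1^2}{4}\|u\|_{H^s_\Lambda}^2-C_1\|u\|_{H^{-N}_\Lambda}^2,
\]
which rearranges to the stated inequality.

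The main obstacle I anticipate is verifying the Toeplitz sharp G\aa{}rding step cleanly: in the classical (non-deformed) pseudodifferential setting this is standard, but on $\Lambda$ one must check that the positive-symbol calculus goes through with the non-compactly-supported weight $H$, which amounts to verifying that the composition formula of Proposition~\ref{l:FBISideComp} preserves reality/positivity of the principal symbol modulo $hS^{m-1}$. Given the self-adjointness of $\Pi_\Lambda$ with respect to the weighted inner product and the uniqueness of the amplitude in Proposition~\ref{l:FBISide}, this should follow without additional work, and the remaining routine steps (Borel sum for $q$, closure of compact perturbations in Fredholm theory) are standard.
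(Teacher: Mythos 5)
Your proposal is correct in outline and lands on the same Fredholm/elliptic conclusion, but it diverges from the paper's argument in two instructive ways.

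\textbf{Different route to the estimate.} The paper avoids a parametrix entirely. Instead it forms the symbol of $E^*E$: apply Proposition~\ref{l:FBISide} to get $\tilde b_E$ with principal part $e|_\Lambda$, then Proposition~\ref{l:FBISideComp} to identify the symbol of $\Pi_\Lambda\overline{\tilde b_E}\Pi_\Lambda\tilde b_E\Pi_\Lambda$ as $b_E$ with principal part $|e|_\Lambda|^2$, and then adds a term $b_\infty\in S^{-\infty}$ so that $b:=b_E+b_\infty$ satisfies $|b|\geq\tfrac{2}{3}c_1^2\langle\alpha_\xi\rangle^{2m}$ \emph{everywhere}. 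Once the symbol is pointwise bounded below by a positive multiple of $\langle\xi\rangle^{2m}$, the quadratic-form estimate $\langle b\,T_\Lambda u,T_\Lambda u\rangle_{\langle\xi\rangle^{-s+m}L^2}\geq\tfrac14 c_1^2\|u\|_{H^s_\Lambda}^2$ is an immediate consequence of that pointwise positivity (the integrand is multiplied by a positive weight); no positivity theorem for the calculus is needed. The paper then repeats the argument for $E^*$ via~\eqref{e:adjoint} and combines the two estimates with compactness of $H^{s-m}_\Lambda\hookrightarrow H^{-N}_\Lambda$ (Lemma~\ref{l:compact}) to conclude Fredholmness. Your parametrix argument is a perfectly viable alternative and arguably more in the spirit of classical elliptic theory; the $E^*E$ route has the virtue of producing the sharp constant $\tfrac{c_1}{2}$ without having to track constants through a Neumann-series parametrix construction.

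\textbf{On the sharp G\aa{}rding worry.} Your self-identified obstacle is not an obstacle at all: sharp G\aa{}rding (which gives positivity of the quantization up to $\mathcal O(h)$ when the symbol is merely nonnegative, possibly vanishing) is overkill here, because the ellipticity hypothesis $|e|\geq c_1|\zeta|^m$ for $|\zeta|\geq C$ gives a strictly positive lower bound $|e|_\Lambda|^2\geq c_1^2\langle\xi\rangle^{2m}$ away from a compact set, and adding a compactly supported $S^{-\infty}$ correction extends that lower bound to all of $\Lambda$. Pointwise positivity of the symbol of $E^*E$ is all that is required, and no positivity theorem for the Toeplitz calculus has to be verified. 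You should replace the sharp G\aa{}rding step with the pointwise estimate. One more small point: a left parametrix alone gives only a semi-Fredholm conclusion; you should note (as is standard) that the commutativity of the leading symbol makes $Q$ automatically a two-sided parametrix modulo smoothing, or alternatively derive the estimate for $E^*$ as the paper does.
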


\begin{proof}
The assumptions on $ e $ guarantee that 
$$
|e|_\Lambda (\alpha)| \geq c_1  |\alpha_\xi|^m, \ \ |\alpha_\xi|\geq C, \ \ \alpha\in \Lambda .
$$
Proposition 
\ref{l:FBISide} then shows 
$$
\Pi_{{\Lambda}}T_{{\Lambda}} E {S}_{{\Lambda}} \Pi_{{\Lambda}}= \Pi_{{\Lambda}} b_E\Pi_{{\Lambda}} + R'
$$
with 
$$
 \tilde{b}_E \sim \sum_j \tilde{b}_j,\qquad \tilde{b}_j\in S^{m-j},\qquad \tilde{b}_0=e|_{\Lambda}
 $$
 and
 $$
 \|R'\|_{\langle \xi\rangle^NL^2({\Lambda})\to \langle \xi\rangle^{-s+m}L^2({\Lambda})}\leq {C}'_2= {C}'_2(s,\e_1,E,N).
 $$
 Next, by Proposition~\ref{l:FBISideComp}, 
$$
 \Pi_{{\Lambda}} \overline{\tilde{b}_E}\Pi_{{\Lambda}} \tilde{b}_E\Pi_\Lambda=\Pi_{{\Lambda}} b_E\Pi_{{\Lambda}} + R''
$$
with 
$$
 b_E \sim \sum_j b_j,\qquad b_j\in S^{m-j},\qquad b_0=|e|_{\Lambda}|^2
 $$
 and
 $$
 \|R''\|_{\langle \xi\rangle^NL^2({\Lambda})\to \langle \xi\rangle^{-s+m}L^2({\Lambda})}\leq {C}''_2= {C}''_2(s,\e_1,E,N).
 $$
Since $|b_0|\geq c_1^2|\alpha_\xi |^{2m}$ on $|\alpha_\xi |\geq  C$, there is $b_\infty\in S^{-\infty}$ such that with $b:=b_E+b_\infty $
$$
|b|\geq \frac{2}{3}c_1^2\langle |\alpha_\xi|\rangle^{2m},
$$
and
\begin{gather*}
(\Pi_{{\Lambda}}T_{{\Lambda}} E {S}_{{\Lambda}} \Pi_{{\Lambda}})^*(\Pi_{{\Lambda}}T_{{\Lambda}} E {S}_{{\Lambda}} \Pi_{{\Lambda}})= \Pi_{{\Lambda}} b\Pi_{{\Lambda}} + R,\\
\|R\|_{\langle \xi\rangle^NL^2({\Lambda})\to \langle \xi\rangle^{-s+m}L^2({\Lambda})}\leq {C}_2= {C}_2(s,\e_1,E,N).
\end{gather*}
For $ u \in H^s ( \Lambda )$ we compute
\[ \begin{split}
\|Eu\|^2_{H^{s-m}_{\Lambda  }} & =\langle \Pi_{\Lambda  } T_{\Lambda  } E {S}_{{\Lambda  }} \Pi_{\Lambda  } T_{\Lambda  } u, \Pi_{\Lambda  } T_{\Lambda  } E {S}_{{\Lambda  }} \Pi_{\Lambda  } T_{\Lambda  }  u\rangle_{\langle \xi\rangle^{-s+m}L^2}\\
&=\langle \Pi_\Lambda (b+R) T_{\Lambda  } u, T_{\Lambda  }  u\rangle_{\langle \xi\rangle^{-s+m}L^2}\\
&\geq \langle  b T_{\Lambda  } u, T_{\Lambda  } u\rangle_{\langle \xi\rangle^{-s+m}L^2}- {C}_2\|u\|_{H^{-N}_{\Lambda  }}^2\\
&\geq \tfrac14 c_1^2 \|u\|^2_{H^{s}_{{\Lambda  }}}- C_1^2\|u\|^2_{H^{-N}_{{\Lambda  }}}
\end{split} \]
with $C_1=C_1(s,\e_1,E,N).$
Therefore,
\begin{equation}
\label{e:ellipticity}
\tfrac 12 c_1 \|u\|_{H^{s}_{{\Lambda  }}}\leq  \|E u\|_{H^{s-m}_{{\Lambda  }}}+C_1\|u\|_{H^{-N}_{{\Lambda  }}}.
\end{equation}
We now note that for all $ s $, 
\begin{equation}
\label{e:adjoint}
\begin{aligned}
\langle E^*u,v\rangle_{H^s_{{\Lambda  }}}&=\langle u,Ev\rangle_{H^s_{{\Lambda  }}}=\langle T_{{\Lambda  }}u,\Pi_{{\Lambda  }} T_{{\Lambda  }} E {S}_{{\Lambda  }} \Pi_{{\Lambda  }}T_{{\Lambda  }}v\rangle_{\langle \xi\rangle^{-s}L^2({\Lambda  })}\\
&=\langle T_{{\Lambda  }}u,(b  +R)T_{{\Lambda  }}v\rangle_{\langle \xi\rangle^{-s}L^2({\Lambda  })}\\
&=\langle (\bar b +R^*)T_{{\Lambda  }}u,T_{{\Lambda  }}v\rangle_{\langle \xi\rangle^{-s}L^2({\Lambda  })}.
\end{aligned}
\end{equation}
Using~\eqref{e:adjoint}, we obtain
\begin{align*}
\|E^*u\|^2_{H^{s-m}_{\Lambda  }}&= \langle E^* u,E^*u\rangle_{H_{\Lambda  }^{s-m}}=\langle (\bar{b}+R^*)T_{\Lambda  } u,T_{\Lambda  }  u\rangle_{\langle \xi\rangle^{-s+m}L^2}\\
&\geq \frac14 c_1^2\|u\|^2_{H^{s}_{{\Lambda  }}}- C_1^2\|u\|^2_{H^{-N}_{{\Lambda  }}}.
\end{align*}
Therefore
\begin{equation}
\label{e:ellipticity2}
\tfrac 12 c_1  \|u\|_{H^{s}_{{\Lambda  }}}\leq \|E ^*u\|_{H^{s-m}_{{\Lambda  }}}+C_{1}\|u\|_{H^{-N}_{{\Lambda  }}}.
\end{equation}

Combining~\eqref{e:ellipticity} and~\eqref{e:ellipticity2} with $s$ replaced by $m-s$, and applying Lemma~\ref{l:compact}, we have for $N>m-s$ that $H^{s-m}_{{\Lambda  }}\to H^{-N}_{{\Lambda  }}$ is compact. Thus, we have proved that $E:H^{s}_{{\Lambda  }}\to H^{s-m}_{{\Lambda  }}$ is  a Fredholm operator.
\end{proof}

\subsection{Zeroth order operators on deformed spaces} 
We now work in the setting of Theorem~\ref{t:meromorphy}.  Let $P\in \Psi^0$ satisfy the assumptions there, $G_0\in S^1(T^*\mathbb{T}^n)$ and $C>0$ such that 
\begin{equation}
\label{e:G0}
H_pG_0>0,\qquad \{|\xi|>C\}\cap \{p=0\}.
\end{equation}
Define the $\RR$-symplectic $I$-Lagrangian submanifold $\Lambda_\theta\subset \widetilde{T^*\mathbb{T}^n}$ by 
$$
\Lambda_\theta=\{ (x+i\theta \partial_\xi G_0, \xi-i\theta\partial_xG_0)\mid (x,\xi)\in T^*\mathbb{T}^n\}.
$$
We work with the spaces $H_{\Lambda_\theta}^m$ as defined in~\eqref{e:deformedSpace}. Observe that for $|\theta|$ small enough, $\theta G_0$ satisfies \eqref{e:G}. To avoid cumbersome notation, we will suppress the dependence of $\Lambda_\theta$ on $\theta$. 

For $u\in H^m_\Lambda$ we have 
$
T_\Lambda P u= \Pi_\Lambda T_\Lambda P {S}_\Lambda T_\Lambda u.
$
By Proposition~\ref{l:FBISide}
\begin{equation}
\label{e:mule}
\begin{gathered}
\Pi_\Lambda T_\Lambda P {S}_\Lambda \Pi_\Lambda= \Pi_\Lambda b_P\Pi_\Lambda +R_1,\qquad \Pi_\Lambda T_\Lambda \Delta  {S}_\Lambda \Pi_\Lambda= \Pi_\Lambda a_\Delta\Pi_\Lambda +R_2,
\end{gathered}
\end{equation}
where $R_i:\langle \xi\rangle^NL^2(\Lambda)\to \langle \xi\rangle^{-N}L^2(\Lambda)$, 
$$
b_P-p|_{\Lambda}\in S^{-1},\qquad a_\Delta+(\zeta)^2|_{\Lambda}\in S^{-1}.
$$
Now, 
$$
p|_{\Lambda}=p(x,\xi)-i\theta H_pG_0 +\mathcal O(\theta^2)_{S^0}.
$$
In particular, by~\eqref{e:G0}, there are $c,C>0$ such that for $\theta>0$ small enough, 
\begin{equation*}
\begin{gathered}
\Im p|_{\Lambda}\leq (-c +C \langle \xi\rangle^{-1})\theta,\qquad\text{ on }\big| \Re p|_{\Lambda}\big|\leq c.
\end{gathered}
\end{equation*}
In particular, there exists $b_\infty \in S^{-\infty}$ such that for $e_0:=b_P+b_\infty$, there are $c_0, C_0>0$ satisfying
\begin{equation}
\label{e:positivity1}
\begin{gathered}
|e_0|>c_0\theta>0, 
\qquad \Im e_0 \leq -c_0\theta \ \text{ on } \ |\Re e_0| <c_0,\qquad |\Im e_0|\leq C_0(\theta+\langle \xi\rangle^{-1}) 
\end{gathered}
\end{equation} 
By~\eqref{e:mule}, we also have
\begin{equation}
\label{e:mule2}
\Pi_\Lambda T_\Lambda P  {S}_\Lambda \Pi_\Lambda= \Pi_\Lambda e_0\Pi_\Lambda +R_0
\end{equation}
with $R_0:\langle \xi\rangle^NL^2(\Lambda)\to \langle \xi\rangle^{-N}L^2(\Lambda)$ uniformly over $0\leq \theta\leq \theta_0 $.

To analyse the contribution of the Laplacian we note that
$$
\Re (\zeta)^2|_{\Lambda} \geq (1-C\theta^2)|\xi|^2, \qquad |\Im (\zeta)^2|_{\Lambda}|\leq C\theta|\xi|^2.
$$
Therefore, for $\theta>0$ small enough, we can find $a_{\infty}\in S^{-\infty}$ such that 
\begin{equation}
\label{e:mule3}
\Re (a_\Delta+a_{\infty})\geq \frac{1}{2}\langle \xi\rangle^2,\qquad |\Im (a_\Delta +a_\infty)|\leq C\theta|\xi|^2+C|\xi|,
\end{equation}
 and we have
\begin{equation}
\label{e:compute}
\Pi_\Lambda T_\Lambda ( P+i\nu \Delta)  {S}_\Lambda \Pi_\Lambda= \Pi_\Lambda e_\nu \Pi_\Lambda +R_\nu,\qquad  e_\nu:=e_0-i\nu(a_\Delta+a_\infty)
\end{equation}
where, by~\eqref{e:mule} and~\eqref{e:mule2}, $R_\nu:\langle \xi\rangle^NL^2(\Lambda)\to \langle \xi\rangle^{-N}L^2(\Lambda)$ uniformly in $0\leq \nu \leq 1$, $0\leq\theta\leq \theta_0 $.

The next lemma gives us crucial properties of $ e_\nu $:
\begin{lemm}
\label{l:positivity}
There exist $c_1,\theta_0,\nu_0>0$ such that for all $0\leq \nu\leq \nu_0$ and $0<\theta\leq\theta_0$ 
\begin{equation}
\label{e:positivity}\begin{gathered}
|e_\nu|>c_1\theta(1+\nu|\xi|^2) >0, \\
\qquad \Im e_\nu \leq -c_1\theta(1+\nu|\xi|^2) \ \text{ on } \ |\Re e_\nu| <c_1(1+\nu|\xi|^2).
\end{gathered}
\end{equation}
\end{lemm}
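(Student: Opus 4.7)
The strategy is to split into cases based on the size of $\nu|\xi|^2$, using the positivity of $e_0$ from \eqref{e:positivity1} when $\nu|\xi|^2$ is bounded and the dominating Laplacian contribution $\nu\Re(a_\Delta + a_\infty) \geq \nu\langle \xi\rangle^2/2$ from \eqref{e:mule3} otherwise. I would first separate real and imaginary parts,
\[
\Re e_\nu = \Re e_0 + \nu\Im(a_\Delta + a_\infty), \qquad \Im e_\nu = \Im e_0 - \nu\Re(a_\Delta + a_\infty),
\]
and note that \eqref{e:mule3} gives $|\nu\Im(a_\Delta+a_\infty)| \leq C\theta\nu|\xi|^2 + C\nu|\xi|$, a perturbation of $\Re e_0$ that is small when $\nu|\xi|^2$ is bounded (using $\nu|\xi| \leq \sqrt{\nu}\,\sqrt{\nu|\xi|^2}$).

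In the regime $\nu|\xi|^2 \leq 1$ one obtains $|\Re e_\nu - \Re e_0| \leq C(\theta+\sqrt{\nu}) \leq c_0/4$ for $\theta_0, \nu_0$ small. I would then split on $|\Re e_0|$: if $|\Re e_0| \geq c_0/2$ then $|\Re e_\nu| \geq c_0/4$, which exceeds $c_1(1+\nu|\xi|^2) \leq 2c_1$ once $c_1 < c_0/8$, so the implication in \eqref{e:positivity} holds vacuously. If instead $|\Re e_0| < c_0$, then \eqref{e:positivity1} gives $\Im e_0 \leq -c_0\theta$, and combining with $\nu\Re(a_\Delta+a_\infty) \geq \nu|\xi|^2/2$ yields
\[
\Im e_\nu \leq -c_0\theta - \nu|\xi|^2/2 \leq -c_1\theta(1+\nu|\xi|^2)
\]
whenever $c_1 \leq c_0/2$ and $c_1\theta_0 \leq 1/2$.

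In the regime $\nu|\xi|^2 \geq 1$ one has $|\xi| \geq \nu^{-1/2}$, hence $\langle \xi\rangle^{-1} \leq C\sqrt{\nu}$; combined with $|\Im e_0| \leq C_0(\theta + \langle \xi\rangle^{-1})$ this gives
\[
\Im e_\nu \leq C_0\theta + C_0\sqrt{\nu} - \nu\langle \xi\rangle^2/2.
\]
The target inequality $\Im e_\nu \leq -c_1\theta(1+\nu|\xi|^2)$ rearranges to $(C_0+c_1)\theta + C_0\sqrt{\nu} \leq \nu|\xi|^2(1/2 - c_1\theta)$; taking $c_1\theta_0 \leq 1/4$, the right-hand side is at least $\nu|\xi|^2/4 \geq 1/4$ while the left-hand side is at most $1/4$ for $\theta_0, \nu_0$ small, so the bound holds unconditionally on $\Re e_\nu$.

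Finally the ellipticity bound $|e_\nu| > c_1\theta(1+\nu|\xi|^2)$ follows by dichotomy: if $|\Re e_\nu| \geq c_1(1+\nu|\xi|^2)$ then $|e_\nu| \geq |\Re e_\nu| \geq c_1\theta(1+\nu|\xi|^2)$ since $\theta \leq 1$; otherwise the imaginary-part bound just established gives $|e_\nu| \geq |\Im e_\nu| \geq c_1\theta(1+\nu|\xi|^2)$. The main obstacle is the transitional regime $\nu|\xi|^2 \sim 1$, where both $\Re e_0$ and the term $C\nu|\xi|$ contribute at comparable scales; this is precisely handled by the geometric-mean estimate $\nu|\xi| \leq \sqrt{\nu}\,\sqrt{\nu|\xi|^2}$, which keeps the correction to $\Re e_0$ of order $\sqrt{\nu_0}$ across this range.
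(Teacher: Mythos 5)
Your proof is correct and follows essentially the same approach as the paper: both split at the frequency threshold $|\xi| \sim \nu^{-1/2}$, using the ellipticity of the Laplacian (via \eqref{e:mule3}) in the high-frequency regime and the absorption from \eqref{e:positivity1} in the low-frequency regime, with the geometric-mean bound $\nu|\xi| \leq \sqrt{\nu}\sqrt{\nu|\xi|^2}$ controlling the cross term. The only cosmetic difference is that the paper moves the threshold outward by a large constant $M$ to absorb error terms, whereas you fix the threshold at $M=1$ and instead shrink $\theta_0,\nu_0$; you also spell out the final dichotomy yielding the lower bound on $|e_\nu|$, which the paper leaves implicit.
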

\begin{proof}
We consider two cases. First, suppose $|\xi|\geq M \nu^{-1/2}$.  Then, by~\eqref{e:mule3} and~\eqref{e:positivity1}, there are $c_2,C_2>0$ such that 
$$
\Im e_\nu\leq -c_2(M^2+\nu |\xi|^2)+C_2(\theta +M^{-1}\nu^{1/2}).
$$
Therefore, setting 
$$
M=\max\Big(1,2\sqrt{{C_2}/{c_2}}\Big),$$
\eqref{e:positivity} holds on $|\xi|\geq M\nu^{-1/2}$ (uniformly in $0\leq \nu\leq 1$ and $0\leq \theta\leq 1$). 

We next consider the case $|\xi|\leq M\nu^{-1/2}$. If  ${c_0}\geq 2 |\Re e _\nu|$, then 
$$
\frac{c_0}{2}\geq |\Re e _\nu|\geq |\Re e_0|-C\theta M^2- C\nu^{1/2}M.
$$
Choosing $\theta_0$ and $\nu_0$ small enough, we obtain $|\Re e_0|\leq c_0$ and hence
$$
\Im e_\nu\leq \Im e_0\leq -c_0\theta,
$$
which completes the proof of~\eqref{e:positivity}.
\end{proof}

\subsection{Fredholm properties and meromorphy of the resolvent}
We add a localized absorbing potential to $ P + i \nu \Delta $ to 
obtain invertibility. {That is, for} $q\in C_c^\infty(\Lambda; [0,\infty))$ we define
\begin{equation}
\label{eq:Pqnu}
P_{q,\nu}:=P+i\nu \Delta  -iQ,\qquad Q:= {S}_\Lambda\Pi_\Lambda q\Pi_\Lambda T_\Lambda.
\end{equation}
This family includes the operator $ P = P_{0,0} $ and the viscous operator
$ P + i \nu \Delta = P_{ 0, \nu} $. 
We 
note that
\begin{equation}
\label{e:onion}
\begin{aligned}
\Pi_\Lambda T_\Lambda QS_\Lambda\Pi_\Lambda&=\Pi_\Lambda T_\Lambda  {S}_\Lambda\Pi_\Lambda q\Pi_\Lambda T_\Lambda S_\Lambda\Pi_\Lambda\\
&=\Pi_\Lambda P_\Lambda \Pi_\Lambda q\Pi_\Lambda P_\Lambda \Pi_\Lambda=\Pi_\Lambda q\Pi_\Lambda,
\end{aligned}
\end{equation}
where we recall that $P_\Lambda=T_\Lambda S_\Lambda$ satisfies $ \Pi_\Lambda=P_\Lambda \Pi_\Lambda$.
We record the following Lemma for use later.
\begin{lemm}
\label{l:adjoint}
The adjoint of $P:H^s(\Lambda)\to H^s(\Lambda)$ satisfies
$$
\langle P^* u,v\rangle_{H^s(\Lambda)}=\langle( \Pi_{\Lambda} \,\bar{e}_0\,\Pi_{\Lambda}+R_0^*)Tu, Tv\rangle_{\langle \xi\rangle^{-s}L^2(\Lambda)},
$$
and the adjoint of $P_{q,\nu}:H^{s+2}(\Lambda)\to H^s(\Lambda)$, $\nu>0$ satisfies
$$
\langle P_{q,\nu}^* u,v\rangle_{H^s(\Lambda)}=\langle( \Pi_{\Lambda} \,(\bar{e}_\nu-iq)\,\Pi_{\Lambda}+R_\nu^*)Tu, Tv\rangle_{\langle \xi\rangle^{-s}L^2(\Lambda)}.
$$
\end{lemm}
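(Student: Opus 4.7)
The first identity is a direct transcription of the computation already carried out for \eqref{e:adjoint} in the proof of Lemma \ref{l:Fred2}. Unpacking the definition of the $H^s(\Lambda)$ inner product through the FBI transform gives
\[
\langle P^* u, v\rangle_{H^s(\Lambda)} = \langle u, P v\rangle_{H^s(\Lambda)} = \langle T_\Lambda u, T_\Lambda P v\rangle_{\langle\xi\rangle^{-s}L^2(\Lambda)},
\]
and since $S_\Lambda T_\Lambda=\Id$ and both $T_\Lambda v$ and $T_\Lambda P v$ lie in the range of $\Pi_\Lambda$, one may write $T_\Lambda P v = \Pi_\Lambda T_\Lambda P S_\Lambda \Pi_\Lambda T_\Lambda v$. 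Equation \eqref{e:mule2} identifies this as $(\Pi_\Lambda e_0\Pi_\Lambda + R_0)T_\Lambda v$, and moving the operator to the first argument of the weighted $L^2$ pairing (using that the adjoint of multiplication by $e_0$ is multiplication by $\bar e_0$) yields the stated formula, with $R_0^*$ collecting the adjoint of $R_0$ in that pairing.

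For the second identity the same manipulation is applied to $P_{q,\nu}=P+i\nu\Delta-iQ$. Equation \eqref{e:compute} handles the $P+i\nu\Delta$ part, while \eqref{e:onion} gives the exact identity $\Pi_\Lambda T_\Lambda Q S_\Lambda\Pi_\Lambda=\Pi_\Lambda q\Pi_\Lambda$. Combining them,
\[
\Pi_\Lambda T_\Lambda P_{q,\nu} S_\Lambda\Pi_\Lambda = \Pi_\Lambda(e_\nu - iq)\Pi_\Lambda + R_\nu,
\]
and the same passage to the adjoint as above produces the claim, with $R_\nu^*$ playing the role of $R_0^*$.

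The one delicate point, which is where I would expect to have to be careful, is that $\Pi_\Lambda$ is orthogonal in the unweighted inner product of $L^2(\Lambda, e^{-2H/h}\,dm_\Lambda)$ but not in the weighted one of $\langle\xi\rangle^{-s}L^2(\Lambda)$: its adjoint there equals $\langle\xi\rangle^{-2s}\Pi_\Lambda\langle\xi\rangle^{2s}$, differing from $\Pi_\Lambda$ by the commutator $[\Pi_\Lambda,\langle\xi\rangle^{2s}]$, which is one order lower than $\langle\xi\rangle^{2s}$ thanks to the symbolic description of the projector in Proposition \ref{p:prelim-Projector}. This discrepancy, together with the genuine adjoint of the remainders, can be absorbed into $R_0^*$ and $R_\nu^*$ of the same mapping class as $R_0$ and $R_\nu$, which is the convention already in force in the last line of \eqref{e:adjoint}.
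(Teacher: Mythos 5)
Your argument reproduces the paper's proof, which consists precisely of observing that the chain of identities \eqref{e:adjoint} from the proof of Lemma \ref{l:Fred2} applies verbatim once \eqref{e:mule2} is used to describe $\Pi_\Lambda T_\Lambda P S_\Lambda\Pi_\Lambda$, and \eqref{e:compute} together with \eqref{e:onion} to describe $\Pi_\Lambda T_\Lambda P_{q,\nu}S_\Lambda\Pi_\Lambda$, and you invoke exactly those inputs. Your flag about the weighted adjoint of $\Pi_\Lambda$ is the right thing to notice and is the point the paper glosses over; the only place to push back is the assertion that the resulting discrepancy lands in an operator of \emph{the same} mapping class as $R_0$. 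After sandwiching between ranges of $\Pi_\Lambda$, what is being compared are $\Pi_\Lambda\bar e_0\Pi_\Lambda\langle\xi\rangle^{2s}\Pi_\Lambda$ and $\Pi_\Lambda\langle\xi\rangle^{2s}\Pi_\Lambda\bar e_0\Pi_\Lambda$, and Proposition \ref{l:FBISideComp} shows these agree only at principal level $\bar e_0\langle\xi\rangle^{2s}$; their subprincipal symbols differ by a bracket-type term, so the defect is one order lower in $h/\langle\xi\rangle$, not $\mathcal O(h^\infty)$ smoothing. Thus $R_0^*$ and $R_\nu^*$ as written should be read as carrying that one-order gain rather than being of the same $\langle\xi\rangle^N L^2\to\langle\xi\rangle^{-N}L^2$ class as $R_0$ and $R_\nu$. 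This is an imprecision already present in \eqref{e:adjoint} and it is harmless in every application (Lemmas \ref{l:Fred1}, \ref{l:fred3}, \ref{l:parametrix} absorb such errors into $\|u\|_{H^{s-1/2}_\Lambda}$ and then iterate, or into $\|u\|_{H^{-N}_\Lambda}$), but your statement overclaims slightly and it is cleaner to say the defect is lower order rather than asserting identity of mapping classes.
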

\begin{proof}
The lemma follows from~\eqref{e:adjoint}.
\end{proof}

We start by proving that $P-\omega$ is Fredholm on $H^s_{\Lambda}$. In particular, the next lemma proves the first part of Theorem~\ref{t:meromorphy} with $\mc{X}=H^s_{\Lambda}$.

\begin{lemm}
\label{l:Fred1}
There is $\omega_0>0$ such that  for $\Im \omega>-\omega_0 \theta $ and $|\Re\omega|<\omega_0$, 
$$ 
P -\omega:H^s_\Lambda\to H^{s}_{\Lambda}
$$  is a Fredholm operator.  For $\Im \omega\gg 1$, $P-\omega$ is invertible with inverse $\mc{R}(\omega)$ satisfying
$$
\|\mc{R}(\omega)\|_{H^s_{\Lambda}\to H^{s}_\Lambda}\leq \frac{C}{\Im \omega},\qquad \Im \omega\geq C,\quad |\Re \omega|<\omega_0.
$$
In particular, $\mc{R} (\omega):H^{s}_{\Lambda}\to H^{s}_{\Lambda}$ is a meromorphic family of operators for $\omega\in (-\omega_0,\omega_0)+i(-\omega_0\theta,\infty).$
\end{lemm}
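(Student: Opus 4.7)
The plan is to transfer the Fredholm question to the FBI side via \eqref{e:mule2},
\[
\Pi_\Lambda T_\Lambda (P-\omega) S_\Lambda \Pi_\Lambda = \Pi_\Lambda (e_0 - \omega) \Pi_\Lambda + R_0,
\]
and then to mimic the proof of Lemma~\ref{l:Fred2}, with the positivity properties \eqref{e:positivity1} of the symbol $e_0$ replacing the ordinary ellipticity assumption on $E$. The essential pointwise input is a lower bound $|e_0 - \omega| \geq c\theta$ on all of $\Lambda$, uniform for $\omega$ in the stated region, together with the sharpening $|e_0 - \omega| \geq \tfrac12 \Im\omega$ when $\Im\omega\gg 1$.

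To verify this lower bound, I would fix $\omega_0 < c_0/2$: if $|\Re(e_0 - \omega)| \geq c_0/2$ the bound is trivial, while otherwise $|\Re e_0|< c_0$ so \eqref{e:positivity1} forces $\Im e_0 \leq -c_0\theta$, whence $|\Im(e_0-\omega)| \geq c_0\theta - \omega_0\theta \geq c_0\theta/2$; the sharper bound for large $\Im\omega$ follows from the uniform estimate $|\Im e_0| \leq C_0(\theta + \langle\xi\rangle^{-1})$. Given this, I would apply Proposition~\ref{l:FBISideComp} to the composition $\Pi_\Lambda\overline{(e_0 - \omega)}\Pi_\Lambda \cdot \Pi_\Lambda (e_0-\omega)\Pi_\Lambda$, which yields a symbol whose principal part equals $|e_0-\omega|^2 \geq c^2\theta^2$; absorbing the subprincipal terms into an $S^{-\infty}$ correction then gives a uniform positive lower bound on the resulting quantization. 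Running the calculation that produces \eqref{e:ellipticity} in this setting yields
\[
\tfrac{c\theta}{2}\|u\|_{H^s_\Lambda} \leq \|(P-\omega)u\|_{H^s_\Lambda} + C\|u\|_{H^{-N}_\Lambda},
\]
and the identical argument applied via Lemma~\ref{l:adjoint} (with $\nu = q = 0$) to $\bar e_0 - \bar\omega$ gives the same estimate for $(P-\omega)^*$. Combined with the compact embedding $H^s_\Lambda \hookrightarrow H^{-N}_\Lambda$ from Lemma~\ref{l:compact}, these imply $P - \omega : H^s_\Lambda \to H^s_\Lambda$ is Fredholm of index zero throughout the region.

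For $\Im\omega\gg 1$, the sharper pointwise bound upgrades the estimate to $\|(P-\omega)u\|_{H^s_\Lambda} \geq (\Im\omega/C - C)\|u\|_{H^s_\Lambda}$ (using $\|u\|_{H^{-N}_\Lambda} \leq \|u\|_{H^s_\Lambda}$ to absorb the remainder), so once $\Im\omega$ is sufficiently large $P - \omega$ is injective and, by the adjoint estimate, has dense range, hence is invertible with the claimed bound $\|\mc{R}(\omega)\|_{H^s_\Lambda\to H^s_\Lambda} \leq C/\Im\omega$. Meromorphy of $\mc{R}(\omega)$ on the connected strip $(-\omega_0, \omega_0) + i(-\omega_0\theta, \infty)$ then follows from analytic Fredholm theory, since $\omega \mapsto P - \omega$ is a holomorphic family of Fredholm operators which is invertible at some point. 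The main technical subtlety will be keeping the constants in Propositions~\ref{l:FBISide} and~\ref{l:FBISideComp} uniform as $\omega$ varies over a compact subset of the strip, but this should be routine since $\omega$ enters only as a scalar perturbation of the principal symbol.
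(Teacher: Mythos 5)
Your proposal is correct and follows essentially the same route as the paper's proof: pass to the FBI side via \eqref{e:mule2}, derive the pointwise lower bound $|e_0-\omega|\gtrsim\theta$ (and $\gtrsim\Im\omega$ for $\Im\omega$ large) from the sign condition \eqref{e:positivity1}, quantize $|e_0-\omega|^2$ via Proposition~\ref{l:FBISideComp}, run the same elliptic estimate for $P-\omega$ and for $P^*-\bar\omega$ via Lemma~\ref{l:adjoint}, and conclude Fredholmness from the compactness in Lemma~\ref{l:compact} and meromorphy from the invertibility at $\Im\omega\gg1$.
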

\begin{proof}
First, observe that by~\eqref{e:compute} with $\nu=0$, and using Proposition~\ref{l:FBISideComp}
$$
\Pi_\Lambda \langle \xi\rangle^{2s}(\Pi_\Lambda T_\Lambda (P-\omega)S_\Lambda \Pi_\Lambda)^*\langle\xi\rangle^{-2s}(\Pi_\Lambda T_\Lambda (P-\omega)S_\Lambda \Pi_\Lambda)=\Pi_\Lambda |e_0-\omega|^2\Pi_\Lambda +\tilde{R}_0
$$
where
$$
\tilde{R}_1:\langle \xi\rangle^{s}L^2(\Lambda)\to \langle \xi\rangle^{s+1}L^2(\Lambda).
$$
By~\eqref{e:positivity}, there is $c_1>0$ such that $\Im e_0\leq -c_1\theta $ on $|\Re e_0|\leq c_1$. Therefore, on $|\Re e_0|\leq c_1$, 
$$
|e_0-\omega|^2\geq  (c_1\theta +\Im \omega )^2\geq \frac{c_1^2\theta^2}{4}{+\max(\Im \omega,0)^2},
$$
where we have taken $\omega_0=\frac{c_1}{2}$ and $\Im \omega\geq -\omega_0\theta$. Then, using $|\Re \omega|\leq \omega_0$, on $|\Re e_0|\geq c_1$, there is $C>0$ such that
$$
|e_0-\omega|^2\geq \frac{c_1^2}{4}+\min(|\Im \omega|^2-C,0)\geq c_1(1+|\Im \omega|^2).
$$
In particular, for $u\in H^s_\Lambda$, 
\begin{align*}
\|(P-\omega)u\|_{H^s_\Lambda}^2&= |\langle (P-\omega)u,(P-\omega)u\rangle_{H_\Lambda^s}|\\
&=\Big|\langle \Pi_\Lambda T_\Lambda (P-\omega) {S}_{\Lambda} \Pi_\Lambda T_\Lambda u, \Pi_\Lambda T_\Lambda (P-\omega) {S}_{\Lambda} \Pi_\Lambda T_\Lambda u\rangle_{\langle \xi\rangle^{-s}L^2}\Big|\\
&\geq |\langle |e_0- \omega|^2T_\Lambda u, T_\Lambda u\rangle_{\langle \xi\rangle^{-s}L^2}| -|\langle \tilde{R}_0T_\Lambda u, T_\Lambda u\rangle_{\langle \xi\rangle^{-s}L^2}|\\
&\geq c_1(1+|\Im \omega|^2)- C\|u\|_{H^{s-\frac{1}{2}}_{\Lambda}}^2.
\end{align*}

In particular, iterating this argument, we have for any $N$, there is $C_N>0$ such that
\begin{equation}
\label{e:pumpkin1}
\|u\|_{H^s_\Lambda}^2\leq C(1+|\Im \omega|^2)^{-1}[\|(P-\omega)u\|_{H^s_\Lambda}^2+ C_N\|u\|_{H^{-N}_{\Lambda}}^2].
\end{equation}

By almost exactly the same argument, using Lemma~\ref{l:adjoint}, we obtain
\begin{equation}
\label{e:pumpkin2}
\|u\|_{H^s_{\Lambda}}\leq C(1+|\Im \omega|^2)^{-1}[\|(P^*-\bar{\omega})u\|_{H^s_{\Lambda}} +C_N\|u\|_{H^{-N}_{\Lambda}}].
\end{equation}
Next, by Lemma~\ref{l:compact}, for $N>-s$, the embedding $H^{s}_{\Lambda}\to H^{-N}_{\Lambda}$ is compact.
Therefore, $ P -\omega:H^s_\Lambda\to H^s_\Lambda$ is Fredholm. 

Finally, taking $\Im \omega \gg1$, we may absorb the $H^{-N}_{\Lambda}$ error into the left hand sides of~\eqref{e:pumpkin1} and~\eqref{e:pumpkin2} to obtain that $P-\omega:H^s_\Lambda\to H^s_{\Lambda}$ is invertible with the desired estimate. The meromorphic Fredholm theorem (see e.g~\cite[Theorem C.9]{dizzy}) then shows that $\mc{R} (\omega)$ is a meromorphic family of operators on $(-\omega_0,\omega_0)+i(-\omega_0\theta,\infty).$
\end{proof}

We next study the meromorphy  of the inverse of $P_{q,\nu}-\omega$, where
$ P_{q, \nu} $ is given in \eqref{eq:Pqnu}:
\begin{lemm}
\label{l:fred3}
There exists $\e_0>0$ such that the following holds. For all $s\in \mathbb{R}$, $K\in \mathbb{N}$, $\omega \in \mathbb{C}$, $q\in C_c^\infty(T^*\mathbb{T}^n)$ and $\nu>0$ there are $C_0 = C_{s,\nu,K}$, and $C_1 = C_{s,\nu, K,N}$ such that for all $G$ satisfying~\eqref{e:G}, 
\[ (P_{q,\nu}-\omega)^K:H^{s}_{\Lambda  }\to H^{s-2K}_{\Lambda  }\] is a Fredholm operator and
\begin{equation}
\label{e:ellipticEst}
\|u\|_{H^{s}_{\Lambda  }}\leq C_0\|(P_{q,\nu}-\omega)^Ku\|_{H_{\Lambda  }^{s-2K}}+C_1 \|u\|_{H^{-N}_{\Lambda  }}.
\end{equation}
Moreover, $P_{q,\nu}-\omega :H^{s}_{\Lambda  }\to H^{s-2}_{\Lambda  }$ is invertible for $\Im \omega \gg1$  with inverse $\mc{R}_{q,\nu}(\omega)$ satisfying
$$
\|\mc{R}_{q,\nu}(\omega)\|_{H^{s}_{\Lambda  }\to H^{s+2}_{\Lambda  }}\leq C\nu^{-1},\qquad \Im \omega \geq C(1+\nu) .
$$
In particular, for all $\nu>0$, $\mc{R}_{q,\nu}(\omega):H^{s}_{\Lambda  }\to H^{s+2}_{\Lambda  }$ is a meromorphic family of operators for $\omega\in \mathbb{C}$.
\end{lemm}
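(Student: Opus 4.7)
The plan is to parallel the proof of Lemma \ref{l:Fred1}, now exploiting the order-$2$ ellipticity supplied by the viscosity term $i\nu\Delta$. First, combining \eqref{e:compute} with \eqref{e:onion} and the identity $\Pi_\Lambda T_\Lambda S_\Lambda \Pi_\Lambda = \Pi_\Lambda$ gives
\[
\Pi_\Lambda T_\Lambda (P_{q,\nu}-\omega) S_\Lambda \Pi_\Lambda = \Pi_\Lambda \tilde e_{\nu,\omega}\Pi_\Lambda + R_{\nu,\omega}, \qquad \tilde e_{\nu,\omega}:= e_\nu - iq - \omega,
\]
with $R_{\nu,\omega}:\langle\xi\rangle^N L^2(\Lambda)\to \langle\xi\rangle^{-N}L^2(\Lambda)$. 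Using \eqref{e:mule3} together with the bound $|\Im e_0|\leq C_0(\theta+\langle\xi\rangle^{-1})$ from \eqref{e:positivity1}, the symbol satisfies
\[
-\Im\tilde e_{\nu,\omega} \geq \tfrac{\nu}{2}\langle\xi\rangle^2 + \Im\omega - C_0,
\]
so that for any fixed $\omega\in\CC$ and $\nu>0$, $|\tilde e_{\nu,\omega}|\geq c_{\nu,\omega}\langle\xi\rangle^2$ once $\langle\xi\rangle$ is large enough. This is the symbolic ellipticity of order $2$ which plays the role that ellipticity of $e_0-\omega$ had in Lemma \ref{l:Fred1}.

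Next, I iterate Proposition \ref{l:FBISideComp} to obtain
\[
\Pi_\Lambda T_\Lambda (P_{q,\nu}-\omega)^K S_\Lambda \Pi_\Lambda = \Pi_\Lambda b^{(K)}_{\nu,\omega} \Pi_\Lambda + \mathcal{O}(h^\infty)_{\langle\xi\rangle^N L^2\to\langle\xi\rangle^{-N}L^2},
\]
where $b^{(K)}_{\nu,\omega} = \tilde e_{\nu,\omega}^K + r$ with $r\in S^{2K-1}$. Composing this with its $H^{s-2K}_\Lambda$-adjoint via Lemma \ref{l:adjoint} and invoking Proposition \ref{l:FBISideComp} once more yields an operator whose leading symbol is $|b^{(K)}_{\nu,\omega}|^2$; outside a compact set in $\xi$ this is bounded below by $c_{\nu,\omega,K}\langle\xi\rangle^{4K}$. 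The argument of Lemma \ref{l:Fred2}, based on Schur boundedness of the smoothing remainder plus a G{\aa}rding-type bound for the positive principal symbol, then gives
\[
\|u\|_{H^s_\Lambda}\leq C_0\|(P_{q,\nu}-\omega)^K u\|_{H^{s-2K}_\Lambda}+C_1\|u\|_{H^{-N}_\Lambda}.
\]
Applying the same reasoning to the formal adjoint (via Lemma \ref{l:adjoint}) and using the compactness of $H^s_\Lambda\hookrightarrow H^{-N}_\Lambda$ from Lemma \ref{l:compact}, I conclude that $(P_{q,\nu}-\omega)^K:H^s_\Lambda\to H^{s-2K}_\Lambda$ is Fredholm with the claimed estimate.

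Finally, for invertibility when $\Im\omega\geq C(1+\nu)$: the bound $-\Im\tilde e_{\nu,\omega}\geq \tfrac{\nu}{2}\langle\xi\rangle^2+\Im\omega-C$ gives $|\tilde e_{\nu,\omega}|\geq c\nu\langle\xi\rangle^2$ uniformly in $\xi\in\RR^n$ (the bounded-frequency region is controlled by $\Im\omega\geq C\nu$, the high-frequency region by $\tfrac{\nu}{2}\langle\xi\rangle^2$). Repeating the quadratic-form computation of Lemma \ref{l:Fred1} with this symbol yields
\[
\|(P_{q,\nu}-\omega)u\|_{H^{s-2}_\Lambda}\geq c\nu\|u\|_{H^s_\Lambda} - C_N\|u\|_{H^{-N}_\Lambda},
\]
and analogously for the adjoint, so for $\Im\omega$ sufficiently large the remainder absorbs and $P_{q,\nu}-\omega$ is two-sided invertible with norm $\mathcal{O}(\nu^{-1})$. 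The analytic Fredholm theorem (see e.g.\ \cite[Theorem C.8]{dizzy}) then provides meromorphic continuation of $\mc{R}_{q,\nu}(\omega)$ to all of $\CC$. The main obstacle I anticipate is the careful bookkeeping of the $\nu$-dependence and symbol orders through the $K$-fold composition: the lower-order corrections generated at each application of Proposition \ref{l:FBISideComp} must remain of strictly smaller order than $\tilde e_{\nu,\omega}^K$ so that the ellipticity of $|b^{(K)}_{\nu,\omega}|^2$ survives and the Lemma \ref{l:Fred2}-style estimate closes with constants of the stated form $C_{s,\nu,K}$.
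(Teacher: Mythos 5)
Your proposal is correct in substance, but it takes a genuinely different route from the paper for the Fredholm/elliptic-estimate part. The paper's trick is to treat $(i\nu\Delta)^K$ as the model elliptic operator: it verifies that $\Delta^K$ satisfies the hypotheses of Lemma~\ref{l:Fred2} directly (an easy check since $\Delta$ is a constant-coefficient differential operator, hence trivially analytic with a transparent symbol), obtains the a priori estimate~\eqref{e:ellipticReg} for $(i\nu\Delta)^K$, and then observes that $(P_{q,\nu}-\omega)^K-(i\nu\Delta)^K$ has order $2K-2$ and therefore produces a compact term $H^{s-2K+2}_\Lambda\hookrightarrow H^{s-2K}_\Lambda$; Fredholmness follows immediately and~\eqref{e:ellipticEst} by iteration. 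You instead establish the ellipticity of $(P_{q,\nu}-\omega)^K$ itself by running the Toeplitz calculus (Proposition~\ref{l:FBISideComp}) $K$ times to exhibit the leading symbol $\tilde e_{\nu,\omega}^K\in S^{2K}$ and then rerunning the argument of Lemma~\ref{l:Fred2}. This is valid, but — as you anticipate in your last sentence — it forces you to track symbol seminorms and $\nu$-dependence through a $K$-fold composition, which the paper sidesteps entirely by isolating the one factor that actually supplies ellipticity. The paper's decomposition is cleaner because it separates the simple (differential, analytic, $\nu$-homogeneous) principal part from the order-$0$ remainder; your direct approach buys symmetry and avoids the ``compare to a model operator'' step but at the cost of heavier bookkeeping. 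For the invertibility statement your argument matches the paper's in essence: the uniform lower bound $-\Im\tilde e_{\nu,\omega}\gtrsim\nu\langle\xi\rangle^2$ for $\Im\omega\geq C(1+\nu)$, followed by the quadratic-form argument of~\eqref{e:squash}; you should note that the absorption of the $C_N\|u\|_{H^{-N}_\Lambda}$ term is what actually dictates the threshold $\Im\omega\geq C(1+\nu)$ (the constant $C$ eats $C_N$), rather than being a separate cosmetic step. One small correction: the cite for the meromorphic Fredholm theorem in the paper is \cite[Theorem C.9]{dizzy}, not C.8.
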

\begin{proof}
We first note that
$$
|\sigma(\Delta)(z,\zeta)|=|\zeta^2|\geq |\Re \zeta|^2-|\Im \zeta|^2
$$
and hence $\Delta^K$ satisfies the hypotheses of Lemma~\ref{l:Fred2} for any $\e_1<1$. In particular, by that lemma $(i\nu \Delta)^K:H^{s}_{\Lambda  }\to H^{s-2K}_{\Lambda  }$ is a Fredholm operator and satisfies
\begin{equation}
\label{e:ellipticReg}
\|u\|_{H^{s}_{\Lambda  }}\leq C_1^{K}\nu^{-K}\|(i\nu \Delta )^{K}u\|_{H^{s-2K}_{\Lambda  }}+\nu^{-K} C_{s,N,K}\|u\|_{H^{-N}_{\Lambda  }},
\end{equation}
for any $G$ satisfying~\eqref{e:G}.

Next, observe that $(P_{q,\nu}-\omega)^K-(i\nu \Delta)^K:H^{s}_{\Lambda  }\to H^{s-2K+2}_{\Lambda  }$ and $H^{s-2K+2}_{\Lambda  }\to H^{s-2K}_{\Lambda  }$ is compact by Lemma~\ref{l:compact}. Therefore, $(P_{q,\nu}-\omega)^K:H^{s}_{\Lambda  }\to H^{s-2K}_{\Lambda  }$ is a Fredholm
operator.

Finally, using~\eqref{e:ellipticReg},
\begin{align*}
\|u\|_{H^{s}_{\Lambda  }}&\leq C_{\nu,K}\|(i\nu \Delta)^K u\|_{H^{s-2K}_{\Lambda  }}+C_{s,\nu,N,K}\|u\|_{H^{-N}_{\Lambda  }}\\
&\leq C_{\nu,K}\|(P_{q,\nu}-\omega)^K u\|_{H^{s-2K}_{\Lambda  }}+C_{s,\nu,K}\|u\|_{H^{s-2}_{\Lambda}}+C_{s,\nu,N,K}\|u\|_{H^{-N}_{\Lambda  }}.
\end{align*}
Estimating $\|u\|_{H^{s-2}_{\Lambda  }}$ by $\| (P_{q,\nu}-\omega)^K\|_{H^{s-2K-2}_{\Lambda  }}$ and iterating we obtain~\eqref{e:ellipticEst}.

For invertibility, let $\Im \omega \gg 1$ and consider
\begin{equation}
\label{e:squash}
\begin{aligned}
&\|(P_{q,\nu}-\omega)u\|_{H^{s-2}_{\Lambda  }}\|u\|_{H^{s-2}_{\Lambda  }}\\
&\geq -\Im \langle (P_{q,\nu}-\omega)u,u\rangle_{H^{s-2}_{\Lambda  }}\\
&\geq -\Im \langle \Pi_{\Lambda  }(i\nu {a}-iq-\omega)\Pi_{\Lambda  }T_{\Lambda  }u,T_{\Lambda  }u\rangle_{\langle \xi\rangle^{-s+2}L^2(\Lambda)} -C_N\|u\|_{H_{\Lambda  }^{-N}}^2
\end{aligned}
\end{equation}
where {$a=a_\Delta+a_\infty$ is as in~\eqref{e:mule3}}. In particular, for $\Im \omega >0$,
$$
\Im i\nu {a}-iq-\omega \leq -c\nu |\xi|^2+C\nu |\xi|-\Im \omega,
$$
and for $\Im \omega\geq C_N+1+C\nu$, 
$$
\Im i\nu {a}-iq-\omega \leq -C_N-1-c\nu |\xi|^2.
$$
Using this in~\eqref{e:squash}, we obtain
$$\|(P_{q,\nu}-\omega)u\|_{H^{s-2}_{\Lambda  }}\geq C\nu\|u\|_{H^s_{\Lambda}}.$$
This same argument implies that 
$$
\|(P^*_{q,\nu}-\bar{\omega})u\|_{H^{s-2}_{\Lambda  }}\geq C\nu\|u\|_{H^s_{\Lambda}}.
$$
and hence $P_{q,\nu}-\omega$ is invertible with inverse as claimed. The meromorphic Fredholm theorem (see e.g~\cite[Theorem C.9]{dizzy}) then shows that $\mc{R}_{q,\nu}(\omega)$ is a meromorphic family of operators for $\omega\in \mathbb{C}$.
\end{proof}

\subsection{A parametrix for the resolvent of $P_{0,\nu}-\omega$}

We next find $q$ so that the compact perturbation $P_{q,\nu}$ of $P_{0,\nu}$ is invertible. This inverse will be used to approximate the inverse of $P_{0,\nu}$.
\begin{lemm}
\label{l:parametrix}
There are $\omega_0,\nu_0,\theta_0>0$ so that for all $\epsilon >0$ and $\theta\in(0,\theta_0)$, there is $q=q(\epsilon,\theta)\in C_c^\infty(\Lambda;[0,\infty))$ such that for all $\nu\in (0,\nu_0]$,  and $\omega\in (-\omega_0,\omega_0)+i(-\omega_0\theta,\infty)$, the operators
$$
P_{q,\nu} -\omega: H^{s+2}_\Lambda\to H^s_\Lambda,\qquad \text{ and }\qquad P_{q,0} -\omega: H^{s}_\Lambda\to H^s_\Lambda
$$
are invertible with inverse $\mathcal{R}_{q,\nu}(\omega):=( P_{q,\nu} -\omega)^{-1}$
satisfying
$$
\|\mathcal{R}_{q,\nu}(\omega)\|_{H^{s}_\Lambda\to H^{s-\epsilon}_\Lambda}\leq 1
$$
\end{lemm}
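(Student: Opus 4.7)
The key idea is that the absorbing term $-iQ$ adds $-iq$ into the FBI-side effective principal symbol of $P_{q,\nu}$, and choosing $q$ large on a sufficiently large (but fixed, compact) region of $\Lambda$ yields a uniform pointwise lower bound $|e_\nu - iq - \omega|\geq 2\langle\alpha_\xi\rangle^{-\epsilon}$. A standard energy argument on the FBI side then converts this symbolic bound into the desired operator estimate $\|u\|_{H^{s-\epsilon}_\Lambda}\leq \|(P_{q,\nu}-\omega)u\|_{H^s_\Lambda}$, which gives the claimed norm bound on the resolvent.

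To implement this, I would first use~\eqref{e:compute} together with the projector identity~\eqref{e:onion} and Proposition~\ref{l:FBISideComp} to write
\[
\Pi_\Lambda T_\Lambda(P_{q,\nu}-\omega)S_\Lambda\Pi_\Lambda \;=\; \Pi_\Lambda(e_\nu - iq - \omega)\Pi_\Lambda + R,
\]
where $R=\mathcal O(1):\langle\xi\rangle^NL^2(\Lambda)\to\langle\xi\rangle^{-N}L^2(\Lambda)$ uniformly in $\nu\in[0,\nu_0]$, $\theta\in(0,\theta_0]$ and $q$ in a bounded family in $C_c^\infty$. Fixing $\omega_0,\nu_0,\theta_0$ small enough that Lemma~\ref{l:positivity} applies, a two-case argument on whether $|\Re e_\nu|<c_1(1+\nu|\alpha_\xi|^2)$ or not yields the uniform lower bound $|e_\nu-\omega|\geq c\theta$ throughout $\Lambda$ for $\omega$ in the prescribed strip. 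Given $\epsilon>0$, I would then set $M:=(c\theta/2)^{-1/\epsilon}$ and pick $q=q(\epsilon,\theta)\in C_c^\infty(\Lambda;[0,\infty))$ with $\mathrm{supp}\,q\subset\{|\alpha_\xi|\leq 2M\}$ and $q\equiv A_0$ on $\{|\alpha_\xi|\leq M\}$ for a large constant $A_0=A_0(\epsilon,\theta)$ chosen to exceed $\sup\{|e_\nu-\omega|:|\alpha_\xi|\leq 2M,\,\nu\in[0,\nu_0],\,\omega\text{ in the strip}\}$ by a definite amount. A case split on $\mathrm{supp}\,q$ versus its complement then yields $|e_\nu-iq-\omega|\geq 2\langle\alpha_\xi\rangle^{-\epsilon}$ pointwise and uniformly in $\nu\in[0,\nu_0]$ and $\omega$.

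From here, applying Proposition~\ref{l:FBISideComp} to the composition $(\Pi_\Lambda(e_\nu-iq-\omega)\Pi_\Lambda)^*(\Pi_\Lambda(e_\nu-iq-\omega)\Pi_\Lambda)$ and using the pointwise bound yields an inequality of the form
\[
\|(P_{q,\nu}-\omega)u\|_{H^s_\Lambda}^2 \;\geq\; 4\|u\|_{H^{s-\epsilon}_\Lambda}^2 - C_N\|u\|_{H^{-N}_\Lambda}^2.
\]
Since $H^{s-\epsilon}_\Lambda\hookrightarrow H^{-N}_\Lambda$ is compact (Lemma~\ref{l:compact}), the standard iteration/absorption argument of the proof of Lemma~\ref{l:fred3} removes the lower-order error and gives $\|u\|_{H^{s-\epsilon}_\Lambda}\leq\|(P_{q,\nu}-\omega)u\|_{H^s_\Lambda}$. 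The same reasoning applied to the adjoint via Lemma~\ref{l:adjoint} yields the corresponding bound for $(P_{q,\nu}-\omega)^*$. Combined with Lemma~\ref{l:Fred1} (for $\nu=0$) and Lemma~\ref{l:fred3} (for $\nu>0$), which provide meromorphy of $\mathcal R_{q,\nu}(\omega)$ in the relevant regions, this rules out any poles and yields the claimed norm bound.

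\emph{The main obstacle} is producing a single $q=q(\epsilon,\theta)$ that works uniformly over $\nu\in[0,\nu_0]$: for $\nu=0$ there is no $\nu|\alpha_\xi|^2$-growth to help at infinity, so the pointwise ellipticity outside $\mathrm{supp}\,q$ must rely entirely on the uniform estimate $|e_\nu-\omega|\geq c\theta$ combined with the decay of $\langle\alpha_\xi\rangle^{-\epsilon}$. Achieving the explicit constant $\leq 1$ (rather than a generic $\leq C$) also requires that the factor $2$ in the pointwise bound dominate all remainders coming from the symbol calculus and the operator $R$, which is arranged by taking $A_0$ large enough.
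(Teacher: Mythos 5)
Your choice of $q$ is in the right ballpark — supported where $\langle\alpha_\xi\rangle^{2\epsilon}\lesssim\theta^{-2}$, large constant on a slightly smaller region — and this is essentially the paper's $q=M\chi_\epsilon$. The gap is in how you use the pointwise bound, and in what bound you actually need.

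Your stated pointwise estimate $|e_\nu-iq-\omega|\geq 2\langle\alpha_\xi\rangle^{-\epsilon}$, i.e. $\langle\xi\rangle^{2\epsilon}|e_\nu-iq-\omega|^2\geq 4$, is too weak. The remainder $\tilde R_\nu$ coming from Proposition~\ref{l:FBISideComp} applied to the composition of $\Pi_\Lambda(e_\nu-iq-\omega)\Pi_\Lambda$ with its adjoint is \emph{not} an $\mathcal O(h^\infty)$ smoothing error: because $e_\nu$ contains $-i\nu a$ with $a\sim\xi^2$, the subprincipal terms obey a bound of the form $\|\tilde R_\nu v\|_{H^s_\Lambda}\leq C_0(\|v\|_{H^{s-1}_\Lambda}+\nu\|v\|_{H^{s+1}_\Lambda}+\nu^2\|v\|_{H^{s+3}_\Lambda})$, i.e. they \emph{grow} in $\xi$ with $\nu$-weights. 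Your claimed inequality $\|(P_{q,\nu}-\omega)u\|^2_{H^s_\Lambda}\geq 4\|u\|^2_{H^{s-\epsilon}_\Lambda}-C_N\|u\|^2_{H^{-N}_\Lambda}$ with a $\nu$-independent $C_N$ and a purely smoothing error therefore does not follow; there is no way to push the $\nu\|\cdot\|^2_{H^{s+1}_\Lambda}$ and $\nu^2\|\cdot\|^2_{H^{s+3}_\Lambda}$ contributions down to an $H^{-N}_\Lambda$ remainder. The paper instead proves the stronger pointwise estimate
\[
\inf_\Lambda\,\langle\xi\rangle^{2\epsilon}|e_\nu-iq-\omega|^2\;\geq\;(1+C_0)(1+\nu|\xi|^2)^2,
\]
using that outside $\supp q$ Lemma~\ref{l:positivity} already gives $|e_\nu-\omega|\gtrsim\theta(1+\nu|\xi|^2)$ (a bound you implicitly have but do not exploit), and on $\supp q$ the constant $M$ in $q=M\chi_\epsilon$ is chosen large enough. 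It is precisely the $(1+\nu|\xi|^2)^2$ factor that dominates the $\nu$- and $\nu^2$-weighted remainders and yields the \emph{exact} estimate $\|u\|_{H^{s-\epsilon}_\Lambda}\leq\|(P_{q,\nu}-\omega)u\|_{H^s_\Lambda}$ with no residual lower-order term.

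The final appeal to ``the iteration/absorption argument of Lemma~\ref{l:fred3}'' to remove the $H^{-N}_\Lambda$ error also does not work here. In Lemma~\ref{l:fred3} the error is removed only for $\Im\omega\gg 1$ by a separate sign argument, not by compactness; compactness and Fredholm theory alone give you that the resolvent exists and is bounded with \emph{some} constant, which will depend on $\nu$ and on the particular operator. They cannot produce the explicit, $\nu$-uniform constant $\leq 1$, which is the whole content of this lemma and is what the proof of Theorem~\ref{t:viscoscity} actually uses. That constant must come from a remainder-free energy estimate, which in turn forces the $(1+\nu|\xi|^2)^2$ growth in the pointwise bound.
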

\begin{proof}
We assume without loss of generality that $\e<\frac{1}{2}$. Observe that by~\eqref{e:compute} and ~\eqref{e:onion}
\begin{align*}
\Pi_\Lambda T_\Lambda P_{q,\nu} {S}_{\Lambda} \Pi_\Lambda& =\Pi_\Lambda( e_\nu-iq )\Pi_\Lambda +R_\nu.
\end{align*}
and thus, by Proposition~\ref{l:FBISideComp}, 
$$
\Pi_\Lambda \langle \xi\rangle^{2s}(\Pi_\Lambda T_\Lambda (P_{q,\nu}-\omega) {S}_{\Lambda} \Pi_\Lambda)^*\langle \xi\rangle^{-2s}\Pi_\Lambda T_\Lambda (P_{q,\nu}-\omega) {S}_{\Lambda} \Pi_\Lambda=\Pi_\Lambda |e_\nu -iq-\omega |^2\Pi_\Lambda +\tilde{R}_\nu
$$
where for all $s\in \mathbb{R}$,
$$
\|\tilde{R}_\nu v\|_{H_\Lambda^s}\leq C_0 (\|v\|_{H_{\Lambda}^{s-1}}+\nu \|v\|_{H_{\Lambda}^{s+1}}+\nu^2\|v\|_{H_{\Lambda}^{s+3}}),\qquad 0\leq \nu\leq 1.
$$
Therefore, 
\begin{equation}
\label{e:estimateMe}
 \begin{split}
&\|(P_{q,\nu}-\omega )u\|_{H^{s}_\Lambda}^2 \\
&\geq |\langle \langle \xi\rangle^{\epsilon}|e_\nu-iq-\omega|^2\langle\xi\rangle ^{\epsilon} \langle\xi\rangle^{-\epsilon} T_\Lambda u, \langle\xi\rangle^{-\epsilon}T_\Lambda u\rangle_{\langle \xi\rangle^{-s}L^2}| \\
&\qquad -|\langle \tilde{R}_\nu\langle\xi\rangle^{\epsilon}\langle\xi\rangle^{-\epsilon}T_\Lambda u,\langle \xi\rangle^{\epsilon} \langle\xi\rangle^{-\epsilon} T_\Lambda u\rangle_{\langle \xi\rangle^{-s}L^2}|\\
&\geq |\langle \langle \xi\rangle^{\epsilon}|e_\nu-iq-\omega|^2\langle\xi\rangle ^{\epsilon} \langle\xi\rangle^{-\epsilon} T_\Lambda u, \langle\xi\rangle^{-\epsilon}T_\Lambda u\rangle_{\langle \xi\rangle^{-s}L^2}| \\
&\qquad -C_0(\|\langle \xi\rangle^{-\epsilon}T_\Lambda u\|^2_{\langle \xi\rangle^{s-\frac{1}{2}+\e} L^2(\Lambda)}+\nu\|\langle \xi\rangle^{-\epsilon}T_\Lambda u\|^2_{\langle \xi\rangle^{s+\frac{1}{2}+\e} L^2(\Lambda)}\\
&\qquad\qquad\qquad\qquad\qquad\qquad+\nu^2\|\langle \xi\rangle^{-\epsilon}T_\Lambda u\|^2_{\langle \xi\rangle^{s+\frac{3}{2}+\e} L^2(\Lambda)})
\end{split} 
\end{equation}

Let $\theta_0,\nu_0,$ and $c_1$ be as Lemma~\ref{l:positivity} and fix $\chi=\chi_{\epsilon}\in C_c^\infty(\Lambda;[0,1])$ with 
$$
\chi\equiv 1\text{ on } \langle \xi\rangle^{2\epsilon}<\frac{8\max(C_0,1)}{c_1^2\min(1,\theta^2)}.
$$
Then, let $q=M\chi_\epsilon$ for $M$ to be chosen later and $\omega_0\leq {c_1}/{2}$. On $\supp (1-\chi_\epsilon)$, 
\begin{align*}
  \langle \xi\rangle^{2\epsilon}|e_\nu-iq- \omega|^2&\geq   {\langle \xi\rangle^{2\epsilon}}(|\Re e_\nu-\Re\omega|^2+|\Im e_\nu -q-\Im \omega|^2)\\
&\geq  {\langle \xi\rangle^{2\epsilon}}(\min((c_1(1+\nu|\xi|^2)-|\Re \omega|)^2,|c_1\theta(1+\nu|\xi|^2)+\Im \omega|^2)\\
&\geq \tfrac 14 {\langle \xi\rangle^{2\epsilon}c_1^2}\min(1,\theta^2)(1+\nu|\xi|^2)^2\\
&\geq (1+ C_0)(1+\nu|\xi|^2)^2
\end{align*}

On $\chi_\epsilon \equiv 1$, we have
\begin{align*}
\langle \xi\rangle^{2\epsilon}|e_\nu-iq- \omega|^2&\geq   {\langle \xi\rangle^{2\epsilon}} (M^2-4(\omega_0\theta+| e_\nu|)^2)\geq \tfrac{1}{4}(M^2-C)
\end{align*}
for some $C>0$ independent of $\nu,\omega_0 ,\theta \in[0,1]$.
Therefore, for  $\omega_0:=\min({c_1}/{2},1)$ and 
$$
M^2:=C+4(1+C_0)\Big(1 +\frac{8\max(C_0,1)}{c_1^2\min(1,\theta^2)}\Big)^2.
$$ 
we have
$$
\inf_{\Lambda}\langle \xi\rangle^{2\epsilon}|e_\nu-iq- \omega|^2\geq ( 1+ C_0)(1+\nu|\xi|^2)^2,\qquad 0\leq \nu\leq 1.
$$
In particular, using this in~\eqref{e:estimateMe} yields
\begin{equation}
\label{e:squid}
\|u\|_{H^{s-\epsilon}_\Lambda}\leq \|(P_{q,\nu}-\omega)u\|_{H^{s}_{\Lambda}}.
\end{equation}
As in the proofs of Lemma~\ref{l:Fred1}, an identical argument using 
$\|(P_{q,\nu}^*-\bar{\omega})u\|^2_{H^{s}_{\Lambda}}$ 
implies 
\begin{equation}
\label{e:octopus}
\|u\|_{H^{s-\epsilon}_\Lambda}\leq \|(P_{q,\nu}^*-\bar{\omega})u\|_{H^{s}_{\Lambda}}.
\end{equation}
Since, $P_{0,\nu}-P_{q,\nu}:H^{s+2}_{\Lambda}\to H^{N}_{\Lambda}$ for any $N$, $P_{q,\nu}-\omega$ is a Fredholm operator. In particular,~\eqref{e:squid} and~\eqref{e:octopus} imply that $\mc{R}_{q,\nu}(\omega)$ exists and satisfies the requisite bounds.
\end{proof}

\subsection{Convergence of the poles of $\mc{R}_{0,\nu}(\omega)$}
We {now finish the proof of Theorem~\ref{t:viscoscity}.}
\begin{proof}[Proof of Theorem~\ref{t:viscoscity}]
First, observe that by Lemma~\ref{l:parametrix} for $\omega\in (-\omega_0,\omega_0)+i(-\omega_0\theta,\infty)$, and $\nu\in [0,\nu_0]$ the inverse $\mc{R}_{q,\nu}:H^s_{\Lambda}\to H^{s}_{\Lambda}$ exists and satisfies
$$
(\Id+i\mc{R}_{q,\nu}(\omega)Q)=\mc{R}_{q,\nu}(\omega)(P_{0,\nu}-\omega).
$$
Moreover, by Lemmas~\ref{l:Fred1} and~\ref{l:fred3}, there is $C_\nu>0$ such that for $\omega \in (-\omega_0,\omega_0) +i(C_\nu,\infty)$, $\mc{R}_{0,\nu}(\omega):H^s_\Lambda\to H^s_{\Lambda}$ exists. Therefore, for $\omega$ in this region, the inverse
$$
(\Id +iR_{q,\nu}(\omega)Q)^{-1}=\mc{R}_{0,\nu}(\omega)(P_{q,\nu}-\omega):H^s_{\Lambda}\to H^s_{\Lambda}
$$
exists. 

Now, for any $N>0$, $Q:H^s_\Lambda\to H^{s+N+\epsilon}_{\Lambda}$ and $\mc{R}_{q,\nu}(\omega):H^{s+N+\epsilon}_{\Lambda}\to H^{s+N}_{\Lambda}$, with uniform bounds in $\nu\geq 0$. Therefore, Lemma~\ref{l:traceDeformed} implies that for any $s$
$$
\mc{R}_{q,\nu}(\omega)Q:H^s_{\Lambda}\to H^s_{\Lambda},
$$
is trace class with uniformly bounded trace class norm. In particular, for $\omega\in (-\omega_0,\omega_0)+i(-\omega_0\theta,\infty)$ the operator
$$
\Id+i\mc{R}_{q,\nu}(\omega)Q:H^s_{\Lambda}\to H^s_{\Lambda},
$$
is Fredholm with index 0. Thus, by the meromorphic version of
Fredholm analyticity (see for instance \cite[Theorem C.10]{dizzy}) 
$$
(\Id+i\mc{R}_{q,\nu}(\omega)Q)^{-1}:H^s_{\Lambda}\to H^s_{\Lambda}
$$
is a meromorphic family of operators satisfying
\begin{equation}
\label{e:resolve}
\mc{R}_{0,\nu}(\omega)=(\Id+i\mc{R}_{q,\nu}(\omega)Q)^{-1}\mc{R}_{q,\nu}(\omega).
\end{equation}
For $ q $ chosen in Lemma \ref{l:parametrix}, $R_{q,\nu}(\omega)$ is analytic in $(-\omega_0,\omega_0)+i(-\omega_0\theta,\infty)$. Hence, the eigenvalues of $P_{0,\nu}$ on $H^0_{\Lambda}$ agree, with multiplicity, with the zeroes of 
$$
f_{\nu}(\omega):={\det}_{H_{\Lambda}^0}(\Id+i\mc{R}_{q,\nu}(\omega)Q).
$$

\begin{lemm}
\label{l:converge}
We have 
$$ f_\nu(\omega)\underset{\nu\to 0}\longrightarrow f_0(\omega) $$ 
uniformly on compact subsets of  $\omega \in (-\omega_0,\omega_0)+i(-\omega_0\theta,\infty)$.
\end{lemm}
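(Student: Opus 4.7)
The plan is to reduce the claimed convergence to a trace-norm convergence statement $\mathcal{R}_{q,\nu}(\omega)Q \to \mathcal{R}_{q,0}(\omega)Q$ in $\mathcal{L}^1(H^0_\Lambda)$ and then invoke the standard Lipschitz continuity of Fredholm determinants with respect to the trace norm. The key input, already established, is that for all $\omega$ in the region of analyticity the operators $\mathcal{R}_{q,\nu}(\omega)$ satisfy the uniform bound $\|\mathcal{R}_{q,\nu}(\omega)\|_{H^s_\Lambda \to H^{s-\epsilon}_\Lambda} \leq 1$ from Lemma~\ref{l:parametrix}, and that $Q = S_\Lambda \Pi_\Lambda q \Pi_\Lambda T_\Lambda$ with $q \in C_c^\infty(\Lambda)$ is smoothing of all orders, i.e.\ $Q : H^s_\Lambda \to H^t_\Lambda$ is bounded for every $s,t$.

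First, I would apply the second resolvent identity inside the space $H^{s-\epsilon}_\Lambda$:
\[
\mathcal{R}_{q,\nu}(\omega) - \mathcal{R}_{q,0}(\omega) = -i\nu\, \mathcal{R}_{q,\nu}(\omega)\, \Delta\, \mathcal{R}_{q,0}(\omega),
\]
valid as an identity of operators once one checks (using Lemma~\ref{l:parametrix} and the mapping $\Delta : H^{t}_\Lambda \to H^{t-2}_\Lambda$) that each factor acts between appropriate deformed Sobolev spaces. Composing on the right with $Q$ and using the smoothing property of $Q$, one obtains, for any $N$,
\[
\bigl(\mathcal{R}_{q,\nu}(\omega) - \mathcal{R}_{q,0}(\omega)\bigr) Q
= -i\nu\, \mathcal{R}_{q,\nu}(\omega)\, \Delta\, \mathcal{R}_{q,0}(\omega)\, Q
: H^0_\Lambda \longrightarrow H^{N}_\Lambda,
\]
with operator norm bounded uniformly in $\nu \in [0, \nu_0]$ and in $\omega$ on compact subsets of $(-\omega_0,\omega_0) + i(-\omega_0\theta, \infty)$ (the factors $\mathcal R_{q,\nu}$ cost only $\epsilon$ of regularity each, $\Delta$ costs $2$, while $Q$ supplies arbitrary smoothing).

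Next, I would choose $N > 3n$ and invoke Lemma~\ref{l:traceDeformed} to conclude that the inclusion $H^N_\Lambda \hookrightarrow H^0_\Lambda$ is of trace class, so
\[
\bigl\| \bigl(\mathcal{R}_{q,\nu}(\omega) - \mathcal{R}_{q,0}(\omega)\bigr) Q \bigr\|_{\mathcal{L}^1(H^0_\Lambda)}
\leq C\nu,
\]
uniformly for $\omega$ in compact subsets. The same smoothing-plus-Lemma~\ref{l:traceDeformed} argument (without the prefactor $\nu$) shows that $\mathcal{R}_{q,\nu}(\omega)Q$ is itself uniformly bounded in $\mathcal{L}^1(H^0_\Lambda)$ on such compact sets.

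Finally, the determinant inequality
\[
\bigl|\det(\Id+A) - \det(\Id+B)\bigr|
\leq \|A-B\|_{\mathcal{L}^1}\, \exp\!\bigl(1+\max(\|A\|_{\mathcal{L}^1}, \|B\|_{\mathcal{L}^1})\bigr),
\]
(see e.g.\ \cite[Appendix B]{dizzy}), applied with $A = i\mathcal{R}_{q,\nu}(\omega)Q$ and $B = i\mathcal{R}_{q,0}(\omega)Q$, yields $|f_\nu(\omega) - f_0(\omega)| \leq C\nu \to 0$, uniformly on compact subsets of the analyticity region. I expect no real obstacle: the only delicate point is keeping track of the (fixed) regularity loss $\epsilon$ from Lemma~\ref{l:parametrix} as one applies the resolvent identity, but this is absorbed for free because $Q$ is smoothing of every order.
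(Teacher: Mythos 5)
Your proposal is correct and follows essentially the same path as the paper's proof: the same resolvent identity $\mathcal{R}_{q,\nu}-\mathcal{R}_{q,0}=-i\nu\,\mathcal{R}_{q,\nu}\Delta\,\mathcal{R}_{q,0}$, composition with the infinitely smoothing $Q$ to gain arbitrary regularity, Lemma~\ref{l:traceDeformed} to convert this into a uniform $\mathcal{L}^1$ bound of order $\nu$, and the Lipschitz continuity of the Fredholm determinant (\cite[Proposition B.29]{dizzy}) to conclude. There is nothing to add.
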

\begin{proof}
First, note that
$$
\nu^{-1}[(\mc{R}_{q,\nu}(\omega)-\mc{R}_{q,0}(\omega))Q]=-i \mc{R}_{q,\nu}(\omega)\Delta \mc{R}_{q,0}(\omega)Q.
$$
Since $Q:H^s_{\Lambda}\to H^{s+N}_{\Lambda}$ for any $N$, and, by Lemma~\ref{l:parametrix}, $\mc{R}_{q,\nu}:H^{s}_\Lambda\to H^{s-\e}_{\Lambda}$ with uniform bounds in $\nu$, $R_{q,\nu}(\omega)\Delta R_{q,0}(\omega)Q:H^{s}_{\Lambda}\to H^{s+N}_{\Lambda}$ is uniformly bounded in $\nu$ for any $N$. In particular, Lemma~\ref{l:traceDeformed} implies
$$
\nu^{-1}\|(R_{q,\nu}(\omega)-R_{q,0}(\omega))Q\|_{\mc{L}^1(H^s_\Lambda\to H^s_{\Lambda})}\leq C.
$$
By \cite[Proposition B.29]{dizzy}
$$
|{\det}_{H_{\Lambda}^s} (I+A)-{\det}_{H_\Lambda^s} (I+B)|\leq \|A-B\|_{\mc{L}^1(H^s_\Lambda\to H^s_{\Lambda})}e^{1+\|A\|_{\mc{L}^1(H^s_\Lambda\to H^s_{\Lambda})}+\|B\|_{\mc{L}^1(H^s_\Lambda\to H^s_{\Lambda})}}.
$$
Therefore, since $\mc{R}_{q,\nu}Q:H^s_{\Lambda}\to H^N_{\Lambda}$ is uniformly bounded in $\nu$ for any $N$, the lemma is proved.
\end{proof}

Finally, we show that the eigenvalues of $P_{0,\nu}$ on $H^s_{\Lambda}$ agree with those on $L^2$. Together with Lemma~\ref{l:converge}, this will complete the proof of Theorem~\ref{t:viscoscity}.
\begin{lemm}
Let $\nu>0$, and $G_0,G_1$ satisfy~\eqref{e:G}. Suppose that $u\in H^{s}_{\Lambda_{G_1}}$ and 
\begin{equation}
\label{e:L2Eig}
(P_{0,\nu}-\omega)^Ku=0
\end{equation}
Then $u\in H^{k}_{\Lambda_{G_0}}$ for any $k$. In particular, the spectrum of $P_{0,\nu}$ on $L^2(\mathbb{T}^n)$ agrees with that on $H^s_{\Lambda}$.
\end{lemm}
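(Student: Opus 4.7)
The key observation is that for $\nu > 0$ the operator $E := (P + i\nu\Delta - \omega)^K$ is elliptic of order $2K$ with analytic symbol: its principal symbol $(i\nu)^K\zeta^{2K}$ dominates the lower-order contributions of $p$ at infinity. The plan is therefore (i) use Lemma~\ref{l:Fred2} to obtain elliptic regularity in the scale $H^k_{\Lambda_{G_1}}$; (ii) upgrade this to analyticity, i.e., $u \in \mathscr{A}_{\delta_1}$ for some $\delta_1 > 0$; (iii) use the exponential decay of the FBI transform on analytic functions (Lemma~\ref{l:expDecay}) to conclude $u \in H^k_{\Lambda_{G_0}}$ for every $k$.

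For step~(i), the symbol $e(z,\zeta) = (p(z,\zeta) + i\nu\zeta^2 - \omega)^K$ satisfies $|e(z,\zeta)| \geq (c\nu)^K|\zeta|^{2K}$ for $|\zeta|$ large and $|\Im z|$, $|\Im\zeta|/\langle\Re\zeta\rangle$ small, so Lemma~\ref{l:Fred2} applies and yields the estimate
\[
\tfrac{1}{2}c_1\|u\|_{H^{s'}_{\Lambda_{G_1}}} \leq \|Eu\|_{H^{s'-2K}_{\Lambda_{G_1}}} + C_1(s',K,N)\|u\|_{H^{-N}_{\Lambda_{G_1}}}.
\]
Combined with $Eu = 0$ and $u \in H^s_{\Lambda_{G_1}}$, iterated in the index $s'$, this gives $u \in H^k_{\Lambda_{G_1}}$ for every $k \in \RR$. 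Step~(iii) is equally direct: once $u \in \mathscr{A}_{\delta_1}$ with $\delta_1$ large enough (depending on $G_0$), Lemma~\ref{l:expDecay} applied in $\Lambda_{G_0}$ gives $|T_{\Lambda_{G_0}} u(\alpha)| \leq e^{-c_0\delta_1|\alpha_\xi|/h}\|u\|_{\mathscr{A}_{\delta_1}}$, and this exponential decay immediately implies membership of $T_{\Lambda_{G_0}}u$ in $\langle\xi\rangle^{-k}L^2(\Lambda_{G_0})$ for every $k$.

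The main obstacle is step~(ii): upgrading rapid decay of $T_{\Lambda_{G_1}} u$ (Sobolev regularity on all scales) to exponential decay (analyticity). I would argue this by a quantitative version of the bootstrap above, tracking the dependence of $C_1(s',K,N)$ on $K$ via almost analytic extensions of $e$ in the strip of~\eqref{e:SymbolAnalyticity}; this should yield a bound of the form $\|u\|_{H^k_{\Lambda_{G_1}}} \leq C_0\, C^k k!$, which is equivalent to the existence of a holomorphic extension of $u$ to a strip $|\Im z| < 2\delta_1$, and hence to $u \in \mathscr{A}_{\delta_1}$ with $\delta_1$ depending only on $\nu$, $\omega$, and the analyticity parameters $a,b$ in~\eqref{e:SymbolAnalyticity}. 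By shrinking $\epsilon_0$ in~\eqref{e:G} at the outset, $\delta_1$ may be arranged to exceed the threshold from Lemma~\ref{l:expDecay} needed for $\Lambda_{G_0}$. A more conceptual alternative is to invoke Sj\"ostrand's analytic microlocal elliptic regularity~\cite{sam} directly. Finally, to deduce the stated agreement of spectra, specialize $G_0 \equiv 0$: then $H^m_{\Lambda_0} = H^m(\TT^n)$ is the standard Sobolev space, and applying the argument in both directions ($G_1 \to 0$ and $0 \to G_1$) identifies generalized eigenspaces of $P_{0,\nu}$ on $H^s_{\Lambda_{G_1}}$ with those on $L^2(\TT^n)$.
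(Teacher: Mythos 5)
Your plan is structurally different from the paper's proof, and its crucial step is not established.

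Step (i) is fine (the paper in fact uses~\eqref{e:ellipticEst} from Lemma~\ref{l:fred3}, which already packages the iteration, rather than re-deriving it from Lemma~\ref{l:Fred2}, but this is cosmetic). The problem is step~(ii), which you rightly flag as ``the main obstacle'' but do not close. Having $u\in\bigcap_k H^k_{\Lambda_{G_1}}$ means only that $T_{\Lambda_{G_1}}u\cdot e^{-H/h}$ decays faster than every power of $\langle\Re\alpha_\xi\rangle$; since $e^{-H/h}$ can itself be exponentially growing or decaying depending on the sign of $H$, this is far from exponential decay of $T_{\Lambda_{G_1}}u$, and the spaces $\bigcap_k H^k_{\Lambda_{G_1}}$ contain genuine hyperfunctions in $\mathscr{A}_{-\delta}$ when $G_1\neq 0$. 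Upgrading to $u\in\mathscr{A}_{\delta_1}$ is therefore a real analytic-hypoellipticity statement in the deformed FBI calculus. The $C_0C^kk!$ bound you propose would indeed give exponential decay of $T_{\Lambda_{G_1}}u\cdot e^{-H/h}$ (and thus of $T_{\Lambda_{G_1}}u$, provided the exponential rate beats $C\epsilon_0$), but establishing it requires tracking the $K$- and $s'$-dependence of the constants through the entire parametrix construction of $B_\Lambda$, the stationary-phase expansions, and the proof of Lemma~\ref{l:Fred2}; none of that is available, and the appeal to Sj\"ostrand~\cite{sam} as a ``conceptual alternative'' would also need to be adapted to the present framework. (As a small side error, in step~(iii) you want $\delta_1>0$ \emph{small} enough, not large: $\mathscr{A}_\delta\subset H^k_{\Lambda_{G_0}}$ for every $k$ already holds for any $\delta$ below the threshold in Lemmas~\ref{l:expDecay}--\ref{l:expDecay2}.)

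The paper avoids the analyticity question entirely. It interpolates between the two Lagrangians by setting $G_\e=(1-\chi(\e|\xi|))G_1+\chi(\e|\xi|)G_0$, so that $\Lambda_{G_\e}$ coincides with $\Lambda_{G_0}$ at low frequencies ($2|\xi|<\e^{-1}$) and with $\Lambda_{G_1}$ at high frequencies, and each $G_\e$ still satisfies~\eqref{e:G}. Lemma~\ref{l:changeLagrangian} (compactly supported changes of the Lagrangian only change constants in the norms) gives $u\in H^s_{\Lambda_{G_\e}}$ for each fixed $\e$. Then~\eqref{e:ellipticEst} applies on $H^k_{\Lambda_{G_\e}}$ with constants \emph{uniform in $\e$} (they depend only on the $S^1$ seminorms of $G_\e$, which are bounded independently of $\e$), yielding $\|u\|_{H^k_{\Lambda_{G_\e}}}\leq C\|u\|_{H^s_{\Lambda_{G_1}}}$ after absorbing the high-frequency piece. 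Finally Fatou's lemma and $G_\e\to G_0$ pointwise give $u\in H^k_{\Lambda_{G_0}}$. This route never proves that $u$ is analytic; it only uses the uniform ellipticity on a one-parameter family of deformations, which is both simpler and more robust than the Gevrey bootstrap you sketch.
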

\begin{proof}
Let $G_\e=(1-\chi(\e|\xi|))G_1+\chi(\e|\xi|)G_0$ with $\chi\in C_c^\infty(\mathbb{R})$, $\chi\equiv 1$ on $[-1/2,1/2]$ and $\supp \chi\subset(-1,1)$. Note that $G_\e$ satisfies~\eqref{e:G} and $\lim_{\e\to 0}G_\e=G_0$ pointwise. In particular, $G_\e=G_0$ on $2|\xi|<\e^{-1}$ and $G_\e=G_1$ on $|\xi|>\e^{-1}$.

Suppose that $u\in H^s_{\Lambda_{G_1}}$ satisfies~\eqref{e:L2Eig}. Then,
\begin{align*}
\|u\|_{H^s_{\Lambda_{G_\e}}}&=\|\langle \xi\rangle^s T_{\Lambda_{G_\e}}u\|_{L^2(\Lambda_{G_\e})}\\
&\leq \|1_{|\xi|\leq \e^{-1}}\langle \xi\rangle^k T_{\Lambda_{G_\e}}u\|_{L^2(\Lambda_{G_\e})}^2+\|1_{|\xi|>\e^{-1}}\langle \xi\rangle^s T_{\Lambda_{G_1}}u\|_{L^2(\Lambda_{G_1})}^2\\
&\leq \|1_{|\xi|\leq \e^{-1}}\langle \xi\rangle^s T_{\Lambda_{G_\e}} {S}_{\Lambda_{G_1}}T_{\Lambda_{G_1}}u\|_{L^2(\Lambda_{G_\e})}^2+\|1_{|\xi|>\e^{-1}}\langle \xi\rangle^s T_{\Lambda_{G_1}}u\|_{L^2(\Lambda_{G_1})}^2\\
&\leq C_\e\|\langle \xi\rangle^sT_{\Lambda_{G_1}}u\|_{L^2(\Lambda_{G_1})},
\end{align*}
where in the last line we use Lemma~\ref{l:changeLagrangian}.
In particular, $u\in H^s(\Lambda_{G_\e})$ for each fixed $\e>0$.  

Since $u\in H^s_{\Lambda_{G_\e}}$, we can apply~\eqref{e:ellipticEst} together with~\eqref{e:L2Eig} to obtain
$$
\|u\|_{H^{k}_{\Lambda_{G_\e}}}\leq C_{k,N,\nu}\|u\|_{H^{-N}_{\Lambda_{G_\e}}}.
$$
where $C_{s,N,\nu}$ does not depend on $\e$. Writing this on the FBI transform side, we have
\begin{align*}
\|\langle \xi\rangle^k T_{\Lambda_{G_\e}}u\|_{L^2(\Lambda_{G_\e})}& \leq C_{k,N,\nu}\|\langle \xi\rangle^{-N}T_{\Lambda_{G_\e}}u\|_{L^2(\Lambda_{G_\e})}\\
&\leq  C_{k,N,\nu}\big(\|1_{|\xi|\leq M}T_{\Lambda_{G_\e}}u\|_{L^2(\Lambda_{G_\e})}+M^{k-N}\|\langle \xi\rangle^{k}T_{\Lambda_{G_\e}}u\|_{L^2(\Lambda_{G_\e})}\big).
\end{align*}
Now, choosing $M\geq (2C_{k,N,\nu})^{\frac{1}{N-k}}$ large enough, and subtracting the last term to the left hand side, we obtain
\begin{align*}
\|\langle \xi\rangle^k T_{\Lambda_{G_\e}}u\|_{L^2(\Lambda_{G_\e})}&\leq C_{k,N,\nu}\|1_{|\xi|\leq M}T_{\Lambda_{G_\e}}u\|_{L^2(\Lambda_{G_\e})}=C_{k,N,\nu}\|1_{|\xi|\leq M}T_{\Lambda_{G_0}}u\|_{L^2(\Lambda_{G_0})}\\
&=C_{k,N,\nu}\|1_{|\xi|\leq M}T_{\Lambda_{G_0}} {S}_{\Lambda_{G_1}}T_{\Lambda_{G_1}}u\|_{L^2(\Lambda_{G_0})}\\
&\leq C_{k,s,N,\nu}\|\langle \xi\rangle^s T_{\Lambda_{G_1}}u\|_{L^2(\Lambda_{G_1})},
\end{align*}
where in the last line we apply Lemma~\ref{l:changeLagrangian}.

In particular, sending $\e\to 0^+$, we have that 
$$
\limsup_{\e \to 0}\|\langle \xi\rangle^k T_{\Lambda_{G_\e}}u\|_{L^2(\Lambda_{G_\e})}\leq C\|u\|_{H^s_{\Lambda_1}}.
$$
Finally, by Fatou's lemma together with the fact that $G_\e\to G_0$, this implies
$$
\|u\|_{H^k_{\Lambda_{G_0}}}=\|\langle \xi\rangle^k T_{\Lambda_{G_0}}u\|_{L^2(\Lambda_{G_0})}\leq C\|u\|_{H^s_{\Lambda_1}},
$$
and in particular, $u\in H^k_{\Lambda_{G_0}}$ as claimed.
\end{proof}
This completes the proof of Theorem \ref{t:viscoscity}.
\end{proof}

\subsection{Poles of the resolvent $\mc{R}_{0,0}(\omega)$ in the upper half plane}
Finally, we study the behavior of $ \mc{R}_{0,0}(\omega) = 
 \mc{R} ( \omega ) $ 
 for $\Im \omega\geq 0$ and complete the proof of Theorem~\ref{t:meromorphy}. (That resolvent was defined in 
 Lemma \ref{l:Fred1}.)
\begin{lemm}
\label{l:poleL2}
Suppose that 
$$
\omega_1\in \{|\Re\omega|<\omega_0,\,\Im \omega \geq 0\}\setminus 
{\rm{spec}}_{{\rm{pp}}, L^2} (P).$$ Then, $\mc{R} (\omega)$ is analytic near $\omega_1$. Moreover, for $s\in \mathbb{R}$, $\Im \omega> 0$ and $u\in L^2(\mathbb{T}^n)\cap H^s_\Lambda$, $\mc{R} (\omega)u=\mc{R}^{L^2}(\omega)u$ where $\mc{R}^{L^2}$ denotes the $L^2$ resolvent for $P$.

Conversely, if $\omega_1\in (-\omega_0,\omega_0)\cap \specppL(P)$, then $\omega_1$ is a pole of $\mc{R}(\omega)$ and 
$$
\mc{R}(\omega)=A(\omega)+\frac{\Pi_{\omega_1}}{\omega-\omega_1},
$$
where $A(\omega):H^s_{\Lambda}\to H^{s}_{\Lambda}$ is analytic near $\omega_1$ and $\Pi_{\omega_1}$ is the orthogonal projection onto the $L^2$ eigenspace of $P$ at $\omega_1$. 
\end{lemm}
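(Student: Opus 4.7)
The proof rests on two steps: (i) for $\Im\omega \gg 1$, identifying $\mc{R}(\omega)$ with the $L^2$-resolvent $\mc{R}^{L^2}(\omega):=(P-\omega)^{-1}$ on the dense overlap $\mathscr A_\delta \subset L^2 \cap H^s_\Lambda$, and (ii) showing that every nontrivial generalized eigenvector of $P$ on $H^s_\Lambda$ at a point $\omega_1$ with $\Im\omega_1 \geq 0$ and $|\Re\omega_1|<\omega_0$ lies in $L^2(\TT^n)$. Step (i) is easy: both operators are given on $\mathscr A_\delta$ by the Neumann series $-\sum_{k\geq 0}\omega^{-k-1}P^k$, which converges in both topologies since $P$ is bounded on $L^2$ and on $H^s_\Lambda$ (via Proposition~\ref{l:FBISide}) and $\mc{R}(\omega)$ exists on $H^s_\Lambda$ for $\Im\omega \gg 1$ by Lemma~\ref{l:Fred1}. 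Uniqueness of analytic continuation in $\omega$ then yields the pointwise identity $\mc{R}(\omega)u = \mc{R}^{L^2}(\omega)u$ on $L^2 \cap H^s_\Lambda$ for every $\omega$ with $\Im\omega > 0$ where both sides are defined, giving the ``moreover'' statement.

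The technical heart is step (ii). Suppose $u \in H^s_\Lambda \setminus \{0\}$ with $(P-\omega_1)^K u = 0$ and $\Im\omega_1 \geq 0$. Transferring to the FBI side via Proposition~\ref{l:FBISide}, $T_\Lambda u$ satisfies $(\Pi_\Lambda(e_0 - \omega_1)\Pi_\Lambda)^K T_\Lambda u \equiv 0$ modulo $\mathcal O(h^\infty)$-negligible terms, where $e_0$ has principal symbol $p|_\Lambda$. The dynamical hypothesis enters through inequalities of the same type as \eqref{e:positivity1} (taking $\nu = 0$ and using $\Im\omega_1 \geq 0$):
\[
|e_0 - \omega_1| \geq c\theta, \qquad \Im(e_0 - \omega_1) \leq -c\theta \text{ on } \{|\Re(e_0 - \omega_1)| \leq c\},
\]
valid for $\omega_1$ in a small complex neighbourhood of $0$ and $\theta \in (0,\theta_0)$. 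This dichotomy -- elliptic in modulus, or strictly dissipative in imaginary part -- is exactly what drives the estimate of Lemma~\ref{l:Fred2}; iterating that argument through the $K$ factors produces $T_\Lambda u = \mathcal O(h^\infty)\langle\xi\rangle^{-N}$ for every $N$. Via Lemma~\ref{l:changeLagrangian}, $u$ then lies in $H^k_{\Lambda_0}$ for every $k$, where $\Lambda_0 = T^*\TT^n$ is the undeformed contour; in particular $u \in L^2(\TT^n)$. A short bootstrap in $K$ (using that $P - \omega_1$ is itself bounded $L^2 \to L^2$ and that $\ker_{L^2}(P-\omega_1)$ is finite-dimensional, so has no Jordan structure beyond that of $P|_{L^2}$) promotes $u$ to an honest eigenvector, so $\omega_1 \in \specppL(P)$.

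With these two ingredients the lemma follows. If $\omega_1 \in U_+ \cap \{\Im\omega \geq 0\}$ with $\omega_1 \notin \specppL(P)$, step (ii) rules out any generalized eigenvector at $\omega_1$, and meromorphy from Lemma~\ref{l:Fred1} upgrades this to analyticity of $\mc{R}$ near $\omega_1$. Conversely, if $\omega_1 \in (-\omega_0,\omega_0)\cap\specppL(P)$ and $u \in L^2$ satisfies $(P-\omega_1)u = 0$, one reads step (ii) in reverse: $u \in L^2 \subset \mathscr A_{-\delta}$ with $(P-\omega_1)u = 0$, combined with the elliptic-regularity scheme of Lemma~\ref{l:Fred1}, places $u \in \bigcap_k H^k_\Lambda$, so $\omega_1$ is a pole of $\mc{R}(\omega)$. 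The residue is identified with the orthogonal $L^2$-projection by integrating the just-established identity $\mc{R}(\omega) = \mc{R}^{L^2}(\omega)$ along a small loop $\gamma$ around $\omega_1$ in the upper half-plane: for $v \in L^2 \cap H^s_\Lambda$,
\[
\Pi_{\omega_1} v = -\frac{1}{2\pi i}\oint_\gamma \mc{R}(\omega) v\, d\omega = -\frac{1}{2\pi i}\oint_\gamma \mc{R}^{L^2}(\omega) v\, d\omega,
\]
and the right-hand side is the standard $L^2$-residue, namely the orthogonal projection onto $\ker_{L^2}(P-\omega_1)$ (using the coincidence of $L^2$- and $H^s_\Lambda$-eigenspaces and the absence of Jordan blocks at real eigenvalues established in step (ii)). Density of $L^2 \cap H^s_\Lambda$ in $H^s_\Lambda$ completes the identification.

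The principal obstacle is step (ii): iterating a Lemma~\ref{l:Fred2}-type estimate through $K$ factors of $\Pi_\Lambda(e_0-\omega_1)\Pi_\Lambda$ while keeping the composition remainders $\mathcal O(h^\infty)$-negligible, exploiting only the dichotomy ``$|e_0 - \omega_1| \geq c\theta$ or $\Im(e_0 - \omega_1) \leq -c\theta$'' in place of outright ellipticity.
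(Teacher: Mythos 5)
Your Step (ii) has a genuine gap. You aim to show that any nontrivial generalized eigenvector $u\in H^s_\Lambda$ of $P$ at $\omega_1$ with $\Im\omega_1\geq 0$ lies in $L^2(\TT^n)$, by iterating an elliptic estimate of Lemma~\ref{l:Fred2} type (using $|e_0-\omega_1|\geq c\theta$) to conclude $T_\Lambda u\in\bigcap_k\langle\xi\rangle^{-k}L^2(\Lambda)$, and then invoking Lemma~\ref{l:changeLagrangian} to place $u$ in $\bigcap_k H^k_{\Lambda_0}$ on the undeformed contour $\Lambda_0=T^*\TT^n$. That last transfer fails: Lemma~\ref{l:changeLagrangian} only bounds $\mathbf{1}_{|\xi|\leq M}T_{\Lambda_{G_0}}S_{\Lambda_{G_1}}$, i.e.\ it handles data supported in a bounded $\xi$-region; globally in $\xi$ the change of Lagrangian carries the weight difference $e^{(H_{G_1}(\beta)-H_{G_0}(\alpha))/h}$, which can produce exponential growth and prevents transferring rapid decay from $\Lambda_{G_1}$ to $\Lambda_0$. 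As a sanity check, the same uniform lower bound $|e_0-\omega_1|\geq c'\theta$ persists for $\Im\omega_1$ slightly negative (this is precisely the region of meromorphy in Lemma~\ref{l:Fred1}), so if your argument were correct it would show that resonant states with $\Im\omega_1<0$ also lie in $L^2(\TT^n)$, which is false. The paper's analogous regularity lemma (the one immediately preceding Lemma~\ref{l:poleL2}) is proved only for $\nu>0$, exactly because $i\nu\Delta$ supplies genuine second-order ellipticity with $\nu$-dependent constants that is absent at $\nu=0$.

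The residue identification also does not go through: for real $\omega_1$ there is no small loop around $\omega_1$ contained in the open upper half-plane, and the identity $\mc{R}(\omega)v=\mc{R}^{L^2}(\omega)v$ is only established for $\Im\omega>0$ (it cannot hold for $\Im\omega<0$, since $\mc{R}^{L^2}$ is analytic there while $\mc{R}$ may have poles). The paper instead uses the limiting absorption principle from~\cite[Lemma 3.3]{DyZw19} together with~\cite[Lemma 3.2]{DyZw19}: writing $\widetilde P=P+\Pi_{\omega_1}$ it expands
$\mc{R}^{L^2}(\omega)=(\widetilde P-\omega)^{-1}+\Pi_{\omega_1}/\big((\omega_1-\omega)(1+\omega_1-\omega)\big)$
for $\Im\omega>0$ near $\omega_1$, matches this against the Laurent expansion $\mc{R}(\omega)=A(\omega)+\sum_{j=1}^K B_j(\omega-\omega_1)^{-j}$ by testing on $u\in\mathscr{A}_\delta\subset H^s_\Lambda\cap H^{1/2+0}$, and lets $\omega=\omega_1+ir\to\omega_1$ from above; the LAP for $\widetilde P$ ensures the limit exists and forces $B_K=\delta_{K1}\Pi_{\omega_1}$, hence $B_1=\Pi_{\omega_1}$. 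Your Step (i) is essentially the paper's argument (Lemma~\ref{l:B5} plays the role of the Neumann series convergence on $\mathscr{A}_\delta$), but the LAP input that distinguishes $\{\Im\omega\geq 0\}$ is missing from your proposal, and without it the conclusion cannot be reached.
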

\begin{proof}
For $\Im \omega>0$, the spectral theorem shows that the resolvent 
of $ P $, 
\[ \mc{R}^{L^2}(\omega):= ( P - \omega)^{-1} : L^2(\mathbb{T}^n)\to L^2(\mathbb{T}^n) \]
 exists and is analytic. Also, Lemma ~\ref{l:Fred1} 
shows that $ \mc R ( \omega ) = \mc R_{0,0} ( \omega) $ 
is a meromorphic family of operators in $(-\omega_0,\omega_0)+i(-c_0 \theta ,\infty)$.  

Lemma \ref{l:B5} implies that  for $ u\in \mathscr A_{\delta} $ 
(defined in~\eqref{e:analytic})
and $\Im \omega \gg 1$,  $ R^{L^2} ( \omega ) u\in \mathscr {A}_\delta$
Since $ P : H_\Lambda \to H_\Lambda $ and $ \mathscr A_\delta \subset H_\Lambda $, 
$ \mathcal R (\omega )  ( P - \omega ) |_{\mathscr A_\delta}=\Id_{\mathscr A_\delta} $.  Therefore, for $ u \in \mathscr A_\delta $,
\[ \mc R^{L^2} ( \omega ) u= \left[ \mc R ( \omega ) ( P - \omega ) \right] \mc R^{L^2} u = \mc R ( \omega ) \left[ ( P - \omega ) 
\mc R^{L^2} ( \omega ) \right] u  = \mc R ( \omega ) u .\]
Since $\mathscr{A}_\delta$  are dense in both $L^2(\mathbb{T}^n)$ and $H^s_\Lambda$, 
$$
\mc{R}(\omega)u=\mc{R}^{L^2}(\omega)u, 
\ \ \Im \omega>0, \ \ \Re\omega \in(-\omega_0,\omega_0), \ \ 
u\in L^2(\mathbb{T}^n)\cap H^s_{\Lambda}.
$$
This proves the first part of the lemma. 

To prove the secod part, let $\omega_1\in (-\omega_0,\omega_0)$ and $\Pi_{\omega_1}:L^2(\mathbb{T}^n)\to L^2(\mathbb{T}^n)$ be the orthogonal projection onto the $\omega_1$ eigenspace for $P$ (possibly the zero operator if $\omega_1$ is not an embedded eigenvalue for $P$). By~\cite[Lemma 3.2]{DyZw19}, the $\omega_1$ eigenfunctions of $P$ are smooth and hence $\widetilde{P}=P+\Pi_{\omega_1}$ has the same symbol as $P$ and no embedded eigenvalue at $\omega_1$.  Moreover, we may choose $\e>0$ so small that $\omega_1$ is the only embedded eigenvalue for $P$ in $|\omega-\omega_1|<\e$. Then, for $0<|\omega-\omega_1|<\e$, $\Im \omega> 0$,
\begin{align*}
\mc{R}^{L^2} (\omega)&= (\widetilde{P}-\omega)^{-1}+(P-\omega)^{-1}\Pi_{\omega_1}(\widetilde{P}-\omega)^{-1}\\
&= (\widetilde{P}-\omega)^{-1}+\frac{\Pi_{\omega_1}}{(\omega_1-\omega)(1+\omega_1-\omega)}.
\end{align*}
Note that by~\cite[Lemma 3.3]{DyZw19} for $\omega\in(\omega_1-\e,\omega_1+\e)$, the limiting absorption resolvent $(\widetilde{P}-\omega-i0)^{-1}:H^{1/2+0}\to H^{-1/2-0}$ exists. 

The meromorphy of $\mc{R}(\omega):H^s_\Lambda\to H^s_{\Lambda}$ (Lemma 
\ref{l:Fred1}) gives
\begin{equation}
\label{e:meroPole}
\mc{R}(\omega)=A(\omega)+\sum_{j=1}^K\frac{B_j}{(\omega-\omega_1)^j}
\end{equation}
where $A:H^{s}_{\Lambda}\to H^{s}_{\Lambda}$ is holomorphic near $\omega_1$. 
Therefore, for $|\omega-\omega_1|<\e$, $\Im \omega>0$, and $u\in L^2(\mathbb{T}^n)\cap H^s_{\Lambda}$,
\begin{equation}
\label{e:poles}
\begin{split}
&(\omega-\omega_1)^K A(\omega)u+\sum_{j=1}^{K}(\omega-\omega_1)^{K-j}B_ju\\
&\qquad=(\omega-\omega_1)^K\Big[(\widetilde{P}-\omega)^{-1}-\frac{\Pi_{\omega_1}}{1+\omega_1-\omega}\Big]u -(\omega-\omega_1)^{K-1}\Pi_{\omega_1}u.
\end{split}
\end{equation}

Let $u\in \mathscr{A}_\delta\subset H_{\Lambda}^s\cap H^{\frac{1}{2}+0}$. Then, using~\eqref{e:poles} with $\omega=\omega_1+ir$, we obtain 
\begin{align*} 
B_Ku&=\lim_{r\to 0^+}[(ir)^KA(\omega_1+ir)u+\sum_{j=1}^K (ir)^{K-j}B_ju]\\
&=\lim_{r \to 0^+}(ir)^K\Big[(\widetilde{P}-\omega_1-ir)^{-1}-\frac{\Pi_{\omega_1}}{1-ir}\Big]u-(ir)^{K-1}\Pi_{\omega_1}u\\
&=\delta_{K1}\Pi_{\omega_1}u
\end{align*}
Since $\mathscr{A}_{\delta}$ is dense in both $H^{\frac{1}{2}+0}$ and $H^s_{\Lambda}$, $B_K=\delta_{K1}\Pi_{\omega_1}$. In particular, we may write~\eqref{e:meroPole} with $K=1$ and by the same argument obtain $B_1=\Pi_{\omega_1}$. 
\end{proof}

\appendix
\section{Review of some almost analytic constructions}

%

Here we include some facts about almost analytic functions and 
manifolds. For an in-depth presentation see \cite[\S 1-3]{mess} 
and \cite[Chapter X]{tre}.

\subsection{Almost analytic manifolds} Let $ U $ be an open subset of $ \CC^m $ and let $ U_\RR := U \cap \RR^m $.
We define an almost analytic function as follows:
\[  f \in C^{\rm{aa}} ( U ) \ \Longleftrightarrow \ 
\partial_{\bar z } f ( z ) = \mathcal O_K ( | \Im z |^\infty ) , \ \ 
z \in K \Subset U . \]
This definition is non-trivial only for $ U_\RR \neq \emptyset $.
We write $ f \sim 0 $ in $ U $ if $ f ( z )  = \mathcal O_K ( | \Im z |^\infty ) $, $ z \in K \Subset U \subset \CC^m $.
We note that (see \cite[Lemma X.2.2]{tre}) that for $ f \in \CI $ that
implies $ \partial^\alpha f \sim 0 $ in $ U $.

We also need the notion of an almost analytic manifold. Let $\Lambda\subset \mathbb{C}^m$ be a smooth manifold and $\Lambda_\RR:=\Lambda\cap \RR^m$. We say that $\Lambda$ is almost analytic if near any point $z_0\in \Lambda_\RR$, there exist a neigbourhood $U$ of $z_0$ in $\CC^m$ and  functions $f_1,\dots,f_k\in  \CI (\CC^m)$ such that: 
\begin{gather*} \Lambda \cap U =\{ z : f_j ( z ) = 0 , 1 \leq j \leq k \}, \ \ \partial_z f_j (z_0) \ \text{ are linearly independent,} \\
|\partial_{\bar z} f_j ( z ) | = \mathcal 
O ( | \Im z |^\infty + | \sup_{1 \leq \ell\leq k} f_\ell ( z ) |^\infty ),
\end{gather*}
see \cite[Theorem 1.4]{mess}.

A special case is given by 
\begin{equation}
\label{eq:fjhj} 
 f_j ( z ) = z_j - h_j (z') ,  \ \ \ z' := ( z_{k+1} , \cdots, z_n ) . 
\end{equation} 

Equivalence of two almost analytic manifolds can be defined as follows (see \cite[Definition 1.6, Proposition 1.7]{mess}): suppose $ \Lambda_1 \cap \RR^m = 
\Lambda_2 \cap \RR^m $ and that $ \Lambda_k $ is defined by 
\eqref{eq:fjhj} with $ h = h_{k} $, $ k = 1, 2$, respectively. Then 
\[    \text{$ \Lambda_1 $ and $ \Lambda_2 $ are equivalent as almost analytic submanifolds (denoted $\Lambda_1\sim \Lambda_2$)} \]
if and only if, on compact subsets,
\begin{equation}
\label{eq:h1h2} 
\begin{gathered}
| h_1  ( z') - h_2 ( z' ) | = \mathcal O ( |\Im h_1 ( z' ) |^\infty ) 
\\
{\text{or, equivalently,}}
\\
| h_1  ( z') - h_2 ( z' ) | = \mathcal O ( | \Im z'|^\infty + |\Im h_1 ( z' ) |^\infty ) .
\end{gathered}
\end{equation}

We now consider {\em almost analytic vector fields}:
\[ V = \sum_{ j=1}^m a_j ( z ) \partial_{z_j} , \ \ a_j \in C^{\rm{aa}} ( \CC^n ) , \]
which we identify with {\em real vector fields} $ \widehat V $
such that for $ u $ holomorphic $ \widehat V f = V $:
\[ \begin{split} 
\widehat V & :=  V + \bar V = 2 \Re V \\
& = \sum_{ j=1}^m  \Re a_j ( z ) ( \partial_{z_j} +
\partial_{\bar z_j } ) + i \Im a_j ( z ) ( \partial_{z_j} -
\partial_{\bar z_j } )  \\
& = \sum_{ j=1}^m  \Re a_j ( z ) \partial_{\Re z_j} +  \Im a_j ( z )  \partial_{\Im z_j} . \end{split} \]

\noindent
{\bf Example.} Suppose $ M \subset \CC^m $, $ \dim_{\RR} M = 2k $ is almost analytic. Then 
vector fields tangent to $ M $ are spanned by almost analytic vector fields, $ V_j = a_j ( z) \cdot \partial_z $, $ \partial_{\bar z }
a_j ( z )  = \mathcal O ( |\Im z |^\infty )$, $ z \in M $, $ j = 1, 
\cdots k $. In fact, using \cite[Theorem 1.{4}, 3$^{\circ}$]{mess} 
we can write  {$M$ locally near any $z\in M\cap \RR^m$} as $ \{ ( z' , h ( z' ) ) : z' \in 
\CC^k \} $, $ h = (h_{k+1}, \cdots , h_m ) : \CC^k \to \CC^{m-k} $, $ \partial_{\bar z } h =
\mathcal O ( |\Im z'|^\infty + |\Im h ( z' ) |^\infty ) $. We then 
put
\begin{equation}
\label{eq:Vj}  V_j = \partial_{z_j} + \sum_{ \ell = k+1}^m \partial_{z_j} h_\ell (z') \partial_{z_\ell} .
\end{equation}
The real vector fields $ \widehat V_j $ then span the vector fields tangent to $M $. \qed

\medskip

Following \cite{mess} and \cite{Sj74} we define the (small complex time)
flow of $ V $ as follows for $ s \in \CC $, 
$ |s | \leq \delta $ 
\begin{equation} 
\label{eq:defesV}  \Phi_s  (z ) := \exp { \widehat {sV } } ( z ) .
\end{equation}
The right hand side is the flow out at time $ 1 $ of the real 
vector field $ \widehat{sV} $. Unless the coefficients in $ V $ are
holomorphic $ [ \widehat V, \widehat { i V } ] \neq 0 $ which means
that $ \exp ( s + t ) V \neq \exp sV \exp t V $ for $ s, t \in \CC $.
However, we still have $ [ \widehat { i V } , \widehat V ] \sim 0 $.

\begin{lemm}
\label{l:flow1}
Suppose that $ \Gamma \in \CC^m $ is an embedded almost 
analytic submanifold {of real dimension $2k$} and $ V $ is an almost analytic vector field. 
Assume that, 
\begin{equation}
\label{eq:ViVin} \text{ $ \widehat V $, $ \widehat {iV } $ are linearly independent with span transversal to $ \Gamma $,} \end{equation}
and that, 
in the notation of \eqref{eq:defesV},
\begin{equation}
\label{eq:assimP}
| \Im \Phi_t ( z ) | \geq |t|/C_K , \ \ z \in K \Subset \Gamma . 
\end{equation}
Then 
for  any $ U \Subset \CC^m $, there exists $ \delta $ such that 
\[ \Lambda  := \left\{ \exp \widehat {t V} ( \rho ) 
: \rho \in \Gamma \cap U  , \ \ |t| < \delta  , \ t \in \CC \right\}  \]
is an almost analytic manifold, $ \Lambda_\RR = \Gamma_\RR $ and  
$ \dim_{\Re} \Lambda = 2k +2$. \end{lemm}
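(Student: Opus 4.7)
The plan is to parametrize $\Lambda$ by the smooth map $F\colon \{|t|<\delta\} \times (\Gamma \cap U) \to \CC^m$, $F(t,\rho) := \Phi_t(\rho) = \exp \widehat{tV}(\rho)$, to show it is an embedding onto a $(2k+2)$-dimensional submanifold, and then to verify that its image can be cut out locally by almost analytic equations near each real point.

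The smooth structure and the identification $\Lambda_\RR = \Gamma_\RR$ are handled quickly. At $(0,\rho_0)$ with $\rho_0 \in \Gamma$, $dF$ restricts to the identity on $T_{\rho_0}\Gamma$ and sends the two real coordinates on $\CC \cong \RR^2$ to $\widehat V(\rho_0)$ and $\widehat{iV}(\rho_0)$; by \eqref{eq:ViVin} the resulting $2+2k$ vectors are linearly independent, so $F$ is an immersion at $(0,\rho_0)$, and by continuity on the whole domain once $\delta$ is small enough. Shrinking $\delta$ further, $F$ becomes injective: coincident images with $t_1 = t_2$ contradict the transversality of the $(\widehat V,\widehat{iV})$-span to $\Gamma$, while $t_1 \neq t_2$ is ruled out by taking imaginary parts and applying \eqref{eq:assimP} to the curve $s \mapsto \Phi_s(\rho_1)$. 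Hence $\Lambda$ is an embedded submanifold of real dimension $2k+2$. For $\rho \in \Gamma_\RR$ one has $F(0,\rho) = \rho \in \RR^m$, so $\Gamma_\RR \subset \Lambda_\RR$; conversely $F(t,\rho) \in \RR^m$ forces $|t|/C_K \leq |\Im F(t,\rho)| = 0$ by \eqref{eq:assimP}, hence $t=0$ and $F(t,\rho) = \rho \in \Gamma_\RR$.

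For almost analyticity, near any $p_0 \in \Gamma_\RR$ I would use the almost analytic graph representation of $\Gamma$ provided by \cite[Theorem 1.4]{mess}, say $\Gamma = \{(w',h(w')) : w' \in \CC^k\}$ in the sense of \eqref{eq:fjhj} with $\bar\partial h \sim 0$, and consider the composed parametrization
\[
\tilde F(w',t) := \Phi_t(w',h(w')) \colon \CC^{k+1} \to \CC^m.
\]
The central claim is that $\tilde F$ is almost holomorphic, i.e.
\[
\bar\partial_{(w',t)} \tilde F(w',t) = \mathcal O\bigl(|\Im(w',t)|^\infty\bigr)
\]
near $\RR^{k+1}$. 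Granted this, $\tilde F$ is an almost holomorphic immersion onto $\Lambda$, and after projecting onto a suitable $(k+1)$-tuple of the ambient coordinates, the almost analytic implicit function theorem gives $\Lambda$ as an almost analytic graph of the form \eqref{eq:fjhj}, which is the required defining description.

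The hard step is the almost holomorphy of $\tilde F$. The $\bar\partial_{w'}$-part follows from the chain rule together with $\bar\partial h \sim 0$ and the almost analyticity of $\rho \mapsto \Phi_t(\rho)$, the latter being a standard consequence of $V$ having almost analytic coefficients. The delicate point is $\partial_{\bar t} \Phi_t \sim 0$ near real $t$. If $V$ were genuinely holomorphic, the real vector fields $\widehat V$ and $\widehat{iV}$ would commute and $\Phi_t$ would be holomorphic in $t$; for merely almost analytic $V$ one only has $[\widehat V,\widehat{iV}] = -2i\,\widehat{[V,\bar V]} \sim 0$, using that the coefficients of $V$ are almost analytic so that $[V,\bar V] \sim 0$. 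The plan is to differentiate the defining ODE $\partial_t \Phi_t = \widehat V(\Phi_t)$ in $\bar t$ and to iterate: at each order the resulting error is driven by a commutator of $\widehat V$ and $\widehat{iV}$ evaluated along the flow, which is $\mathcal O(|\Im|^\infty)$ by almost analyticity. A straightforward induction then yields $\partial_{\bar t} \Phi_t(z) = \mathcal O(|\Im t|^\infty + |\Im z|^\infty)$, which combined with the $\rho$-regularity gives the stated almost holomorphy. This is the complex-time flow statement from \cite[\S 2]{Sj74} and \cite[\S 1]{mess}, and is the only genuine analytic ingredient beyond bookkeeping.
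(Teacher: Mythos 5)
Your overall strategy is aligned with the paper's: both proofs hinge on showing that the $\bar t$-derivative of $\Phi_t$ is negligible, and both use the transversality hypothesis for the dimension count and \eqref{eq:assimP} for the identification $\Lambda_\RR = \Gamma_\RR$. But there are two concrete defects in the almost-analyticity part that need to be repaired.

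First, the estimate you assert, $\partial_{\bar t}\Phi_t(z) = \mathcal O(|\Im t|^\infty + |\Im z|^\infty)$, is false: taking $t$ real and nonzero with $z$ real, it would force $\partial_{\bar t}\Phi_t(z)=0$, which only holds when $V$ is genuinely holomorphic. The Taylor expansion of $\partial_{\bar t}\Phi_t$ at $t=0$ (this is what the paper computes in \eqref{eq:partf} using \eqref{eq:pullbyexp} and $[\widehat V,\widehat{iV}]\sim 0$) gives vanishing to infinite order \emph{in $t$ at $t=0$}, i.e. $\mathcal O(|t|^\infty + |\Im z|^\infty)$. This is much weaker than holomorphy in $t$, and it is precisely why assumption \eqref{eq:assimP} is indispensable in the almost-analyticity argument and not only in the identification $\Lambda_\RR=\Gamma_\RR$: since $|t| \le C|\Im\Phi_t(z)|$ and $|\Im z|\le C|\Im\Phi_t(z)|$, the estimate $\mathcal O(|t|^\infty + |\Im z|^\infty)$ becomes $\mathcal O(|\Im\Phi_t(z)|^\infty)$, which is the form demanded by the Melin--Sj\"ostrand criterion. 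Your write-up omits this conversion entirely.

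Second, you assert that $\rho\mapsto\Phi_t(\rho)$ is almost analytic ``as a standard consequence of $V$ having almost analytic coefficients'' and then invoke a chain rule. The paper does not argue this way. Instead it picks vector fields $Y_c = \widehat{W_c}$, constant complex linear combinations of the almost analytic fields $V_j$ in \eqref{eq:Vj} that are tangent to $\Gamma$, and shows via the pushforward expansion \eqref{eq:pushbyexp} that $c\mapsto(\Phi_t)_*Y_c(\Phi_t(z))$ is $\CC$-linear up to $\mathcal O(|\Im\Phi_t(z)|^\infty)$; this, combined with \eqref{eq:bartphi}, gives $d(T_{\Phi_t(z)}\Lambda, iT_{\Phi_t(z)}\Lambda) = \mathcal O(|\Im\Phi_t(z)|^\infty)$, and the conclusion then follows from the tangent-space criterion \cite[Theorem 1.4, 1$^\circ$]{mess}. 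Your alternative route through an ``almost analytic implicit function theorem'' applied to the composed map $\tilde F$ could in principle be made to work (MS do have the graph characterization), but as stated it pushes the difficulty into an unproved assertion about the flow's $\rho$-regularity; the structural point that makes the estimate work out is the same combinator-of-expansions argument you would need to carry out explicitly, so nothing is gained and precision is lost. The injectivity discussion is also incomplete (ruling out $\Phi_{t_1}(\rho_1)=\Phi_{t_2}(\rho_2)$ for $t_1\neq t_2$ does not follow by ``taking imaginary parts'' alone); the paper simply verifies that the differential of the map $(t,\rho)\mapsto\Phi_t(\rho)$ is injective and invokes the implicit function theorem.
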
 

We will use the following geometric lemma:
\begin{lemm}
\label{l:geof}
Suppose $ Z_j \in \CI ( \RR^m ; T^* \RR^m ) $, $ j = 1, \cdots , J $, are smooth vector fields
and, for $ s \in \RR^J $, 
\[ \langle s, Z \rangle := \sum_{j=1}^J s_j Z_j \in \CI ( \RR^m ; T^* \RR^m ) .\]
Then for $ f \in \CI ( \RR^m ) $
\begin{equation}
\label{eq:pullbyexp}
f ( e^{ \langle s , Z \rangle} ( \rho ) ) =
\sum_{ p=1}^{P} \frac{1}{p!} ( \langle s, Z \rangle)^k f ( \rho ) + 
\mathcal O_K ( |s|^{P+1} ), \ \ \rho \in K \Subset \RR^m . 
\end{equation}
while for $ Y \in \CI ( \RR^m ; T^* \RR^m ) $, 
\begin{equation}
\label{eq:pushbyexp}
e^{ \langle s , Z \rangle }_* Y ( \rho ) = 
\sum_{p=1}
^{P} \frac{1}{p!} \ad_{ \langle s, Z \rangle}^k Y ( \rho ) + 
\mathcal O_K ( |s|^{P+1} ), \ \ \rho \in K \Subset \RR^m .
\end{equation}
\end{lemm}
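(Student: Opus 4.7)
The plan is to prove both formulas as instances of the one-variable Taylor expansion with remainder applied along the flow of $\langle s, Z\rangle$, treating $s$ as a parameter. Both identities are local, so I may reduce to $\rho$ varying in a fixed compact neighborhood where the flow $\phi_s^Z := e^{\langle s, Z\rangle}$ is defined and smooth for $|s|$ sufficiently small.

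For \eqref{eq:pullbyexp}, I would consider $g(t) := f(e^{t \langle s, Z\rangle}(\rho))$ as a smooth function of $t \in [0,1]$, with $s$ fixed. Direct differentiation of the flow equation $\frac{d}{dt}e^{t\langle s,Z\rangle}(\rho) = \langle s, Z\rangle (e^{t\langle s,Z\rangle}(\rho))$ combined with the chain rule shows, by induction on $p$, that
\[ g^{(p)}(t) = \bigl(\langle s, Z\rangle^p f\bigr)\bigl(e^{t\langle s, Z\rangle}(\rho)\bigr), \]
and hence $g^{(p)}(0) = \langle s, Z\rangle^p f(\rho) = \mathcal O_K(|s|^p)$ since each factor of $\langle s, Z\rangle$ carries one power of $|s|$. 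Taylor's theorem with integral remainder for $g$ on $[0,1]$ then yields the expansion in \eqref{eq:pullbyexp}, with the remainder of size $\mathcal O_K(|s|^{P+1})$ coming from the bound on $g^{(P+1)}$ uniformly over $\rho \in K$ and small $s$.

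For \eqref{eq:pushbyexp}, the same idea applies to the vector field $Y_t(\rho) := (e^{t\langle s, Z\rangle})_* Y(\rho)$, using the classical Lie-derivative identity $\frac{d}{dt}Y_t = -[\langle s, Z\rangle, Y_t]$ (with the sign dictated by whichever push-forward convention is in force; the paper's $\mathrm{ad}_{\langle s, Z\rangle}^k$ absorbs this sign). Differentiating iteratively gives
\[ \frac{d^p}{dt^p}\Big|_{t=0} Y_t = (\pm \mathrm{ad}_{\langle s, Z\rangle})^p Y = \mathcal O_K(|s|^p), \]
and Taylor's theorem once again produces \eqref{eq:pushbyexp} with the claimed remainder bound. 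In each case one applies the expansion at $t = 1$, so that the prefactor $1/p!$ appears without extra powers of $t$.

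There isn't a real obstacle: both formulas are essentially the statement that the time-one map of a vector field is Taylor-expanded by iterated directional derivatives, with the (co)variant object transported along the flow. The only point requiring care is the sign/convention for $(e^{\langle s,Z\rangle})_*$ on vector fields, which is a bookkeeping matter and can be checked on the base case $p=1$ against the definition of $\mathrm{ad}_{\langle s, Z\rangle}$ used elsewhere in the paper. The smoothness of $Z_j$ on $\RR^m$ ensures that all intermediate objects are well-defined and that the $\mathcal O_K$-bounds are uniform for $\rho$ in compact sets, concluding the proof.
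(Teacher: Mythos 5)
Your proof is correct, and it is the standard argument. Note that the paper itself does not prove this lemma: it simply refers to \cite[Appendix A]{fred} for a proof, so there is no in-paper proof to compare against. Your approach — reducing both formulas to the one-variable Taylor expansion with integral remainder along the flow $t\mapsto e^{t\langle s,Z\rangle}(\rho)$, using $g^{(p)}(t)=(\langle s,Z\rangle^pf)(\phi_t(\rho))$ for the pull-back and the Lie-derivative ODE $\dot Y_t = \pm[\langle s,Z\rangle, Y_t]$ for the push-forward, and then extracting the $\mathcal O_K(|s|^{P+1})$ bound from the fact that $\langle s,Z\rangle^{P+1}$ is a degree-$(P+1)$ polynomial in $s$ with smooth coefficients bounded uniformly for $\rho$ in a compact set — is exactly what one would find in that reference. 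Two small remarks: (i) the displayed formulas \eqref{eq:pullbyexp}--\eqref{eq:pushbyexp} in the paper contain typos (the summation index $k$ should read $p$, and the sums should start at $p=0$ to include the zeroth-order term); you implicitly interpret them correctly as the full Taylor expansion; (ii) for the uniform remainder bound you need, and correctly invoke, that for $\rho\in K$ and $|s|$ small the flow $\phi_t(\rho)$, $t\in[0,1]$, stays in a slightly enlarged compact set, which is guaranteed by continuity of the flow. The sign/convention issue for $\ad$ in the push-forward formula is, as you say, pure bookkeeping and is pinned down by checking $p=1$ against the paper's convention $F_*Y(F(\rho)):=dF(\rho)Y(\rho)$.
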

For a proof see for instance \cite[Appendix A]{fred}. We recall that
$ F_* Y ( F ( \rho ) ) := dF ( \rho ) Y ( \rho ) $.


\begin{proof}[Proof of Lemma \ref{l:flow1}] 
Let $ \iota : \Gamma \hookrightarrow \CC^m $ be the inclusion
map. Then 
\[  \partial  \exp ( t_1  \widehat { V } + t_2  \, \widehat { i V } )  \circ \iota (\rho ) 
: T_{(0,\rho)} (\RR^2_t \times \Gamma ) \to T_{\rho} \CC^{m} \]
is given by 
$ ( T, X ) \mapsto T_1 \widehat{  V } + T_2 \widehat { i V } + \iota_* X , $
which, thanks  {to} our assumptions, is surjective onto a $ 2k +2 $ (real) dimensional subspace of $ T^* \CC^m $. Hence, by the implicit function theorem $ \Lambda $ is a $ 2k+2 $ dimensional embedded submanifold of $ \CC^m $. 

To fix ideas we start with the simplest case of $
\Gamma = \{ 0 \} \subset \CC^n $. In that case
$ \{ \Lambda = \{ \Phi_t ( 0 ) : t \in \CC , |t| < \delta \}$, and 
from our assumption $ |\Im \Phi_t ( 0 )| \sim |t_1 \widehat V + 
t_2 \widehat { i V } | \sim | t | $.
The tangent space is given by 
\[ T_{ \Phi_t ( 0 ) } \Lambda = \{ \partial_t \Phi_t ( 0 ) 
T + \partial_{\bar t } \Phi_t ( 0 ) \bar T : T \in \CC \}
\subset \CC^2  .\]
If we show that  
\begin{equation}
\label{eq:parttP} \partial_{\bar t }\Phi_t ( 0 ) = \mathcal O ( |t|^\infty) 
\end{equation} then  
$ d (  T_{ \Phi_t ( 0 ) } \Lambda , i  T_{ \Phi_t ( 0 ) } \Lambda ) = 
\mathcal O ( t^\infty ) $ and almost analyticity of $ \Lambda $ 
follows from \cite[Theorem 1.4, 1$^\circ$]{mess}. 
The estimate \eqref{eq:parttP} will follow from showing that for
any  
holomorphic function $ f $, $  {\partial_{t_1}^{\alpha_1}\partial_{t_2}^{\alpha_2}}\partial_{\bar t} f ( \Phi_t ( 0 ) ) |_{t= 0 } = 0 $. But this follows from 
\eqref{eq:pullbyexp} and the fact that $ [ \widehat V , \widehat { i V } ] \sim 0 $ at $ 0$. Indeed, 
\begin{equation}
\label{eq:partf}  \begin{split} 
 {\partial_{t_1}^{\alpha_1}\partial_{t_2}^{\alpha_2}} \partial_{\bar t} f (\Phi_t ( 0 ) ) |_{t=0} & = 
 {\partial_{t_1}^{\alpha_1}\partial_{t_2}^{\alpha_2}}  \partial_{\bar t } \left( \sum_{k=0}^\infty \frac{1}{k!}
\left( t_1 \widehat V + t_2 \widehat { iV } \right)^k f (0 ) \right)|_{t=0} \\
& =    {\partial_{t_1}^{\alpha_1}\partial_{t_2}^{\alpha_2}}
\left( \sum_{k=0}^\infty \frac{1}{k!}
\left( t_1 \widehat V + t_2 \widehat { iV } \right)^k 
( \widehat V + i  \widehat { i V } )f (0 ) \right)|_{t = 0 } 
\\
& = \widehat V^{\alpha_1 } \widehat { i V}^{\alpha_2} ( 
\widehat V + i \, \widehat { i V } ) f ( 0 ) = 
\widehat V^{\alpha_1 } \widehat { i V}^{\alpha_2}  ( V - V ) f ( 0 ) = 0.
\end{split}
\end{equation} 
The fact that $ \widehat V $ and $ \widehat {i V }$ commute to infinite 
order at $ 0 $ was crucial in this calculation. Holomorphy of $ f $
was used to have $ \widehat W f = W f $. 

We now move the general case. For $ z \in \Gamma $, 
$ T_{\Phi_t ( z ) } \Lambda $ is spanned by 
\begin{equation}
\label{eq:TanPht}   \partial_t \Phi_t ( z ) T + \partial_{\bar t} \Phi_t ( z ) \bar T , \ T \in \CC, \ \ 
d \Phi_t ( z) X , \ \  X \in T_z \Gamma . \end{equation}
We can repeat the calculation \eqref{eq:partf} with $ 0 $ replaced by 
$ z $ to see that, using the assumption \eqref{eq:assimP} and
the fact that $ \Im \Phi_t ( z ) = \Im z + \mathcal O ( t )  $, 
\begin{equation} 
\label{eq:bartphi} \partial_{\bar t } \Phi ( z ) = \mathcal O ( |t|^\infty + 
|\Im z |^\infty ) = \mathcal O ( |\Im \Phi_t ( z ) |^\infty ) . 
\end{equation}
To consider $ d \Phi_t ( z ) X = (\Phi_t)_* Y ( \Phi_t ( z ) )  $ we choose a vector field 
tangent to $ \Gamma $, $ Y $, $ Y_c ( z ) = X $. We choose 
\begin{equation}
\label{eq:YcWc} 
 Y_c  = \widehat 
W_c , \ \ \  W_c = \sum_{ j=1}^k c_j V_j , \ \  c \in \CC^k ,
\end{equation}  
a constant coefficient linear combination of vector fields \eqref{eq:Vj}. 
Then $ d \Phi_t ( z ) X = (\Phi_t)_* Y_c ( \Phi_t ( z ) ) $ and we 
want to show that
\begin{equation}
\label{eq:almostca}   c \mapsto (\Phi_t)_* Y_c ( \Phi_t ( z ) )  \ 
\text{ is complex linear modulo errors  
$\mathcal O ( |\Im \Phi_t ( z ) |^\infty ) $.} \end{equation}
 In view of \eqref{eq:TanPht}
that shows that $ d ( T_{ \Phi_t ( z ) } \Lambda , i T_{\Phi_t ( z ) }
\Lambda ) = 
\mathcal O ( |\Im \Phi_t ( z ) |^\infty ) $ and from 
\cite[Theorem 1.4, 1$^\circ$]{mess} we conclude that $ \Lambda $ is
almost analytic.

To establish \eqref{eq:almostca} we use \eqref{eq:pushbyexp}
with $ \langle s , X \rangle = s_1 \widehat V + s_2 \widehat{i V } $, $ s_1 = \Re t$, $ s_2 = \Im t $. Since $ [ \widehat V , 
\widehat {i V} ] \sim 0 $  and $ \widehat V \sim \widehat { i V }/i $
at $ \Im w = 0 $, we see that
\begin{equation}
\label{eq:Phit}  (\Phi_t )_* Y_c ( w ) = \sum_{ p=0}^\infty \frac {t^p} {p!} 
{\ad_{\widehat V}^p W_c } ( w ) + \mathcal O ( |t|^{K+1} +
|\Im w |^\infty ) . \end{equation}
Because of the form of $ W_c $ (see \eqref{eq:Vj} and \eqref{eq:YcWc}) 
\[ \ad_{\widehat V}^p W_c ( w) = \widehat {\ad_{V}^p W_c } ( w ) 
+ \mathcal O ( |\Im w' |^\infty + |\Im h(w')|^\infty ) , \]
and 
\[ c \mapsto  {\ad_{V}^p W_c } ( w )  \ \text{ is complex linear}. \] 
Since $ w = \Phi_t ( z ) $, $ z \in \Gamma $, 
\[ \begin{split}  | \Im w' | + | \Im h ( w') | & = \mathcal O ( |\Im z' | + | \Im h ( z' ) | + |t| ) \\
& =  
\mathcal O  ( | \Im z | + |t | ) = 
\mathcal O  ( | \Im w | + |t | ) = \mathcal O ( | \Im w| ), 
\end{split} \]
since $ |\Im  w| = |\Im \Phi_t ( z ) | \geq |t|/C $. Combining this estimates with \eqref{eq:Phit} gives \eqref{eq:almostca}.
\end{proof}

\subsection{Almost analytic generating functions}

We now recall how to obtain 
generating functions for almost analytic strictly positive Lagrangian manifolds. We recall that 
an almost analytic submanifold of $ T^* \CC^n $, $ \Lambda $,
is Lagrangian if
\[  ( \omega_\CC )|_\Lambda \sim 0 , \ \ \omega_\CC := \sum_{ j=1}^n 
d\zeta_j \wedge d z_j . \]
In addition 
we say that $ \Lambda $ is \emph{strictly positive} (\cite[Definition 3.3]{mess}) if $\Lambda_\RR$ is a submanifold of $ T^*\RR^m$ and for all $\rho\in \Lambda_\RR$,
\begin{equation}
\label{eq:strpos}
\frac 1 i \sigma(V,\bar{V})(\rho)>0, \ \ \text{for all } \  V\in T_\rho \Lambda\setminus (T_\rho\Lambda_\RR)^{\mathbb{C}}.
\end{equation}
\begin{lemm}
\label{l:pos}
Suppose that $\Lambda$ is a strictly positive almost analytic 
Lagrangian submanifold of $ \nbhd_{T^* \CC^m } ( 0 ) $ given by 
$$
\{(z,\zeta(z))\mid z\in \nbhd_{\CC^m}(0)\}.
$$
Then there is $C>0$ such that for $x\in \nbhd_{\RR^m}(0) $, 
\begin{equation} 
\label{eq:LemA3}
\frac{1}{C}d(x,\pi({\Lambda_\RR}))\leq |\Im \zeta(x)|\leq Cd(x,\pi({\Lambda_\RR})).
\end{equation}
\end{lemm}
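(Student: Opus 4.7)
Set $S := \pi(\Lambda_\RR) = \{x \in \nbhd_{\RR^m}(0) : \Im \zeta(x) = 0\}$; the strict positivity hypothesis ensures $S$ is a submanifold. The \emph{upper bound} $|\Im \zeta(x)| \leq C\, d(x, S)$ is immediate from the Lipschitz continuity of $\Im \zeta$: since $\Im \zeta$ vanishes identically on $S$, for any $y \in S$ we have $|\Im \zeta(x)| = |\Im \zeta(x) - \Im \zeta(y)| \leq C|x - y|$, and minimising over $y \in S$ yields the claim.

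For the \emph{lower bound}, the key is a pointwise derivative estimate: there exists $c > 0$ such that for all $x_0$ in a compact neighbourhood of $0$ inside $S$,
\[
\|d_{x_0}(\Im \zeta)(v)\| \geq c \|v\|, \qquad v \in (T_{x_0} S)^\perp.
\]
Granted this estimate, a standard quantitative Taylor argument finishes the proof. For $x$ sufficiently close to $S$, pick $x_0 \in S$ with $|x - x_0| = d(x, S)$, so $x - x_0 \in (T_{x_0} S)^\perp$, and expand
\[
\Im \zeta(x) = d_{x_0}(\Im \zeta)(x - x_0) + O(|x - x_0|^2).
\]
The derivative estimate then yields $|\Im \zeta(x)| \geq (c/2) d(x, S)$ once $|x - x_0|$ is small enough. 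For $x$ bounded away from $S$ but inside $\nbhd_{\RR^m}(0)$, both sides are bounded below by positive constants (by compactness, since $\Im \zeta$ vanishes only on $S$), so the inequality holds after adjusting $C$.

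The derivative estimate is extracted from the strict positivity condition \eqref{eq:strpos} as follows. Set $\rho = (x_0, \zeta(x_0)) \in \Lambda_\RR$ and parametrise $T_\rho \Lambda$ by $v \mapsto V_v := (v, d_{x_0}\zeta(v))$ for $v$ in the real tangent space to $\CC^m$ at $x_0$. Under this parametrisation, $V_v \in T_\rho \Lambda_\RR$ precisely when $d_{x_0}\zeta(v) \in \RR^m$, i.e.\ when $v \in T_{x_0} S$; hence non-trivial classes in the complexified quotient $(T_\rho \Lambda \otimes \CC) / (T_\rho \Lambda_\RR)^\CC$ correspond to complex combinations of $v \in \RR^m \setminus T_{x_0} S$ with their imaginary translates. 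Applying \eqref{eq:strpos} to such complex tangent vectors and unpacking $\sigma(V, \bar V)/i$ in local coordinates produces, after using the Lagrangian relation $\omega_\CC|_\Lambda \sim 0$ to cancel the vanishing contributions, a positive-definite Hermitian form on the quotient bounded below by a positive multiple of $\|d_{x_0}(\Im \zeta)(v)\|^2$ for $v \in (T_{x_0}S)^\perp$. Uniformity in $x_0$ then follows from continuous dependence of this form on $x_0$ together with compactness.

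The main obstacle is the algebraic unpacking of $\sigma(V, \bar V)/i$ on the complexified quotient: one must keep track of several distinct complex structures (on the base $\CC^m$, on $T^* \CC^m$, and the scalar used in forming $T \otimes \CC$), and must invoke the Lagrangian condition carefully to isolate the portion of the Hermitian form that actually controls $\Im d\zeta$. An alternative, probably cleaner route is to introduce a local almost analytic generating function $H$ with $\zeta = \partial_z H$ (available since $\Lambda$ is Lagrangian), observe that $\Im \zeta|_{\RR^m} = \nabla_x(\Im H|_{\RR^m})$, and translate strict positivity into uniform positive-definiteness of $\operatorname{Hess}_{x_0}(\Im H)|_{(T_{x_0}S)^\perp}$ for $x_0 \in S$; the derivative estimate then follows directly from the quadratic nondegeneracy of $\Im H$ transverse to its critical set.
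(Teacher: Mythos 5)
Your proof is correct and follows essentially the same route as the paper: both arguments apply the strict positivity condition to tangent vectors $(v, d\zeta(v))$ with $v$ transversal to $S := \pi(\Lambda_\RR)$, compute $\sigma(V,\bar V)/i = 2\langle \Im d\zeta(v),v\rangle$ to obtain positive definiteness of $\Im d\zeta$ on $(T_{x_0}S)^\perp$, and conclude by quadratic nondegeneracy; the paper simply streamlines this by choosing coordinates in which $S=\{x''=0\}$ so that $d(x,S)\sim|x''|$ is explicit. One small wording slip: the positive-definite form is bounded below by $c\|v\|^2$ (not by $\|d(\Im\zeta)(v)\|^2$), and one then applies Cauchy--Schwarz, $c\|v\|^2\le\langle\Im d\zeta(v),v\rangle\le\|\Im d\zeta(v)\|\,\|v\|$, to get the derivative estimate—but this is a phrasing issue, not a gap.
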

\begin{proof}
Since $\Lambda_\RR$ is a submanifold of $T^*\mathbb{R}^m$, we may choose real coordinates on $\RR^m$ such that (near $ 0 $)
$$
\Lambda_\RR=\{(x',0, \zeta(x',0))\mid (x',x'')\in \mathbb{R}^k\times \mathbb{R}^{m-k}\}.
$$
Then, with  $\rho=(x',0,\zeta(x',0))$,
$
(T_\rho\Lambda_\RR)^\CC=\{(\delta_{z'},0, \partial_{x'}\zeta(x',0) \delta_{z'})\mid \delta_{z'}\in \mathbb{C}^k\}
$, 
and it follows that for all $\delta_{x''}\in \mathbb{R}^{m-k}$,
$$
(0,\delta_{x''},\partial_{x''}\zeta(x',0)\delta_{x''})\in T_\rho \Lambda\setminus (T_\rho\Lambda_\RR)^\CC.
$$
Strict positivity of $\Lambda$ then implies that 
\[ 
\frac 1 i \sigma\big((0,\delta_{x''},\partial_{x''}\zeta(x',0)\delta_{x''}),\overline{(0,\delta_{x''},\partial_{x''}\zeta(x',0)\delta_{x''})}\big)=2\langle \Im \partial_{x''}\zeta\delta_{x''},\delta_{x''}\rangle >0. 
\]
Since in our coordinates,
$$
\frac{1}{C}|x''|\leq d(x,\pi(\Lambda_\RR))\leq C |x''|,
$$
\eqref{eq:LemA3} follows.
\end{proof}

With this lemma in place we can find generating functions in the 
almost analytic setting:
\begin{lemm}
\label{l:generate}
Suppose that $ \Lambda $ is a {strictly positive} almost analytic Lagrangian 
submanifold of $ \nbhd_{T^* \CC^m } ( 0 ) $ and that
$ \pi_*:  T_{(0,0)} \Lambda \to T_0 \CC^m $ is onto. Then 
there exists $ \Psi \in \CI ( \nbhd_{\CC^m } ( 0 ) ) $ satisfying
\begin{equation}
\label{eq:generate1}
 \partial_{\bar z } \Psi = \mathcal O ( 
|\Im z |^\infty + | \Im \Psi ( z) |^\infty ) ,
\end{equation}
such that, as almost analytic manifolds, 
\begin{equation}
\label{eq:paraLa} \Lambda \sim \{  ( z , \Psi_z ( z ) ) : |z| < \epsilon \}  , \ \ 
\Psi_z ( 0 ) = 0 . \end{equation}
\end{lemm}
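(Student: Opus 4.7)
\medskip

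The plan is to parametrize $\Lambda$ as a graph, extract an almost-closed one-form from the Lagrangian condition, and integrate it to define $\Psi$. Since $\pi_\ast: T_{(0,0)}\Lambda \to T_0\CC^m$ is surjective between spaces of matching real dimension $2m$, it is a real isomorphism, so the smooth implicit function theorem presents $\Lambda$ locally as the graph $\{(z,\zeta(z))\}$ of a smooth $\zeta: \nbhd_{\CC^m}(0)\to \CC^m$ with $\zeta(0)=0$. The characterization of almost analytic submanifolds in \cite[Theorem 1.4]{mess} then translates the almost analyticity of $\Lambda$ into
\[
\partial_{\bar z_k}\zeta_j(z) = \mathcal O\bigl(|\Im z|^\infty + |\Im\zeta(z)|^\infty\bigr),
\]
and by Lemma \ref{l:pos} this is the same as $\mathcal O(|\Im z|^\infty + d(z,\pi(\Lambda_\RR))^\infty)$ on $\nbhd_{\RR^m}(0)$.

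Next, the Lagrangian condition $\omega_\CC|_\Lambda \sim 0$ pulled back along the graph reads $\sum_j d\zeta_j\wedge dz_j \sim 0$. Expanding $d\zeta_j$ in $dz$ and $d\bar z$ parts, the $d\bar z\wedge dz$ contribution is automatically negligible by the first step, so the $dz_k\wedge dz_j$ coefficients yield the symmetry
\[
\partial_{z_k}\zeta_j - \partial_{z_j}\zeta_k = \mathcal O\bigl(|\Im z|^\infty + |\Im\zeta(z)|^\infty\bigr).
\]
With this in hand I define
\[
\Psi(z) := \int_0^1 \langle\zeta(tz),z\rangle\,dt;
\]
differentiating in $z_k$, rewriting $\partial_{z_k}\zeta_j$ as $\partial_{z_j}\zeta_k$ modulo errors, and integrating by parts in $t$ yields $\partial_{z_k}\Psi = \zeta_k + \mathcal O(|\Im z|^\infty + |\Im\zeta|^\infty)$, and in particular $\Psi_z(0)=0$ and $\Lambda\sim \{(z,\Psi_z(z))\}$ as almost analytic manifolds.

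The final step, and the main technical obstacle, is the almost analyticity bound $\partial_{\bar z}\Psi = \mathcal O(|\Im z|^\infty + |\Im\Psi(z)|^\infty)$. Differentiating under the integral gives
\[
\partial_{\bar z_k}\Psi(z) = \int_0^1 t\sum_j (\partial_{\bar w_k}\zeta_j)(tz)\,z_j\,dt = \mathcal O\bigl(|\Im z|^\infty + d(z,\pi(\Lambda_\RR))^\infty\bigr),
\]
where I use $d(tz,\pi(\Lambda_\RR)) \lesssim t\,d(z,\pi(\Lambda_\RR))$ near $0$, valid because $\pi(\Lambda_\RR)$ agrees with its tangent space at the origin to leading order. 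To convert this error into $|\Im\Psi|^\infty$ I need the quadratic lower bound $|\Im\Psi(z)| \gtrsim d(z,\pi(\Lambda_\RR))^2$, which is the standard generating-function consequence of strict positivity. The argument has two parts: (i) after subtracting, if needed, an almost analytic correction depending only on $\Pi(z)$ (the nearest-point projection onto $\pi(\Lambda_\RR)$) one arranges $\Im\Psi|_{\pi(\Lambda_\RR)}\equiv 0$ without disturbing $\partial_z\Psi \sim \zeta$; and (ii) at $\rho\in \pi(\Lambda_\RR)$ the transverse Hessian of $\Im\Psi$ is identified, via $\Psi_z=\zeta$ and the chain rule, with the strictly positive form $\tfrac{1}{2i}\sigma(V,\bar V)$ restricted to $T_\rho\Lambda\setminus(T_\rho\Lambda_\RR)^\CC$ appearing in the definition \eqref{eq:strpos}. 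Once $|\Im\Psi| \gtrsim d^2$ is established, the elementary inequality $d^N \lesssim |\Im\Psi|^{N/2}$ for every $N$ absorbs the $d^\infty$ error into $|\Im\Psi|^\infty$ and finishes the proof.
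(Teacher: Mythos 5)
Your argument tracks the paper's proof through the key middle steps — graph parametrization via the projection hypothesis, almost analyticity of $\zeta$, the symmetry $\partial_{z_k}\zeta_j \sim \partial_{z_j}\zeta_k$ extracted from $\sigma|_\Lambda\sim 0$, and the radial Poincar\'e integral — but then branches off at precisely the point where the paper's argument is easiest, and ends up with a gap.

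The paper defines $\Psi$ only for real $x$ by the Poincar\'e integral, verifies $\partial_{x_j}\Psi(x)=\zeta_j(x)+\mathcal O(|\Im\zeta(x)|^\infty)$, and then takes an \emph{almost analytic extension} of $\Psi$ off $\RR^m$. For such an extension one has $\partial_{\bar z}\Psi=\mathcal O(|\Im z|^\infty)$ by definition, which is strictly stronger than \eqref{eq:generate1}, so the $\bar\partial$-estimate is automatic and no lower bound on $\Im\Psi$ is ever needed; one only invokes \cite[Proposition 1.7(ii)]{mess} to identify the two almost analytic manifolds. You instead define $\Psi(z)=\int_0^1\langle\zeta(tz),z\rangle\,dt$ for complex $z$ directly. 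This is \emph{not} an almost analytic extension of $\Psi|_{\RR^m}$: at real non-$\pi(\Lambda_\RR)$ points, $\partial_{\bar z}\Psi(x)=\int_0^1 t\,\partial_{\bar w}\zeta(tx)\cdot x\,dt$ is only $\mathcal O(d(x,\pi(\Lambda_\RR))^\infty)$, not zero to infinite order in $\Im z$. To finish, you therefore need to convert the error $d(\Re z,\pi(\Lambda_\RR))^\infty$ into $|\Im\Psi(z)|^\infty$, which forces you to establish $|\Im\Psi(z)|\gtrsim d(z,\pi(\Lambda_\RR))^2$. This is the gap: you state that this is a "standard consequence of strict positivity," but the two ingredients you invoke are not carried out. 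Your step (i), the normalization $\Im\Psi|_{\pi(\Lambda_\RR)}\equiv 0$, is not a free operation — "subtracting an almost analytic correction depending only on $\Pi(z)$" will in general perturb $\partial_z\Psi$ at the same order unless you prove that the radial path integral and a path within $\pi(\Lambda_\RR)$ differ by $\mathcal O(|x|^\infty)$ (a Stokes-type argument using the symmetry of $\partial_z\zeta$ and the fact that the radial chord to a point of $\pi(\Lambda_\RR)$ stays $\mathcal O(|x|^2)$-close to it). Your step (ii), identifying the transverse Hessian with the positive form from \eqref{eq:strpos}, needs to be run through Lemma~\ref{l:pos} (whose proof produces exactly the matrix $\Im\partial_{x''}\zeta(x',0)$). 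Neither is hard, but neither is done, and until they are the proof of \eqref{eq:generate1} is incomplete. The simpler path — observe that, having done the real-variable computation, any almost analytic extension satisfies \eqref{eq:generate1} automatically — is available and is what the paper uses.
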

\begin{proof}
Since $ \Lambda $ is an almost analytic Lagrangian, we have 
$ \sigma|_\Lambda \sim 0 $ (vanishes to infinite order at $ \Lambda_\RR $) while the projection property shows that,
near $ z = 0 $, $ \Lambda = \{ ( z, \zeta ( z ) ) : z \in \CC^m \}$, 
$ \zeta ( 0 ) = 0 $.
Hence $ d ( \zeta ( z ) d z ) \sim 0 $ and (see \cite[Theorem 1.4, 3$^\circ$]{mess})
\[ \partial_{\bar z } \zeta ( z ) = \mathcal O ( |\Im z |^\infty + 
|\Im \zeta ( z ) |^\infty ) .
\]
We note that for $ z =x \in \RR^n $, {Lemma~\ref{l:pos} together with} the strict positivity at
$ \Lambda_\RR = \{ ( 0 , 0 )\} $ show that 
\begin{equation}
\label{eq:imzeta}
| x''|/C \leq | \Im \zeta ( x ) | \leq C |x''|, \ \ x \in \RR^n, \ |x| < 
\epsilon . \end{equation}
where $\pi(\Lambda_\RR)$ is given by $\{|x''|=0\}$.
We now see that 
\[  0 \sim \sigma|_\Lambda = \sum_{j=1}^n \partial_z \zeta_j ( z ) \wedge 
d z_j + \mathcal O ( | \Im z|^\infty + |\Im \zeta ( z ) |^\infty )_{
C^\infty ( \CC^n ;\wedge^{2n} \CC^n )} , \]
and in view of \eqref{eq:imzeta} 
\[  \partial_{z_k} \zeta_j ( x ) - \partial_{z_j} \zeta_k ( x ) 
= \mathcal O ( |x'' |^\infty ) , \ \ x \in \RR^n, \ |x| < 
\epsilon . \]
For $ x \in \RR^n $, define $ \Psi $ by a simple version of the Poincar\'e lemma:
$ \Psi ( x ) = \int_0^1 \zeta ( t x ) \cdot x dt $. 
Then
 \begin{equation}
\label{eq:aaPoin} \begin{split} \partial_{x_j} \Psi  ( x ) & =  \int_0^1 \left(\sum_{k=1}^n t z_k \partial_{x_j} \zeta_k ( t x )  + \zeta_j ( t x ) \right) dt \\
& =  \int_0^1\left( \sum_{k=1}^n t z_k \partial_{x_k} \zeta_j ( t x )  
+  \zeta_j ( t x ) \right) dt + \mathcal O ( |{x''}|^\infty  ) \\
& = \int_0^1 \partial_t ( t \zeta_j ( t x ) ) dt + 
 \mathcal O ( |{x''}|^\infty )  = \zeta_j ( x ) + \mathcal O ( | \Im \zeta ( x ) | ^\infty) ,
\end{split} 
\end{equation}
in the last argument we used \eqref{eq:imzeta} again. 
We now define $ \Psi ( z ) $ as an almost analytic extension of 
$ \Psi $. From \cite[Proposition 1.7(ii)]{mess} we obtain \eqref{eq:paraLa}. 
\end{proof}

\subsection{Integration of almost analytic vector fields}
\label{A:trans}

Here we show how to solve transport equations arising in \S \ref{s:trans}. 
For clarity we present a simpler case (see also \cite[\S 5.2.2]{GZ}). Thus 
we assume that $ V $ is an almost analytic vector field on $ \CC^n $ (
$ w = ( w_1, w' ) \in \CC^n $, $ w_1 \in \CC$, $ w' \in \CC^{n-1} $)
satisfying
\[  | \Im  \exp ( \widehat {t V } ) ( 0 , w' ) | \geq |t|/C, \ \ 
w' \in B_{\CC^{n-1} } ( 0 , \epsilon) ,  
 \ \ t \in \CC, \ \ |t| < \epsilon , \ \ dw_1 ( V ) \neq 0.  \]
Then $ ( t, w' ) \mapsto \exp ( \widehat { t V } ( 0 , w' ) ) =: z ( t, w' ) $ is a diffeomorphism for $ \epsilon $ small enough.
We solve 
\begin{equation}
\label{eq:Vab}   V a \sim b, \ \ a ( 0, w' ) = a_0 ( w' ) , \end{equation}
by putting
\[ a ( z) := a_1 ( z ) + a_2 (z ), \ \  a_1 ( z ) = a_0 ( w' ( z ) ) , \ \ 
a_2 ( z ( t, w')) :=  \int_0^1 t b ( z ( ts, w' ) )ds . \]
We calculate the action of $ V $ on $ a_1 $
using almost analyticity of $ b  $, the properties of 
$ z ( t, w' ) $ and \eqref{eq:pullbyexp}:
{\begin{align*}
( V a_2 ) ( z ( t, w' )) & \sim\int_0^1 \sum_{k=0}^\infty \frac{s^k}{k!}V\widehat{tV}^ktb (z(0,w'))ds +\mathcal O(|t|^\infty) \\
& \sim \int_0^1 \sum_{k=0}^\infty \frac{s^k}{k!}\widehat{tV}^{k+1}b (z(0,w')) ds + \mathcal O(|t|^\infty)+\mathcal O(|\Im z(0,w')|^\infty) \\
&\sim \sum_{k=0}^\infty \frac{1}{(k+1)!}\widehat{tV}^{k+1}b(z(0,w'))+\mathcal O(|t|^\infty)+ \mathcal O(|\Im z(0,w')|^\infty)\\
&= b (z(t,w'))+\mathcal O(|t|^\infty+|\Im z(0,w')|^\infty)\\
&= b (z(t,w'))+\mathcal O(|\Im z|^\infty).
\end{align*}}
Similarly, $ V a_1 \sim 0 $ and we obtain \eqref{eq:Vab}.

\section{Physical deformations and numerical results}

%
%

The purpose of this appendix is to illustrate our results by numerical examples. We have not yet implemented the general theory numerically. However, in some circumstances, it is enough to consider physical deformations of $\mathbb{T}^n$ rather than the more complicated phase space deformations. In particular, this is possible when there exists $G(x,\xi)$ linear in $\xi$ satisfying $H_pG>0$ on $\{p=0\}\cap\{|\xi|\geq C\}$. This type of deformation is analogous to the method of {\em complex scaling} (rediscovered 
as the method of {\em perfectly matched layers} in numerical analysis) -- see \cite[\S\S 4.5,4.7]{dizzy} for an introduction and references.

\subsection{Deformations of analytic pseudodifferential operators}

For $ u \in \mathscr D' ( \TT^n ) $ 
we extend $u$ to be $2\pi \mathbb{Z}^n$ periodic on $\mathbb{R}^n$.
We consider 
\begin{equation}
\label{e:PAssume}
\begin{gathered}
(Pu)(x)=\lim_{\e\to 0^+}\lim_{\delta\to 0^+}\frac{1}{(2\pi)^n}\int e^{i\langle x-y,\xi\rangle -\e|\xi|^2-\delta|x-y|^2}p\big(x,\xi\big)u(y)dyd\xi,\\
 |p(z,\zeta)|\leq C\langle \Re \zeta\rangle^m,\ \  |\Im z|\leq a,\ \ 
 |\Im \zeta | \leq b\langle \Re \zeta\rangle,  \\ 
p(x,\xi)=p(x+2\pi k,\xi), \ \ k\in \mathbb{Z}^n.
\end{gathered}
\end{equation}
and 
$G(x,\xi)\in S^1(T^*\mathbb{T}^n)$ such that 
$$
G(x,\xi)=\langle G_0(x),\xi\rangle,\qquad G_0\in C^\infty(\mathbb{T}^n;\mathbb{R}^n).
$$
\noindent{\bf{Remark:}} Observe that ~\eqref{e:PAssume} agrees with the definition of the standard left quantization of the symbol $p$ as in~\eqref{eq:defP}. 

We consider the complex deformed operator, $P_\theta$, defined by the property that when $u$ is analytic in a sufficiently large neighbourhood of $ \TT^n $ (or simply for $ u $ being a trigonometric polynomial),
$$
P_\theta\big(  u|_{\Gamma_\theta}\big)=(Pu)|_{\Gamma_\theta},\qquad \Gamma_\theta:=\{ x+i\theta G_0(x)\mid x\in \mathbb{T}^n\}. 
$$

We start by deriving a formula for the kernel of $P_\theta$: 

\begin{lemm}
\label{l:analytic}
Suppose $u\in C^{\omega}(\mathbb{T}^n)$ extends analytically to $|\Im z|<a$. Then, for $|\Im z|<a$, the limit 
$$
v(z)=\lim_{\e\to 0^+}\lim_{\delta\to 0^+}\frac{1}{(2\pi)^n}\int e^{i\langle z-y,\xi\rangle -\e|\xi|^2-\delta (z-y)^2}p(z,\xi)u(y)dyd\xi,
$$
exists and, moreover, $v(z)$ is the analytic continuation of $Pu$.
\end{lemm}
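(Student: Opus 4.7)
The plan is to reduce to Fourier modes and compute explicitly. Since $u\in C^\omega(\mathbb{T}^n)$ extends analytically to $|\Im z|<a$, its Fourier coefficients $\widehat u(k)$ decay as $|\widehat u(k)|\le C_{a'}e^{-a'|k|}$ for every $a'<a$. So I would write $u(y)=\sum_{k\in\mathbb{Z}^n}\widehat u(k)e^{i\langle k,y\rangle}$ and evaluate the integral termwise against each mode $e^{i\langle k,y\rangle}$, afterwards checking that the resulting sum converges absolutely and uniformly on compact subsets of $\{|\Im z|<a\}$.

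For a single mode, after the substitution $w=z-y$, the inner integral over $y\in\mathbb{R}^n$ becomes an integral over the contour $w\in\mathbb{R}^n+i\Im z$. The integrand is entire in $w$ with $|\delta|$-Gaussian decay at infinity in the real direction, so by a standard contour-shift argument (Cauchy's theorem plus the $e^{-\delta(\Re w)^2}$-decay) we move the contour back to $\mathbb{R}^n$. This produces, for $u(y)=e^{i\langle k,y\rangle}$,
\[
e^{i\langle k,z\rangle}\frac{1}{(2\pi)^n}\int_{\mathbb{R}^n\times\mathbb{R}^n} e^{i\langle w,\xi-k\rangle-\epsilon|\xi|^2-\delta w^2}\,p(z,\xi)\,dw\,d\xi.
\]
The $w$-integral is an explicit Gaussian; sending $\delta\to 0^+$ (via the substitution $\xi=k+2\sqrt{\delta}\,\eta$ and dominated convergence, using $|p(z,\xi)|\le C\langle\Re\xi\rangle^m$) gives $e^{-\epsilon|k|^2}p(z,k)e^{i\langle k,z\rangle}$, and then $\epsilon\to 0^+$ yields $p(z,k)e^{i\langle k,z\rangle}$.

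Summing over $k$, I would obtain
\[
v(z)=\sum_{k\in\mathbb{Z}^n}\widehat u(k)\,p(z,k)\,e^{i\langle k,z\rangle}.
\]
This series converges absolutely and uniformly on $\{|\Im z|\le a''\}$ for each $a''<a$, since $|\widehat u(k)|\,|p(z,k)|\,|e^{i\langle k,z\rangle}|\le C\langle k\rangle^m e^{-(a'-a'')|k|}$ for $a''<a'<a$. Each summand is holomorphic in $z$, so the uniform limit $v$ is holomorphic on $\{|\Im z|<a\}$. For real $z=x$ this is exactly the formula for $Pu(x)$ (from \eqref{e:PAssume} applied modewise), so $v$ agrees with $Pu$ on $\mathbb{T}^n$ and is therefore its analytic continuation.

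The main technical obstacle is rigorously interchanging the Fourier sum with the iterated limits. I would handle this by first fixing $\epsilon,\delta>0$: for such fixed parameters the full integrand is Schwartz in $(y,\xi)$, so Fubini applies and termwise evaluation against the Fourier series of $u$ is immediate. One then justifies the passage $\delta\to 0^+$ followed by $\epsilon\to 0^+$ inside the sum by dominated convergence, using the symbol estimate together with exponential decay of $\widehat u(k)$; the only nontrivial bound is a uniform majorant of the form $C\langle k\rangle^{m}e^{-a''|k|}$ for the $k$-th term, valid uniformly in $\epsilon,\delta\in(0,1]$, which follows from the Gaussian computation above.
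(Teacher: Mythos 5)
Your proof is correct, but it takes a genuinely different route from the paper's. The paper proves Lemma~\ref{l:analytic} by a direct contour deformation in $y$ followed by two rounds of integration by parts (first in $\xi$, using the operator $L_\epsilon$, to gain decay in $y$; then in $y$, using the operator $B$, to gain decay in $\xi$), establishing uniform convergence of the iterated $\delta\to 0^+$, $\epsilon\to 0^+$ limits on compact subsets of $\{|\Im z|<a\}$ without ever exploiting periodicity. You instead expand $u$ in its Fourier series, evaluate the regularized integral mode by mode via an explicit Gaussian computation and a contour shift in $w=z-y$, and sum. Both are sound: the contour shift is legitimate because $\Re(-\delta w^2)=-\delta(|\Re w|^2-|\Im w|^2)$ gives Gaussian decay in $\Re w$ on the strip, and your dominated-convergence interchange of the limits with the sum works because the mode-$k$ term is bounded by $C\langle k\rangle^m e^{a''|k|}$ uniformly in $\epsilon,\delta\in(0,1]$ while $|\widehat u(k)|\leq Ce^{-a'|k|}$ with $a''<a'<a$. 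What your approach buys is an explicit closed-form answer, $v(z)=\sum_{k\in\mathbb Z^n}\widehat u(k)\,p(z,k)\,e^{i\langle k,z\rangle}$, which makes the analytic continuation transparent and is useful in its own right on the torus; what the paper's approach buys is robustness — the integration-by-parts argument does not rely on the global Fourier decomposition and would adapt to non-periodic settings and to more general symbol classes where termwise evaluation is less convenient. One small imprecision: your stated majorant for the $k$-th term should be $C\langle k\rangle^m e^{-(a'-a'')|k|}$ after multiplying by $\widehat u(k)$ (the factor $e^{a''|k|}$ from $|e^{i\langle k,z\rangle}|$ is cancelled by part of the decay of $\widehat u(k)$), but this is a matter of bookkeeping and does not affect the argument.
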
 
\begin{proof}
For each fixed $\e,\delta$, the resulting function of $z$ is manifestly analytic in a neighbourhood of $ \TT^n $ (or $ \RR^n $ if we think of
periodic functions). Therefore, in order to see that $v$ itself is analytic, we need only show that the limit exists and the convergence is uniform on compact subsets of $|\Im z|<a$. For this, we deform the contour in $y$ to 
$$
\Gamma(z): y\mapsto y+i\Im z,
$$
so that we have
\begin{align*}
v(z)&=\lim_{\e\to 0^+}\lim_{\delta\to 0^+}\frac{1}{(2\pi)^n}\int \int_{\Gamma(z)} e^{i\langle z-y,\xi\rangle -\e|\xi|^2-\delta|z-y|^2}p(z,\xi)u(y)dyd\xi\\
&=\lim_{\e\to 0^+}\lim_{\delta\to 0^+}\frac{1}{(2\pi)^n}\int e^{i\langle \Re z-y,\xi\rangle -\e|\xi|^2-\delta|\Re z-y|^2}p(z,\xi)u(y+i\Im z)dyd\xi.
\end{align*}
This contour deformation is justified for each fixed $\delta$ since the integrand is super exponentially decaying in $y$. Now, integrating by parts in $\xi$ using 
$$
L_{\e}=\frac{1+\langle \Re z-y -2i\e, D_\xi\rangle  }{1+|\Re z-y|^2+4\e|\xi|^2},
$$
we have for any $N>0$
\begin{equation*}
v(z)=\lim_{\e\to 0^+}\lim_{\delta\to 0^+}\frac{1}{(2\pi)^n}\int e^{i\langle \Re z -y,\xi\rangle -\e|\xi|^2-\delta|\Re z-y|^2}(L_{\e}^t)^N\big(p(z,\xi)\big)u(y+i\Im z)dyd\xi.
\end{equation*}
In particular, since $|\partial_{\xi}^\alpha p|\leq C_\alpha\langle \Re \xi\rangle^{m-|\alpha|},$ the integrand is bounded by $C_N\langle \Re z-y\rangle^{-N}\langle \xi\rangle^me^{-\e|\xi|^2}$ uniformly in $\delta>0$ and compact subsets of $|\Im z|<a$. Hence, for $N$ large enough, the limit in $\delta$ exists and is uniform in compact subsets of $|\Im z|<a$. It is given by
\begin{equation*}
v(z)=\lim_{\e\to 0^+}\frac{1}{(2\pi)^n}\int e^{i\langle \Re z-y,\xi\rangle -\e|\xi|^2}(L_{\e}^t)^N\big[p(z,\xi)\big]u(y+i\Im z)dyd\xi.
\end{equation*}
Defining 
$
B:= ( 1+\langle \xi,D_y\rangle)/({1+|\xi|^2}) $ we have, for any $N>0$,
\begin{equation*}
v(z)=\lim_{\e\to 0^+}\frac{1}{(2\pi)^n}\int_{\Gamma} e^{i\langle \Re z-y,\xi\rangle -\e|\xi|^2}(B^t)^N\Big[(L_{\e}^t)^N\big[p(z,\xi)\big]u(y+i\Im z)\Big]dyd\xi.
\end{equation*}
Since $|\partial^{\alpha}_y u(y+i\Im z)|\leq C_\alpha$ and $|\partial^{\alpha}_x\partial_{\xi}^\beta p(x,\xi)|\leq C_{\alpha\beta}\langle \Re\xi\rangle^{m-|\beta|}$, the integrand is bounded by 
$$
C_N\langle \Re z-y\rangle^{-N}\langle \xi\rangle^{m-N},
$$
uniformly in $\e>0$ and compact subsets of $|\Im z|<a$. In particular, the limit in $\e$ exists, is uniform in compact subsets of $|\Im z|<a$, and is given by 
$$
v(z)=\frac{1}{(2\pi)^n}\int_{\Gamma} e^{i\langle \Re z-y,\xi\rangle}(B^t)^N\Big[(L_{0}^t)^N\big[p(z,\xi)\big]u(y+i\Im z)\Big]dyd\xi.
$$

Thus, we see that $v$ is analytic on $|\Im z|<a$ and agrees with $Pu$ on $\Im z=0$. In particular, by uniqueness of analytic continuation, $v(z)=(Pu)(z)$.
\end{proof}

We now move to the representation for the Scwartz kernel of $P_\theta$:
\begin{lemm}
\label{l:ker}
The kernel of $P_\theta$ acting on $2\pi \mathbb{Z}^n$ periodic functions on $\mathbb{R}^n$ is given by
\begin{equation}
\label{e:kernel}
K_{\theta}(x,y)=\frac{1}{(2\pi)^n}\int e^{i\langle x-y, \xi\rangle} p\big(\gamma_{\theta}(x),(e_\theta(x,y)^{-1})^t\xi\big)\frac{ \det (\partial_y\gamma_{\theta}(y))}{\det e_\theta(x,y)}d\xi
\end{equation}
where the integral is interpreted as an oscillatory integral, 
$$
\gamma_\theta:=x+i\theta G_0(x),
$$
and $e_{\theta}(x,y)$ satisfies 
$$
e_{\theta}(x,y)(x-y)=\gamma_\theta(x)-\gamma_\theta(y).
$$
In particular, $P_{\theta}\in \Psi^m$ and its principal symbol is given by 
$$
\sigma(P_{\theta})=p(\gamma_\theta(x), (\partial\gamma_\theta (x)^{-1})^t\xi).
$$
\end{lemm}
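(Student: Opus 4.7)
The plan is to start from the defining property $P_\theta(u|_{\Gamma_\theta}) = (Pu)|_{\Gamma_\theta}$ for $u$ holomorphic in a neighbourhood of $\TT^n$, compute the right-hand side at the point $\gamma_\theta(x)$ using Lemma \ref{l:analytic}, and then deform and change variables in the $y$-integral so as to express everything in terms of $v(y') := u(\gamma_\theta(y'))$ integrated against an oscillatory kernel in the real variable $y'\in\TT^n$.

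Concretely, I would first apply Lemma \ref{l:analytic} at $z=\gamma_\theta(x)$ to write, for $u$ extending holomorphically to $\{|\Im z|<a\}$,
\[
(Pu)(\gamma_\theta(x)) = \lim_{\e\to 0^+}\lim_{\delta\to 0^+}\frac{1}{(2\pi)^n}\int_{\RR^n}\int_{\RR^n} e^{i\langle \gamma_\theta(x)-y,\xi\rangle-\e|\xi|^2-\delta(\gamma_\theta(x)-y)^2} p(\gamma_\theta(x),\xi)\,u(y)\,dy\,d\xi.
\]
Using periodicity of $G_0$ (so that $\gamma_\theta(y')+2\pi k=\gamma_\theta(y'+2\pi k)$) together with the super-exponential decay provided by the $\delta$-regulariser, I would deform the $y$-contour from $\RR^n$ to $\Gamma_\theta$, i.e.\ substitute $y=\gamma_\theta(y')$, $y'\in\RR^n$. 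This contour shift is legitimate for $|\theta|$ small because $p(z,\zeta)$ is holomorphic on $|\Im z|\leq a$ and $u$ is holomorphic in the same strip, and because the factor $e^{-\delta(\gamma_\theta(x)-y)^2}$ dominates all polynomial growth during the homotopy; after the shift, the arguments of Lemma~\ref{l:analytic} (integrations by parts using appropriate $L_\e$ and $B$ operators) let us pass to the limits $\delta\to 0^+$ and then $\e\to 0^+$ inside an oscillatory integral.

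Next, I would exploit the identity $\gamma_\theta(x)-\gamma_\theta(y')=e_\theta(x,y')(x-y')$, which defines $e_\theta$ as a smooth matrix-valued function close to $\partial\gamma_\theta(x)$ when $|x-y'|$ is small (and more generally, invertible for $|\theta|$ small enough by a Neumann series around $\mathrm{Id}+i\theta\,\partial G_0$). This rewrites the phase as $\langle x-y',\,e_\theta(x,y')^t\xi\rangle$. The Jacobian of $y=\gamma_\theta(y')$ contributes $\det(\partial_{y'}\gamma_\theta(y'))$. Finally, performing the linear (complex) substitution $\eta:=e_\theta(x,y')^t\xi$ in the $\xi$-integral—again justified as an oscillatory integral because $e_\theta$ is close to the identity, uniformly in $(x,y')$, for $|\theta|$ small—produces a Jacobian $1/\det e_\theta(x,y')$ and yields
\[
(P_\theta v)(x)=\int_{\TT^n} K_\theta(x,y')\,v(y')\,dy'
\]
with $K_\theta$ as in \eqref{e:kernel}. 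Unravelling the $\RR^n$-representation into a sum over $2\pi\ZZ^n$-translates is automatic since the whole construction is periodic in $y'$.

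The main obstacle I anticipate is the rigorous justification of the two contour deformations (in $y$ and then in $\xi$) at the level of oscillatory integrals, uniformly in the regularisers $\e,\delta$; this is analogous to the care taken in Lemma~\ref{l:analytic} but now the contour in $y$ is a genuine complex submanifold $\Gamma_\theta$ and one must control the interplay between the analyticity strip $|\Im z|\leq a$ of $p(\cdot,\zeta)$ and the size of $\theta$. Once \eqref{e:kernel} is established, the statements $P_\theta\in\Psi^m$ and $\sigma(P_\theta)(x,\xi)=p(\gamma_\theta(x),(\partial\gamma_\theta(x)^{-1})^t\xi)$ follow by inspection: the amplitude is smooth in $(x,y',\xi)$ and symbolic of order $m$ in $\xi$, while $e_\theta(x,x)=\partial_x\gamma_\theta(x)$ gives $\det(\partial_y\gamma_\theta)/\det e_\theta=1$ on the diagonal, so the usual reduction of a right-quantised kernel to a left-quantised symbol produces the claimed principal part.
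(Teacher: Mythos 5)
Your overall strategy matches the paper's: apply Lemma~\ref{l:analytic} at $z=\gamma_\theta(x)$, deform the $y$-contour to $\Gamma_\theta$ using the $\delta$-regulariser, introduce $e_\theta$ via $\gamma_\theta(x)-\gamma_\theta(y')=e_\theta(x,y')(x-y')$, and deform the $\xi$-contour to absorb $e_\theta^t$ into the phase. However, there is a genuine ordering problem in your argument.

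You propose to pass to the limits $\delta\to 0^+$ and then $\e\to 0^+$ \emph{before} performing the $\xi$-substitution. But after the $y$-deformation the phase is $\langle \gamma_\theta(x)-\gamma_\theta(y'),\xi\rangle$ with $x,y',\xi$ real, and its imaginary part equals $\theta\langle G_0(x)-G_0(y'),\xi\rangle$, which for $x\ne y'$ grows \emph{linearly} in $|\xi|$ and is negative on a half-space of $\xi$-directions. Consequently $\bigl|e^{i\langle\gamma_\theta(x)-\gamma_\theta(y'),\xi\rangle}\bigr|$ grows exponentially in those directions; the integrations by parts with $B$-type operators only produce extra polynomial decay in $\xi$ from the symbolic estimates on $p$, which cannot beat this exponential growth. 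So the limit $\e\to 0^+$ does not exist in any oscillatory-integral sense before the $\xi$-contour is straightened out. The correct order, which the paper follows, is: after the $y$-deformation, first deform the $\xi$-contour $\xi\mapsto (e_\theta(x,y')^t)^{-1}\xi$ \emph{while $\e>0$ still provides Gaussian decay} (using that $e_\theta=\Id+\mathcal O(\theta\langle x-y'\rangle^{-1})$ keeps the deformed contour inside the analyticity cone $|\Im\zeta|\leq b\langle\Re\zeta\rangle$ of $p$ for $\theta$ small); this restores a real phase $\langle x-y',\xi\rangle$, and only then can the $\delta$- and $\e$-regularisers be removed by the integrations by parts of Lemma~\ref{l:analytic}. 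With this reordering, your remaining steps (Jacobians, periodicity, reading off the symbol and using $e_\theta(x,x)=\partial_x\gamma_\theta(x)$) go through as in the paper.
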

\noindent {\bf{Remark:}} Note that the symbol in~\eqref{e:kernel} is \emph{not} $2\pi \mathbb{Z}^n$ periodic in $x$. However, it is of the form $a(x,x-y,\xi)$ where $a$ is $2\pi \mathbb{Z}^n$ periodic in the first variable. Therefore, it still maps periodic functions to periodic functions.
\begin{proof}
By Lemma~\ref{l:analytic}, for $u$ analytic on $|\Im z|<a$ and $\theta$ small enough
\begin{align*}
&(P_\theta u|_{\Gamma_\theta})(\gamma_{\theta}(x))\\
&=\lim_{\e\to 0^+}\lim_{\delta\to 0^+}\frac{1}{(2\pi)^n}\int e^{i\langle \gamma_{\theta}(x)-y,\xi\rangle-\e|\xi|^2-\delta (\gamma_\theta(x)-y)^2}p(\gamma_{\theta}(x),\xi)u(y)dyd\xi.
\end{align*}
Now, since for each fixed $\delta>0$, the integrand is super exponential decaying in $y$, we may deform the contour in $y$ to $\Gamma_\theta$, to obtain 
\begin{equation}
\label{e:discreteUse}
\begin{aligned}
(P_\theta u|_{\Gamma_\theta})(\gamma_{\theta}(x))&=\lim_{\e\to 0^+}\lim_{\delta\to 0^+}\frac{1}{(2\pi)^n}\int e^{i\langle \gamma_{\theta}(x)-\gamma_{\theta}(y),\xi\rangle-\e|\xi|^2-\delta (\gamma_\theta(x)-\gamma_{\theta}(y))^2}\\
&\qquad\qquad \qquad\qquad\qquad p(\gamma_{\theta}(x),\xi) \det (\partial_y\gamma_{\theta}(y))u(\gamma_{\theta}(y))dyd\xi.
\end{aligned}
\end{equation}
Next, using that for each fixed $\e>0$, the integrand is super exponentially decaying in $\xi$ and that, with $e_\theta(x,y)(x-y)=\gamma_\theta(x)-\gamma_\theta(y)$, we have $e_\theta = \Id+O(\theta\langle x-y\rangle^{-1})$, we can deform the contour in $\xi$ to $\Gamma_1=\xi \mapsto (e_\theta(x,y)^t)^{-1}\xi$, to obtain
\begin{align*}
(P_\theta u|_{\Gamma_\theta})(\gamma_{\theta}(x))&=\lim_{\e\to 0^+}\lim_{\delta\to 0^+}\frac{1}{(2\pi)^n}\int e^{i\langle x-y ,\xi\rangle-\e((e_\theta^t)^{-1}\xi)^2-\delta (\gamma_\theta(x)-\gamma_{\theta}(y))^2}\\
&\qquad\qquad\qquad p\big(\gamma_{\theta}(x),(e_\theta(x,y)^{-1})^t\xi\big)\frac{ \det (\partial_y\gamma_{\theta}(y))}{\det e_\theta(x,y)}u(\gamma_{\theta}(y))dyd\xi.
\end{align*}
Now, integrating by parts as in the proof of Lemma~\ref{l:analytic}, results in the formula~\eqref{e:kernel}.

To prove the final claim, let $\chi\in C_c^\infty(-1,1)$ with $\chi\equiv 1$ near $0$. Then, for any $\delta>0$, we have by~\eqref{e:kernel} that with 
$$
P_{\theta}'=\frac{1}{(2\pi)^n}\int e^{i\langle x-y, \xi\rangle} p\big(\gamma_{\theta}(x),(e_\theta(x,y)^{-1})^t\xi\big)\frac{ \det (\partial_y\gamma_{\theta}(y))}{\det e_\theta(x,y)}\chi(\delta^{-1}|x-y|)d\xi,
$$
the error $P_\theta-P_\theta'$ is smoothing and maps periodic functions to periodic functions. In particular, $P_{\theta}$ is a pseudodifferential operator on $\mathbb{T}^n$ with symbol
$$\sigma(P_\theta)=p(\gamma_\theta(x), (\partial\gamma_\theta (x)^{-1})^t\xi),$$
proving the last claim in the lemma.
\end{proof}

\subsection{The resolvent of the deformed operator}

We now consider the setting of Theorem~\ref{t:viscoscity}. Namely, we assume that $P$ is a self-adjoint 0th order pseudodifferential operator and study the properties of $P_\theta$.
\begin{prop}
\label{p:Deform}
Suppose that $P\in \Psi^0$ is self adjoint and satisfies~\eqref{e:PAssume} and that $G=\langle G_0,\xi\rangle \in S^1$ has $H_pG>0$ on $\{p=0\}\cap \{|\xi|>C\}$.  Then there are $\omega_0,\theta_0>0$ such that for $0<\theta<\theta_0$, $\omega\in (-\omega_0,\omega_0)+i(-\omega_0\theta,\infty)$, and all $s\in \mathbb{R}$,
$$
\mc{R}_\theta(\omega):=(P_\theta-\omega)^{-1}:H^s(\mathbb{T}^n)\to H^s(\mathbb{T}^n)
$$
is meromorphic with finite rank poles. 
\end{prop}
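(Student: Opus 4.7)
The plan is to treat $P_\theta$ directly as a classical pseudodifferential operator of order $0$ on $\mathbb{T}^n$ and to mimic the strategy of Lemmas~\ref{l:Fred1}, \ref{l:parametrix} and the meromorphy argument in the proof of Theorem~\ref{t:viscoscity}, with the ordinary $\Psi^0(\mathbb{T}^n)$ calculus replacing the $\Lambda$-microlocal machinery developed in the body of the paper. The starting point is the symbol formula from Lemma~\ref{l:ker}: $P_\theta \in \Psi^0(\mathbb{T}^n)$ with principal symbol
\[ p_\theta(x,\xi) := p\bigl(\gamma_\theta(x),(\partial\gamma_\theta(x)^{-1})^{t}\xi\bigr), \qquad \gamma_\theta(x) = x + i\theta G_0(x). \]
Using $\partial_\xi G = G_0$ and $\partial_x G = (\partial G_0)^{t}\xi$ together with Taylor's formula in $\theta$, one obtains
\[ p_\theta = p - i\theta\, H_p G + \mathcal{O}(\theta^{2})_{S^{0}}, \]
so the imaginary part of the principal symbol is $-\theta H_p G$ to leading order.

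The next step is to promote the pointwise hypothesis $H_p G > 0$ on $\{p=0\}\cap\{|\xi|>C\}$ to a uniform positivity statement: by a standard continuity/homogeneity argument (cf.~\cite{CS,DyZw19}), there exist $c_1,c_2>0$ and $C'>C$ such that $H_p G(x,\xi) \geq c_1$ on $\{|p(x,\xi)|\leq c_2\}\cap\{|\xi|\geq C'\}$. Combined with the symbol formula, for $\omega_0>0$ and $\theta_0>0$ sufficiently small and $\omega \in (-\omega_0,\omega_0) + i(-\omega_0\theta,\infty)$ with $0 < \theta < \theta_0$, one then deduces
\[ |p_\theta(x,\xi) - \omega| \geq c_3 \theta, \qquad |\xi|\geq C', \]
by splitting into the region $\{|\Re(p_\theta-\omega)|\leq c_2/2\}$, where the imaginary-part estimate $-\theta H_p G - \Im\omega \leq -c_1\theta/2$ controls the modulus, and its complement, where the real part alone suffices.

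To handle the remaining compact region $\{|\xi|<C'\}$ we introduce a complex absorbing potential: choose $q\in C_c^\infty(T^*\mathbb{T}^n;[0,\infty))$ supported in $\{|\xi|\leq 2C'\}$ and so large on $\{|\xi|\leq C'\}$ that, with $Q:=\mathrm{Op}(q) \in \Psi^{-\infty}(\mathbb{T}^n)$,
\[ |p_\theta(x,\xi) - iq(x,\xi) - \omega| \geq c_4 > 0 \]
everywhere on $T^*\mathbb{T}^n$, uniformly in $\omega$ over the region. The operator $P_\theta - iQ - \omega \in \Psi^0(\mathbb{T}^n)$ is then elliptic, while $\Im\sigma(P_\theta - iQ - \omega) \leq -c_5\theta$ on the set where its modulus is not automatically bounded below. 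As in Lemmas~\ref{l:Fred1}--\ref{l:parametrix}, the combination of ellipticity (yielding Fredholmness via an elliptic parametrix construction) with the one-sided sign in the imaginary part (yielding injectivity for both the operator and its adjoint via a sharp G\aa rding/$E^{*}E$ argument) gives invertibility of
\[ R_{Q,\theta}(\omega):=(P_\theta - iQ - \omega)^{-1} : H^s(\mathbb{T}^n)\to H^s(\mathbb{T}^n), \]
depending holomorphically on $\omega$ in the prescribed region.

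The proof is concluded via the factorization
\[ P_\theta - \omega = (P_\theta - iQ - \omega)\bigl(I + i R_{Q,\theta}(\omega)\,Q\bigr), \]
which reduces meromorphy of $\mc{R}_\theta(\omega)$ to meromorphy of $(I + iR_{Q,\theta}(\omega) Q)^{-1}$. Since $Q$ is smoothing, $R_{Q,\theta}(\omega) Q$ is compact on every $H^s(\mathbb{T}^n)$, so analytic Fredholm theory (see e.g.~\cite[Theorem C.9]{dizzy}) applies as soon as we exhibit a single $\omega$ at which $I + iR_{Q,\theta}(\omega) Q$ is invertible. Such an $\omega$ is supplied by taking $\Im\omega\gg 1$: then $P_\theta - \omega$ is inverted on $H^s$ directly by Neumann series, since $P_\theta$ is bounded on $H^s$ as a $\Psi^0$ operator, and the factorization then forces invertibility of the right-hand factor. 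The main technical obstacle is the invertibility estimate for $P_\theta - iQ - \omega$ in step three; however, because no phase-space deformation is required, this reduces to a direct analogue of Lemma~\ref{l:positivity} followed by a routine pseudodifferential G\aa rding argument.
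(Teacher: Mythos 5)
Your argument is correct in outline, but it imports machinery that is unnecessary in this setting and thereby takes a longer route than the paper. The first two steps (computing $\sigma(P_\theta)=p-i\theta H_pG+\mathcal O(\theta^2)_{S^0}$, and upgrading $H_pG>0$ on $\{p=0\}\cap\{|\xi|>C\}$ to uniform positivity on $\{|p|<\omega_1\}\cap\{|\xi|>C\}$) coincide with the paper, and your lower bound $|p_\theta-\omega|\gtrsim\theta$ for $|\xi|\geq C'$ is exactly the ellipticity the paper uses. At that point, however, the paper stops: for a \emph{classical} operator $P_\theta-\omega\in\Psi^0(\mathbb T^n)$, ellipticity (outside a compact $\xi$-region) already yields a parametrix modulo smoothing and hence Fredholmness on $H^s(\mathbb T^n)$ for every $s$; since $P_\theta$ is bounded on $H^s$, taking $\Im\omega\gg 1$ inverts $P_\theta-\omega$ by Neumann series, giving index $0$ on the whole connected region, and the meromorphic Fredholm theorem finishes the proof directly. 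Your insertion of a complex absorbing potential $Q$, the invertibility of the reference operator $P_\theta-iQ-\omega$, and the subsequent factorization $P_\theta-\omega=(P_\theta-iQ-\omega)(I+iR_{Q,\theta}(\omega)Q)$ reproduce the scheme the paper uses in \S\ref{s:vis} for the spaces $H^s_\Lambda$ --- where that detour is genuinely needed, since there the Fredholm property is not free and must be manufactured through the $T_\Lambda$, $\Pi_\Lambda$ calculus. In the classical $\Psi^0$ calculus on $\mathbb T^n$ none of this is required; it is not wrong, but the Gårding-type invertibility estimate for $P_\theta-iQ-\omega$ that you invoke (as ``a routine pseudodifferential Gårding argument'') is precisely the sort of work the direct parametrix argument lets you avoid. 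Both approaches prove the statement, but the paper's is a one-line consequence of elliptic theory plus analytic Fredholm, whereas yours replicates the heavy-machinery proof in a context where the standard calculus suffices.
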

\begin{proof}
First, note that there is $\omega_1>0$ such that $H_pG>c>0$ on $\{|p|<\omega_0\}\cap\{|\xi|>C\}$. We compute
\begin{align*}
\sigma(P_\theta)(x,\xi)&=p(x,\xi)+i\theta \langle \partial_xp(x,\xi) ,G_0(x)\rangle+\langle \partial_\xi p(x,\xi)),(\partial\gamma_\theta (x)^{-1})^t-\Id)\xi\rangle+O(\theta^2)\\
&=p(x,\xi)+i\theta(\langle \partial_xp(x,\xi) ,G_0(x)\rangle -\langle \partial_\xi p(x,\xi),(\partial_x G_0)^t(x)\xi\rangle+O(\theta^2)\\
&=p(x,\xi)-i\theta H_pG(x,\xi) +O(\theta^2).
\end{align*}
Therefore, for $\theta>  0$ small enough, and $\omega_0=\min(c,\omega_1)$, $P_\theta-\omega $ is elliptic when $\Im \omega \geq -\omega_0\theta$, $|\Re \omega|<\omega_0.$
 
 In particular, $P_{\theta}-\omega:H^s\to H^s$ is Fredholm for $\omega\in [-\omega_0,\omega_0]+i(-\omega_0\theta,\infty)$. Moreover, since $P_\theta:H^s\to H^s$ is bounded, if $\Im \omega\gg 1$, $P_\theta-\omega$ is invertible by Neumann series and hence $P_\theta-\omega$ has index 0. By the meromorphic Fredholm theorem~\cite[Theorem C.9]{dizzy}, its inverse $\mc{R}_{\theta}(\omega)=(P_\theta-\omega)^{-1}:H^s\to H^s$ is a meromorphic family of operators with finite rank poles for $\omega\in (-\omega_0,\omega_0)+i(-\omega_0\theta,\infty)$.
 \end{proof}
 
 \begin{prop}
 \label{p:sharedPoles}
Let $P$ and $G$ as in Proposition~\ref{p:Deform}. There are $\theta_0,\omega_0>0$ such that for $0<\theta<\theta_0$, the poles of $\mc{R}_{\theta}(\omega)$ for $\omega\in (-\omega_0,\omega_0)+i(-\omega_0\theta,\infty)$ agree with multiplicity with those of $\mc{R}^{H_{\Lambda}}(\omega)$ where $\mc{R}^{H_{\Lambda}}(\omega)$ the resolvent of $P$ on $H^s_{\Lambda}$ from Lemma~\ref{l:Fred1}.
 \end{prop}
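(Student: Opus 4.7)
The plan is to exhibit both $\mc R_\theta(\omega)$ and $\mc R^{H_\Lambda}(\omega)$ as meromorphic continuations of the self-adjoint $L^2$ resolvent $\mc R^{L^2}(\omega)$ from $\{\Im \omega \gg 1\}$, and then match their poles and multiplicities via a natural intertwining coming from the fact that when $G(x,\xi) = \langle G_0(x), \xi \rangle$ is linear in $\xi$, the Lagrangian $\Lambda_\theta$ is a purely physical contour deformation.

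Concretely, the restriction map $R_\theta u := u \circ \gamma_\theta$, with $\gamma_\theta(x) := x + i\theta G_0(x)$, is a continuous embedding $\mathscr A_\delta \hookrightarrow C^\omega(\mathbb T^n) \hookrightarrow H^s(\mathbb T^n)$ for $\theta$ small enough. By Lemma~\ref{l:analytic} and the definition of $P_\theta$ it intertwines: $P_\theta R_\theta = R_\theta P$ on $\mathscr A_\delta$. For $\Im \omega \gg 1$, $\mc R^{L^2}(\omega)$ exists by self-adjointness and, arguing as in the proof of Lemma~\ref{l:poleL2} (via Lemma~\ref{l:B5}), preserves $\mathscr A_\delta$ after shrinking $\delta$. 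Combined with $\mc R^{L^2}(\omega)|_{\mathscr A_\delta} = \mc R^{H_\Lambda}(\omega)|_{\mathscr A_\delta}$ from Lemma~\ref{l:poleL2}, this yields the identity
\[
R_\theta\, \mc R^{H_\Lambda}(\omega) \;=\; \mc R_\theta(\omega)\, R_\theta \quad \text{on } \mathscr A_\delta, \qquad \Im \omega \gg 1.
\]
Both sides continue meromorphically to the common domain $U := (-\omega_0,\omega_0) + i(-\omega_0\theta, \infty)$ (by Lemma~\ref{l:Fred1} and Proposition~\ref{p:Deform}), so the identity persists throughout $U$.

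It follows immediately that the pole sets of $\mc R_\theta$ and $\mc R^{H_\Lambda}$ in $U$ coincide. To match multiplicities at a common pole $\omega_1\in U$, I would consider the spectral projectors $\Pi^\theta_{\omega_1} := \tfrac{1}{2\pi i}\oint_{\gamma_{\omega_1}} \mc R_\theta(\omega)\,d\omega$ and $\Pi^\Lambda_{\omega_1} := \tfrac{1}{2\pi i}\oint_{\gamma_{\omega_1}} \mc R^{H_\Lambda}(\omega)\,d\omega$. The intertwining gives $R_\theta \Pi^\Lambda_{\omega_1} = \Pi^\theta_{\omega_1} R_\theta$, and equality of ranks will follow from bijectivity of $R_\theta : \operatorname{Range}(\Pi^\Lambda_{\omega_1}) \to \operatorname{Range}(\Pi^\theta_{\omega_1})$. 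Injectivity comes from iterated elliptic regularity on the $H^t_\Lambda$ scale (adapting Lemma~\ref{l:Fred2} to $(P-\omega_1)^K$): generalized $H_\Lambda$-eigenfunctions lie in $\bigcap_t H^t_\Lambda$ and, by the embedding chain in the Remark after~\eqref{e:deformedSpace} together with the analyticity of $p|_\Lambda$, extend to elements of $\mathscr A_\delta$ for some $\delta>0$; then $R_\theta u = 0$ implies $u$ vanishes on the real analytic submanifold $\Gamma_\theta$, forcing $u \equiv 0$.

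The main obstacle is surjectivity, which amounts to an analytic hypoellipticity statement for $(P_\theta - \omega_1)^K$. By Lemma~\ref{l:ker} the full symbol of $P_\theta$ is analytic, and the computation in Proposition~\ref{p:Deform} combined with the hypothesis $H_pG>0$ on $\{p=0\}\cap\{|\xi|>C\}$ shows that $P_\theta - \omega_1$ is analytically elliptic on $\mathbb T^n$. Standard analytic hypoellipticity for elliptic analytic pseudodifferential operators then yields that every generalized eigenfunction $v$ of $P_\theta$ at $\omega_1$ lies in $C^\omega(\mathbb T^n)$ and extends holomorphically to a neighborhood of $\Gamma_\theta$ in $\mathbb C^n$; pulling back via $\gamma_\theta^{-1}$ produces $u \in \mathscr A_\delta$ with $R_\theta u = v$, and the intertwining forces $u \in \operatorname{Range}(\Pi^\Lambda_{\omega_1})$. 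This closes the bijection and hence matches the multiplicities, completing the proof.
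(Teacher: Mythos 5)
Your proposal starts from the same key identity as the paper: both exploit the fact that for $u\in\mathscr A_\delta$ and $\Im\omega\gg1$, one has $R_\theta\,\mc R^{H_\Lambda}(\omega)u = \mc R_\theta(\omega)\,R_\theta u$, which then continues meromorphically to the common domain. Where the two proofs diverge is in how they pass from equality of resolvents on a dense set of test elements to agreement of pole locations \emph{and multiplicities}.

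The paper avoids discussing eigenfunctions entirely. Instead it pairs against a second test element $v\in\mathscr A_\delta$ and observes that the \emph{scalar} meromorphic functions
\[
\omega\mapsto \langle\mc R_\theta(\omega)u|_{\Gamma_\theta},v|_{\Gamma_\theta}\rangle_{L^2(\Gamma_\theta)}
\quad\text{and}\quad
\omega\mapsto \langle\mc R^{H_\Lambda}(\omega)u,v\rangle_{\mathscr A_{-\delta},\mathscr A_\delta}
\]
agree (after a contour deformation in the inner product), first for $\Im\omega\gg1$ and then by continuation throughout $(-\omega_0,\omega_0)+i(-\omega_0\theta,\infty)$. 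Since $\mathscr A_\delta$ is dense both in $H^s_\Lambda$ and (after restriction) in $L^2(\Gamma_\theta)$, the singular Laurent coefficients of the two operator families — and hence poles with multiplicity — must coincide. No regularity of generalized eigenfunctions is needed.

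You instead attempt to exhibit $R_\theta$ as a linear isomorphism between $\operatorname{Range}(\Pi^\Lambda_{\omega_1})$ and $\operatorname{Range}(\Pi^\theta_{\omega_1})$. This is a strictly stronger statement, and it is where a genuine gap appears. For injectivity (and indeed for $R_\theta$ even to be defined on $\operatorname{Range}(\Pi^\Lambda_{\omega_1})$), you assert that generalized $H_\Lambda$-eigenfunctions lie in some $\mathscr A_\delta$, claiming this follows from ``iterated elliptic regularity on the $H^t_\Lambda$ scale'' together with $\mathscr A_\delta\subset H^t_\Lambda\subset\mathscr A_{-\delta}$. That inclusion gives no improvement: membership in $\bigcap_t H^t_\Lambda$ corresponds on the FBI side to super-polynomial decay of $T_\Lambda u$, whereas membership in $\mathscr A_\delta$ requires \emph{exponential} decay. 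The paper develops no analytic-regularity (exponential-decay) result for eigenfunctions in the $H_\Lambda$ framework, and $P-\omega_1$ is not analytically elliptic on $T^*\TT^n$ — the Fredholm property on $H_\Lambda$ comes from the escape function, not from ellipticity — so this cannot be pulled from the classical theory either. Without it, $R_\theta$ is not even defined on the full range of $\Pi^\Lambda_{\omega_1}$, and your bijection argument breaks down. The surjectivity half (analytic hypoellipticity of the genuinely elliptic $P_\theta-\omega_1$) is more plausible but would also require citing results not used in the paper. The paper's scalar-pairing device is precisely designed to avoid all of this.
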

 
We will need the following lemma.
\begin{lemm}
\label{l:B5} 
 For $P$ as in~\eqref{e:PAssume}, there are $C>0$ and $\delta>0$ such that for $\Im\omega\geq C$, 
 $$
 \mc{R}^{L^2}(\omega):\mc{A}_{\delta}\to \mc{A}_{\delta}.
 $$
 \end{lemm}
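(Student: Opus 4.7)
The plan has two steps: first show that $P$ itself is bounded on $\mathscr A_\delta$ for $\delta$ sufficiently small, and then deduce the resolvent claim via a Neumann series for $\Im\omega$ large. The first step is the substantive one: it uses the analytic extension of $p$ in the $x$ variable to convert the boundedness question into a convolution estimate on Fourier coefficients.

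For Step 1, note that the left quantization defined in~\eqref{e:PAssume} satisfies $P(e^{i\langle \mu,\cdot\rangle})(x) = p(x,\mu)\, e^{i\langle \mu,x\rangle}$ for each $\mu \in \mathbb{Z}^n$ (this is the computation that carries out the $\delta,\varepsilon$ limits already performed in the proof of Lemma~\ref{l:ker}), so
\[
\widehat{Pu}(\mu) = \sum_{\nu \in \mathbb{Z}^n} \widehat u(\nu)\, \widehat{p(\cdot, \nu)}(\mu-\nu).
\]
Under the hypothesis $|p(z, \zeta)| \leq M$ on $|\Im z| \leq a$ (the order-zero case of~\eqref{e:PAssume}, which is the setting of Theorem~\ref{t:viscoscity}), a standard Paley--Wiener contour deformation applied to the $2\pi\mathbb{Z}^n$-periodic real-analytic function $x \mapsto p(x, \nu)$ gives
\[
|\widehat{p(\cdot, \nu)}(k)| \leq M e^{-a'|k|}, \qquad k \in \mathbb{Z}^n,
\]
uniformly in $\nu$, for any fixed $a' < a$. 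The triangle inequality $|\mu| \leq |\nu| + |\mu - \nu|$ then gives
\[
|\widehat{Pu}(\mu)|\, e^{2\delta |\mu|} \leq M \sum_{\nu} \bigl(|\widehat u(\nu)|\, e^{2\delta|\nu|}\bigr)\, e^{-(a' - 2\delta)|\mu - \nu|},
\]
and Young's convolution inequality (the second factor being $\ell^1$-summable as soon as $2\delta < a'$) bounds the $\ell^2_\mu$-norm of the left-hand side by $C \|u\|_{\mathscr A_\delta}$. Hence $P: \mathscr A_\delta \to \mathscr A_\delta$ is bounded whenever $2\delta < a$; denote its norm by $M_\delta$.

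For Step 2, set $C := 2 M_\delta$. Then for $\Im\omega \geq C$ we have $|\omega| > M_\delta$, so the Neumann series $\mathcal R(\omega) := -\sum_{k \geq 0} \omega^{-k-1} P^k$ converges in $\mathcal L(\mathscr A_\delta)$ to a two-sided inverse of $P-\omega$ on $\mathscr A_\delta$. Since $\mathscr A_\delta \hookrightarrow L^2(\mathbb{T}^n)$ and $\mathcal R^{L^2}(\omega)$ is the unique bounded inverse of $P - \omega$ on $L^2$ for such $\omega$, these two inverses agree on $\mathscr A_\delta$, proving the mapping property. The only delicate point is the boundedness assertion in Step 1, and even there everything reduces to a $\nu$-independent Paley--Wiener estimate followed by Young's inequality; Step 2 is entirely routine.
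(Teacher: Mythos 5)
Your proposal is correct and follows essentially the same route as the paper: compute $\widehat{Pu}(\mu)=\sum_\nu \widehat u(\nu)\,\widehat{p(\cdot,\nu)}(\mu-\nu)$, use the Paley--Wiener bound $|\widehat{p(\cdot,\nu)}(k)|\lesssim e^{-a'|k|}$ coming from the analytic extension in \eqref{e:PAssume}, conclude $P:\mathscr A_\delta\to\mathscr A_\delta$ for $2\delta<a'$, and finish with a Neumann series. The only cosmetic difference is that you invoke Young's convolution inequality where the paper uses a direct Cauchy--Schwarz estimate on the convolution sum; both yield the same bound.
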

 \begin{proof}
We start by showing that $P:\mathscr{A}_{\delta}\to \mathscr{A}_{\delta}$ is bounded. For this, note that  for $j\in \mathbb{Z}^n$,
 $$
\widehat{Pu}(j)=\sum_{k\in\mathbb{Z}^n} \widehat{u}(k) \widehat{p}(j-k,k)
 $$
 where $\widehat{p}(j,\xi)$ denotes the Fourier series for $p(x,\xi)$ in the $x$ variable.
  Note that by~\eqref{e:PAssume}, there is $C>0$ such that 
 $$
| \widehat{p}(k,\xi)|\leq C\langle \xi\rangle^m e^{-a|k|}.
 $$
 Therefore, for $\delta<\frac{a}{2}$,
 \begin{align*}
 \|Pu\|^2_{\mathscr{A}_{\delta}}&=\sum_j\Big|\sum_k \widehat{u}(k)\widehat{p}(j-k,k)\Big|^2 e^{4|j|\delta}\\&\leq \sum_j\Big(\sum_k |\widehat{u}(k)|^2e^{4|k|\delta}\Big)\Big(\sum_k|\widehat{p}(j-k,k)|^2 e^{4(|j|-|k|)\delta}\Big)\\
 &= \|u\|^2_{\mathscr{A}_{\delta}}\Big(\sum_{k,j}e^{-2a|j-k|} e^{4(|j|-|k|)\delta}\Big)\leq C \|u\|^2_{\mathscr{A}_{\delta}}\Big(\sum_{j}e^{(4\delta -16\frac{a\delta}{4\delta+2a})|j|}\Big)\leq C\|u\|_{\mathscr{A}_{\delta}}^2.
 \end{align*}
Since $\Im \omega>\|P\|_{\mathscr{A}_{\delta}\to \mathscr{A}_{\delta}}$, 
 $
 \mc{R}^{L^2}(\omega)=-\sum_{k=0}^\infty \omega^{-k-1}P^k $
  the proof is complete.
 \end{proof}
 
 \begin{proof}[Proof of Proposition~\ref{p:sharedPoles}]
Let $\omega_0$ be the minimum of $\omega_0$ from Proposition~\ref{p:Deform} and Lemma~\ref{l:Fred1} and  suppose that $\omega\in  (\omega_0,\omega_0)+i(-\omega_0\theta,\infty)$ with $\Im \omega\gg 1$.  Then, $\mc{R}_{\theta}(\omega):H^s(\Gamma_\theta) \to H^s(\Gamma_\theta)$ and $\mc{R}^{L_2}(\omega):\mathscr{A}_{\delta}\to \mathscr{A}_{\delta}$.
 
 Let $u\in \mathscr{A}_{\delta}$. Then we have $\mc{R}^{L_2}(\omega)u\in \mathscr{A}_\delta$ and
 $$(P_\theta -\omega) \big([\mc{R}^{L_2}(\omega)u]|_{\Gamma_\theta}\big)=((P-\omega)\mc{R}^{L_2}(\omega)u)|_{\Gamma_\theta}=u|_{\Gamma_\theta}.$$ 
 In particular, $[\mc{R}^{L_2}(\omega)u]|_{\Gamma_\theta}=\mc{R}_{\theta}(\omega)(u|_{\Gamma_\theta})$.  
 
 For $u,\,v\in \mathscr{A}_{\delta}$, and $\Im \omega \gg 1$,
 \begin{align*}
 \langle \mc{R}_\theta(\omega)u|_{\Gamma_\theta},v|_{\Gamma_\theta}\rangle_{L^2(\Gamma_\theta)}&=\langle \big[\mc{R}^{L^2}(\omega)u|_{\mathbb{T}^n}\big]\big|_{\Gamma_\theta},v|_{\Gamma_\theta}\rangle_{L^2(\Gamma_\theta)}.
 \end{align*}
 By Lemma~\ref{l:poleL2}, when $\Im \omega>0$ and $u\in \mathscr{A}_{\delta}$, we have $\mc{R}^{H_{\Lambda}}(\omega)u=\mc{R}^{L_2}(\omega)u$.
 Then, deforming the contour of integration in the inner product to $\mathbb{T}^n$,
 \begin{align*}
  \langle \mc{R}_\theta(\omega)u|_{\Gamma_\theta},v|_{\Gamma_\theta}\rangle_{L^2(\Gamma_\theta)}&=\langle \big[\mc{R}^{L^2}(\omega)u|_{\mathbb{T}^n}\big]\big|_{\Gamma_\theta},v|_{\Gamma_\theta}\rangle_{L^2(\Gamma_\theta)}\\&=\langle \mc{R}^{L^2}(\omega)u,v\rangle_{L^2(\mathbb{T}^n)}\\
 &=\langle \mc{R}^{H_{\Lambda}}(\omega)u,v\rangle_{L^2(\mathbb{T}^n)}=\langle \mc{R}^{H_{\Lambda}}(\omega)u,v\rangle_{\mathscr{A}_{-\delta}(\mathbb{T}^n),\mathscr{A}_{\delta}(\mathbb{T}^n)}.
 \end{align*}

 Since $\mathscr{A}_{\delta}\subset H_{\Lambda}^0\subset \mathscr{A}_{-\delta}$, both sides of this equality continue meromorphically from $\Im \omega\gg1 $ to $\omega\in  (-\omega_0,\omega_0)+i(-\omega_0\theta,\infty)$ and the equality continues to hold for $\omega$ in this set. Finally, since $\mathscr{A}_{\delta}$ is dense in $H^s_{\Lambda}$ and $\mathscr{A}_{\delta}|_{\Gamma_\theta}$ is dense in $L^2(\Gamma_\theta)$, this equality implies that the poles of $\mc{R}_\theta$ and $\mc{R}^{H_{\Lambda}}$ coincide with an agreement of multiplicities.
 \end{proof}

 \subsection{Numerical examples and discretization}

\begin{figure}
\includegraphics[width=13cm]{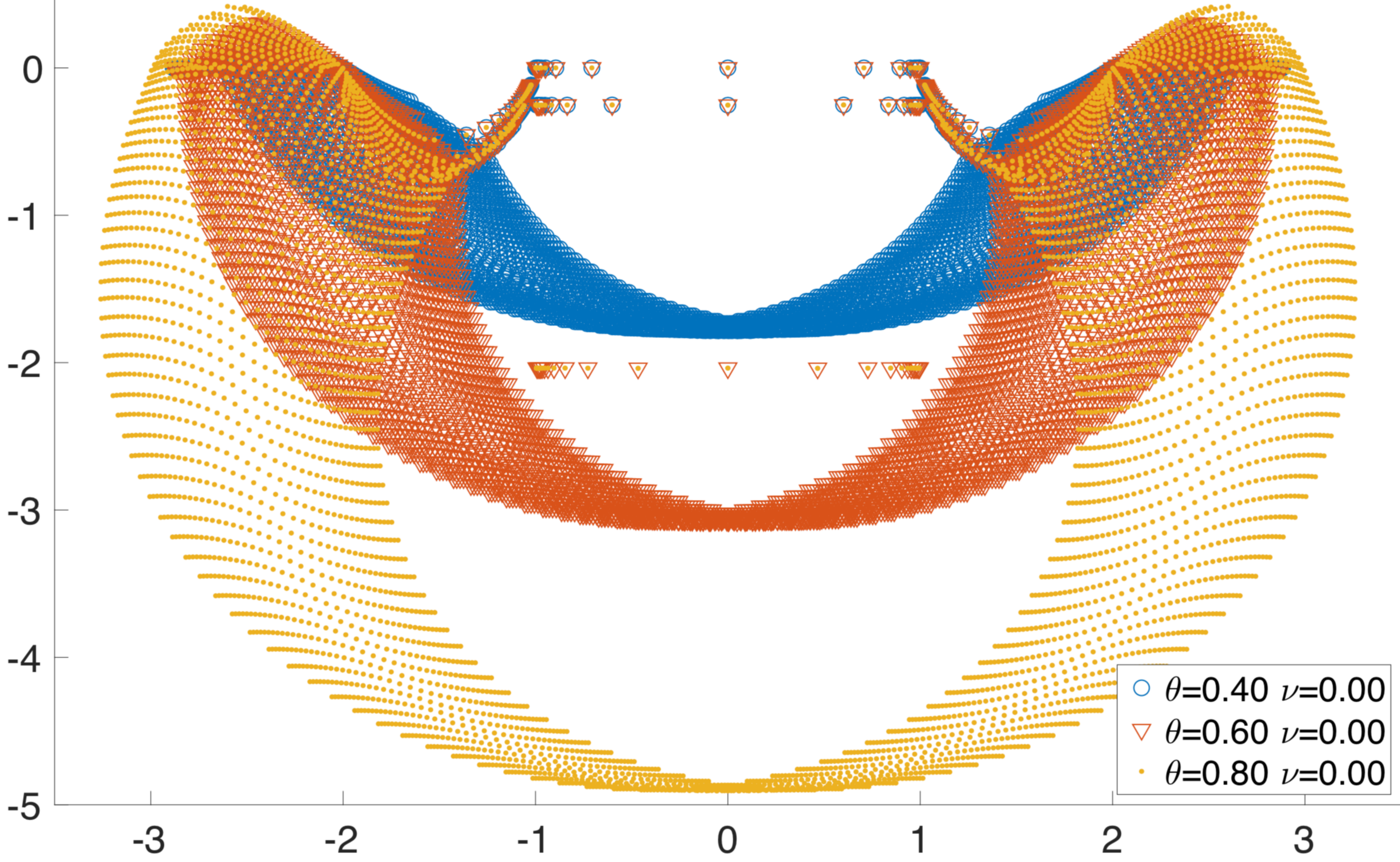}
\caption{\label{f:2}We display the eigenvalues of $P_\theta$ acting on $e^{inx_2}L^2_{x_1}$ for three different values of $\theta$ and $-20\leq n\leq 20$. $P$ is chosen as in~\eqref{e:numeric} with $V_a=0$ and $V_m=((1-\xi^2)+e \xi^2)e^{-\xi^2}$. These choices guarantee the existence of an embedded eigenvalue at 0~\cite[Example 1]{ZTao}. Note that once the eigenvalues emerge from the continues spectrum, they are independent of the choice of $\theta$. }
\end{figure}

\begin{figure}
\includegraphics[width=13.5cm]{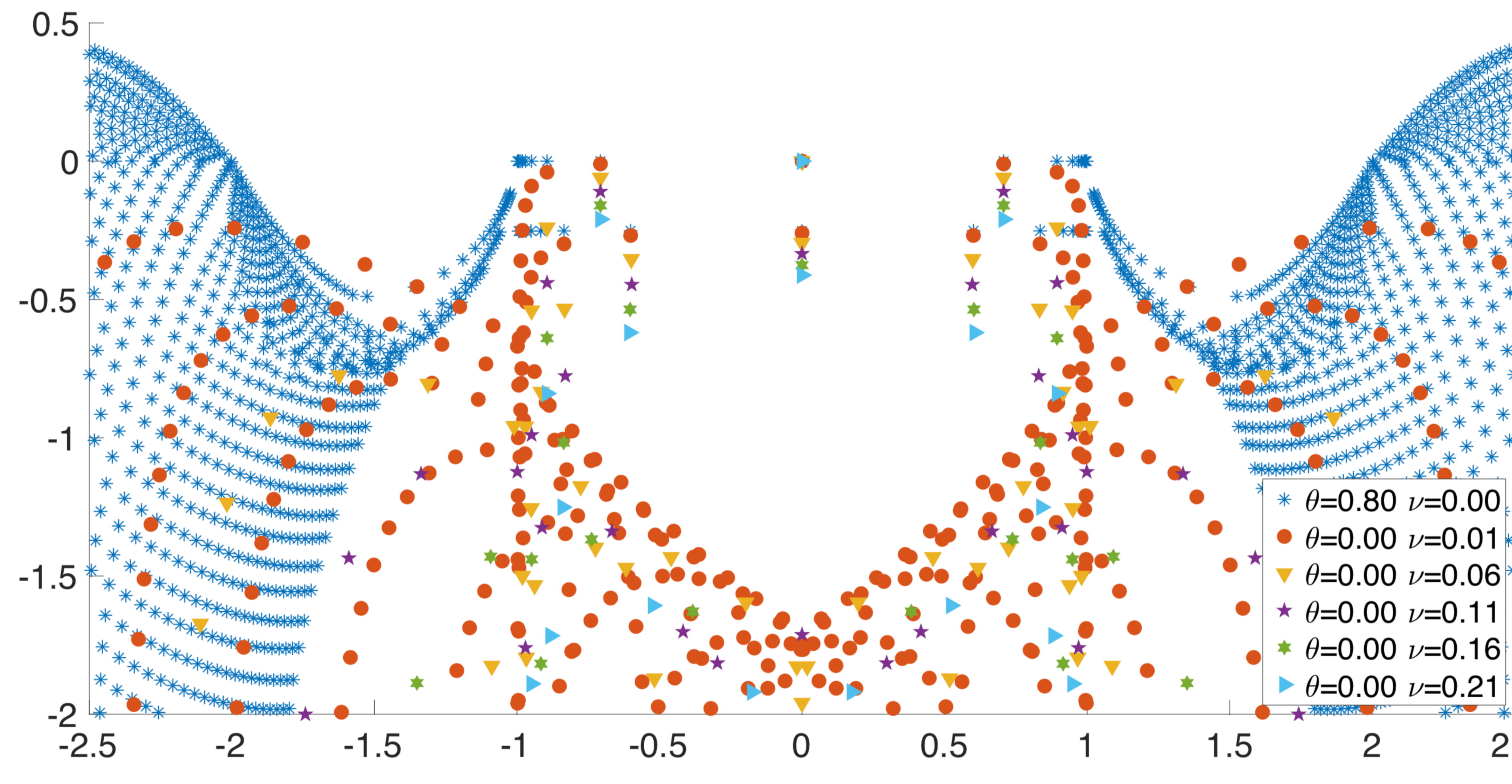}
\caption{\label{f:3}We display the eigenvalues of $P_\theta$ on $e^{inx_2}L^2_{x_1}$ with $\theta=0.8$ as well as those for $P+i\nu\Delta$ for five values of $\nu$ approaching zero and $-20\leq n\leq 20$. $P$ is chosen as in~\eqref{e:numeric} with $V_a=0$ and $V_m=((1-\xi^2)+e \xi^2)e^{-\xi^2}$. These choices guarantee the existence of an embedded eigenvalue at 0~\cite[Example 1]{ZTao}. }
\end{figure}
 
 In our numerical study, we consider operators of the form 
 \begin{equation}
 \label{e:numeric}
 P= \langle D\rangle^{-1}D_{x_2}+\sin(x_1)(\Id-V_m(D_{x_1}))+(\Id-V_m(D_{x_1}))\sin(x_1) +V_a(D_{x_1}),
 \end{equation}
 with $V_{\bullet}(\xi_1)$, $ \bullet = a, m $, satisfying 
\begin{equation}
\label{e:rapidDecay}
|V_{\bullet}(\xi_1)|\leq Ce^{-c|\Re \xi_1|^2}, \ \ \ |\Im \xi_1|<b\langle \Re \xi_1\rangle, \ \ \ \bullet = a, m .
\end{equation}
Then, $P$ satisfies the assumptions of Proposition~\ref{p:Deform} with $G_0=(-2\cos(x_1),0).$

Since the deformation $G$ does not involve $x_2$, we may decompose 
$$
L^2(\mathbb{T}^2)=\bigoplus_{n=-\infty}^\infty e^{inx_2}L^2(x_1)
$$ 
and use that $D_{x_2}|_{e^{inx_2}L^2_{x_1}}=n\Id.$

In order to discretize the operator $P_\theta|_{e^{inx_2}L^2_{x_1}}$, we replace $\mathbb{T}_{x_1}$ by $\frac{2\pi}{N}(\mathbb{Z}/N\mathbb{Z})$ and denote by $X_1,Y_1\in [\frac{2\pi}{N}(\mathbb{Z}/N\mathbb{Z})]^N$ vectors with $j^{\text{th}}$ entry $X_1(j)=\tfrac{2\pi j}{N}$. We will represent the Fourier dual to $\frac{2\pi}{N}(\mathbb{Z}/N\mathbb{Z})$ as $-\frac{N}{2}+\mathbb{Z}/N\mathbb{Z}$ and index vectors on the Fourier side with $K\in \{-\frac{N}{2},-\frac{N}{2}+1,\dots, \frac{N}{2}-1\}.$  We then compute a matrix which approximates the action of $P_\theta$ on the Fourier series side. 

Letting $\gamma_\theta(x):=x+i\theta G_0(x)$, and $\mc{F}_N$ the matrix for the Discrete Fourier transform,
$$
(\mc{F}_N)_{K,j}:=-\frac{e^{-2\pi i jK/N}}{\sqrt{N}} ,\qquad  -\frac{N}{2}\leq K\leq \frac{N}{2}-1,\, \qquad0\leq j\leq N-1
$$ 
we obtain for a vector $u\in \mathbb{C}^N$,
$$
\mc{F}_N\frac{u}{\gamma_{\theta}'}={\bf{\Gamma}} \mc{F}_N u,\qquad {\bf{\Gamma}}:=\mc{F}_N\operatorname{Diag}\Big(\frac{1}{\gamma'_\theta(X_1)}\Big)\mc{F}_N^*,
$$
so that 
$$
[\mc{F}_N (D_{x_1})_{\theta}u] \approx   [{\bf{\Gamma} \bf{K}} \mc{F}_Nu]
$$
where
$$
{\bf{K}}=\operatorname{Diag}(K).
$$
At this point we discretize $(\langle D\rangle^{-\frac{1}{2}})_{\theta}$ using the functional calculus of~\cite[Section 4]{SjZwDist}. In particular, writing 
$$
{\bf{\langle D\rangle_{\theta}}}=(1+n^2){\bf{\Id}}+( {\bf{\Gamma K}})^2,
$$ 
we have 
$$
\mc{F}_N[(\langle D\rangle^{-\frac{1}{2}})_{\theta}u]\approx (\sqrt{{\bf{\langle D\rangle_{\theta}}}})^{-1}\mc{F}_Nu
$$
where the square root is taken in the sense of matrices and all eigenvalues are taken with non-negative real part. 

\noindent{\bf{Remark:}} Note that the functional calculus definition of $(\langle D\rangle^{-1/2})_{\theta}$ agrees with the definition~\eqref{e:discreteUse} with $p=(1+n^2+\xi^2)^{-\frac{1}{2}}$. To see this observe that at $\theta=0$, the two operators agree and hence their analytic extensions to $\Gamma_\theta$ agree. 

The operators $[V_{a/m}(D)]_\theta$ are computed by using~\eqref{e:discreteUse} to write 
$$
(V_{a/m}(D))_{\theta} u(x)=\sum_k\frac{1}{2\pi}\int  e^{i(\gamma_\theta(x)-\gamma_\theta(y))k}V_{a/m}(k)u(y)dy.
$$
This sum converges since $V_{a/m}$ satisfies~\eqref{e:rapidDecay}. We then evaluate $x$ and $y$ on $\frac{2\pi}{N}(\mathbb{Z}/N\mathbb{Z})$ in the above kernel to obtain the matrix approximation on the Fourier transform side. More precisely, putting 
$$
({{\bf \widehat{V}}^{\theta}_{\bullet }})_{ij}:=\sum_k\frac{1}{2\pi}\int  e^{i(\gamma_\theta(X_1(i))-\gamma_\theta(Y_1(j)))k}V_{\bullet}(k), \ \ 
\bullet = a , m , 
$$
we have
$$
\mc{F}_NV_{\bullet}(D)_{\theta} u\approx  \mathbf{V}^{\theta}_{\bullet}
\mc{F}_Nu,\qquad {\bf{V}}^{\theta}_{\bullet}= \mc{F}_N {\bf \widehat{V}}^{\theta}_{\bullet}\mc{F}_N^*.
$$
Note that this approximation is valid since we take $|V(\xi)|\leq  Ce^{-C|\xi|}$ and therefore the sum in $k$ converges rapidly. Finally, writing 
$$
{\bf{S}^{\theta}}=\mc{F}_N\operatorname{Diag}(\sin(\gamma_\theta(X_1)))\mc{F}_N^*,
$$ 
Our total operator is then approximated by
$$
P_{\theta}|_{e^{inx_2}L^2_{x_1}}\approx \mc{F}_N^*{\bf{P}}_N^\theta\mc{F}_N,\ \ {\bf{P}}_N^\theta:= n (\sqrt{{\bf{\langle D\rangle_{\theta}}}})^{-1}+{\bf{S}^{\theta}}({\bf{\Id}}-{\bf{V}}^{\theta}_{m})+({\bf{\Id}}-{\bf{V}}^{\theta}_{m}){\bf{S}^\theta} +{\bf{V}}^{\theta}_a  .
$$ 
Since $\mc{F}_N$ is unitary, we compute the eigenvalues of ${\bf{P}}^\theta_N$ to approximate the eigenvalues of $P_\theta|_{e^{inx_2}L^2_{x_1}}$.

When approximating $P+i\nu \Delta$, these computations are much simpler and we use the standard Fourier series approximations
$$
P+i\nu \Delta \approx\mc{F}_N^*{\bf{P}}_N^\nu\mc{F}_N,\ \  {\bf{P}}_N^{\nu}:= n {\bf{\langle D\rangle^{-1/2}}}+{\bf{S}}({\bf{\Id}}-{\bf{V}}_{m})+({\bf{\Id}}-{\bf{V}}_{m}){\bf{S}} +{\bf{V}}_a  -i\nu {\bf{K^2}},
$$
where 
\begin{gather*}
{\bf{\langle D\rangle}}^{-1/2}:=\operatorname{Diag}(\langle K\rangle^{-1/2}),\ \  {\bf{V}_{\bullet}}:=\operatorname{Diag}(V_{\bullet}(K)),\ \ \mathbf{S}=\mc{F}_N\operatorname{Diag}(\sin(X_1))\mc{F}_N^*.
\end{gather*}

The results of several numerical experiments are displayed in Figures~\ref{f:1},~\ref{f:2}, and~\ref{f:3}.

\end{document}